\title[Integrable systems approach to the Schottky problem]
{Integrable systems approach to the Schottky problem and related questions}
\author{Samuel Grushevsky}
\address{Department of Mathematics and Simons Center for Geometry and Physics, Stony Brook University, Stony Brook, NY 11794-3651}
\email{sam@math.stonybrook.edu}
\thanks{Research of the first author was supported in part by NSF grant DMS-21-01631}
\author{Yuancheng Xie} 
\address{Center for Computational Mathematics and Control Research, Shenzhen MSU-BIT University, Shenzhen, Guangdong 518172, China}
\email{xieyuancheng@smbu.edu.cn}
\thanks{The work of the second author was partially supported by the National Key Research and Development Program of China (No. 2021YFA1002000), and by the National Natural Science Foundation of China under the Grant No. 12301304. }
\date{\today}
\numberwithin{equation}{section}
\newtheorem{theorem}{Theorem}[section]
\newtheorem{claim}[theorem]{Claim}
\newtheorem{fact}[theorem]{Fact}
\newtheorem{proposition}[theorem]{Proposition}
\newtheorem{propdef}[theorem]{Proposition-Definition}
\newtheorem{lemma}[theorem]{Lemma}
\newtheorem{problem}[theorem]{Problem}
\newtheorem{corollary}[theorem]{Corollary}
\newtheorem{conjecture}[theorem]{Conjecture}
\theoremstyle{definition}
\newtheorem{definition}[theorem]{Definition}
\newtheorem{example}[theorem]{Example}
\theoremstyle{remark}
\newtheorem{remark}[theorem]{Remark}
\newtheorem{exercise}[theorem]{Exercise}
\newtheorem*{hint}{Hint}
\newtheorem*{solution}{Solution}
\newcommand{\ttt}{\mathbf t}
\newcommand{\ZZ}{\mathbb Z}
\newcommand{\RR}{\mathbb R}
\newcommand{\NN}{\mathbb N}
\newcommand{\CC}{\mathbb C}
\newcommand{\PP}{\mathbb P}
\newcommand{\HH}{\mathbb H}
\newcommand{\cW}{\mathcal{W}}
\newcommand{\cA}{\mathcal{A}}
\newcommand{\cB}{\mathcal{B}}
\newcommand{\cE}{\mathcal{E}}
\newcommand{\cF}{\mathcal{F}}
\newcommand{\cL}{\mathcal{L}}
\newcommand{\cM}{{\mathcal M}}
\newcommand{\cO}{{\mathcal O}}
\newcommand{\cV}{{\mathcal V}}
\newcommand{\cD}{{\mathcal D}}
\newcommand{\cJ}{{\mathcal J}}
\newcommand\half{{\textstyle\frac12}}
\newcommand\ve{\varepsilon}
\DeclareMathOperator{\const}{const}
\DeclareMathOperator{\AJ}{AJ}
\DeclareMathOperator{\BA}{BA}
\DeclareMathOperator{\Res}{Res}
\renewcommand{\Im}{\operatorname{Im}}
\renewcommand{\Re}{\operatorname{Re}}
\DeclareMathOperator{\Jac}{Jac}
\DeclareMathOperator{\Pic}{Pic}
\DeclareMathOperator{\Div}{Div}
\DeclareMathOperator{\Kum}{Kum}
\DeclareMathOperator{\Sing}{Sing}
\DeclareMathOperator{\SL}{SL}
\DeclareMathOperator{\Sp}{Sp}
\DeclareMathOperator{\Mat}{Mat}
\DeclareMathOperator{\mult}{mult}
\DeclareMathOperator{\ord}{ord}
\renewcommand{\div}{\operatorname{div}}
\newcommand{\thmrefer}[1]{\renewcommand\thetheorem
  {\protect\ref{#1}}\addtocounter{theorem}{-1}}
\begin{document}

\begin{abstract}
We give a somewhat informal introduction to the integrable systems approach to the Schottky problem, explaining how the theta functions of Jacobians can be used to provide solutions of the KP equation, and culminating with the exposition of Krichever's proof of Welters' trisecant conjecture in the most degenerate (flex line) case.
\end{abstract}

\dedicatory{Dedicated to the memory of Igor Krichever}

\maketitle

\setcounter{tocdepth}{1}
\tableofcontents

\section{Introduction}

These lectures will focus on the relations between integrable systems and algebraic geometry. Let us first give some motivations for why we put these two subjects together. 
Consider the following naive question: How do we solve a given differential equation? This is a very vague question, but clearly of central mathematical importance. When we ask such a question, we need to know what it really means to {\em solve} a differential equation. For linear differential equations with constant coefficients, this is clear and well-known --- all solutions are given by linear combinations of suitable exponentials and polynomials, and are completely explicit. But what if we have linear differential equations with nonconstant coefficients? In a usual course of ordinary differential equations, something may be mentioned about such equations, but usually not much; perhaps some approximation techniques come to mind. In general if we write a complicated differential equation, there is no reason for it to have a nice solution, and almost surely no solutions will be elementary functions. For example, hypergeometric functions satisfy the so-called hypergeometric equations, and it took hundreds of years to have a decent understanding of them (hypergeometric series was first used by John Wallis in his 1655 book Arithmetica Infinitorum \cite{Wallis1972}, and the understanding slowly progressed until Riemann resolved the connection problem for hypergeometric functions in 1857 \cite{Riemann1857a}). Sometimes when we try to solve a differential equation, it could happen that the best way to describe the function that solves it is just to say that it is a solution of this differential equation --- and one may not necessarily hope to get a better direct description. So here `solve' would mean getting an alternative description of this function beyond just saying that it solves a particular differential equation.

There are in fact systems of differential equations which, while impossible to solve explicitly in elementary functions, are completely integrable in the sense that all integrals of motion can be determined. And then there is something that will lead us to algebro-geometric solutions, by which we mean solutions constructed from algebro-geometric data; perhaps hypergeometric functions are the first example of such solutions. The modern study of this, and the main results we discuss, is largely due to the works by Igor Krichever from the 1970s to 2010s.

\smallskip
So why does an algebraic curve come into play in the story of solving differential equations, in the first place? A simple example may explain the matter. Consider the integral
\begin{equation}\label{eq:trigonometricintegral}
y(x) \coloneqq \int_{0}^x \frac{dt}{\sqrt{1 - t^2}}, \quad -1 \le x \le 1\,.
\end{equation}
This integral can be calculated by making the `magic' substitution 
\begin{equation}\label{eq:tsintheta}
t \coloneqq \sin(\theta)\,, 
\end{equation}
which gives
\[y = \arcsin{x}, \qquad -1 \le x \le 1\,,\]
in the original variables. An important goal in the study of integrable systems is to find a magic substitution like \eqref{eq:tsintheta}, so that we can calculate the desired integrals or solve the desired equations. The deeper reason why the integral \eqref{eq:trigonometricintegral} can be calculated is because the semi-circle defined by $s = \sqrt{1 - t^2}$ can be rationally uniformized, that is there is a way to simultaneously represent $t$ and $s$ as rational functions of a third variable $z$: $t = \frac{2z}{1 + z^2}$, $s = \frac{1 - z^2}{1 + z^2}$ with $|z| \le 1$. But such elementary parameterization in general does not even exist for the seemingly slightly more general integral
\begin{equation}\label{eq:ellipticintegral}
y(x) \coloneqq \int_0^x \frac{dt}{\sqrt{4t^3 - g_2t - g_3}}, \qquad g_2, g_3 \in \CC\,.
\end{equation}
This is due to the fact that while the equation $s^2=1-t^2$ defines a rational curve (that is, a genus zero Riemann surface), the equation
\begin{equation}\label{eq:elliptic}
s^2 = 4t^3 - g_2t - g_3
\end{equation}
defines an elliptic curve (a genus one Riemann surface), which thus cannot be rationally parameterized. It is clear that to uniformize the elliptic curve we need to introduce some new functions. It was Gauss who made the important observation in his famous book Disquisitiones Arithmeticae \cite{Gauss1995} that analogously to the trigonometric integral \eqref{eq:trigonometricintegral}, in integral \eqref{eq:ellipticintegral} we should view $x$ as a function of $y$ to define this new function. This proposal was noticed and carried out by Abel, and the results are systematically exposed in Jacobi's Fundamenta Nova \cite{Jacobi2012}. This new function is the famous Weierstrass  function $\wp(x)$, which can be used to uniformize the elliptic curve defined by equation \eqref{eq:elliptic}, since it satisfies the differential equation
\[(\wp'(x))^2 = 4\wp(x)^3 - g_2\, \wp(x) - g_3\,.\]
The integral \eqref{eq:ellipticintegral} defines the so-called Abel-Jacobi map on the elliptic curve \eqref{eq:elliptic}, and can be generalized to the case of Riemann surfaces of higher genus, which was done by Riemann \cite{Riemann1857b}. It took another century for the realization that these Abel-Jacobi maps are exactly the magic transformations needed to obtain so-called finite-gap solutions of many integrable systems. 

\smallskip
Now we turn to the geometry of algebraic curves. In the following by a curve we always mean a complex projective, i.e.~algebraic and compact, reduced curve, which is the same as a compact Riemann surface. One can associate to a genus~$g$ curve~$C$, which is fundamentally a geometric object, its Jacobian, denoted $\Jac(C)$. The Jacobian is a $g$-dimensional complex torus, and in fact a principally polarized abelian variety. That is, the Jacobian is an algebraic variety that can be embedded into the complex projective space, and which has the structure of an abelian group. The Jacobian can be written as the quotient $\CC^g/\Lambda$ by a full rank lattice $\Lambda\cong \ZZ^{2g}$, with $\Lambda\otimes_\ZZ\RR=\CC^g$ (all of these we will define properly in due course), and as such is an object of a more arithmetic flavor. The map that sends a curve to its Jacobian is called the Torelli map, because Torelli's theorem is the statement that a curve can be recovered from its Jacobian. More precisely, this is the statement that the map
\[\begin{array}{rcl}
J: \cM_g & \rightarrow & \cA_g\\
C & \mapsto & \Jac(C)
\end{array}\]
defines an embedding of the moduli space~$\cM_g$ of genus~$g$ curves into the moduli space~$\cA_g$ of genus~$g$ (principally polarized, and as this will be the only type that occur in these lectures, we will henceforth drop these words) abelian varieties. \footnote{There is a caveat here that at the level of stacks this map is in fact 2-to-1 onto its image; we will discuss this in a little more detail later on.}

The Torelli map thus relates the geometric data of a curve with the arithmetic data of its Jacobian. There are, however, more abelian varieties than Jacobians: more precisely $\dim_\CC \cM_g=3g-3<\dim_\CC\cA_g=\tfrac{g(g+1)}{2}$ for $g\ge 4$. The 150-year old Schottky problem is to determine the image of the Torelli map.

\begin{problem}[Schottky problem]
    Determine the image of the Torelli map~$J$, i.e.~determine which abelian varieties arise as Jacobians of curves.
\end{problem}
There are many interpretations of this problem, depending on what we mean by `determine' --- one could deduce special geometric properties of Jacobians of curves and advance the understanding of their geometry, one can write equations for the Jacobian locus $J(\cM_g)$, \dots There are many approaches to the Schottky problem, some of which will be mentioned below, and essentially each of these approaches has been fundamental in the development of a mathematical theory: (Siegel) modular forms, Hodge theory, and integrable systems. 

There is also a well-known easier problem, which for lack of a standard term we will call 
\begin{problem}[Restricted Schottky problem]
    Given a curve $C$ embedded into a (principally polarized) abelian variety~$A$, is~$A$ the Jacobian~$\Jac(C)$? 
\end{problem}
\noindent
This is a much easier problem because given an abelian variety~$A$, the full Schottky problem asks if there exists a curve (which can be taken inside~$A$) such that~$A$ 
is its Jacobian, while in the restricted version the curve is already magically given to us. Still, the restricted Schottky problem is highly nontrivial, and various results in this direction will appear in these lectures in due course. Our main focus in these notes is Krichever's complete solution to the (full) Schottky problem.

On the one hand, we are going to solve suitable differential equations via some functions constructed starting from an arbitrary curve (and using its Jacobian). On the other hand, this same set of differential equations will provide a solution to the Schottky problem from pure algebraic geometry. This is why we put these two stories together. Starting from the curve, we will be able to construct some solutions of some differential equation, and the fact that we have been able to do this will characterize Jacobians of curves among all abelian varieties. This means that if we try to perform such a construction starting from any abelian variety, we will succeed if and only if this abelian variety is a Jacobian. The capstone result in this direction is  Krichever's proof of Welters' trisecant conjecture characterizing Jacobians among abelian varieties by the property of their Kummer variety having a trisecant line. The aim of these lectures is to present, following Krichever \cite{Krichever2006, Krichever2010, Krichever2013, Krichever2023}, the outline of the proof of the case of this characterization where the trisecant is most degenerate --- i.e.~when the 3 points of secancy come together to form a flex line of the Kummer variety.

\subsection*{The aim and the structure of these notes}
These lecture notes are loosely based on a set of 5 lectures given by the first author at BIMSA in June 2024, however, they include lots more material, additional background, details of the constructions, many more references, etc. We have tried to preserve the more informal style of exposition, giving more motivation and philosophy for the ideas and constructions, sometimes at the expense of more details of the arguments. These notes are not meant to be fully self-contained nor to be a textbook survey for the precise elements of the proofs. Rather, we hope that the more eclectic material selection, and the exposed interplay between constructions from classical algebraic geometry of curves and abelian varieties, theta functions, and integrable systems, may serve as a worthwhile introduction to this circle of ideas for mathematicians with a background either in integrable systems or in algebraic geometry. The original lectures were meant for intermediate-to-advanced graduate students, and this is the level of the background generally assumed.

The material we cover brings together notions and constructions from classical algebraic geometry and integrable systems, culminating in Krichever's proof of the characterization of Jacobians by the flex line of the Kummer variety. Our sections follow the lectures in alternating between topics: \Cref{sec:CommDO} exposes the basics of formal solutions of differential operators in one variable, of commuting differential operators, and ends with the appearance of the spectral curve; \Cref{sec:Jac} introduces Jacobians of curves and abelian varieties, from the analytic and algebraic viewpoint, and discusses Riemann's theta singularity theorem; in \Cref{sec:theta} we discuss the geometry of theta divisors of Jacobians, Weil's reducibility, and end by the discussion of trisecants of Kummer varieties of Jacobians, including Fay-Gunning's theorem, Shiota's theorem, and the statement of Welters' conjecture; in \Cref{sec:BAfunction} we describe Krichever's construction of the Baker-Akhiezer functions starting from the algebro-geometric data, showing how these can be used to construct solutions of the KP hierarchy; finally in \Cref{sec:flex} we explain Krichever's characterization of Jacobians by the existence of a flex line of the Kummer variety.

There are certainly many other wonderful surveys and expositions on a related circle of ideas, often more focused on either the integrable systems or on the algebraic geometry aspects, including those by Krichever himself: \cite{Mumford1975, vanderGeer1985, Donagi1988, Debarre1995, Marini1998, vanderGeer-Oort1999, Grushevsky2012, Dubrovin1981,Taimanov1990, Taimanov1997, Arbarello-DeConcini1990, mulase, Krichever2006, Krichever2013, Krichever-Shiota2013, Krichever2023}. We cannot supplant these, but hope that these notes may raise the readers' interest in learning more of all aspects of the story. Our exposition of Krichever's proof of the flex line case of the Welters' conjecture in spirit follows the outline given in \cite{Krichever2023}, but we provide more background and detail (hopefully without introducing extra mistakes).

\section*{Acknowledgments}
We are very grateful to the organizers of the first Beijing Summer Workshop in Mathematics and Mathematical Physics, at BIMSA, in June 2024, for the invitation for the first author to give lectures and for the second author to serve as the TA for this lecture course, and for very strong encouragement for us to prepare these lecture notes. We thank the audience for these lectures for active participation, many interesting questions and discussions that led us to further refine and enhance the presentation.

The second author gave a similar short course at the Great Bay University and Shenzhen University with these lecture notes. He thanks Fudong Wang and Chengfa Wu for the invitation. The second author also would like to thank Yuji Kodama for reading these notes and commenting on the construction of Baker-Akhiezer function on the singular spectral curves. We are thankful to Harry Braden for useful comments and corrections on the first version of these notes. We thank Motohico Mulase for bringing the reference \cite{mulase} to our attention, and for further comments on the multivariable KP hierarchy.

\smallskip
We dedicate this survey to the memory of Igor Krichever whose work and life created and shaped much of the mathematics discussed here. The first author will always cherish the memory of the collaboration and conversations with Igor, and whatever he knows about integrable systems is certainly owed to Igor. Of course any mistakes in the current text are our own, though.

\section{Commuting differential operators}\label{sec:CommDO}
In the first lecture we discuss commuting differential operators in one variable. While quite formal, this theory underpins the further developments, and we will see the algebraic curve naturally arising from a pair of commuting differential operators.

Let $L$ be a general differential operator in one variable $x$, where for now $x$ can be either a real or a complex number. More precisely, suppose
\[L = \sum \limits_{i = 0}^n u_i(x)\frac{d^i}{dx^i}\,.\]
Here the functions $u_i(x)$ can be holomorphic or smooth functions depending on the context.

We want to find eigenfunctions of $L$, i.e.~solutions of the equation
\[
L \psi(x)  = E \cdot \psi(x)
\]
for a constant $E\in\CC$ or $E\in\RR$ (denoted this way because we are thinking of the energy). For $\psi(x)$, we are interested in all `formal eigenfunctions', i.e.~these will be suitable series with no assumptions on convergence. We first note that we have some freedom to normalize~$L$ a little bit to put it in a simpler standard form.
\begin{exercise}\label{ex:1}
Show that by a suitable change of variables, it is `enough' to consider 
\begin{equation}\label{eq:Lcanonical}
L = \frac{d^n}{dx^n} + \sum \limits_{i = 0}^{n - 2}u_i(x)\frac{d^i}{dx^i},
\end{equation}
where the top coefficient is $1$, and the subleading coefficient vanishes.
\end{exercise}
We give some hints and outlines of solutions of the exercises in \Cref{sec:hints}.

The point of the exercise is that there is some naturally defined equivalence relation and it is convenient to find a canonical form in each equivalence class. The canonical form \eqref{eq:Lcanonical} of $L$ has the advantage that we can view it as some kind of perturbation of the leading term $\frac{d^n}{dx^n}$, and the eigenvalue problem for the leading term $\frac{d^n}{dx^n}$ is easily solved:
\[\frac{d^n}{dx^n} e^{kx} = k^n e^{kx}\,,\]
where $E = k^n$ is the corresponding eigenvalue. Then one can hope that the eigenfunction $\psi(x)$ for general~$L$ would be some kind of perturbation of $e^{kx}$. 

\smallskip
Motivated by the above consideration and observation that the $E$-eigenspace of $\tfrac{d^n}{dx^n}$ is $n$-dimensional, it will be more useful for us to write $E=k^n$, and to solve the equation $L \psi(x)= k^n \cdot \psi(x)$. This is of course the same equation as before, but crucially for the whole story, we will now be studying the dependence of (suitably normalized) solutions on~$k$, and to anticipate this we will write this in the following form:
\begin{equation}\label{eq:normalizedeigenvalueproblem}
L\psi(x, k) = k^n \psi(x, k)\,,
\end{equation}
where of course~$L$ is still the same differential operator, in~$x$ only.

The first result is the existence and uniqueness of suitably normalized solutions, given as formal power series in~$k^{-1}$.
\begin{theorem}
For any fixed $x_0 \in \CC$ (resp.~$x_0\in\RR$) there exists a unique formal solution $\psi$ of \eqref{eq:normalizedeigenvalueproblem} of the form
\begin{equation}\label{eq:normalizedeigenfunctionpsi}
\psi(x, k) = \left(\sum \limits_{s = 0}^{\infty} \xi_s(x) k^{-s}\right) e^{k(x - x_0)},
\end{equation}
where $\xi_s$ are holomorphic (resp.~smooth) functions of~$x$, 
and such that the solution is normalized at the point $x_0$ in the sense that~$\xi_0(x) \equiv 1$ and $\xi_s(x_0) = 0$  for all $s > 0$.
\end{theorem}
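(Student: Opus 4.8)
The plan is to substitute the ansatz \eqref{eq:normalizedeigenfunctionpsi} into \eqref{eq:normalizedeigenvalueproblem} and derive a recursive system of ordinary differential equations for the coefficients $\xi_s(x)$. The key observation is that applying $L$ to $f(x)e^{k(x-x_0)}$ produces $e^{k(x-x_0)}$ times a polynomial in $k$ whose coefficients are differential expressions in $f$; collecting powers of $k$ and matching against $k^n$ on the right-hand side should, after the leading terms cancel, leave a cascade of equations in which each $\xi_{s+1}$ is determined by the previously found $\xi_0,\dots,\xi_s$ up to a single constant of integration, which is then pinned down by the normalization $\xi_s(x_0)=0$.

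\textbf{Key steps.} First I would compute, via the Leibniz rule, that
\[
\frac{d^i}{dx^i}\bigl(f(x)e^{k(x-x_0)}\bigr)=e^{k(x-x_0)}\sum_{j=0}^{i}\binom{i}{j}k^{i-j}f^{(j)}(x),
\]
so that
\[
L\bigl(f e^{k(x-x_0)}\bigr)=e^{k(x-x_0)}\left(k^n f+\sum_{j=1}^{n}k^{n-j}\left[\binom{n}{j}f^{(j)}+\sum_{i=0}^{n-2}\binom{i}{i-j}\,u_i\,f^{(i-j)}\right]\right),
\]
where terms with negative lower index are zero. Writing $f=\sum_{s\ge 0}\xi_s k^{-s}$ and inserting into \eqref{eq:normalizedeigenvalueproblem}, the $k^n$ term cancels identically; extracting the coefficient of $k^{n-1-m}$ for $m\ge 0$ gives a relation of the form $n\,\xi_{m+1}'=(\text{differential polynomial in }\xi_0,\dots,\xi_m\text{ and the }u_i)$. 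In particular the coefficient of $k^{n-1}$ reads $n\,\xi_0'=0$, forcing $\xi_0$ constant, consistent with the normalization $\xi_0\equiv 1$; and for each $m\ge 0$ the equation for $\xi_{m+1}'$ has right-hand side depending only on the already-constructed lower-order coefficients (here the hypothesis that the subleading coefficient of $L$ vanishes, as arranged in \Cref{ex:1}, is what makes the $\xi_{m+1}$ term appear only through its derivative, with no undifferentiated $\xi_{m+1}$). I would then integrate from $x_0$, using $\xi_{m+1}(x_0)=0$ to fix the constant, which simultaneously proves existence and uniqueness by induction on $s$; holomorphicity (resp.~smoothness) of each $\xi_s$ follows since integration preserves these classes and the $u_i$ are assumed holomorphic (resp.~smooth).

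\textbf{Main obstacle.} The substantive point — rather than an obstacle — is verifying that the recursion is genuinely triangular: that the coefficient of $k^{n-1-m}$ involves $\xi_{m+1}$ only linearly and only through $\xi_{m+1}'$ (with nonzero constant $n$), with everything else expressible in $\xi_0,\dots,\xi_m$. This is where the normalization of $L$ to the canonical form \eqref{eq:Lcanonical} is essential: a nonvanishing coefficient of $d^{n-1}/dx^{n-1}$ would contribute an undifferentiated $\xi_{m+1}$ term, turning each step into a genuine first-order linear ODE rather than a direct integration — still solvable, but the bookkeeping is cleaner in canonical form. I would also remark that formal convergence is not claimed and not needed: the series in $k^{-1}$ is manipulated purely formally, which is exactly why the existence-uniqueness statement is clean.
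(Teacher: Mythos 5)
Your proposal is essentially the paper's proof: substitute the ansatz into \eqref{eq:normalizedeigenvalueproblem}, apply the Leibniz rule, collect powers of $k$, and obtain an upper-triangular system of first-order equations of the form $n\,\xi_{l-1}' = (\text{expression in } \xi_0,\dots,\xi_{l-2} \text{ and the } u_i)$, solved recursively by integration with the constant fixed by $\xi_s(x_0)=0$; your remark that the canonical form \eqref{eq:Lcanonical} is what keeps undifferentiated $\xi$'s from appearing is exactly the point the paper's recursion rests on. The only blemishes are two off-by-one bookkeeping slips in your intermediate displays, neither affecting the structure or conclusion: the $u_i$ contribution to the coefficient of $k^{n-j}$ should read $\binom{i}{\,i-n+j\,}u_i f^{(i-n+j)}$ (nonzero only when $j\ge n-i\ge 2$), and the coefficient of $k^{n-1-m}$ determines $\xi_m'$ rather than $\xi_{m+1}'$ (equivalently, $k^{n-2-m}$ determines $\xi_{m+1}'$), consistent with your own correct observation that the $k^{n-1}$ coefficient gives $n\,\xi_0'=0$.
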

We denote the above eigenfunction normalized at $x = x_0$ by $\psi(x, k; x_0)$. 
\begin{remark}
Here by saying $\psi(x)$ is a formal eigenfunction we simply mean that we do not worry about any convergence of the series. Note that the condition $\xi_0(x) \equiv 1$ does not depend on $x_0$, which means the lowest order term is just $e^{k(x - x_0)}$, and the condition $\xi_s(x_0) = 0$ means that there are no corrections at higher orders at the point $x = x_0$. 
\end{remark}
\begin{proof}
The proof is basically a direct formal computation. We want our $\psi$ to be
\[\psi(x) = e^{k(x - x_0)} \left(1 + \xi_1(x)k^{-1} + \xi_2(x)k^{-2} + \dots\right)\]
where all $\xi_s(x)$ vanish at $x = x_0$. Applying $L$ to $\psi$, and noting that
\begin{align*}\frac{d^i}{dx^i}\left(\xi_s(x)k^{-s}e^{k(x - x_0)}\right) &= k^{-s}\left(\sum \limits_{l = 0}^i\binom{i}{l}\frac{d^{i - l}\xi_s}{dx^{i - l}} \cdot k^l\right)e^{k(x - x_0)}\\ &= k^{-s}\left(\frac{d^i\xi_s}{dx^i} + i\frac{d^{i - 1}\xi_s}{dx^{i - 1}} \cdot k + \dots \right)e^{k(x - x_0)}\,,\end{align*}
we get
\begin{align*}\sum \limits_{s = 0}^{\infty} \sum \limits_{j = 0}^n \binom{n}{j}\frac{d^j\xi_s}{dx^j}k^{n - j - s}e^{k(x - x_0)} &+ \sum \limits_{s = 0}^{\infty} \sum \limits_{i = 0}^{n - 2} u_i(x) \sum \limits_{j = 0}^i \binom{i}{j}\frac{d^j\xi_s}{dx^j}k^{i - j - s} e^{k(x - x_0)} \\ &= \left(\sum \limits_{s = 0}^{\infty} \xi_s(x)k^{n-s}\right) e^{k(x - x_0)}.\end{align*}
Canceling the nonzero factor $e^{k(x - x_0)}$, we get a differential equation for the coefficients $\xi_s(x)$. As this equation is valid for any $k$, the coefficient  of each $k^{n-l}$ must vanish, and computing these gives
\begin{align} \label{eq:recursivexi}
    \sum \limits_{j = 0}^l \binom{n}{j}\frac{d^j\xi_{l-j}}{dx^j} + \sum \limits_{i = 0}^{n - 2} u_{n - 2 - i}(x)\sum \limits_{j = 0}^{l - 2 - i}\binom{n - 2 - i}{j}\frac{d^j \xi_{l - 2 - i - j}}{dx^j} = \xi_l(x), \quad \forall l \ge 0\,. 
\end{align}
For $l = 0$, we obtain the equality
\[\xi_0 = \xi_0\,.\]
For $l \ge 1$, we obtain the differential equation
\[n \frac{d\xi_{l - 1}}{dx} = - \sum \limits_{j = 2}^{l}\binom{n}{j}\frac{d^j\xi_{l - j}}{dx^j} - \sum \limits_{i = 0}^{n - 2} u_{n - 2 - i}(x)\sum \limits_{j = 0}^{l - 2 - i}\binom{n - 2 - i}{j}\frac{d^j\xi_{l - 2 - i - j}}{dx^j}\,.\]
This system is upper triangular when written in terms of the column vector $(\xi_0, \xi_1, \xi_2, \dots)^t$, so it can be  solved recursively. Notice that every time we just need to solve a first order differential equation whose solution is thus uniquely determined by the conditions
\[\xi_0(x) = 1, \qquad \xi_s(x_0) = 0, \quad s \ge 1\,.\]
So now we have both existence and uniqueness of $\psi(x, k)$, which completes the proof.
\end{proof}

\begin{remark}
We are not going to get into the discussion of whether one can sum such formal series in any way, that's not the point. Note however that we did not say anything about what $k$ is, so this is a true statement for any $k$, and $\psi$ has a very explicit dependence on $k$, which is essential. Here, the eigenvalues~$k$ can deform arbitrarily, and the solutions will deform nicely if we deform the eigenvalue. This is crucial to the whole story.
\end{remark}

\begin{corollary}\label{cor:formalsolutionsall}
Any other formal solution of $L\psi = k^n \psi$ has the form
\begin{equation}
\psi(x, k) = \psi(x, k; x_0)A(k, x_0)
\end{equation}
where $A(k, x_0)$ is a function that only depends on $k$ and $x_0$. 
\end{corollary}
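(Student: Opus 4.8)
The plan is to \emph{divide by the normalized solution} whose existence and uniqueness we have just established; write $\psi_0\coloneqq\psi(x,k;x_0)$ for brevity. Since $\psi_0=\bigl(1+\xi_1(x)k^{-1}+\cdots\bigr)e^{k(x-x_0)}$ has leading coefficient identically $1$, the series $1+\xi_1k^{-1}+\cdots$ is a unit in the ring of formal power series in $k^{-1}$ with holomorphic (resp.\ smooth) coefficients. Hence, given any other formal solution $\tilde\psi(x,k)=\bigl(\sum_{s\ge 0}\eta_s(x)k^{-s}\bigr)e^{k(x-x_0)}$, the two exponential factors cancel in the quotient and we may write
\[
\tilde\psi(x,k)=R(x,k)\,\psi_0,\qquad R(x,k)=\sum_{s\ge 0}R_s(x)k^{-s},
\]
with each $R_s$ holomorphic (resp.\ smooth). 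The corollary is then equivalent to the assertion that $R$ is independent of $x$, i.e.\ that every $R_s$ is a constant; one then renames $R(x,k)$ as $A(k,x_0)$ --- it is manufactured from $\tilde\psi$ and from $\psi_0$, the latter carrying the dependence on $x_0$.

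To prove $\partial_x R=0$ I would use that $L$ differentiates in $x$ only, so that $L(R\psi_0)=k^nR\psi_0=R\,(L\psi_0)$. Expanding $L(R\psi_0)$ by the Leibniz rule and peeling off the $l=0$ term, which reproduces $R\cdot L\psi_0$, this identity reduces to
\[
\sum_{i=0}^{n}u_i(x)\sum_{l=1}^{i}\binom{i}{l}\frac{d^l R}{dx^l}\,\frac{d^{i-l}\psi_0}{dx^{i-l}}=0,
\]
a linear differential equation in $x$ for $R$. Dividing through by the unit $\psi_0$ and using that $\psi_0^{-1}\frac{d^{m}\psi_0}{dx^{m}}=k^{m}(1+O(k^{-1}))$ --- which holds because the leading coefficient of $\psi_0$ is the constant $1$ --- one sees that the highest power of $k$ that can occur is $k^{n-1}$, arising only from $i=n$ (where $u_n\equiv 1$) and $l=1$, with coefficient $n\,\partial_x R_0$; hence $R_0$ is constant. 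One then argues by induction on $s$: the coefficient of $k^{n-1-s}$ equals $n\,\partial_x R_s$ plus terms proportional to $\partial_x^{l}R_{s'}$ with $s'<s$, all of which vanish by the inductive hypothesis, so $\partial_x R_s=0$ for every $s$. This gives $R(x,k)=A(k,x_0)$ and hence $\tilde\psi=\psi(x,k;x_0)\,A(k,x_0)$; the reverse implication is immediate, since multiplication by a formal series in $k^{-1}$ with constant coefficients commutes with $L$.

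The only genuinely delicate point is the bookkeeping of powers of $k$ in the displayed equation: one must check that, after dividing by $\psi_0$, nothing but $n\,\partial_x R_s$ contributes to the coefficient of $k^{n-1-s}$, which uses that $L$ has top-order coefficient $1$ and that every remaining term carries a strictly smaller power of $k$ (the vanishing of the subleading coefficient $u_{n-1}$ in the canonical form is here a pleasant convenience rather than a necessity). An essentially equivalent route avoids the quotient altogether: re-reading the recursion \eqref{eq:recursivexi} shows that a formal solution $\tilde\psi$ is determined by the constant $\eta_0$ --- forced to be a constant by the $l=1$ case of \eqref{eq:recursivexi} --- together with the values $\eta_s(x_0)$ for $s\ge 1$, whence $\psi(x,k;x_0)\cdot\sum_{s\ge 0}\eta_s(x_0)k^{-s}$ is a formal solution with exactly the same data and must, by the uniqueness in the Theorem, equal $\tilde\psi$.
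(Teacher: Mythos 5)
Your proposal is correct, but your primary argument is genuinely different from the paper's. The paper's proof is a two-line appeal to the uniqueness statement of the Theorem: since $\psi(x_0,k;x_0)=1$, one sets $A(k,x_0)\coloneqq\psi(x_0,k)$, observes that $\psi(x,k)A^{-1}(k,x_0)$ is an eigenfunction satisfying the normalization at $x_0$, and concludes $\psi(x,k)A^{-1}=\psi(x,k;x_0)$ by uniqueness --- at the (implicit) cost of inverting the series $A$. You instead divide by $\psi_0$ (legitimate, since its series part has leading coefficient the constant $1$, hence is a unit) and prove directly that the ratio $R(x,k)$ is $x$-independent by bookkeeping powers of $k$: your identification of the coefficient of $k^{n-1-s}$ as $n\,\partial_xR_s$ plus derivatives of earlier $R_{s'}$ is accurate (since $l\ge 1$ and $i\le n$ force $i-l\le n-1$, with equality only for $i=n$, $l=1$, and the subleading terms of $\psi_0^{-1}\psi_0^{(m)}$ only involve $R_{s'}$ with $s'<s$), so the induction closes, and your remark that the vanishing of $u_{n-1}$ is not needed is also correct. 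What your route buys is that it never needs $A$ to be invertible and it exhibits concretely the mechanism forcing $\partial_xR=0$; what the paper's route buys is brevity, by recycling the uniqueness already proved. Your closing alternative --- solutions are determined by the constant $\eta_0$ and the values $\eta_s(x_0)$, so $\psi_0\cdot\sum_s\eta_s(x_0)k^{-s}$ must coincide with $\tilde\psi$ --- is essentially the paper's argument, rephrased so as to avoid dividing by $A$; note only that it uses a mild extension of the Theorem's uniqueness (to arbitrary prescribed data at $x_0$ rather than the normalized data), which you correctly justify by re-reading the recursion \eqref{eq:recursivexi}.
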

\begin{proof}
As \eqref{eq:normalizedeigenvalueproblem} is a linear differential equation on $x$, for any $A(k, x_0)$ independent of $x$
\[\psi(x,k) = \psi(x, k; x_0)A(k, x_0)\,,\]
clearly solves the same eigenvalue problem as~$\psi(x, k; x_0)$. For the other direction, we need to show that for any eigenfunction of $L$ with eigenvalue $E = k^n$, there exists a unique $A(k, x_0)$ such that the above identity is valid. Given any formal eigenfunction~$\psi(x, k)$ of $L$ with eigenvalue $E = k^n$, we define
\[A(k, x_0) \coloneqq \psi^{-1}(x_0, k; x_0) \cdot \psi(x_0, k)\,.\]
Then $\psi(x, k)A^{-1}(k, x_0)$ is also an eigenfunction of $L$ with eigenvalue $E = k^n$, and satisfies the normalization condition $\psi(x_0, k)A^{-1}(k, x_0) = 1$. By the uniqueness of $\psi(x, k; x_0)$ it follows that
$\psi(x, k; x_0) = \psi(x, k)A^{-1}(k, x_0)$.
\end{proof}

The above discussion completes what we require from the study of formal eigenfunctions of one differential operator in one variable. What we want to consider next is the case when we have more than one differential operator. Assume that we have two differential operators in the standard form
\begin{equation}\label{eq:nm}
\begin{aligned}
    & L_1 = \frac{d^n}{dx^n} + \sum \limits_{i = 0}^{n - 2}u_i(x) \frac{d^i}{dx^i}, \\
    & L_2 = \frac{d^m}{dx^m} + \sum \limits_{j = 0}^{m - 2} v_j(x) \frac{d^j}{dx^j}.
\end{aligned}
\end{equation}
We {\em assume that $m, n$ are coprime}, and the main example in our following discussion would be $n = 2, m = 3$. The non-coprime case is very interesting, but presents further challenges, and will not be needed for our main goals --- but see \cite{Krichever-Novikov1978, Krichever-Novikov1979, Krichever-Novikov1980, Grunbaum1988, Previato-Wilson1992, Mironov-Zheglov2016} for more information.

We know the formal eigenfunctions for $L_1$ and we can do the same analysis for the formal eigenfunctions of $L_2$. Recall that in finite-dimensional linear algebra if we have two linear operators,
we can ask when do they have a common eigenfunction or when are all of their eigenfunctions common? This question becomes even more interesting for our case of infinite dimensional spaces. We would like to ask when are all the eigenfunctions of $L_1$ and $L_2$ common, which is to say when is
$[L_1, L_2] = 0$?

\begin{exercise}\label{ex:L2L3commute}
When $n = 2, m = 3$, write down explicitly the conditions on the coefficients $u_0, v_0, v_1$ equivalent to the commutation relation
\[[L_1, L_2] = 0\,.\]
\end{exercise}
\begin{solution}
We calculate directly the commutator $[L_1, L_2]$ for $L_1 = \frac{d^2}{dx^2} + u_0$ and $L_2 = \frac{d^3}{dx^3} + v_1\frac{d}{dx} + v_0$. Note that the composition of operators satisfies the Leibniz rule, that is the composition of two monomial differential operators is given by
\[\left(f(x)\frac{d^i}{dx^i}\right)\circ\left(g(x)\frac{d^j}{dx^j}\right)  = \sum \limits_{\alpha = 0}^i \binom{i}{\alpha}f(x)g^{(\alpha)}(x)\frac{d^{i + j - \alpha}}{dx^{i + j - \alpha}}\,,\]
and the general case can be handled by combining this with the distributive property.
Note that $[L_1, L_2]$ is a second order differential operator as the coefficients for $\frac{d^i}{dx^i}, i \ge 3$ vanish automatically, and for this operator to be zero the coefficients of all $\frac{d^i}{dx^i}, 0 \le i \le 2$ have to be zero. So we obtain the following system of equations on $u_0, v_0, v_1$:
\begin{align*}
2\frac{dv_1}{dx} & = 3 \frac{du_0}{dx}\,,\\
\frac{d^2v_1}{dx^2} + 2 \frac{dv_0}{dx} & = 3 \frac{d^2u_0}{dx^2}\,,\\
\frac{d^2v_0}{dx^2} & = \frac{d^3u_0}{dx^3} + v_1 \frac{du_0}{dx}\,.
\end{align*}
Plugging $\frac{dv_1}{dx}$ from the first expression into the second, and then $\frac{dv_0}{dx}$ from the second expression into the third, we obtain the following differential equation for~$u_0$ only
\[\frac{1}{4}\frac{d^3u_0}{dx^3} + \left(\frac{3}{2}u_0 + \frac{c_1}{2}\right)\frac{du_0}{dx} = 0\,,\]
where $c_1$ is a constant of integration. If~$u_0$ satisfies this equation, there exist~$v_0$ and~$v_1$ making~$L_1$ and~$L_2$ commute.
\end{solution}

The next theorem gives a criterion for a differential operator to commute with a given one.
\begin{theorem}\label{thm:L12commute}
Let $L_1, L_2$ be as in \eqref{eq:nm}, and let $\psi(x, k; x_0)$ be the formal eigenfunction of $L_1$ normalized at $x = x_0$. Then $[L_1, L_2] = 0$ if and only if $\psi^{-1}(x, k; x_0) L_2 \psi(x, k; x_0)|_{x=x_0} = A(k)$ is independent of $x_0$.
\end{theorem}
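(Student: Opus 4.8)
The plan is to exploit the characterization from Corollary~\ref{cor:formalsolutionsall}: any formal eigenfunction of $L_1$ with eigenvalue $k^n$ is a scalar multiple (in $x$) of $\psi(x,k;x_0)$. The key observation is that if $[L_1,L_2]=0$, then $L_2$ preserves eigenspaces of $L_1$, so $L_2\psi(x,k;x_0)$ is again a formal eigenfunction of $L_1$ with the same eigenvalue $k^n$; hence by the Corollary it equals $\psi(x,k;x_0)\cdot A(k,x_0)$ for some function $A$ depending only on $k$ and $x_0$. Evaluating at $x=x_0$ and using the normalization $\psi(x_0,k;x_0)=1$ shows that this $A(k,x_0)$ is precisely $\psi^{-1}(x,k;x_0)L_2\psi(x,k;x_0)\big|_{x=x_0}$. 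For the independence of $x_0$: the operator $L_2$ does not depend on $x_0$, and the quantity $\psi^{-1}(x,k;x_0)L_2\psi(x,k;x_0)$ is a well-defined function of $x$ (the eigenvalue of $L_2$ acting on a common eigenfunction), which does not depend on the chosen normalization point — changing $x_0$ multiplies $\psi$ by an $x$-independent factor $A(k,x_0)/A(k,x_0')$ which cancels in the conjugation. So $A(k,x_0)$ must in fact be a genuine function of $k$ alone.

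Conversely, suppose $A(k,x_0):=\psi^{-1}(x,k;x_0)L_2\psi(x,k;x_0)\big|_{x=x_0}$ is independent of $x_0$. I would first argue that $\psi^{-1}(x,k;x_0)L_2\psi(x,k;x_0)$ is in fact independent of $x$ as well, not just at $x=x_0$: fixing $k$, vary the base point and use that $\psi(x,k;x_0)$ and $\psi(x,k;x_1)$ differ by an $x$-independent scalar, so the conjugated expression $\psi^{-1}L_2\psi$ is the same for all base points; evaluating $\psi^{-1}(x,k;x_1)L_2\psi(x,k;x_1)$ at $x=x_1$ gives $A(k,x_1)=A(k)$ by hypothesis, and since $x_1$ was arbitrary this shows $\psi^{-1}(x,k;x_0)L_2\psi(x,k;x_0)=A(k)$ identically in $x$. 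Equivalently $L_2\psi(x,k;x_0)=A(k)\,\psi(x,k;x_0)$, i.e.\ $\psi(x,k;x_0)$ is a common (formal) eigenfunction of $L_1$ and $L_2$ for every $k$.

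It then remains to deduce $[L_1,L_2]=0$ from the existence of a common family of eigenfunctions $\psi(\cdot,k;x_0)$ parametrized by $k$. I would compute: $L_1L_2\psi = L_1(A(k)\psi)=A(k)L_1\psi=A(k)k^n\psi$, and similarly $L_2L_1\psi = L_2(k^n\psi)=k^n A(k)\psi$, so $[L_1,L_2]\psi(x,k;x_0)=0$ for all $k$. Now $[L_1,L_2]$ is a differential operator in $x$ of some finite order $N$; the vanishing of $[L_1,L_2]\psi$ for all $k$ forces all its coefficients to vanish. Concretely, write $\psi(x,k;x_0)=(\sum_s\xi_s(x)k^{-s})e^{k(x-x_0)}$; applying the order-$N$ operator $[L_1,L_2]$ and collecting powers of $k$, the leading term is (coefficient of $\tfrac{d^N}{dx^N}$ in $[L_1,L_2]$) times $k^N e^{k(x-x_0)}$, which must vanish, hence that coefficient is zero; descending through lower powers of $k$ kills the remaining coefficients one at a time. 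The main obstacle I anticipate is this last bookkeeping step — making rigorous that the formal series $\psi$, depending on the free parameter $k$, is "generic enough" to annihilate only the zero operator — but this is exactly the kind of argument already used in the existence/uniqueness theorem (matching coefficients of powers of $k$), so it should go through cleanly.
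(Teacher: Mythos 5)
Your proposal is correct and follows essentially the same route as the paper: the forward direction uses \Cref{cor:formalsolutionsall} to see that $L_2\psi$ is a multiple of $\psi$ and that the factor relating normalizations at different base points is $x$-independent and cancels in the conjugation, while the converse uses the common-eigenfunction computation plus matching powers of $k$, which is just a concrete implementation of the paper's finite-dimensionality trick (and your explicit bridging from the value at $x=x_0$ to the identity $L_2\psi=A(k)\psi$ in $x$ is a point the paper glosses over). The only slip is the parenthetical identification of the relating factor as $A(k,x_0)/A(k,x_0')$ --- it is in fact $\psi(x_0',k;x_0)$ --- but since your argument only uses that this factor is independent of $x$, nothing is affected.
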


\begin{remark}
As the leading term of $\psi(x, k; x_0)$ is $1$, we can make sense of $\psi^{-1}(x, k; x_0)$ near $x=x_0$ by formally inverting the series (and continuing not to worry about convergence). Note that $\psi^{-1}(x, k; x_0)L_2\psi(x, k; x_0)$ in fact does not depend on~$x$ by \Cref{cor:formalsolutionsall}. The point of this theorem is that it does not depend on~$x_0$ either.  
\end{remark}

\begin{proof}
This is again an elementary computation. We first show that the condition is necessary. Assume $[L_1, L_2] = 0$, then $L_2\psi(x, k; x_0)$ is also an eigenfunction of $L_1$ with eigenvalue~$k^n$, since
\[L_1L_2 \psi = L_2L_1\psi = L_2 (k^n\psi)\,.\]
Thus by \Cref{cor:formalsolutionsall}, for any~$x_0$, there exists $A(k, x_0)$ such that
\[L_2 \psi(x, k; x_0) = \psi(x, k; x_0) \cdot A(k, x_0)\,.\]
So $\psi^{-1}(x, k; x_0)L_2 \psi(x, k; x_0)$ is independent of $x$, and our goal is to show that $A(k, x_0)$ is also independent of $x_0$. For this take any other point $x = x_1$, then by the same argument we have
\[L_2 \psi(x, k; x_1) = \psi(x, k; x_1)A(k, x_1)\,,\]
and we need to show that $A(k, x_0) = A(k, x_1)$. Consider
\begin{align*}
\psi(x, k; x_0) e^{k(x_0 - x_1)} & = \left(1 + \xi_1(x)k^{-1} + \dots\right)e^{k(x - x_0)}e^{k(x_0 - x_1)} \\
& = (1 + \xi_1(x)k^{-1} + \dots) e^{k(x - x_1)}.
\end{align*}
This is also an eigenfunction of $L_1$ but in general it is not necessarily equal to $\psi(x, k; x_1)$ since it does not satisfy our normalization condition at $x = x_1$ which requires $\xi_s(x_1)=0$ for all $s > 1$, while by our construction we instead have $\xi_s(x_0)=0$. As it is an eigenfunction of $L_1$ with eigenvalue~$k^n$, by \Cref{cor:formalsolutionsall} there exists $B_{x_1}(k, x_0)$ such that
\[\psi(x, k; x_0)e^{k(x_0 - x_1)} = \psi(x, k; x_1) \cdot B_{x_1}(k, x_0)\,.\]
Then on the one hand we have
\begin{align*}
L_2 B_{x_1}(k, x_0)\psi(x, k; x_1) & = B_{x_1}(k, x_0)\psi(x, k; x_1)A(k, x_1) \\
& = \psi(x, k; x_0) e^{k(x_0 - x_1)}A(k, x_1)\,,
\end{align*}
while on the other hand
\[
L_2 B_{x_1}(k, x_0)\psi(x, k; x_1) = L_2 \psi(x, k; x_0)e^{k(x_0 - x_1)} = e^{k(x_0 - x_1)}\psi(x, k; x_0)A(k, x_0)\,, 
\]
since $L_2\psi(x, k; x_0) = \psi(x, k; x_0)A(k, x_0)$. 

So we obtained $A(k, x_0) = A(x, x_1)$ as desired. For the other direction, we need to show that if $\psi^{-1}(x, k; x_0) = L_2 \psi(x, k; x_0) = A(k)$, then $[L_1, L_2] = 0$.

To prove this, there is a {\em finite-dimensionality trick} that we are going to use in the following a couple of times. Note that
\[L_1L_2 \psi(x, k; x_0) = L_1A(k)\psi(x, k; x_0) = A(k)k^n \psi(x, k; x_0) \qquad \forall x_0\,,\]
and similarly we have
\[L_2L_1 \psi(x, k; x_0) = L_2 k^n L_1\psi(x, k; x_0) = k^n A(k)\psi(x, k; x_0) \qquad \forall x_0\,,\]
so that
\[[L_1, L_2]\psi(x, k; x_0) = 0 \quad \forall k, x_0\,.\]
However,  $[L_1, L_2]$ is a finite order differential operator that is independent of $k$, and thus unless it is identically zero, its kernel is a finite-dimensional vector space. But since $\psi(x, k; x_0)$ lies in its kernel for arbitrary $k$ and $x_0$ --- and the space of such is infinite-dimensional, which can be seen from the explicit dependence of $\psi(x, k; x_0)$ on $k$ --- we must have
\[[L_1, L_2] = 0\,,\]
which completes the proof. Note that this is where we truly benefit from varying $k$ in the family!
\end{proof}

\Cref{thm:L12commute} tells us that for~$L_1$ fixed we can find out if some other differential operator commutes with~$L_1$ by checking how $L_2$ acts on the eigenspaces of $L_1$. Now we can ask what happens if we have three differential operators $L_1, L_2, L_3$ of the type we consider, such that $[L_1, L_2] = [L_1, L_3] = 0$? Then we have the following

\begin{corollary}\label{cor:commutingDOring}
If $[L_1, L_2] = 0 = [L_1, L_3]$, then also $[L_2, L_3] = 0$.
\end{corollary}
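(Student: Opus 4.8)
The plan is to use the same finite-dimensionality trick employed in the second half of the proof of \Cref{thm:L12commute}. The key observation is that $\psi(x, k; x_0)$, the formal eigenfunction of $L_1$ normalized at $x_0$, is a simultaneous eigenfunction of all three operators. Indeed, from $[L_1, L_2] = 0$ we know by \Cref{thm:L12commute} (and its proof) that $L_2\psi(x, k; x_0) = A_2(k)\psi(x, k; x_0)$ for some function $A_2(k)$ independent of $x$ and $x_0$; likewise $[L_1, L_3] = 0$ gives $L_3\psi(x, k; x_0) = A_3(k)\psi(x, k; x_0)$ for some $A_3(k)$.

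First I would compute $L_2 L_3 \psi(x, k; x_0)$ and $L_3 L_2 \psi(x, k; x_0)$ directly. Since $L_3\psi = A_3(k)\psi$ and $A_3(k)$ is a scalar (independent of $x$), we get $L_2 L_3 \psi = L_2 (A_3(k)\psi) = A_3(k) L_2\psi = A_3(k) A_2(k)\psi$; symmetrically $L_3 L_2 \psi = A_2(k) A_3(k)\psi$. Subtracting, $[L_2, L_3]\psi(x, k; x_0) = 0$ for all $k$ and all $x_0$.

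Then I would invoke the finite-dimensionality argument: $[L_2, L_3]$ is a differential operator of finite order (at most $n + m - 1$, but in any case fixed) with coefficients independent of $k$, so if it were not identically zero its kernel would be a finite-dimensional space of functions of $x$. But the functions $\psi(x, k; x_0)$, as $k$ ranges over $\CC$ (or even just over any infinite set), are linearly independent — this follows from their explicit leading exponential behavior $e^{k(x-x_0)}$ with distinct $k$ — so they span an infinite-dimensional space contained in the kernel of $[L_2, L_3]$. This forces $[L_2, L_3] = 0$.

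I do not expect any genuine obstacle here; the only point requiring a little care is making precise the claim that $\{\psi(x,k;x_0)\}_k$ spans an infinite-dimensional space, but this is exactly the same assertion already used and justified in the proof of \Cref{thm:L12commute}, so it can simply be cited. One could alternatively phrase the whole argument more slickly by noting that $L_2$ and $L_3$ both lie in the commutant of $L_1$, and the common formal eigenfunctions of $L_1$ are rich enough to detect commutativity — but the direct computation above is the cleanest route and mirrors the style of the preceding proofs.
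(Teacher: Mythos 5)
Your proof is correct and follows essentially the same route as the paper: invoke \Cref{thm:L12commute} to see that $\psi(x,k;x_0)$ is a common eigenfunction of $L_2$ and $L_3$, deduce $[L_2,L_3]\psi(x,k;x_0)=0$ for all $k$ and $x_0$, and conclude by the finite-dimensionality trick. No gaps; the linear-independence point you flag is exactly the one the paper also defers to the proof of \Cref{thm:L12commute}.
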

\begin{proof}
This is now very easy. Take any formal eigenfunction $\psi(x, k; x_0)$ of $L_1$. Then since $[L_1, L_2] = [L_1, L_3] = 0$, \Cref{thm:L12commute} implies that there exist $A_2(k)$ and $A_3(k)$ such that
\begin{align*}
 L_2 \psi(x, k; x_0) &= A_2(k)\psi(x, k; x_0) \quad\hbox{and}\\
 L_3 \psi(x, k; x_0) &= A_3(k)\psi(x, k; x_0)\,,
\end{align*}
which then implies that $[L_2, L_3] \psi(x, k; x_0) = 0$. This is valid for any $k$ and any $x_0$, which makes the kernel of the differential operator $[L_2, L_3]$ infinite-dimensional, so by the same finite-dimensionality trick as in the proof of \Cref{thm:L12commute}, we have $[L_2, L_3] = 0.$
\end{proof}

\begin{remark}
This corollary is due to Schur \cite{Schur1905}. The problem of classifying three commuting differential operators of orders $3, 4, 5$ was studied in the work of Shabat \cite{Shabat1980}, and some explicit examples of commuting differential operators with any number of generators were constructed in the work of Kodama-Xie \cite{Kodama-Xie2021}. 
\end{remark}
Let's pause to appreciate what happened here. \Cref{cor:commutingDOring} claims that, if we start with a differential operator $L_1$ and consider all differential operators of the same form that commute with $L_1$, then they will also commute with each other. So this means that the differential operators that commute with a given differential operator $L_1$ {\em form a commutative ring}. Of course there are some elements that are trivially contained in this ring, such as the powers of $L_1$ or, if $L_1$ is a power of some other differential operator $L$, all powers of $L$. \footnote{Our standing assumption that $n$ and $m$ are coprime is precisely to avoid the case when both $L_1$ and $L_2$ are powers of some~$L$, in which case the commutativity would be trivial.} We can now ask to classify all commutative rings that can arise as the ring of differential operators commuting with some given~$L_1$. The next 100-year-old theorem provides some information in this direction.

\begin{theorem}[Burchnall-Chaundy~1923~\cite{Burchnall-Chaundy1923}]
If $[L_1, L_2] = 0$, then there exists a polynomial in two variables $Q(\alpha, \beta)$ such that $Q(L_1, L_2) = 0$.
\end{theorem}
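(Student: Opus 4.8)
My plan is to exhibit a suitable $Q$ as a characteristic polynomial. Although $L_1$ and $L_2$ are infinite-dimensional operators, the commutation forces $L_2$ to preserve each finite-dimensional eigenspace of $L_1$, so on such an eigenspace the two operators satisfy the polynomial relation coming from Cayley--Hamilton; this relation, made uniform in the eigenvalue, yields $Q$, and the finite-dimensionality trick of \Cref{thm:L12commute} then promotes it to a genuine identity of differential operators. Concretely: fix $E\in\CC$ and let $V_E$ be the space of solutions of $L_1\eta=E\eta$ on a neighbourhood of $x_0$; since $L_1$ has order $n$ and leading coefficient $1$, $\dim_\CC V_E=n$. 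As $[L_1,L_2]=0$, for $\eta\in V_E$ we get $L_1(L_2\eta)=L_2(L_1\eta)=E\,L_2\eta$, so $L_2$ restricts to an endomorphism of $V_E$, on which $L_1$ acts as the scalar $E$.

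Next I would make the dependence on $E$ explicit. Taking the basis $\eta_0,\dots,\eta_{n-1}$ of $V_E$ normalized by $\eta_i^{(j)}(x_0)=\delta_{ij}$, the recursion obtained by differentiating $\eta^{(n)}=E\eta-\sum_{i\le n-2}u_i\eta^{(i)}$ at $x_0$ shows inductively that each derivative $\eta_i^{(p)}(x_0)$ is a \emph{polynomial} in $E$ (of degree about $p/n$). Hence the matrix entries of $L_2|_{V_E}$ in this basis, namely the numbers $(L_2\eta_i)^{(r)}(x_0)$ for $0\le r\le n-1$, are polynomials in $E$, and I would define
\[
Q(\alpha,\beta)\ :=\ \det\!\bigl(\beta\cdot\mathrm{Id}-L_2|_{V_\alpha}\bigr)\ \in\ \CC[\alpha,\beta].
\]
This $Q$ is monic of degree $n$ in $\beta$, hence nonzero. (Equivalently, in the language of \Cref{thm:L12commute}: the formal eigenfunctions $\psi(x,\zeta k;x_0)$ with $\zeta^n=1$ form a basis of $V_{k^n}$ on which $L_2$ acts diagonally with eigenvalues $A(\zeta k)$, so $Q(k^n,\beta)=\prod_{\zeta^n=1}\bigl(\beta-A(\zeta k)\bigr)$; this is the spectral curve of the pair $(L_1,L_2)$.)

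Finally I would conclude. By the Cayley--Hamilton theorem applied on the $n$-dimensional space $V_E$, the operator $Q(E,L_2)$ annihilates $V_E$. Since $L_1$ and $L_2$ commute, every monomial $L_1^aL_2^b$ acts on $\eta\in V_E$ as $E^aL_2^b\eta$, so $Q(L_1,L_2)\eta=Q(E,L_2)\eta=0$; thus $R:=Q(L_1,L_2)$ annihilates $V_E$ for \emph{every} $E\in\CC$. But $R$ is a single differential operator of some fixed finite order, so if $R\neq 0$ its kernel is finite-dimensional; since $V_E\cap V_{E'}=0$ whenever $E\neq E'$ (a common element $\eta$ would satisfy $(E-E')\eta=0$), the span $\sum_{E\in\CC}V_E\subseteq\ker R$ is infinite-dimensional, a contradiction. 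Hence $Q(L_1,L_2)=0$.

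I expect the one real obstacle to be the polynomiality statement used to define $Q$: one must check that the coefficients of the characteristic polynomial of $L_2|_{V_E}$ depend \emph{polynomially}, not merely holomorphically, on $E$, so that $Q$ is an honest element of $\CC[\alpha,\beta]$ and not just a family of polynomials in $\beta$ with entire coefficients. This is precisely the degree bound on the Taylor coefficients of the solutions of $L_1\eta=E\eta$ at $x_0$, equivalently the assertion that the eigenvalue $A(k)$ attached by \Cref{thm:L12commute} to the $\psi$-eigenline is polynomial in $k$ rather than a general Laurent series in $k^{-1}$. Everything else is the standard interplay of linear algebra on $V_E$ with the finite-dimensionality trick.
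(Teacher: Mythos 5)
Your proposal is correct, and its overall architecture coincides with the paper's: restrict $L_2$ to the $n$-dimensional $E$-eigenspace of $L_1$, take the characteristic polynomial to get $Q(\alpha,\beta)$, and then kill $Q(L_1,L_2)$ by the finite-dimensionality trick (your observation that the spaces $V_E$ for distinct $E$ intersect trivially is a clean way to package the "infinite-dimensional kernel" step; the paper instead varies $k$ in the family $\psi(x,k;x_0)\in\cL(k^n)$, which amounts to the same thing). Where you genuinely diverge is in the proof of the key claim that $Q_E(\beta)=\det(\beta\cdot\mathrm{Id}-L_2|_{V_E})$ has coefficients \emph{polynomial} in $E$. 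The paper argues this by symmetry: exchanging the roles of $L_1$ and $L_2$, the common eigenvalue function is simultaneously the characteristic polynomial of $L_2$ on eigenspaces of $L_1$ and of $L_1$ on eigenspaces of $L_2$, hence polynomial in both variables. You instead compute directly with the fundamental system $\eta_i^{(j)}(x_0)=\delta_{ij}$, using the recursion $\eta^{(n)}=E\eta-\sum_{i\le n-2}u_i\eta^{(i)}$ to see that all Taylor coefficients of the $\eta_i$ at $x_0$, and hence all matrix entries $(L_2\eta_i)^{(r)}(x_0)$ of $L_2|_{V_E}$, are polynomials in $E$. This is precisely the "explicit computation" route that the paper attributes to Krichever as an alternative proof of the claim, and it has the advantage of being fully rigorous where the paper's symmetry argument is somewhat impressionistic (it does not really explain in what sense $Q$ and $Q'$ are "the same function"); the cost is that it is less conceptual and does not by itself exhibit the symmetry of the spectral curve in $L_1$ and $L_2$. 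You also correctly identify polynomiality in $E$ (as opposed to mere holomorphy, or to $A(k)$ being only a Laurent series in $k^{-1}$) as the one genuinely delicate point, and your degree bound $\deg_E\eta_i^{(p)}(x_0)\lesssim p/n$ settles it.
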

\begin{proof}
For any given $E \in \CC$, let 
\[\cL(E) \coloneqq \{\psi : L_1 \psi = E\cdot \psi\}\,,\]
be the space of all formal eigenfunctions of $L_1$ with eigenvalue $E$. Then $\cL(E)$ is a finite-dimensional ($n$-dimensional in the current setup) complex vector space. Since $[L_1, L_2] = 0$, $L_2$ maps $\cL(E)$ into itself. Thus $L_2|_{\cL(E)}$ is a linear map of a finite-dimensional vector space into itself, given by a finite (in fact, $n\times n$) matrix in any chosen basis. We let $Q_E(\alpha)$ be the characteristic polynomial $\det(L_2\vert_{\cL(E)} - \alpha \cdot\text{id})$ of this matrix $L_2|_{\cL(E)}$. We now use the following
\begin{claim}
$Q_E(\alpha)$ depends polynomially on $E$.
\end{claim}
Given the claim, we can think of $Q_E(\alpha) \in \CC[\alpha, E]$ as a polynomial in two variables. Then $Q(L_1, L_2)$ is a differential operator of finite order, and we apply the finite-dimensionality trick again. Indeed, observe that each of the one-parameter family of functions $\psi(x, k; x_0)\in\cL(k^n)$ belongs to the kernel of~$Q(L_1,L_2)$, simply because $\psi(x,k;x_0)\in\cL(k^n)$ is annihilated by the characteristic polynomial $Q_E$ of $L_2|_{\cL(k^n)}$. Thus by varying~$k$ we see that $Q(L_1, L_2)$ has an infinite-dimensional kernel, so it must be identically zero.
\end{proof}
There are different ways to prove the claim --- below we use the symmetry of~$L_1$ and~$L_2$.
\begin{proof}[Proof of the claim]
For each $E$ we have the characteristic polynomial $Q_E(\alpha)$ --- it is a polynomial in~$\alpha$, and we want to prove that its coefficients are themselves polynomials in~$E$. For this we use the symmetry in the usage of~$L_1$ and~$L_2$. Indeed, the roots of $Q_E$ are the eigenvalues of $L_2|_{\cL(E)}$. However, if we start with $L_2$ and denote by $\cL'(\alpha)$ its eigenspace with eigenvalue $\alpha$, then $L_1$ will preserve $\cL'(\alpha)$. Let then $Q'_\alpha(E)$ be the characteristic polynomial of $L_1|_{\cL'(\alpha)}$, which depends polynomially on~$E$. However, $Q$ and $Q'$ are simply the eigenvalues of $L_1$ and $L_2$ on their common eigenfunctions, and thus are the same function, which is thus a polynomial in both variables.
\end{proof}
Another proof of this claim, by an explicit computation, was given by Krichever in \cite{Krichever1977}.

\begin{exercise}\label{ex:2} (see the appendix for a hint).
P. Etingof proposed another way to complete the proof of the above theorem.
\begin{enumerate}
\item Show that if $[L_1, L_2] = 0$, then $L_2$ has constant leading coefficient: $L_2 = \tfrac{d^m}{dx^m} + \dots$
\item Show that $\dim\{L_2: [L_1, L_2] = 0\} \le m + 1$.
\item Compute the growth of the dimension of the graded pieces of the polynomial ring in two variables.
\item Show that the centralizer of $L_1$ is finitely generated.
\end{enumerate}
\end{exercise}

\begin{remark}
While the above construction of a (spectral) curve from a pair of commuting differential operators will mostly suffice for our limited purpose in these lectures, except right at the end, there is a more geometric and general viewpoint on spectral curves, developed by Mulase and others, with multiple further developments solidifying the theory. Indeed, one can consider the space of isospectral deformations of a (Lax) differential operator, i.e.~those deformations that preserve all of its eigenvalues. Then there is a commutative ring of differential operators that determine such deformations, and the spectrum of this ring turns out to be an algebraic curve. This demonstrates the fundamental importance of Sato's infinite-dimensional Grassmannian, and the importance of the KP hierarchy with an infinite collection of time parameters --- while of course in algebraic situations eventually a finite number of differential equations suffices. We further discuss some ideas of this viewpoint in \Cref{rmk:KPgeometry}, \Cref{rmk:Sato}, and \Cref{rmk:UGM} once we introduce the relevant notions, and refer to \cite{mulase} for a detailed survey of the foundations and methods of this theory.
\end{remark}

\subsubsection*{Summary of this section} In principle, everything we have done so far is purely formal, but somehow an algebraic curve defined by the polynomial equation $Q(\alpha, \beta) = 0$ suddenly appears at the end of this discussion. A priori this is an algebraic curve in~$\CC^2$ that can be arbitrarily singular. In algebraic geometry, we would like to further compactify this curve in $\CC\PP^2$ or in $\CC\PP^1 \times \CC\PP^1$. The singularities we will mostly ignore for the lectures, even though they are a major issue that one needs to deal with in many contexts. As an algebraic curve has magically appeared in our formal discussion, we will now switch from differential operators and integrable systems to the other side of the story, geometry of algebraic curves. Eventually these two stories will come together again!

\section{Curves and their Jacobians}\label{sec:Jac}
In this lecture we define the Jacobians of curves analytically and algebraically, and lay out the foundational results on the geometry of the theta divisor, which we will need in what follows.

\subsection{Jacobians of curves --- the analytic definition}\label{sec:curveandJacobian}

Let $C$ be a compact complex algebraic curve of genus~$g$. We will define the Jacobian of the curve $C$, denoted~$\Jac(C)$, in several different ways. First we note that the fundamental group $\pi_1(C)$ of $C$ has (and we can choose) generators $A_1, \dots, A_g$, $B_1, \dots, B_g$ such that 
\[\pi_1(C) = \langle A_1, \dots, B_g \rangle \slash \prod \limits_{i = 1}^g [A_i, B_i] = 1\,.\]
The homology of the curve $C$ with integral coefficients is just the quotient of the fundamental group by its commutator subgroup, and is isomorphic to $\ZZ^{2g}$ :
\[H_1(C, \ZZ) = \pi_1(C) \slash [\pi_1(C), \pi_1(C)] \cong \ZZ^{2g}\,.\]
We have a symplectic pairing on $H_1$ by computing the intersection number of two real cycles on a Riemann surface; explicitly it is defined on the generators by
\[\langle A_i, A_j\rangle = 0 = \langle B_i, B_j\rangle, \qquad \langle A_i, B_j \rangle = \delta_{ij}\,.\] 

Our first definition of the Jacobian is analytic in nature. 
\begin{definition}[Analytic Jacobian] The Jacobian of a curve is
\[\Jac(C) \coloneqq H^{1, 0}(C, \CC) \slash H_1^*(C, \ZZ)\,.\]
\end{definition}
So to construct the Jacobian of a curve $C$, we take the vector space of holomorphic one-forms on $C$ and then take the quotient by the dual of the integral homology lattice. What does this mean? In more explicit terms this means the following: given $A_1, \dots, A_g, B_1, \dots, B_g$, there exists a unique basis $\omega_1, \dots, \omega_g$ of the space of holomorphic one-forms on $C$, such that the integral of $\omega_i$ over $A_j$ is equal to the Kr\"onecker delta-symbol $\delta_{ij}$:
\begin{equation}\label{eq:normalizeddifferential}
    \int_{A_j}\omega_i = \delta_{ij}\,.
\end{equation}
Then the {\em period matrix} $\tau$ of $C$ is the matrix defined by
\[\tau_{ij} \coloneqq \int_{B_j}w_i \in \CC\,,\] 
and the Jacobian is $\Jac(C)=\CC^g/(\ZZ^g+\tau\ZZ^g)$.

The~$g\times g$ complex matrix~$\tau\in\Mat_{g\times g}(\CC)$ satisfies the following \footnote{We call `facts' statements that are essential and fundamental for our exposition, but whose proofs we do not even indicate.} 
\begin{fact}[Riemann's bilinear relations]
The period matrix is symmetric: $\tau=\tau^t$, and its imaginary part (which is a symmetric real $g \times g$ matrix) is positive definite, which we write as $\Im\tau>0$.
\end{fact}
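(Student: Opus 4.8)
The plan is to prove Riemann's bilinear relations by the classical contour integration argument on the cut-open surface. First I would fix a canonical dissection: cut the Riemann surface $C$ along the representative cycles $A_1,\dots,A_g,B_1,\dots,B_g$ to obtain a $4g$-gon $\Delta$ with boundary $\partial\Delta = \prod_{i=1}^g A_i B_i A_i^{-1} B_i^{-1}$, on which the holomorphic forms $\omega_i$ have single-valued primitives. Pick a base point $p_0$ and set $f_i(p) := \int_{p_0}^p \omega_i$ on $\Delta$. The key combinatorial lemma is that for any two closed holomorphic one-forms $\alpha,\beta$ on $C$,
\begin{equation}
\int_{\partial\Delta} f\, \beta \;=\; \sum_{i=1}^g\left( \int_{A_i}\alpha\int_{B_i}\beta - \int_{B_i}\alpha\int_{A_i}\beta\right),
\end{equation}
where $f$ is the primitive of $\alpha$; this comes from pairing the edge $A_i$ with its partner $A_i^{-1}$ and using that $f$ jumps by the constant $-\int_{B_i}\alpha$ across that pair (and symmetrically for the $B$ edges).

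Next I would apply this in two ways. For symmetry $\tau=\tau^t$: take $\alpha=\omega_i$, $\beta=\omega_j$. Since $\omega_i\wedge\omega_j = 0$ (both are of type $(1,0)$ on a curve), Stokes' theorem gives $\int_{\partial\Delta} f_i\,\omega_j = \int_\Delta d(f_i\,\omega_j) = \int_\Delta \omega_i\wedge\omega_j = 0$. Combined with the combinatorial lemma and the normalization $\int_{A_k}\omega_i = \delta_{ik}$, the right-hand side collapses to $\tau_{ji} - \tau_{ij}$, hence $\tau_{ij}=\tau_{ji}$. For positive-definiteness of $\Im\tau$: take $\alpha = \omega := \sum_i c_i\omega_i$ for $c\in\CC^g$ and $\beta = \bar\omega$. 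Now $\omega\wedge\bar\omega$ is, up to a positive constant factor of $i$ (more precisely $\tfrac{i}{2}$ for the standard convention), a nonnegative volume form, strictly positive unless $\omega\equiv 0$; so $\int_\Delta \omega\wedge\bar\omega > 0$ when $c\neq 0$. Unwinding the boundary term with the combinatorial lemma, the $A$-periods of $\omega$ are $c_i$ and the $B$-periods are $\sum_i c_i\tau_{ij}$ (using symmetry), which yields after a short computation that $\int_\Delta\omega\wedge\bar\omega = \text{(positive constant)}\cdot \bar c^{\,t}(\Im\tau)\, c$; positivity of the left side for all $c\neq 0$ gives $\Im\tau > 0$.

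A couple of technical points need care rather than being genuine obstacles. One is the sign bookkeeping in the combinatorial lemma — tracking which way each edge of the $4g$-gon is traversed and what the jump of the primitive is across glued edges — which is where essentially all the content lives; I would set up orientations once and carefully, matching the intersection conventions $\langle A_i,B_j\rangle = \delta_{ij}$ fixed in the text. The other is justifying Stokes' theorem on $\Delta$: the primitives $f_i$ are holomorphic (hence smooth) on the interior and extend continuously to the boundary polygon, so this is standard, but one should note that the corners of the polygon form a measure-zero set and cause no trouble. The only real subtlety — and the step I would flag as the crux — is verifying that the normalized differentials $\omega_1,\dots,\omega_g$ actually exist and are linearly independent, i.e.\ that the $A$-period map $H^{1,0}(C)\to\CC^g$ is an isomorphism; this rests on the fact that a nonzero holomorphic one-form cannot have all $A$-periods zero (otherwise the same positivity computation above would force $\int_\Delta\omega\wedge\bar\omega = 0$ with $\omega\not\equiv 0$, a contradiction), together with $\dim H^{1,0}(C) = g$. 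Since the statement is labeled a ``fact'' whose proof the authors explicitly do not indicate, in the write-up I would present the contour-integral computation as the heart of the matter and merely reference the standard existence result for normalized differentials.
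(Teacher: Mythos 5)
Your plan is correct, but there is nothing in the paper to compare it against: the statement is labeled a Fact, and the authors state explicitly (in the footnote defining ``facts'') that proofs of such statements are not even indicated. What you sketch is the standard classical proof via the canonical dissection and the reciprocity formula on the $4g$-gon (as in Farkas--Kra or Griffiths--Harris), and the essential steps are all in place: $\int_\Delta \omega_i\wedge\omega_j=0$ together with the normalization $\int_{A_k}\omega_i=\delta_{ik}$ collapses the boundary sum to $\tau_{ji}-\tau_{ij}$, giving symmetry, while positivity of $\tfrac{i}{2}\int_\Delta\omega\wedge\bar\omega$ gives $\Im\tau>0$; your remark that the same positivity argument also forces nondegeneracy of the $A$-period map, hence the existence of the normalized basis, is exactly the right way to close that loop. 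Two small points of bookkeeping: it is $i\int_\Delta\omega\wedge\bar\omega$ (not the integral itself) that is positive, and unwinding the boundary term with $\alpha=\omega$, $\beta=\bar\omega$ gives $\int_{\partial\Delta}f\,\bar\omega=-2i\,\bar c^{\,t}(\Im\tau)\,c$, so what you obtain directly is positivity of $\bar c^{\,t}(\Im\tau)\,c$ for all complex $c\neq 0$ --- which suffices, since $\Im\tau$ is real symmetric. Neither issue affects the correctness of the approach.
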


\subsection{Complex tori and abelian varieties}
The Jacobian of an algebraic curve is a principally polarized abelian variety, and we now introduce these in general.

\begin{definition} \label{def:complextorus}
 A complex torus is a quotient $\CC^g \slash \Lambda$, where $\Lambda\subset\CC^g$ is a lattice, that is an abelian subgroup isomorphic to $\ZZ^{2g}$, which has full rank, that is $\Lambda \otimes_{\ZZ} \RR = \CC^g$.
\end{definition}

We can always choose coordinates on~$\CC$ so that the first $g$ generators of $\Lambda$ are the coordinate vectors in $\CC^g$. Then we can write $\Lambda = \ZZ^g + \tau\ZZ^g$ where $\ZZ^g$ is the sublattice generated by the first $g$ generators, and $\tau\ZZ^g$ is what the other $g$ generators of~$\Lambda$ generate. A holomorphic map from one $g$-dimensional complex torus to another can be lifted to their universal covers, to yield a holomorphic map from $\CC^g$ to $\CC^g$. This lifting must have linear growth, and must thus be a linear map. The situation in dimension 1 is completely explicit, and is described by the following

\begin{exercise}\label{ex:g=1}(see the appendix for a hint).
For $g = 1$, which is the classical case when complex tori are elliptic curves $\CC \slash \ZZ +\tau \ZZ$:
\begin{enumerate}
\item When are two elliptic curves biholomorphic?
\item Which complex tori of dimension $1$ have nontrivial automorphisms?
\item Describe the group of automorphisms or endomorphisms of each complex torus. 
\end{enumerate}
\end{exercise}

\begin{definition}
  A complex torus $A$ is called an {\em abelian variety} if it is a projective variety, that is if there exists an embedding~$A\hookrightarrow\CC\PP^N$ of it into the complex projective space.
\end{definition}

What is so special about such an embedding? From the point of view of differential geometry, $\CC\PP^N$ has a Fubini-Study metric, which has positive curvature. From the point of view of algebraic geometry, $\CC\PP^N$ has an ample bundle $O_{\CC\PP^N}(1)$. For any subvariety $A \subseteq \CC\PP^N$ we can take this Fubini-Study metric or ample bundle $O_{\CC\PP^N}(1)$  and restrict it to $A$, obtaining a positive/ample line bundle on~$A$, which we will denote by~$L$.

When speaking about morphisms of abelian varieties, we simply mean holomorphic maps, which are automatically algebraic by the GAGA principle. 
\begin{definition}\label{def:ppav}
A complex principally polarized abelian variety $(A, \Theta)$ is an abelian variety $A$ together with the first Chern class $\Theta\coloneqq c_1(L)$ of an ample line bundle $L$ on~$A$ such that dim $H^0(A, L) = 1$.  
\end{definition}
In general, a polarization on an algebraic variety~$X$ is the first Chern class of an ample line bundle~$L$ on $X$. This class is positive from the point of view of differential geometry, and integral, as the class of a line bundle:
\[c_1(L) \in H^{1, 1}_{> 0}(X, \CC) \cap H^2(X, \ZZ)\,.\]
A polarization is called principal if the dimension of the space of sections is precisely $1$: dim $H^0(X, L) = 1$. 

\begin{exercise}
Check that the intersection pairing on the homology of a curve $H_1(C,\ZZ)=\langle A_1,\dots,A_g,B_1,\dots,B_g\rangle$ defines a principal polarization on its Jacobian. 
\end{exercise}

We denote by $\cA_g$ the moduli space of principally polarized abelian varieties, that is the set of their isomorphism classes. Note that some care is needed here, as every abelian variety has an infinite number of automorphisms. In fact every complex torus acts on itself by translations: for any fixed point $a\in A=\CC^g/\Lambda$, we have the map $t_a:A\to A$ given by $t_a(z)\coloneqq a+z$. To define good moduli spaces, we should exclude such maps, and thus should only consider maps of abelian varieties fixing the origin. By an isomorphism of principally polarized abelian varieties, we further mean that the polarization pulls back to the polarization.

There is a theorem that characterizes abelian varieties among all complex tori.
\begin{theorem}[Riemann's condition]
$\CC^g \slash \ZZ^g + \tau\ZZ^g$ is an abelian variety if and only if $\tau$ is symmetric and the imaginary part of $\tau$ is positive definite.
\end{theorem}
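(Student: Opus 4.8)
The plan is to pass from ``$X\coloneqq\CC^g/(\ZZ^g+\tau\ZZ^g)$ is projective'' to the intrinsic existence of a \emph{Riemann form}, and then to read that condition off from~$\tau$. Recall the basic dictionary for a complex torus $X=\CC^g/\Lambda$: giving a line bundle on $X$ up to translation is the same as giving a Hermitian form $H$ on $\CC^g$ whose imaginary part $E\coloneqq\Im H$ is integer-valued on $\Lambda$; such a bundle is ample exactly when $H$ is positive definite, and then a suitable tensor power embeds $X$ projectively (Lefschetz), while $\dim H^0$ equals the Pfaffian $\sqrt{\det E}$. (If one prefers the differential-geometric side, the same $H$ is a translation-invariant positive $(1,1)$-form with integral periods, and Kodaira embedding plays the role of Lefschetz.) So the first step is to reduce the theorem to the statement: $X$ is an abelian variety iff $\CC^g$ carries a positive definite Hermitian $H$ with $\Im H$ integral on the lattice. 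The remaining work is linear algebra matching this against ``$\tau$ symmetric, $\Im\tau>0$''.

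For the ``if'' direction I would simply write down the candidate: given $\tau=\tau^t$ and $Y\coloneqq\Im\tau>0$, set $H(u,v)\coloneqq\bar{u}^t Y^{-1} v$. This is Hermitian precisely because $Y$ is symmetric, and positive definite because $Y>0$; evaluating $E=\Im H$ on the $2g$ lattice generators $e_1,\dots,e_g,\tau e_1,\dots,\tau e_g$ gives $E(e_i,e_j)=0$ and $E(e_i,\tau e_j)=\delta_{ij}$ at once, while the only nonobvious identity $E(\tau e_i,\tau e_j)=0$ comes out to be equivalent to $\Re\tau=(\Re\tau)^t$ --- which is exactly where $\tau=\tau^t$ enters. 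Thus $E$ is the standard integral symplectic form, $H$ is a principal polarization, and $X$ is projective. In the language of these notes the nonzero section of the associated bundle is the Riemann theta function $\theta(z)=\sum_{n\in\ZZ^g}e^{\pi i\,n^t\tau n+2\pi i\,n^t z}$, whose convergence is guaranteed precisely by $Y>0$; this recovers the ``Fact'' stated above for Jacobians.

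For the ``only if'' direction I would start from a positive definite Hermitian $H$ with $E\coloneqq\Im H$ integral on $\Lambda=\ZZ^g+\tau\ZZ^g$, and choose a symplectic $\ZZ$-basis $\lambda_1,\dots,\lambda_g,\mu_1,\dots,\mu_g$ of $\Lambda$ for $E$ (taking the polarization principal for concreteness; in general one carries elementary divisors $d_1\mid\cdots\mid d_g$ through the same steps). The key step --- and the genuine geometric content --- is to show that the real span $V$ of $\lambda_1,\dots,\lambda_g$ is \emph{totally real}, $V\cap iV=\{0\}$: if $0\ne v=iw$ with $v,w\in V$, then $E(v,w)=0$ because $V$ is Lagrangian, whereas $H(v,w)=i\,H(v,v)$ is purely imaginary and nonzero since $H(v,v)>0$, forcing $E(v,w)=H(v,v)\ne0$, a contradiction. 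Hence $\lambda_1,\dots,\lambda_g$ is a $\CC$-basis; after a $\CC$-linear change of coordinates I may take $\lambda_i=e_i$, so $\Lambda=\ZZ^g+\tau'\ZZ^g$ with $\tau'=(\mu_1\mid\cdots\mid\mu_g)$, an equivalent presentation. Re-reading the relations $E(\lambda_i,\lambda_j)=0$, $E(\lambda_i,\mu_j)=\delta_{ij}$, $E(\mu_i,\mu_j)=0$ in terms of the Hermitian matrix $\mathcal H$ of $H$ and using $H>0$: the first forces $\mathcal H$ real (hence real symmetric positive definite), the second then forces $\mathcal H\,\Im\tau'=I$ so $\Im\tau'=\mathcal H^{-1}>0$, and the third forces $\Re\tau'$ symmetric, i.e.\ $\tau'=(\tau')^t$.

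I expect the main obstacle to lie in the ``only if'' direction, on two counts. First, the equivalence ``projective $\Leftrightarrow$ existence of a positive Hermitian form with integral imaginary part'' rests on the Appell--Humbert/Lefschetz package (or Kodaira embedding), which I would cite rather than reprove. Second, the totally-real-Lagrangian lemma is what actually makes it possible to put $\Lambda$ in the shape $\ZZ^g+\tau\ZZ^g$ with $\tau$ symmetric; its proof is short but is the only step that is not bookkeeping. I would also flag two honest caveats: the ``only if'' direction yields a symmetric period matrix only after possibly changing the symplectic basis (it does not assert symmetry of an arbitrarily chosen $\tau$ for a given torus), and the non-principally-polarized case requires tracking the elementary divisors through the computation.
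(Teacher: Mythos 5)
The paper never proves Riemann's condition --- it is quoted as a classical fact (on par with the neighboring ``Facts''), so there is no in-paper argument to measure yours against; what follows is an assessment of your proposal on its own. Your route is the standard textbook proof and it is correct: reduce ``projective'' to the existence of a positive definite Hermitian form $H$ on $\CC^g$ with $E=\Im H$ integral on the lattice (the Appell--Humbert/Lefschetz package, which is entirely reasonable to cite rather than reprove), then in the ``if'' direction check that $H(u,v)=\bar u^{\,t}(\Im\tau)^{-1}v$ restricts on the generators $e_i,\tau e_j$ to the standard unimodular symplectic form --- and your observation that the only nontrivial identity $E(\tau e_i,\tau e_j)=0$ is precisely the symmetry of $\Re\tau$ is the right bookkeeping --- and in the ``only if'' direction pass through a Frobenius (symplectic) basis plus the totally-real-Lagrangian lemma $V\cap iV=\{0\}$ to normalize the first half of the basis to $e_1,\dots,e_g$; the resulting identifications (matrix of $H$ real symmetric positive definite, $\Im\tau'$ its inverse, $\Re\tau'$ symmetric) are exactly right. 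Two small remarks. First, $H(v,w)=\pm iH(v,v)$ depending on which argument of $H$ is conjugate-linear; this only changes a sign and not the contradiction, so it is harmless. Second, your closing caveats are well placed and, if anything, slightly understated: with the paper's standing convention that ``abelian variety'' means principally polarized abelian variety, the statement is exactly the principal case you prove, but for a general polarization of type $(d_1,\dots,d_g)$ the same normalization produces a period matrix in the normal form $(\tau,D)$ with $\tau\in\HH_g$ and $D=\diag(d_1,\dots,d_g)$, not again a lattice $\ZZ^g+\tau\ZZ^g$ with $\tau$ symmetric; indeed for $g\ge 2$ an abelian variety admitting no principal polarization can still be written as $\CC^g/(\ZZ^g+T\ZZ^g)$ for some matrix $T$, but for no choice of basis with $T\in\HH_g$, since such a $T$ would itself furnish a principal polarization. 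So the literal ``only if'' genuinely requires either the principally polarized reading or the ``after a change of symplectic basis'' reading that you flag, and ``tracking elementary divisors'' does not by itself return the stated normal form.
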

Denoting 
\[
\HH_g\coloneqq\{\tau\in\Mat_{g\times g}(\CC)\,:\,\tau=\tau^t,\ \Im\tau>0\}
\]
the {\em Siegel space} consisting of all symmetric complex matrices with positive-definite imaginary part, it follows that there is a surjective map $\HH_g\to\cA_g$. In fact, similarly to \Cref{ex:g=1}, one can prove that $\cA_g=\HH_g/\Sp(2g,\ZZ)$ is the quotient by the integral symplectic group. Given $\tau\in\HH_g$, what is then the polarization on $\CC^g/\ZZ^g+\tau\ZZ^g$? It can in fact be explicitly given by the Riemann theta function, which is the main player in our lectures, and which we now define.
\begin{definition}\label{def:theta}
For any $\tau \in \HH_g$, and for any $z \in \CC^g$, we let
\[\theta(\tau, z) \coloneqq \sum \limits_{n \in \ZZ^g} \exp(\pi i n^t (\tau n + 2z))\,.\]
\end{definition}
The fact that $\Im\tau$ is positive definite makes this sum convergent, as morally we can think of $\tau$ as being something like $i$, so that we are summing something like $\exp(-\pi n^2)$. Of course the actual proof is more complicated. 

Here is an easy exercise to familiarize ourselves with this formula and to see if we made any of the customary typos in the order of matrix multiplication.
\begin{exercise}\label{ex:thetamodularity}
The theta function is quasi-periodic: for any $m_1, m_2 \in \ZZ^g$, 
\[\theta(\tau, z + m_1 + \tau m_2) = \exp \left(\pi i (-2m_2^t z -m_2^t \tau m_2 )\right)\theta(\tau, z)\,.\]
\end{exercise}
As the exponential factor is nowhere zero, this means that the zero locus of the theta function, for fixed~$\tau$, as a function of~$z$, is invariant under translation by the lattice, and we obtain

\begin{corollary}
The locus
\[\Theta_{\tau} \coloneqq \{z \in A_{\tau}: \theta(\tau, z) = 0\}\]
is well-defined as a subvariety of the abelian variety $A_{\tau} = \CC^g \slash (\ZZ^g + \tau \ZZ^g)$. It is called the {\em theta divisor}.
\end{corollary}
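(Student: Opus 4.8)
The plan is to read off the corollary directly from the quasi-periodicity established in \Cref{ex:thetamodularity}, so I expect the argument to be short. First I would record that for a fixed $\tau\in\HH_g$ the defining series of $\theta(\tau,z)$ converges locally uniformly in $z$ (as noted after \Cref{def:theta}), so $\theta(\tau,\cdot)$ is an entire holomorphic function on $\CC^g$, and hence its zero set
\[
Z_\tau\coloneqq\{z\in\CC^g\,:\,\theta(\tau,z)=0\}
\]
is a closed analytic subvariety of $\CC^g$.

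Next I would feed in \Cref{ex:thetamodularity}: for every $\lambda=m_1+\tau m_2$ in the lattice $\Lambda\coloneqq\ZZ^g+\tau\ZZ^g$ we have $\theta(\tau,z+\lambda)=e_\lambda(z)\,\theta(\tau,z)$ with $e_\lambda(z)=\exp(\pi i(-2m_2^tz-m_2^t\tau m_2))$ nowhere vanishing. Therefore $z\in Z_\tau$ if and only if $z+\lambda\in Z_\tau$, i.e.\ $Z_\tau$ is $\Lambda$-invariant, equivalently it is saturated for the quotient map $\pi\colon\CC^g\to A_\tau=\CC^g/\Lambda$ in the sense that $\pi^{-1}(\pi(Z_\tau))=Z_\tau$. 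Since $\Lambda$ acts on $\CC^g$ by translations, hence freely and properly discontinuously, $\pi$ is an unramified holomorphic covering; a closed, saturated analytic subset thus descends, so $\Theta_\tau\coloneqq\pi(Z_\tau)$ is a closed analytic subvariety of $A_\tau$, locally biholomorphic via $\pi$ to $Z_\tau$. As $A_\tau$ is projective, GAGA then promotes it to an algebraic subvariety. This already gives the assertion of the corollary.

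To justify calling $\Theta_\tau$ a divisor I would add one remark: that $\theta(\tau,\cdot)\not\equiv 0$, so $Z_\tau\neq\CC^g$, and by the local structure of zero sets of holomorphic functions $Z_\tau$, hence $\Theta_\tau$, is then of pure codimension one. The cleanest check of $\theta\not\equiv0$ is to fix generic values of $z_2,\dots,z_g$ and view $\theta$ as a Fourier series in $e^{2\pi i z_1}$: its coefficients are convergent theta-like series in the remaining variables and are manifestly not all identically zero.

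I do not anticipate a genuine obstacle here: everything of substance is already contained in \Cref{ex:thetamodularity}, and the remaining ingredients --- descent of an invariant analytic set through a covering map, and the non-triviality $\theta\not\equiv0$ --- are routine. If anything, the only point deserving care is making the descent precise (saturation plus proper discontinuity of the lattice action), which is exactly what the quasi-periodicity with a nowhere-zero exponential factor buys us.
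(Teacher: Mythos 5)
Your argument is correct and is essentially the paper's own: the text immediately preceding the corollary derives it exactly from the nowhere-vanishing exponential factor in \Cref{ex:thetamodularity}, which makes the zero locus $\Lambda$-invariant and hence well-defined on $A_\tau$. Your additional remarks (convergence, descent through the covering, GAGA, and $\theta\not\equiv 0$ so the locus has pure codimension one) are correct routine elaborations of the same route rather than a different approach.
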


Here is a fact which we will need in the following discussion. 
\begin{fact}
$\Theta_{\tau}$ defines a principal polarization on $A_{\tau}$.
\end{fact}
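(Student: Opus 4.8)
The plan is to show that $\Theta_\tau$ is the first Chern class of an ample line bundle $L$ on $A_\tau$ with $\dim H^0(A_\tau, L) = 1$, which is exactly \Cref{def:ppav}. The natural strategy is to produce $L$ directly from the theta function: the quasi-periodicity established in \Cref{ex:thetamodularity} is precisely the statement that $\theta(\tau, \cdot)$ is a section of a line bundle on $A_\tau$. Concretely, the automorphy factors $e_{m}(z) = \exp(\pi i(-2m_2^t z - m_2^t\tau m_2))$ attached to $m = m_1 + \tau m_2 \in \Lambda = \ZZ^g + \tau\ZZ^g$ satisfy the cocycle condition $e_{m+m'}(z) = e_m(z + m')\,e_{m'}(z)$, and hence define a line bundle $L$ on $A_\tau = \CC^g/\Lambda$ as the quotient of $\CC^g \times \CC$ by the action $m\cdot(z, v) = (z + m, e_m(z)v)$. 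By construction $\theta(\tau,\cdot)$ descends to a holomorphic section of $L$ whose zero locus is $\Theta_\tau$, so $\Theta_\tau = c_1(L)$ as a divisor class.

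Next I would verify the two required properties of $L$. For ampleness, one computes the first Chern class of $L$ from the automorphy factor: writing the factor as $\exp(2\pi i(\ell_m(z) + c_m))$ with $\ell_m$ linear in $z$, the associated Hermitian form $H$ on $\CC^g$ has imaginary part the alternating form $E$ on $\Lambda$ given in the chosen basis by the standard symplectic matrix $\begin{pmatrix}0 & I\\ -I & 0\end{pmatrix}$, and $H(z,z) = (\Im\tau)^{-1}$-type expression which is positive definite precisely because $\Im\tau > 0$ (this is where Riemann's bilinear relations / Riemann's condition enter). Positivity of $H$ is the Riemann relation that makes $L$ ample, by the Appell--Humbert theorem and the Kodaira embedding theorem (or directly: $H > 0$ forces a projective embedding by high powers of $L$). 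For the principal polarization condition, I would invoke the classical computation that $\dim H^0(A_\tau, L) = \sqrt{\det E} = 1$ for this particular $E$; since the alternating form $E$ is unimodular (determinant $1$ in the symplectic basis), there is exactly one section up to scale, namely $\theta(\tau,\cdot)$ itself. This can be seen by expanding an arbitrary section in a Fourier-type series and using the quasi-periodicity to pin down all coefficients in terms of one.

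The main obstacle here is genuinely foundational rather than a single hard step: the cleanest route uses the Appell--Humbert description of line bundles on complex tori and the theorem computing $\dim H^0$ in terms of the Pfaffian of the polarization form, neither of which is developed in the excerpt. Since the statement is labeled a \emph{fact} with the explicit caveat that proofs of facts are not even indicated, I would not reprove Appell--Humbert or Kodaira embedding from scratch; instead I would cite these, and concentrate the actual argument on the two computations that are specific to the theta function: (i) checking the cocycle condition for the automorphy factors from \Cref{ex:thetamodularity}, which is a short direct calculation, and (ii) checking that the resulting Hermitian form is positive definite, which is a one-line consequence of $\Im\tau > 0$, together with (iii) the uniqueness-of-section count, which follows from unimodularity of the intersection form on $H_1(C,\ZZ)$ --- the very thing the reader is asked to verify in the exercise just before \Cref{def:ppav}. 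In the style of these notes, the honest proposal is: reduce to Appell--Humbert, observe that the polarization data coming from $\theta$ matches the standard principal polarization, and defer the deep input to the cited literature.
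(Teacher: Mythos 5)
There is nothing in the paper to compare against here: the statement is labeled a \emph{fact}, and the authors explicitly say that proofs of facts are not even indicated — they only remark afterwards that the content is ampleness of the class of $\Theta_\tau$ plus one-dimensionality of the space of sections. Judged on its own, your outline is the standard and correct route. The quasi-periodicity of \Cref{ex:thetamodularity} does give a genuine cocycle of automorphy factors (the discrepancy term $\exp(-2\pi i\, m_2^t m_1')$ is $1$ since $m_2^t m_1'\in\ZZ$, and symmetry of $\tau$ handles the quadratic terms), so $\theta(\tau,\cdot)$ descends to a section of a line bundle $L$ with divisor $\Theta_\tau$; the Hermitian form $H(z,w)=z^t(\Im\tau)^{-1}\bar w$ is positive definite exactly because $\Im\tau>0$, which gives ampleness via Appell--Humbert and Lefschetz/Kodaira; and the Fourier count gives $h^0(A_\tau,L)=1$, the unique section being $\theta$ itself (periodicity under $\ZZ^g$ gives an expansion $\sum_n c_n e^{2\pi i n^t z}$, and quasi-periodicity under $\tau\ZZ^g$ determines every $c_n$ from $c_0$). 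Outsourcing Appell--Humbert, the embedding theorem, and the Pfaffian section-count to the literature is reasonable given that the paper itself declines to prove the statement.

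Two small corrections. First, in your point (iii) you attribute the needed unimodularity to the intersection form on $H_1(C,\ZZ)$; but this fact concerns an arbitrary $\tau\in\HH_g$, with no curve anywhere in sight, so that appeal is off-target (and the exercise you mention actually comes \emph{after} \Cref{def:ppav}, and is a consequence of this fact in the Jacobian case, not an ingredient). What you actually need — and already wrote down — is that $E=\Im H$ restricted to $\Lambda=\ZZ^g+\tau\ZZ^g$ is the standard symplectic form in the basis given by the columns of $(I,\tau)$, hence has determinant one; that computation is a short direct check and should replace the reference to $H_1(C,\ZZ)$. Second, if you want the section count to carry real weight rather than be a citation, the Fourier recursion $c_{n+m_2}=c_n\exp\bigl(2\pi i\, n^t\tau m_2+\pi i\, m_2^t\tau m_2\bigr)$ is the one-line heart of it: it pins all coefficients to $c_0$ and, using $\Im\tau>0$, shows the resulting series converges, so $h^0=1$ and the unique section is $\theta$. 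With those adjustments the proposal is sound.
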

The locus $\Theta_{\tau}$ is a complex codimension $1$ subset of $A_{\tau}$, and thus its cohomology class lies in $H^{1, 1}(A_\tau,\CC)\cap H^2(A_\tau,\ZZ)$. The above fact is the claim that it's an ample class and that the dimension of the space of sections is one. By abuse of notation, in the following when we talk about the principal polarization, we will mean one of the following three things: an ample line bundle defined up to translation, its Chern class, and its section (unique up to scaling) --- and we will write $\Theta$ for all of these.

\subsection{The Schottky problem}
We now want to formalize the setup for the Torelli map and the Schottky problem. We denote by $\cM_g$ the moduli space of genus $g$ curves, up to isomorphisms. One miracle in the theory is that this `parameter space of algebraic varieties' itself has a complex structure. More precisely, $\cM_g$ is really a Deligne-Mumford stack, or an orbifold, which is to say that locally it is modelled on a quotient of a complex manifold by a finite group action, where the group may act with stabilizers. For our purposes, the stackiness will not play an important role, as we will be mostly thinking about characterizing the locus of Jacobians as a set, i.e.~as a coarse space underlying the stack. The study of~$\cM_g$ goes back at least to Riemann \cite{Riemann1857b}, who by considering an arbitrary genus~$g$ curve as a branched cover of $\CC\PP^1$ and applying essentially the Riemann-Roch theorem, computed that for $g>1$
\[\dim_{\CC}\cM_g = 3g - 3\,.\]
Repeating Riemann's dimension count is a nice exercise, and we encourage you to do this, and especially to think about what kind of general position assumptions are used to prove this. However, unless one already somehow knows that~$\cM_g$ is an orbifold, a priori it does not even make sense to ask about the dimension.

The map that sends a curve to its Jacobian is called the Torelli map
\[ J:\cM_g\to\cA_g\]
from the moduli of curves to the moduli of (principally polarized) abelian varieties, which we recall is $\cA_g =  \HH_g/\Sp(2g, \ZZ).$ The moduli space of abelian varieties is in fact also a (Deligne-Mumford) stack --- though to see this, it is not enough to think about its universal cover $\HH_g$, as we need a local representation as a quotient of a manifold by a finite group. As the group $\Sp(2g,\ZZ)$ is discrete, it follows (once we know that $\cA_g$ is an orbifold, and thus dimension makes sense) that $\dim_\CC\cA_g=\dim_\CC\HH_g=\tfrac{g(g+1)}{2}$, simply because this is the dimension of the space of complex symmetric $g\times g$ matrices, and the positive-definitiveness of the imaginary part is an open condition.

\begin{theorem}[Torelli's theorem]
The map that sends a curve~$C$ to its Jacobian $\Jac(C)$ is an embedding
\[ J:\cM_g \hookrightarrow\cA_g\,.\]
\end{theorem}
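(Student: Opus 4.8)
The plan is to prove injectivity of the Torelli map on the level of coarse spaces, i.e. to recover a curve $C$ from the data of its principally polarized Jacobian $(\Jac(C),\Theta)$ up to isomorphism. The standard route is via the geometry of the theta divisor, and I would follow Andreotti--Mayer/Torelli's original insight combined with Riemann's singularity theorem (which the paper promises to develop in \Cref{sec:Jac}). First I would reduce to showing that $\Theta\subset\Jac(C)$ determines $C$. For $g=1$ the Jacobian \emph{is} the curve (an elliptic curve is its own Jacobian), so assume $g\ge 2$. The key players are the images $W_d$ of the symmetric products $C^{(d)}$ under the Abel--Jacobi map; in particular $W_{g-1}$ is a translate of $\Theta$, and $W_1\cong C$ sits inside $\Jac(C)$ as the image of the Abel--Jacobi embedding (for $g\ge 2$ this is an embedding). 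So it suffices to reconstruct the curve $W_1$ intrinsically from $(\Jac(C),\Theta)$.

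The main step is to analyze the singular locus of $\Theta$. By Riemann's singularity theorem, for a generic (non-hyperelliptic) curve $\Sing\Theta=W_{g-3}$ shifted appropriately, and the tangent cones to $\Theta$ at its double points cut out the canonical curve $\phi_K(C)\subset\PP^{g-1}=\PP(T_0\Jac(C))$: each double point $p\in\Sing\Theta$ gives a quadric (its tangent cone), and the intersection of all these quadrics is precisely the canonical image of $C$ (this is the Andreotti--Mayer argument, going back to Riemann, Kempf, Green). From the canonical curve one recovers $C$ for $g\ge 3$ non-hyperelliptic. The hyperelliptic case must be handled separately: there $\Sing\Theta=W_{g-3}$ is irreducible of the expected dimension but the canonical map is $2:1$ onto a rational normal curve, and one reconstructs $C$ as the double cover of $\PP^1$ branched over the $2g+2$ points determined by the theta-null data (the even theta characteristics that vanish). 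One also has to check the two cases do not overlap --- hyperelliptic Jacobians are distinguished from non-hyperelliptic ones by the dimension/structure of $\Sing\Theta$, or by vanishing thetanulls.

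An alternative, perhaps cleaner, approach I would keep in reserve is the one via the Gauss map $\gamma\colon\Theta\dashrightarrow\PP^{g-1\,\vee}$ sending a smooth point to its tangent hyperplane: for Jacobians the branch locus, or the degree and ramification of $\gamma$, encodes the curve directly (Andreotti's proof of Torelli). Either way, the logical skeleton is: (i) $W_{g-1}=\Theta$ up to translation; (ii) extract a canonically-defined subvariety of $\Theta$ or of $\PP(T_0)$ --- the canonical curve, or the branch divisor of the Gauss map --- using only the pair $(\Jac(C),\Theta)$; (iii) reconstruct $C$ from that; (iv) check the reconstruction is functorial so that an isomorphism $(\Jac(C),\Theta)\cong(\Jac(C'),\Theta')$ induces $C\cong C'$.

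The hard part will be step (ii) in its full generality: proving that the tangent cones at the double points of $\Theta$ actually cut out exactly the canonical curve (no more, no less) requires Riemann's singularity theorem with multiplicities and the geometric input that the canonical curve is projectively normal / cut out by quadrics for $g\ge 4$ (Petri), plus the separate low-genus and hyperelliptic/trigonal analyses where quadrics alone are insufficient and one needs cubics or the Kempf--Mumford refinement. Managing the hyperelliptic locus --- where $\Sing\Theta$ still has the expected dimension but behaves differently --- and verifying it is genuinely disjoint in its reconstruction recipe from the non-hyperelliptic locus, is where the bookkeeping is most delicate. Given the informal, survey nature of these notes, I would likely state Torelli's theorem, sketch the Riemann singularity / tangent cone mechanism, and refer to \cite{Mumford1975, Arbarello-DeConcini1990} for the complete argument rather than grinding through Petri's analysis.
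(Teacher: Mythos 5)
The paper does not prove Torelli's theorem at all: it states the theorem as a classical cornerstone, immediately qualifies it in a remark (at the level of stacks the map is in fact 2-to-1 onto its image, branched along the hyperelliptic locus, because of the $-1$ involution of any abelian variety), and then simply points to the literature \cite{Weil1957, Andreotti1958, Matsusaka1958, Martens1963, Andreotti-Mayer1967}. So your proposal is not in conflict with the paper --- it is a sketch of one of the standard proofs the paper cites (the Riemann-singularity/tangent-cone route of Andreotti--Mayer, with Andreotti's Gauss-map argument in reserve), and your closing instinct to state the mechanism and defer to \cite{Mumford1975, Arbarello-DeConcini1990} is exactly what the authors themselves do. Your logical skeleton (i)--(iv) is the right one, and step (iv) is where the paper's cautionary remark lives: the tangent cones and the Gauss map are insensitive to $z\mapsto -z$, which is precisely why one recovers $C$ but the Torelli morphism of stacks is $2$-to-$1$; it would strengthen your write-up to make that point explicitly, since the statement as phrased (``embedding'') is only correct on coarse spaces.

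One concrete slip to fix: you write that for a generic non-hyperelliptic curve $\Sing\Theta=W_{g-3}$ ``shifted appropriately.'' By the Riemann singularity theorem $\Sing\Theta=W^1_{g-1}=\{D\in\Pic^{g-1}(C): h^0(D)\ge 2\}$, which for a non-hyperelliptic curve has dimension $g-4$; it is in the \emph{hyperelliptic} case that $\Sing\Theta$ is a translate of $W_{g-3}$ (namely $g^1_2+W_{g-3}$), of dimension $g-3$, i.e.\ one more than expected --- this dimension jump is in fact the cleanest way to separate the two cases in your step (ii). Also be aware that the assertion ``the intersection of the quadric tangent cones at double points is exactly the canonical curve'' is a genuinely hard theorem (Green), and fails as stated for trigonal curves and plane quintics, where the intersection of quadrics through the canonical curve is a scroll or Veronese surface; you flag this, but it means your ``main step'' needs those exceptional families handled by separate arguments (or by Andreotti's Gauss-map proof, which avoids them), not just extra bookkeeping.
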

\begin{remark}
This is a nice theorem except it is false. To properly work with this map, we should consider $\cM_g$ and $\cA_g$ as stacks. Any abelian variety~$A$ is an abelian group, and thus has an automorphism $a\mapsto -a$ preserving the zero. One can check that the principal polarization (the Chern class of the line bundle) is always preserved by this involution, and thus every point of $\cA_g$ has at least $\ZZ/2\ZZ$ worth of automorphisms. In fact the automorphisms of a general abelian variety are precisely $\ZZ/2\ZZ$, and thus from the point of view of orbifolds a generic point $(A,\Theta)\in\cA_g$ is really half of a point, as $(A,\Theta)$ has one nontrivial involution. On the other hand, a general curve of genus $g\ge 3$ has no automorphisms, and is just an actual point of~$\cM_g$, and not half of one. One can check (exercise once the Abel-Jacobi map is discussed below) that the only Jacobians of curves where the sign involution is inherited from an involution of the curve are those of hyperelliptic curves (that is, double covers of $\CC\PP^1$). Thus the correct statement at the level of stacks is that for $g\ge 3$ the Torelli map is 2-to-1 onto its image, branched along the hyperelliptic locus.
\end{remark}

There are many many proofs of Torelli's theorem (see for example \cite{Weil1957, Andreotti1958, Matsusaka1958, Martens1963, Andreotti-Mayer1967}). Torelli's theorem is one of the cornerstones of the moduli theory and many classification problems. It is also the first instance of Hodge theory because Jacobian is the classifying space for weight one Hodge structures, and this captures the geometry of the curve.

While for $g\ge 4$ one checks that $\dim\cM_g=3g-3<\dim\cA_g=\tfrac{g(g+1)}{2}$, for $g=2,3$ the dimensions match.
\begin{fact}
The Torelli maps $J:\cM_2\to\cA_2$ and $J:\cM_3\to\cA_3$ are dominant.
\end{fact}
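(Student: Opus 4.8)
The plan is to deduce dominance from Torelli's theorem together with the dimension count recorded just above. Both $\cM_g$ and $\cA_g$ are irreducible ($\cM_g$ classically, via connectedness of the relevant Hurwitz spaces, and $\cA_g=\HH_g/\Sp(2g,\ZZ)$ as a quotient of the connected space $\HH_g$). By Torelli's theorem $J$ is injective on coarse spaces (and $2$-to-$1$ as a map of stacks), hence generically finite onto its image, so $\dim\overline{J(\cM_g)}=\dim\cM_g=3g-3$. For $g=2$ and $g=3$ this equals $\tfrac{g(g+1)}{2}=\dim\cA_g$ (namely $3$, resp.\ $6$), and since $\overline{J(\cM_g)}$ is an irreducible closed subvariety of the irreducible variety $\cA_g$ of the same dimension, it must be all of $\cA_g$; thus $J$ is dominant.

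I would also present the infinitesimal version, which fits the spirit of these notes. The codifferential of $J$ at $[C]$ (the dual of $dJ$) is identified, via Serre duality and the standard description of the tangent spaces to $\cM_g$ and $\cA_g$, with the multiplication map $\operatorname{Sym}^2 H^0(C,K_C)\to H^0(C,K_C^{\otimes 2})$, since $T^*_{\Jac(C)}\cA_g\cong\operatorname{Sym}^2 H^0(C,K_C)$ and $T^*_{[C]}\cM_g\cong H^0(C,K_C^{\otimes 2})$. By Max Noether's theorem this map is surjective for $C$ non-hyperelliptic, which is the general case when $g=3$; as both sides then have the same dimension $\tfrac{g(g+1)}{2}=3g-3=6$, the codifferential is an isomorphism at a general $[C]$, so $J$ is dominant. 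When $g=2$ every curve is hyperelliptic, but there the multiplication map $\operatorname{Sym}^2 H^0(C,K_C)\to H^0(C,K_C^{\otimes 2})$ is nonetheless surjective (both sides are $3$-dimensional, and surjectivity is checked directly in the model $y^2=f(x)$ with $\deg f=6$), giving the same conclusion.

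I do not expect a real obstacle here: the first approach only invokes the irreducibility of $\cM_g$ and $\cA_g$ and the injectivity half of Torelli's theorem, and the second invokes Max Noether's surjectivity theorem for canonical curves. The genuinely harder question --- not asked here --- is to describe the image precisely: for $g=2$ it is the complement of the divisor parametrizing products of two elliptic curves, and for $g=3$ the complement of the locus of decomposable principally polarized abelian threefolds; proving these sharper statements would require the structure theory of low-dimensional abelian varieties rather than a mere dimension count.
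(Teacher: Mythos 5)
The paper states this as one of its ``facts'' --- statements whose proofs, by the authors' own convention, are not even indicated --- so there is no proof in the text to compare with; what can be said is that your proposal is correct. Your first argument is the standard one and is already complete: $\cM_g$ and $\cA_g=\HH_g/\Sp(2g,\ZZ)$ are irreducible, Torelli gives injectivity on coarse spaces, hence $\overline{J(\cM_g)}$ is an irreducible closed subvariety of dimension $3g-3$, which equals $\dim\cA_g=\tfrac{g(g+1)}{2}$ exactly for $g=2,3$, forcing $\overline{J(\cM_g)}=\cA_g$. Your infinitesimal argument is also fine, with one point worth making explicit: dominance requires the codifferential $\operatorname{Sym}^2H^0(C,K_C)\to H^0(C,2K_C)$ to be \emph{injective} (so that $dJ$ is surjective), whereas Max Noether's theorem gives \emph{surjectivity}; these coincide precisely because the two spaces have equal dimension when $g=2,3$, which is what your ``isomorphism at a general $[C]$'' step uses --- for $g\ge 4$ the same reasoning would only give generic injectivity of $dJ$, consistent with $\cJ_g\subsetneq\cA_g$. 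Your genus-$2$ verification (both sides $3$-dimensional, products $dx^2/y^2$, $x\,dx^2/y^2$, $x^2\,dx^2/y^2$ independent) is correct, and your closing remark matches the paper's own follow-up sentence that the complement of the image consists exactly of the decomposable principally polarized abelian varieties, which indeed requires more than the dimension count.
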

Dominant does not mean that these maps are isomorphisms, but means their images are dense. More precisely, the complements of the images consist of those abelian varieties that are products of lower-dimensional ones. It is an interesting exercise to check that the Jacobian of a smooth curve cannot be a product of lower-dimensional abelian varieties (crucially, as principally polarized abelian varieties, taking the product of polarizations --- there is otherwise a whole field of study of Jacobians that decompose up to isogenies, see eg \cite{Wirtinger1895, Chevalley-Weil1934, Ekedahl-Serre1993, Lange-Recillas2004, Earle2006, Moonen-Oort2013, Paulhus-Rojas2017, Lombardo-Lorenzo-Ritzenthaler-Sijsling2023}).

Thus the Torelli maps are well-understood for $g\le 3$, but for $g\ge 4$ the image $J(\cM_g)$ cannot be dense in~$\cA_g$. We denote by $\cJ_g\subseteq\cA_g$ the closure of $J(\cM_g)$, and call it {\em the Jacobian locus}. The Schottky problem asks to determine $\cJ_g$ as a subvariety of $\cA_g$.

The classical approach to this problem, going back to Schottky himself, is to solve this problem in terms of theta constants, which we now define. For $\ve, \delta \in \frac{1}{2}(\ZZ \slash 2\ZZ)^g$, and $m \coloneqq \tau \ve + \delta$, we define
\begin{align*}
    \theta\left[\begin{smallmatrix} \ve \\ \delta \end{smallmatrix}\right](\tau, z) & \coloneqq \sum \limits_{n \in \ZZ^g} \exp(\pi i(n + \ve)^t (\tau (n + \ve) + 2 (z + \delta)))\\
    & = \exp (\pi i \ve^t \tau \ve + 2 \pi i \ve^t (z + \delta))\theta(\tau,t_m (z))\,,
\end{align*}
which, up to a simple exponential factor, is thus the theta function translated by a two-torsion point $m$ (a point on $A_\tau$ such that $2m\in\ZZ^g+\tau\ZZ^g$), where we recall that $t_m:A_\tau\to A_\tau$ is the translation by~$m$.

The idea of the following theorem goes back to Schottky in the 1880s, while the actual rigorous proof is due to Igusa in 1980s. 
\begin{theorem}[Igusa~1981~\cite{Igusa1981}]\label{thm:Schottky}
    $A_{\tau}$ (for $\tau \in \HH_4$) is a Jacobian ($A_{\tau} \in \cJ_4$) if and only if
    \begin{equation}\label{eq:Schottkyg4}
        2^4 \sum \limits_{\ve, \delta \in (\ZZ \slash 2\ZZ)^4} \theta^{16}\left[\begin{smallmatrix} \ve \\ \delta \end{smallmatrix}\right](\tau, 0) = \left(\sum \limits_{\ve, \delta \in (\ZZ \slash 2 \ZZ)^4} \theta^8\left[\begin{smallmatrix} \ve \\ \delta \end{smallmatrix}\right] (\tau, 0)\right)^2\,.
    \end{equation}
\end{theorem}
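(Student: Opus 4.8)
The plan is to prove the two inclusions separately: that every Jacobian satisfies \eqref{eq:Schottkyg4} (so that $\cJ_4$ is contained in the ``Schottky locus''), and conversely that the Schottky locus is no larger than $\cJ_4$. To make the statement precise I would first recall that each $\theta^8\!\left[\begin{smallmatrix}\ve\\\delta\end{smallmatrix}\right](\tau,0)$ is a modular form of weight $4$ for a suitable congruence subgroup of $\Sp(8,\ZZ)$, and that the two symmetrized sums in \eqref{eq:Schottkyg4} — hence their difference, call it $F(\tau)$ — transform as a Siegel modular form of weight $8$ for the full group $\Sp(8,\ZZ)$. Thus the vanishing locus $\cS_4\coloneqq\{[\tau]\in\cA_4:F(\tau)=0\}$ is a well-defined closed subvariety of $\cA_4$, and the theorem is the set-theoretic equality $\cS_4=\cJ_4$.

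\textbf{The inclusion $\cJ_4\subseteq\cS_4$.} For Schottky's original direction I would use the Schottky--Jung proportionality relations. Given a curve $C$ of genus $4$ and a nonzero $2$-torsion point $\eta\in\Jac(C)$, one obtains an étale double cover $\pi:\tilde C\to C$ with $\operatorname{Prym}(\tilde C/C)$ a $3$-dimensional principally polarized abelian variety, and the Schottky--Jung identities (established in general by Farkas--Rauch, and via Fay's work) express products of genus-$4$ theta constants of $C$ in terms of theta constants of this Prym. Substituting these proportionalities into \eqref{eq:Schottkyg4} and using that the Torelli map is dominant in genus $3$ — so the Prym theta constants obey all the relations available to genus-$3$ Jacobians — the genus-$4$ relation reduces to an identity among genus-$3$ theta constants that holds identically on $\HH_3$. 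This shows $F$ vanishes along $J(\cM_4)$, hence along its closure $\cJ_4$.

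\textbf{The inclusion $\cS_4\subseteq\cJ_4$.} Here the strategy is dimension-theoretic. First one checks $F\not\equiv0$: since $\dim_\CC\cJ_4=3\cdot4-3=9<10=\dim_\CC\cA_4$, the zero divisor of $F$ is forced to be proper, and in fact $F$ is a nonzero modular form — indeed, as Igusa shows, it spans the one-dimensional space of weight-$8$ cusp forms for $\Sp(8,\ZZ)$. By Krull's principal ideal theorem $\cS_4$ is then a divisor, of pure codimension $1$, so every irreducible component has dimension $9$. Since $\cJ_4$ is irreducible of dimension $9$ and is contained in $\cS_4$ by the previous step, $\cJ_4$ is an irreducible component of $\cS_4$; it remains to show it is the \emph{only} one, i.e.\ that $\cS_4$ is irreducible. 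This is precisely Igusa's theorem on the irreducibility of the Schottky divisor: the argument analyzes the Fourier--Jacobi expansion of $F$ along the rank-one degeneration $\cA_3\hookrightarrow\overline{\cA_4}$, where the leading term is governed by genus-$3$ data, and combines the triviality of the Schottky problem in genus $\le 3$ with a study of the $\Sp(8,\ZZ)$-action (connectedness/monodromy of the relevant family) to force irreducibility. Putting the two inclusions together yields $\cS_4=\cJ_4$.

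\textbf{Main obstacle.} The hard part is this last point — ruling out spurious components of the zero divisor of $F$. The two set-theoretic inclusions and the bookkeeping with modular forms are essentially formal once Schottky--Jung is granted, but the irreducibility of $\cS_4$ needs the genuinely delicate analytic input: precise control of Fourier--Jacobi coefficients, induction on the genus, and knowledge of the graded ring of Siegel modular forms in low genus (Freitag's approach to the same irreducibility could be substituted here). One should also note that the dimension/irreducibility argument only delivers an equality of reduced loci, which is exactly the set-theoretic statement asserted.
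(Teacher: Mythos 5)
First, a point of comparison: the paper does not prove this theorem at all — it is quoted as a known result (Schottky's idea from 1888, rigorous proof by Igusa in 1981), so there is no in-paper argument to measure yours against; your outline has to stand on its own. As such, it correctly maps the architecture of the standard proof, but it is not a proof: at the decisive point the inclusion $\cS_4\subseteq\cJ_4$ is reduced to ``Igusa's theorem on the irreducibility of the Schottky divisor,'' which is precisely the content of the statement you are asked to prove (once vanishing on $\cJ_4$ is known, irreducibility of the zero divisor and the equality $\cS_4=\cJ_4$ are the same assertion). The Fourier--Jacobi/monodromy sentence gestures at how Igusa does it but contains no argument, and this is where essentially all the difficulty lives. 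A smaller but genuine gap: the dimension count $\dim\cJ_4=9<10=\dim\cA_4$ does not ``force'' $F\not\equiv 0$ — a modular form can vanish identically no matter how small $\cJ_4$ is; nonvanishing requires an actual computation (e.g.\ comparing Fourier coefficients of the theta series of $E_8\oplus E_8$ and $D_{16}^+$, which agree for genus $\le 3$ and first differ in genus $4$), and in your text it too is supplied only by citing Igusa.

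The first inclusion also glosses over real work. The Schottky--Jung proportionalities naturally yield the vanishing on $J(\cM_4)$ of Schottky's original degree-$16$ polynomial (built from products of theta constants attached to azygetic systems), not of the symmetric combination $F=2^4\sum\theta^{16}-\bigl(\sum\theta^8\bigr)^2$ in \eqref{eq:Schottkyg4}. The identification of the two — that Schottky's polynomial and $F$ cut out the same locus, equivalently that they agree modulo the ideal of relations among genus-$4$ theta constants — is itself a nontrivial theorem of Igusa; the paper flags exactly this point when it remarks that Schottky wrote a different polynomial and that the equivalence uses the many relations satisfied by theta constants. So ``substituting the proportionalities into \eqref{eq:Schottkyg4} and reducing to a genus-$3$ identity'' is not the mechanical step your write-up suggests. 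To turn the proposal into a proof you would need to supply, or at least reduce to precisely stated external inputs other than the theorem itself, three things: the nonvanishing of $F$, the comparison between $F$ and Schottky's polynomial (or an independent proof that $F$ vanishes on Jacobians), and above all the irreducibility of the zero divisor of $F$ in $\cA_4$.
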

The above expressions $ \theta\left[\begin{smallmatrix} \ve \\ \delta \end{smallmatrix}\right](\tau, 0)$, i.e.~the values of theta functions with characteristics at $z=0$ are called theta constants; up to some exponential factors, they are simply the values of $\theta(\tau,z)$ at $z=m$ being the corresponding two-torsion points. The simple expression above is due to Igusa; Schottky in 1888 \cite{Schottky1888} wrote a different polynomial in theta constants that also characterizes the locus of genus 4 Jacobians.

This result is amazing in that it relates two very different structures. It describes the image of the moduli of curves of genus $4$ within the moduli space of abelian varieties of dimension $4$ very explicitly, by an equation that one can easily remember, and explain in words. The solution is given as a polynomial in theta constants, i.e.~as a Siegel modular form (though we do not discuss the theory here, not even defining what a Siegel modular form is). Despite lots of progress since Schottky's original work, no such explicit solution/characterization of the Jacobian locus within the moduli space of abelian varieties, by polynomials in theta constants, is known for any $g \ge 5$. This does not at all mean that there has not been any progress in the last 140 years. In fact, multiple more geometric approaches and solutions to the Schottky problem have been developed, and the goal of these lectures is precisely to present one solution to the Schottky problem, due to Igor Krichever, via flex lines of the Kummer variety. Before we proceed to explain what this is, let us comment on the developments in this most classical approach to the Schottky problem.

First, note that it is natural to generalize \eqref{eq:Schottkyg4} to arbitrary genus: we take the same sums of 8'th and 16'th powers of theta constants, and multiply by $2^g$ on the left, instead of $2^4$. There was a conjecture of Belavin, Knizhnik, and Morozov \cite{Belavin-Knizhnik1986, BKMP1986, Morozov1987}, and later D'Hoker and Phong \cite{D'Hoker-Phong2005}, formulated in physics terms by saying that the cosmological constants for the $E_8 \times E_8$ and $SO(32)$ bosonic string theories are equal, which is equivalent to say that this generalized equation holds on $\cJ_5$ --- but this turned out not to be true.
\begin{theorem}[Grushevsky-Salvati~Manni~2011~\cite{Grushevsky-Salvati2011, Grushevsky-Salvati2012}]
    The generalization of equation \eqref{eq:Schottkyg4} does {\em not} hold on $\cJ_g$ for any $g \ge 5$.
\end{theorem}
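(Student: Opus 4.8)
The plan is as follows. Write~$S_g$ for the difference of the two sides of the generalization of~\eqref{eq:Schottkyg4},
\[
S_g(\tau)\ \coloneqq\ 2^{g}\sum_{\ve,\delta\in(\ZZ/2\ZZ)^{g}}\theta^{16}\!\left[\begin{smallmatrix}\ve\\\delta\end{smallmatrix}\right](\tau,0)\ -\ \Bigl(\sum_{\ve,\delta\in(\ZZ/2\ZZ)^{g}}\theta^{8}\!\left[\begin{smallmatrix}\ve\\\delta\end{smallmatrix}\right](\tau,0)\Bigr)^{\!2},
\]
which is a Siegel modular form of weight~$8$ for $\Sp(2g,\ZZ)$ (the modularity of the two theta-constant sums being classical). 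It is illuminating that, up to the nonzero factor~$2^{2g}$, $S_g$ equals the difference $\Theta^{(g)}_{D_{16}^{+}}-\Theta^{(g)}_{E_8\oplus E_8}$ of the genus-$g$ Siegel theta series of the two even unimodular rank-$16$ lattices: $\bigl(\sum_{\ve,\delta}\theta^{8}[\cdots]\bigr)^{2}$ is the theta series of $E_8\oplus E_8$, and $2^{g}\sum_{\ve,\delta}\theta^{16}[\cdots]$ that of~$D_{16}^{+}$. By \Cref{thm:Schottky} we have $S_4\not\equiv0$ and its zero locus in~$\cA_4$ is exactly~$\cJ_4$, whereas $S_g\equiv 0$ for $g\le 3$ (these two lattices having the same Siegel theta series up to genus~$3$). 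The assertion to be proved is that $S_g$ does \emph{not} vanish identically on~$\cJ_g$ for every $g\ge 5$.

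The first step is to reduce to the single case $g=5$ by means of the Siegel $\Phi$-operator, which sends a weight-$k$ modular form in genus~$g$ to one in genus~$g-1$ by letting the last row and column of~$\tau$ tend to~$i\infty$. A direct computation from the definition of theta constants shows that~$\Phi$ carries the genus-$g$ sum $\sum_{\ve,\delta}\theta^{8}[\cdots]$ to twice the corresponding genus-$(g-1)$ sum, and similarly for the $16$th powers --- the characteristics with last coordinate~$\tfrac12$ are annihilated, those with last coordinate~$0$ survive, and the sum over the final $\delta$-coordinate contributes the factor~$2$ --- so that $\Phi(S_g)=4\,S_{g-1}$; in particular $S_g\not\equiv0$ on all of~$\cA_g$ for every $g\ge 4$, since $\Phi^{g-4}(S_g)$ is a nonzero multiple of~$S_4$. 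Geometrically, in the Satake compactification $\overline{\cA_g}=\cA_g\sqcup\cA_{g-1}\sqcup\cdots$ the boundary stratum~$\cA_{g-1}$ meets the closure~$\overline{\cJ_g}$ precisely along~$\cJ_{g-1}$ (this being the description of the Satake compactification of~$\cM_g$), and the restriction of a modular form to that stratum is its $\Phi$-image. Hence if~$S_g$ vanished on~$\cJ_g$, it would vanish on $\overline{\cJ_g}\cap\cA_{g-1}=\cJ_{g-1}$, i.e.\ $S_{g-1}=\tfrac14\Phi(S_g)$ would vanish on~$\cJ_{g-1}$; iterating this down to $g=5$, it suffices to prove that $S_5$ does not vanish identically on~$\cJ_5$.

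For the genus-$5$ case the plan is to exhibit a single point of~$\cJ_5$ at which $S_5\neq0$. Since $J(\cM_5)$ is dense in~$\cJ_5$, it is enough to find one smooth genus-$5$ curve~$C$ with $S_5(\Jac C)\neq0$, and the only real work is to make the theta constants of~$\Jac C$ explicit enough to evaluate. For curves with enough symmetry --- most classically hyperelliptic curves, where Thomae's formula expresses the even theta constants, up to a single common normalization depending only on the chosen symplectic basis, as explicit products of differences of the branch points (and the odd theta constants vanish identically) --- substituting into~$S_5$ turns it into a concrete symmetric function of the moduli, which one then shows is not identically zero, for instance by degenerating the branch points to a one-variable or numerical computation. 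One must, however, be careful \emph{not} to test only on special loci such as the hyperelliptic one, on which many Schottky-type identities degenerate and could force a spurious vanishing; it may be necessary to evaluate at a sufficiently general Jacobian (for example numerically, with certified error bounds, after computing a period matrix), or, alternatively, to pin down the finite-dimensional space $M_8(\Sp(10,\ZZ))$ of weight-$8$ genus-$5$ Siegel modular forms together with the weight-$8$ part of the ideal of~$\cJ_5$ (e.g.\ that coming from the Schottky--Jung relations) and verify directly that~$S_5$ is not contained in it. Once such a nonvanishing point in genus~$5$ is secured, the statement for all $g\ge5$ follows by the $\Phi$-propagation of the previous paragraph.

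The main obstacle is exactly this last point: unlike everything preceding it, it is not a formal argument but a genuine computation on an explicit Jacobian, and it is delicate for two reasons. First, one needs a curve whose theta constants can actually be pinned down and the relevant normalization constants (as in Thomae's formula) carefully tracked --- this is feasible precisely because taking $8$th and $16$th powers washes out the root-of-unity ambiguities in the transformation law of theta constants. Second, one must make sure the chosen Jacobian is general enough to detect the \emph{failure} of the generalized Schottky relation, rather than inadvertently landing on some special sublocus where the relation happens to hold.
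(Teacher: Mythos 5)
Your reduction to $g=5$ is correct and is essentially how the general-$g$ case is handled in the original papers (which this survey cites without proof): $\Phi(S_g)=4S_{g-1}$ is right, a modular form vanishing on $\cJ_g$ vanishes on its Satake closure, and that closure meets the boundary stratum $\cA_{g-1}$ along $\cJ_{g-1}$, so nonvanishing on $\cJ_5$ propagates upward. The identification of $S_g$, up to a nonzero constant, with the difference of the genus-$g$ theta series of $E_8\oplus E_8$ and $D_{16}^{+}$ is also correct and is the right way to think about the form.

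The gap is the base case $g=5$ -- which is the entire content of the theorem -- and your primary concrete plan for it provably cannot work: $S_g$ vanishes \emph{identically on the hyperelliptic locus for every $g$} (a theorem of Poor). One way to see this: via Thomae's formula, the restriction to hyperelliptic period matrices of the theta series of a lattice built from a doubly even self-dual binary code depends only on the weight enumerator of the code, and by Gleason's theorem all doubly even self-dual codes of length $16$ -- in particular the two underlying $E_8\oplus E_8$ and $D_{16}^{+}$ -- have the same weight enumerator. It is also forced by the actual answer: the zero divisor of $S_5$ on $\cM_5$ is the Brill--Noether divisor of curves carrying a $g^1_3$, whose closure contains the hyperelliptic locus (a $g^1_2$ plus a base point is a $g^1_3$). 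So the Thomae computation you flag as ``the only real work'' returns $0=0$ and proves nothing; your own warning about special loci applies with full force to the very locus you chose. Your fallback options (certified numerics at a provably general period matrix, or determining $M_8(\Sp(10,\ZZ))$ together with the weight-$8$ part of the ideal of $\cJ_5$) are not carried out and are each substantial projects, so the proposal does not contain a proof of the base case. For comparison, the published argument of \cite{Grushevsky-Salvati2011, Grushevsky-Salvati2012} is structurally different: it exploits the Schottky--Jung proportionalities to relate $S_5$ on $\cJ_5$ to the genus-$4$ form evaluated on Prym varieties of genus-$5$ curves; since the Prym map dominates $\cA_4$ and, by \Cref{thm:Schottky}, $S_4$ vanishes exactly on $\cJ_4$, one obtains nonvanishing at a general Jacobian, and a finer analysis identifies the zero locus on $\cM_5$ with the trigonal divisor.
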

While a generalization of \eqref{eq:Schottkyg4} is thus not one of the defining equations of $\cJ_g\subseteq\cA_g$, there is something we can do, which is motivated by the actual equation Schottky derived in genus 4 (which is not \eqref{eq:Schottkyg4} above). Indeed, the way Schottky arrived at his equation was by investigating what are now known as the Prym varieties, and by obtaining (in a later work with Jung \cite{Schottky-Jung1909}) what are now known as Schottky-Jung proportionalities, relating theta constants of genus~$g$ Jacobians to theta constants of their Pryms, which are $(g-1)$-dimensional abelian varieties. As a result, in genus 4 Schottky obtained a degree 16 polynomial in theta constants which vanishes for Jacobians. The reason the vanishing of that polynomial is equivalent to \eqref{eq:Schottkyg4} is that theta constants of any abelian varieties themselves satisfy many relations: indeed $\dim\cA_g=\tfrac{g(g+1)}{2}$, while there are $2^{2g}-1$ nontrivial 2-torsion points and corresponding theta constants. Though roughly half of the theta constants, corresponding to so-called odd theta characteristics, vanish identically, there are still numerous relations among the even theta constants, and thus different polynomial expressions in them may be identically equal on $\cA_g$. In fact the problem of determining all relations among theta constants on $\cA_4$ still remains open.

From Schottky's original approach one can obtain numerous explicit equations satisfied by theta constants of Jacobians, and one can ask whether the full set of equations thus obtained solves the Schottky problem, i.e.~characterizes $\cJ_g\subseteq\cA_g$. While this question remains completely open for any $g\ge 5$, a weaker result was recently obtained.

\begin{theorem}[Farkas-Grushevsky-Salvati~Manni~2021~\cite{Farkas-Grushevsky-Salvati2021}]
    Let $N \coloneqq \tfrac{g(g + 1)}{2} - (3g-3)=\dim\cA_g-\dim\cM_g$. Then for any $g\ge 5$ there exists an explicit set of polynomials in theta constants $F_1, \dots, F_N$ such that $\cJ_g$ is an irreducible component of the common zero locus $\{F_1 = \dots = F_N = 0\} \subset \cA_g$.
\end{theorem}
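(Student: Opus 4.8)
The plan is to reduce the statement, via a routine commutative-algebra argument, to the combination of two classical facts: the Schottky--Jung relations furnish \emph{explicit} polynomials in theta constants vanishing on $\cJ_g$, and, by a theorem of van Geemen, $\cJ_g$ is an \emph{irreducible component} of the locus they cut out. Recall from the discussion above that to $\tau\in\HH_g$ together with a nonzero two-torsion point one associates, by an explicit polynomial recipe, a point of $\HH_{g-1}$ (its ``Prym''), and the Schottky--Jung proportionalities are polynomial identities relating the theta constants of $\tau$ to those of this Prym; eliminating the Prym theta constants and symmetrizing over the choice of two-torsion point so that everything descends from the appropriate finite level cover to $\cA_g$ itself, one gets a finite family $G_1,\dots,G_M$ of polynomials in theta constants --- explicit Siegel modular forms --- whose common zero locus $\cS_g\subseteq\cA_g$ contains $\cJ_g$. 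Van Geemen's theorem says $\cJ_g$ is an irreducible component of $\cS_g$; together with $\operatorname{codim}_{\cA_g}\cJ_g=\dim\cA_g-\dim\cM_g=N$ (from Torelli injectivity), this is exactly the input we need.

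It then remains to trim $G_1,\dots,G_M$ down to $N$ members without losing $\cJ_g$ as a component, and here I would run a standard Bertini/prime-avoidance argument. First localize $\cA_g$ at the generic point $\eta$ of the irreducible variety $X\coloneqq\cJ_g$, getting a Noetherian local domain $R\coloneqq\cO_{\cA_g,\eta}$ of dimension $N$ (in fact $\cA_g$ is smooth at $\eta$, since a generic genus-$g$ Jacobian has automorphism group $\{\pm1\}$ and the sign involution acts trivially on the deformation space $\operatorname{Sym}^2 H^1(A,\cO_A)$ of a ppav --- but all one uses is that the dimension theory of $R$ is that of a localization of a variety). Because $X$ is an irreducible component of $V(G_1,\dots,G_M)$, the ideal $I\coloneqq(G_1,\dots,G_M)R$ is $\mathfrak m_R$-primary. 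Now choose $F_1,\dots,F_N$ inductively, each a linear combination $F_{i+1}=\sum_j c_jG_j$ with integer coefficients: given $F_1,\dots,F_i$ with $\dim R/(F_1,\dots,F_i)=N-i$, every minimal prime of $(F_1,\dots,F_i)R$ has height $i<N$ and so does not contain the $\mathfrak m_R$-primary ideal $I$; the coefficient vectors $(c_j)$ for which $\sum_j c_jG_j$ lands in one of these finitely many primes form a proper linear subspace, so a general choice puts $F_{i+1}\in I$ outside all of them, giving $\dim R/(F_1,\dots,F_{i+1})=N-i-1$. After $N$ steps $\dim R/(F_1,\dots,F_N)=0$, which says precisely that the prime ideal of $X$ is minimal over $(F_1,\dots,F_N)$; since each $F_i$ also vanishes on $X$, we conclude $X=\cJ_g$ is an irreducible component of $\{F_1=\dots=F_N=0\}\subseteq\cA_g$. ``General choice'' here means avoiding an explicit proper closed subset, so one can in fact exhibit concrete $F_1,\dots,F_N$, as claimed.

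The hard part is not this trimming step but van Geemen's component statement: proving $\cJ_g$ to be an irreducible \emph{component} of $\cS_g$, and not merely contained in it, is the substantive result; it is proved by induction on the genus, controlling the behaviour of both theta constants and the Schottky--Jung relations under degeneration to the boundary of a toroidal compactification of $\cA_g$, with the low-genus cases --- notably Igusa's genus-$4$ analysis --- anchoring the induction. A secondary point requiring care is the descent of the Schottky--Jung relations from the level cover, where theta constants with characteristics naturally live, down to genuine polynomials in theta constants on $\cA_g$, which is what makes the final $F_i$ of the form demanded by the statement.
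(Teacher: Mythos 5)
This theorem is quoted in the notes from \cite{Farkas-Grushevsky-Salvati2021} without proof, so your proposal has to be measured against that paper rather than against an argument in the text. Your commutative-algebra trimming step is itself correct: localizing at the generic point of $\cJ_g$, noting that the ideal generated by any finite system of equations cutting out $\cJ_g$ as a component becomes primary to the maximal ideal, and choosing $N$ suitably generic combinations by avoidance of the finitely many minimal primes at each stage does produce $N$ equations for which $\cJ_g$ is still a component; combined with van Geemen's component theorem for the Schottky--Jung locus this gives a weak solution with exactly $N=\binom{g-2}{2}$ equations. But this proves a weaker statement than the one claimed, and the difference is exactly the content of the theorem: explicitness. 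The ``general choice'' in your induction must avoid the coefficient vectors landing in the minimal primes of $(F_1,\dots,F_i)\,\cO_{\cA_g,\eta}$, and those primes are not produced by the argument; calling the bad locus ``an explicit proper closed subset'' is not justified, since writing it down would require knowing the primary decomposition of the Schottky--Jung ideal along $\cJ_g$. So your $F_i$ exist but are not explicit, whereas the whole point of the cited result (as the notes stress immediately after the statement) is that the equations are written in closed form. A second genuine gap is the descent from the level cover: the Schottky--Jung proportionalities, and van Geemen's theorem (which is for a fixed two-torsion point, on a level cover where theta constants with characteristics make sense), do not simply ``symmetrize'' down to $\cA_g$. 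Replacing each relation by the product over its $\Sp(2g,\ZZ/2\ZZ)$-orbit yields downstairs an intersection of unions, which can strictly contain the image of the Schottky--Jung locus, so the component property does not automatically survive; one must either work with the invariant ideal of the group-invariant (big) Schottky--Jung locus or state the result on the level cover, and one must also keep the combinations homogeneous in weight so that their zero loci are well defined on the quotient.

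For comparison, the route of \cite{Farkas-Grushevsky-Salvati2021} is different: rather than trimming a large known system by a genericity argument, the authors directly construct exactly $\binom{g-2}{2}$ structured equations, obtained by iterating the Schottky--Jung proportionalities so as to reduce to the genus~$4$ Schottky--Igusa form, and then prove directly that $\cJ_g$ is an irreducible component of the zero locus of these specific equations, by induction on the genus via degeneration to the boundary (Fourier--Jacobi expansions of the forms), with Igusa's genus~$4$ theorem \cite{Igusa1981} anchoring the induction --- not by invoking van Geemen's theorem and then selecting generic combinations. Your outline correctly locates where the real work lies (a component statement proved by boundary induction based in genus $4$), but as written it establishes ``there exist $N$ polynomials'', not ``there exists an explicit set of $N$ polynomials''.
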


This is a so-called weak solution to the Schottky problem, where `weak'  means that we do not characterize the Jacobian locus itself, but only characterize it up to some extra irreducible components. There is a separate long story of weak solutions to the Schottky problem, the first famous one due to Andreotti and Mayer \cite{Andreotti-Mayer1967}, who showed that $\cJ_g$ is an irreducible component of the locus of abelian varieties  such that the dimension of the singular locus of the theta divisor is at least $g-4$, that is $\dim\operatorname{Sing}\Theta_\tau\ge g-4$. 

The equations~$F_i$ in the above weak solution are completely explicit, but they are not simple enough that one can just write them down on a piece of paper from memory. Moreover, the way $F_i$ are obtained involves combinatorial choices, and by making a different combinatorial choice one can obtain still more explicit equations satisfied by theta constants of Jacobians. It is unclear whether eventually one eliminates all extraneous irreducible components of the common zero set, to obtain a full solution to the Schottky problem in this way.

\smallskip
The interlude on the classical approach to the Schottky problem is over. So remember the goal of these lectures is to present Krichever's solution to the (strong, not weak!) Schottky problem, in terms of flexes of Kummer varieties. For that we first need to understand what the theorem is geometrically. So again we have the moduli space of abelian varieties, which is an arithmetically defined object, and we have the moduli space of algebraic curves, which is geometric. The geometric data on the abelian variety is really its polarization divisor (unique, up to translation, for our case of principally polarized abelian varieties) --- the abelian variety itself locally just looks like~$\CC^g$, with no intrinsic geometry. We will now discuss the algebraic definition of the Jacobians of curves and the geometry of their theta divisors. We will then proceed to Riemann's theta singularity theorem describing the structure of the theta divisor for Jacobians, which will then lead to the Kummer varieties and the existence of their trisecants for Jacobians.

\subsection{Jacobians of curves --- the algebraic definition}
We first discuss divisors on a compact complex genus $g$ algebraic curve~$C$. A {\em divisor} on $C$ is just a formal finite linear combination $\sum \limits_{i = 1}^N m_ip_i$ where $m_i \in \ZZ, p_i \in C$. We denote  $\Div(C)$ the set of all divisors on $C$; it naturally forms an abelian group just by formally adding such expressions. There is a degree map from divisors to integers that just takes the sum of all coefficients:
\[\deg: \left(\sum \limits m_ip_i\right) \in\Div(C)\longmapsto \left(\sum m_i\right) \in \ZZ\,.\]
For a nonconstant meromorphic function~$f$ on $C$, that is for $f: C \to \CC\PP^1$, the divisor of $f$ is defined as
\begin{equation}\label{eq:defdiv}
\div(f) \coloneqq \sum_{p\in C}\mult_p f\,.
\end{equation}
Here we note that the multiplicity is zero unless~$p$ is a zero or pole of~$f$, so that the sum is in fact finite,
\[\div (f)=\sum_{i=1}^N m_ix_i\,,\] 
where the~$x_1,\dots,x_N$ are all the zeroes and poles of~$f$.

The divisors of functions are called {\em principal} divisors, and they form a subgroup $P\Div(C)\subset\Div(C)$ because
\[\div(f \cdot g) = \div(f) + \div(g)\,.\]
\begin{definition}\label{def:Pic}
The Picard group of $C$ is the quotient of the group of divisors by the subgroup of principal divisors:
\[\Pic(C) \coloneqq \Div(C) /P\Div(C)\,.\]
\end{definition}
\begin{remark}\label{rmk:Dsf1}
$\Div(C) /P\Div(C)$ is usually called the divisor class group and the Picard group is defined as the isomorphism classes of line bundles on $C$. There is a 1-1 correspondence between them as follows: the divisor $D$ on $C$ can be described by giving an open cover $\{U_i\}$ of $C$, and for each $i$ a meromorphic function $f_i$, such that for each $i, j$, $g_{ij}\coloneqq f_i \slash f_j$ is a holomorphic nowhere zero function on $U_i \cap U_j$. The functions $g_{ij}$ then provide a set of well-defined transition functions for a line bundle, as $g_{ij} \cdot g_{jk} \cdot g_{ki} = 1$. This line bundle is denoted by $[D]$ and it is trivial if and only if $D$ is the divisor of a global meromorphic function on $C$. The local data $\{U_i, f_i\}$ also give a meromorphic section $s_0$ of~$[D]$ with $\div(s_0) = D$.
\end{remark}
\begin{remark}\label{rmk:Dsf2}
Let $\mathscr{L}(D)$ be the space of meromorphic functions $f$ on $C$ such that
\[D + \div(f) \ge 0,\]
i.e.~$D + \div(f)$ is effective, i.e.~the coefficients of all points~$p\in C$ in $D+\div (f)$ are non-negative. Denote by $|D|$ the set of all effective divisors linearly equivalent to $D$, i.e.~those that differ from $D$ by a principal divisor. Then there is a 1-1 correspondence between meromorphic functions in $\mathscr{L}(D)$ and holomorphic sections of $[D]$: let $s_0$ be a global holomorphic section of $[D]$ with $\div (s_0) = D$; then multiplication by $s_0$ gives an identification
\[\mathscr{L}(D) \xrightarrow{\otimes s_0} H^0(C, [D]).\]
As $C$ is compact, for any $D' \in |D|$, there exists $f \in \mathscr{L}(D)$ such that
\[D' = D + \div(f)\,,\]
and such $f$ is unique up to multiplying by a nonzero constant. Then we have
\[|D| \cong \PP(\mathscr{L}(D)) \cong \PP(H^0(C, [D])).\]
In particular, we have
\[\text{dim}\,|D| = h^0(C, [D]) - 1.\]
In the following we will use these concepts interchangeably, i.e.~an effective divisor $D' \in |D|$ is the same as a holomorphic section $s \in \PP(H^0(C, \cO([D])))$, and is the same as a meromorphic function $f \in \PP(\mathscr{L}(D))$.
\end{remark}

Recall that the number of zeros of a meromorphic function is always the same as the number of its poles, counted with multiplicity, so the degree of a principal divisor is always $0$, and thus the degree map descends to a well-defined map $\deg:\Pic(C)\to\ZZ$. We denote by $\Pic^d(C) \subset \Pic(C)$ the set of degree $d$ divisors. Crucially, $\Pic^d(C)$ can then be interpreted as the set of degree~$d$ line bundles on~$C$ up to linear equivalence. 

We can now tell you what the Jacobian of $C$ is geometrically. 
\begin{definition}[Algebraic Jacobian]
The Jacobian of~$C$ is the set of linear equivalence classes of degree~$g-1$ line bundles on~$C$, that is
\[\Jac(C) \coloneqq \Pic^{g - 1}(C). \]
\end{definition}
We note that $\Jac(C)$ is isomorphic to $\Pic^d(C)$ for any~$d\in\ZZ$, but such an isomorphism is not canonical. Indeed, any $D \in \Pic^{g - 1 - d}(C)$ induces an isomorphism of $\Pic^d(C)$ to $\Jac(C)$ simply by adding~$D$ to a given divisor; in terms of line bundles, this means that we take a degree~$d$ line bundle on~$C$ and take its tensor product with $[D]$, getting a line bundle of degree~$g-1$.

Why do we then define $\Jac(C)$ as $\Pic^{g-1}(C)$ and not as for example~$\Pic^0(C)$, which would have the advantage of obviously being an abelian group? The point is the principal polarization, which we must define on $\Jac(C)$ to make it into a principally polarized abelian variety, and there is a very natural such polarization in $\Pic^{g-1}(C)$.
\begin{definition}
The principal polarization $\Theta$ on $\Pic^{g - 1}(C)$ is defined to be the locus of effective divisors, i.e.
\[\Theta \coloneqq \{p_1 + \dots + p_{g - 1} :p_i\in C\} \subset \Pic^{g - 1}(C)\,.\]
\end{definition}
Points in $\Theta$ are parameterized by $(g-1)$-tuples of points of $C$, symmetrically, so by definition we have the surjective map $C^{\times (g-1)}/S_{g-1}\to \Theta$. It can be shown that this map is generically finite (we will in fact learn much more than that below in \Cref{thm:Riemannsingularity}), and thus $\dim_{\CC}\Theta = g - 1.$ It is true that the Jacobian we just defined is the same as the Jacobian defined analytically, and in particular it is true that $\dim_\CC\Pic^d(C)=g$. It is a priori unclear why $\Theta\subset\Jac(C)$ defines an ample divisor that has a unique section, and we will accept this fact without proof.

\begin{remark}
Another confusing thing about the Jacobian is the group structure. In the analytic viewpoint, the Jacobian of a curve is a quotient of~$\CC^g$ by an abelian subgroup, and thus automatically is an abelian group. Here we have the obvious addition maps $\Pic^d(C)\times\Pic^{d'}(C)\to\Pic^{d+d'}(C)$. Moreover, as the canonical bundle (the bundle of holomorphic one-forms) is a canonically chosen linear equivalence class $K_C\in\Pic^{2g-2}(C)$, we can canonically identify $\Pic^{2g-2}(C)\simeq\Pic^0(C)$, and thus have the addition map
\[\Jac(C) \times \Jac(C) \to\Pic^{2g-2}(C)\simeq\Pic^0(C)\,\]
which, however, does not make $\Pic^{g-1}(C)$ into a group. To make it into a group one needs to pick an element of $\Pic^{g-1}(C)$; a natural choice is to pick $\eta\in\Pic^{g-1}(C)$ such that $\eta+\eta=K_C$. There are in fact $2^{2g}$ such choices (as many as there are two-torsion points on a~$g$-dimensional abelian variety), and to complete the identification of the analytic and algebraic Jacobian one such~$\eta$ needs to be chosen. This is called the Riemann's constant, and we will suppress its choice from now on, just to confuse you (but don't worry). Note, however, that $D\mapsto K_C-D$ defines a canonical involution on~$\Jac(C)$ that does not depend on any choices.
\end{remark}

Let's recap where we are. We defined the moduli of (principally polarized) abelian varieties, and viewed this in the arithmetic/analytic way as the quotient of the Siegel upper half-space by the symplectic group. In the analytic viewpoint, given a curve, we chose a basis of holomorphic one-forms on it, a basis of homology, and use this to define the period matrix $\tau$ of a curve. This period matrix is in the Siegel upper half-space by Riemann's theorem, and thus we know that the theta function --- for which we have an analytic expression --- gives the principal polarization on the Jacobian of a curve. The following fundamentally important theorem describes geometrically what the theta divisor of the Jacobian of an algebraic curve looks like, in the algebraic viewpoint. This is of essential importance in understanding Jacobians and solving the Schottky problem, as there is simply no similar geometry for general principally polarized abelian varieties. There are many many questions about theta divisors of general abelian varieties that are not approachable because there is no analogue of the following statement. 
\begin{theorem}[Riemann theta singularity theorem]\label{thm:Riemannsingularity}
For any divisor $D \in \Pic^{g - 1}(C) \cong \Jac(C)$, the multiplicity of the theta divisor at the point $D\in\Jac(C)$ is equal to the dimension of the space of sections of~$[D]$ on~$C$:
\begin{equation}\label{eq:Riemannsingularity}
\mult_D\Theta =\dim H^0(C, [D]).
\end{equation}
\end{theorem}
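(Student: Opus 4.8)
The plan is to relate the local structure of the theta divisor at a point $D$ to the vanishing of $\theta$ along cosets of the Abel-Jacobi image of $C$, using the classical Riemann vanishing theorem as the bridge. First I would recall the setup: fix a base point $p_0\in C$ and the Abel-Jacobi map $u\colon C\to\Jac(C)$, $p\mapsto\int_{p_0}^p(\omega_1,\dots,\omega_g)$, extended additively to symmetric products $C^{(d)}$. The Riemann vanishing theorem says that, after translating by the Riemann constant $\eta$ (which we have agreed to suppress), one has $\theta(u(p_1)+\cdots+u(p_{g-1}))=0$ if and only if the divisor $p_1+\cdots+p_{g-1}$, viewed as a point of $\Pic^{g-1}(C)$, lies in $\Theta$, i.e.\ $h^0(C,[p_1+\cdots+p_{g-1}])\ge 1$; and more precisely the order of vanishing is governed by $h^0$. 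My proof would establish precisely this refined statement: $\operatorname{mult}_D\Theta=h^0(C,[D])$.

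The key steps, in order: (1) Reduce to showing the two inequalities $\operatorname{mult}_D\Theta\le h^0(C,[D])$ and $\operatorname{mult}_D\Theta\ge h^0(C,[D])$ separately, writing $r+1:=h^0(C,[D])$ so that $\dim|D|=r$. (2) For the lower bound, use the fact that for any effective divisor $E=q_1+\cdots+q_{g-1}\in|D|$ and any points $q_1',\dots,q_r'\in C$, the divisor class $D-q_1'-\cdots-q_r'+(\text{suitable effective compensating divisor})$ stays in $\Theta$; concretely, restrict $\theta$ to the $r$-dimensional family obtained by moving $r$ of the points of a generic representative of $|D|$ and show $\theta$ vanishes to order $\ge r+1$ at $D$ by differentiating: a carefully chosen $r$-parameter subvariety through $D$ lies entirely in $\Theta$ with the right contact, so all partials of $\theta$ up to order $r$ vanish at $D$. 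This is essentially Kempf's / Riemann's computation via the Abel-Jacobi map, identifying the tangent cone. (3) For the upper bound, one computes the tangent cone to $\Theta$ at $D$ explicitly: Riemann's singularity analysis shows the leading term of the Taylor expansion of $\theta$ at $D$ is, up to scalar, the determinant of an $(r+1)\times(r+1)$ matrix whose entries are linear forms $\sum_i z_i\omega_j(p_k)$ built from a basis of $H^0(C,[D])$ and its ``residual'' points, and such a determinant is not identically zero, hence vanishes to order exactly $r+1$. (4) Combine to conclude equality.

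The main obstacle is step (3) — pinning down that the tangent cone is given by that determinantal expression and, crucially, that this determinant is \emph{not} identically zero as a polynomial on $\Jac(C)$. The vanishing-to-order-$\ge(r+1)$ half (step 2) is a relatively soft argument once the Abel-Jacobi geometry and Riemann's theorem are in place, because it only requires exhibiting enough curves through $D$ inside $\Theta$. But the reverse bound requires the genuinely hard input: an explicit local expansion of the theta function (Riemann's original computation, or its modern treatment via the Poincaré residue and the Brill-Noether matrix), together with a nonvanishing statement for the resulting minor, which ultimately rests on the base-point-free pencil trick / the fact that a basis of $H^0(C,[D])$ genuinely separates the residual points in general position. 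I would present this part by citing the Riemann vanishing theorem and the standard local analysis (e.g.\ as in Arbarello-Cornalba-Griffiths-Harris or Mumford's Tata lectures) rather than reproving the expansion from scratch, flagging it as the one nontrivial analytic ingredient, and then assembling the two inequalities into the stated equality.
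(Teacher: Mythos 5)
The paper never proves this theorem: it is quoted as a classical result (Riemann's singularity theorem) and used as a black box in everything that follows, so there is no in-paper argument to compare yours against. Judged on its own, your outline follows the standard classical route (Riemann vanishing theorem, the two inequalities, and an explicit tangent-cone computation for the upper bound), but as written it contains one genuine gap and one wholesale deferral.

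The gap is in your step (2). Moving $r$ of the points of an effective representative of $|D|$ does give an $r$-dimensional family inside $\Theta$ through $D$, but containment of an $r$-dimensional subvariety through $D$ in $\Theta$ only shows $\mult_D\Theta\ge 1$: every point of $\Theta$, smooth or singular, lies on $(g-2)$-dimensional subvarieties of $\Theta$, so no multiplicity bound can come from this alone — the phrase ``with the right contact'' is where the entire argument is hiding. The standard soft proof of $\mult_D\Theta\ge h^0(C,[D])$ uses the add-and-subtract family instead: writing $r+1=h^0(C,[D])$, for every $k\le r$ the function $\theta\bigl(\AJ(D+p_1+\dots+p_k-q_1-\dots-q_k)\bigr)$ vanishes identically in all $2k$ variables, because $h^0(D+\sum p_i-\sum q_i)\ge h^0(D)-k\ge 1$ puts the argument on $\Theta$; one then differentiates in the $p_i$ and $q_i$, sets $p_i=q_i$, and argues by induction on the order of derivative to conclude that all partials of $\theta$ at $D$ of order $\le r$ vanish. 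Your step (2) should be replaced by, or expanded into, exactly this inductive differentiation; the $r$-parameter family you describe does not suffice. As for step (3), you correctly identify it as the hard half (the nonvanishing of the determinantal leading term of the expansion), but since you propose to cite it from the literature rather than prove it, the proposal as it stands is an outline of the standard proof with its key analytic input outsourced — acceptable for a survey of this kind, provided it is flagged as a citation rather than an argument.
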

Here we are really using the algebraic viewpoint on the Jacobian, identifying divisors as points of~$\Jac(C)$ with linear equivalence classes of line bundles on~$C$. The theta divisor $\Theta$ is a divisor in $\Pic^{g - 1}(C)$, which is the zero locus of the theta function, and thus we can ask what the multiplicity of this divisor (or this function) is at the point $D\in\Pic^{g-1}(C)$. The theorem says that this multiplicity is equal to the dimension of the space of sections of~$[D]$ on the curve~$C$, thus relating the geometry of the curve to the geometry of its Jacobian.

Note also that $\mult_D\Theta>0$ is equivalent to saying $D\in\Theta$, which by \Cref{thm:Riemannsingularity} is equivalent to saying that~$[D]$ has a nonzero section, let's denote it by~$s$. But then $\div(s)$ is by definition the set of points of~$C$ where $s$ vanishes, and since $[D]$ has degree~$g-1$, this means that~$s$ has~$g-1$ zeroes on~$C$, counted with multiplicity. But this says that $\div(s)=p_1+\dots+p_{g-1}$ for some $p_i\in C$ (which may coincide), which is precisely our definition of the theta divisor in~$\Pic^{g-1}(C)$ to start with.

\begin{remark}
We defined the theta function in two different ways. Given any $\tau\in\HH_g$ in the Siegel upper half-space, we defined $\theta$ by the exponential sum in \Cref{def:theta}. Now for Jacobians of curves we have defined the theta divisor geometrically. The claim that for Jacobians of curves the analytic formula defines the same divisor as the geometric construction is far from obvious, and is miraculous in that it relates two completely different worlds. 
\end{remark}
\begin{remark}
Given any $\tau\in\HH_g$, we have defined explicitly by a formula the principal polarization $\Theta_\tau$ on $A_\tau=\CC^g/\ZZ^g+\tau\ZZ^g$. In fact one can also explicitly define other suitable automorphic forms that will define {\em non}-principal polarizations on~$A_\tau$, and these formulas will also depend holomorphically on $\tau\in\HH_g$. However, to understand the isomorphisms of non-principally polarized abelian varieties constructed this way one would then need to quotient~$\HH_g$ not by $\Sp(2g, \ZZ)$, but by another group. As our main interest is Jacobians of curves, non-principally polarized abelian varieties will never again be mentioned in these lectures, and we will keep saying `abelian variety', while we always mean abelian variety with a principal polarization.
\end{remark}

Let us familiarize ourselves with the Riemann theta singularity theorem by the following exercise.

\begin{exercise}\label{ex:3}(see the appendix for a hint)
\begin{enumerate}
\item For $g = 1$, observe that $C \cong \Jac(C)$. What does Riemann theta singularity theorem say in this case? Is it true that $\cM_1 = \cA_1$? This question admits both answers yes and no depending on how one thinks about it. Both are correct. 

\item For $g = 2$, how is $C$ related to Jac$(C)$? And what does the Riemann theta singularity theorem say then? There is a nice uniform answer to this question, as indeed all curves of genus $2$ look the same from the point of view of the geometry of their theta divisors.

\item For $g = 3$, what does the Riemann theta singularity theorem say? What are the possible singularities of theta divisors of genus 3 Jacobians? What do the possible singular points look like? Notice that not all curves of genus~$3$ look the same anymore, from the point of view of the geometry of theta divisors of their Jacobians.
\end{enumerate}
\end{exercise}
\begin{remark}
As discussed before, for $g\ge 4$ not every abelian variety is a Jacobian for dimension reasons, as $\dim\cM_g<\dim\cA_g$. Still, one could describe rather explicitly theta divisors of Jacobians, and in particular (this is a nice exercise) one could show that $\mult_D\Theta\le\tfrac{g+1}{2}$ for any $D\in\Pic^{g-1}(C)$. As we will see, theta divisors of Jacobians are in general very special and singular, and one can conjecture \cite{Smith-Varley2004, Casalaina-Martin2008, Casalaina-Martin2009,Grushevsky-Hulek2013} that this bound for the multiplicity of theta divisors holds for all indecomposable \footnote{A principally polarized abelian variety is called indecomposable if it is not a product of lower-dimensional principally polarized abelian varieties. For such products (called decomposable) the multiplicity of the theta function is the sum of the multiplicities on the factors, and decomposable abelian varieties need to be excluded from many geometric discussions.} $A_\tau\in\cA_g$. This conjecture is proven for $g\le 5$ in \cite{Casalaina-Martin-Friedman2005}, where the Prym construction provides a geometric description of all abelian varieties, but remains completely open in general, showing how little we understand the geometry of theta divisors of arbitrary abelian varieties.
\end{remark}

\subsection{Theta functions on Jacobians}

Recall that we have defined $\Theta \subset \Pic^{g - 1}(C)$ as a divisor in $\Pic^{g - 1}(C)$ that is the set of all elements in the form
$\Theta = \{p_1 + \dots + p_{g - 1}: p_i\in C\}$. Now we want to try to think of the theta function defining this divisor as a function on the curve~$C$ itself. This requires thinking about the curve as sitting inside its Jacobian, and this is how we do it. Observe that there is a tautological map (called the Abel-Jacobi map) $C\to\Pic^1(C)$, which simply sends the point $p \in C$ to the point $p \in \Pic^1(C)$ (here and throughout, we are increasingly freely using the identification of divisors and line bundles on a curve). The Jacobian is of course~$\Pic^{g-1}(C)$, and to map the curve~$C$ in it we will need to non-canonically identify it with $\Pic^1(C)$ --- or rather, with~$\Pic^{-1}(C)$ for convenience. For this we will need a degree~$g$ divisor. By the Riemann-Roch theorem, every degree~$g$ line bundle on a genus~$g$ curve has a holomorphic section, and we can thus use the divisor of zeroes of such a section.

\begin{definition}
Given a $g$-tuple of not necessarily distinct points $p_1,\dots,p_g\in C$, the associated Abel-Jacobi map is defined as
\begin{equation}\label{eq:Abel-Jacobimap}
    \begin{array}{rcl}
    \AJ_{p_1,\dots,p_g}: C & \hookrightarrow & \Pic^{g-1}(C) \\
     p &  \mapsto & p_1+\dots+p_g-p,
\end{array}
\end{equation}
\end{definition}
Often the choice of the points~$p_i$ will be understood, and we will then drop it from the notation. 

\begin{exercise}
Show that analytically the Abel-Jacobi map is defined as
\[\AJ: z \in C \mapsto \left(\int_{p_0}^z \omega_1, \dots, \int_{p_0}^z \omega_g\right)^t\,,\]
where $(\omega_1, \dots, \omega_g)$ is a normalized basis of holomorphic differentials \eqref{eq:normalizeddifferential} on $C$, and relate the starting point of integration~$p_0$ to the divisor $p_1+\dots+p_g$.
\end{exercise}
Importantly, from the analytic definition it immediately follows that
\begin{equation}\label{eq:AJdifferential}
    \frac{\partial \AJ(z)}{\partial z} = \left(\omega_1, \dots, \omega_g\right)^t.
\end{equation}
More generally, for $D = \sum_i p_i - \sum_i q_i \in \Pic^0(C)$, we have
\[\AJ(\sum p_i - \sum q_i) = \left(\sum_i \int_{q_i}^{p_i}\omega_1, \dots, \sum_i \int_{q_i}^{p_i} \omega_g\right)^t.\]

We can now easily think of the theta function~$\theta$, which is a section of the principal polarization on the Jacobian, as a section of a line bundle --- still called the theta function --- on the curve, by pulling it back under the Abel-Jacobi map, so that we consider $\theta\circ \AJ_{p_1,\dots,p_g}(p)$ as a function of~$p\in C$.

Note that by the algebraic definition of the theta divisor we have $\theta\circ \AJ_{p_1,\dots,p_g}(p_i)=0$ for any $1\le i\le g$; indeed
$$
  \AJ_{p_1,\dots,p_g}(p_i)=p_1+\dots+\hat p_i+\dots+p_g\in\Pic^{g-1}
$$
is manifestly the sum of~$g-1$ points of~$C$, and thus lies on the theta divisor. It turns out that these are the only zeroes.
\begin{fact}\label{fact:zerotheta}
$\theta\circ \AJ_{p_1,\dots,p_g}(p)=\theta(\tau,p_1 + p_2 + \cdots + p_g - p)$ is zero if and only if $p = p_i$ for some $1\le i\le g$.  
\end{fact}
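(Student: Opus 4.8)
The plan is to leverage the Riemann theta singularity theorem (\Cref{thm:Riemannsingularity}) together with a dimension/degree count. First I would observe that $\theta\circ\AJ_{p_1,\dots,p_g}(p)=0$ means, by the algebraic description of the theta divisor, that the divisor class $D_p\coloneqq p_1+\dots+p_g-p\in\Pic^{g-1}(C)$ lies on $\Theta$; equivalently, by \Cref{thm:Riemannsingularity}, that $h^0(C,[D_p])\ge 1$, i.e.\ the degree $g-1$ line bundle $[D_p]$ has a nonzero holomorphic section. The ``if'' direction is immediate: if $p=p_i$ then $D_p=p_1+\dots+\hat p_i+\dots+p_g$ is manifestly effective, hence $h^0\ge 1$ and $\theta\circ\AJ(p_i)=0$, exactly as noted in the paragraph preceding the statement.

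For the ``only if'' direction, suppose $h^0(C,[D_p])\ge 1$ and pick a nonzero section $s$. Then $\div(s)$ is an effective divisor of degree $g-1$ linearly equivalent to $D_p=p_1+\dots+p_g-p$, so there is a nonconstant meromorphic function $f$ on $C$ (unless $p$ is base, handled below) with $\div(f)=\div(s)-D_p$, i.e.\ $\div(f)+p_1+\dots+p_g-p=\div(s)\ge 0$. Rearranging, $\div(f)+p_1+\dots+p_g\ge p$, meaning $f\in\mathscr{L}(p_1+\dots+p_g)$ and moreover $f$ vanishes at $p$ (or, if $p$ coincides with some $p_i$, we are already done). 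The key point is then that $h^0(C,[p_1+\dots+p_g])$ is small: for a \emph{generic} choice of the $g$ points $p_1,\dots,p_g$, Riemann--Roch gives $h^0(C,[p_1+\dots+p_g])=1$, so $\mathscr{L}(p_1+\dots+p_g)$ consists only of constants, and a constant function vanishing at $p$ is zero — contradiction unless $p\in\{p_1,\dots,p_g\}$. More robustly, without genericity: the divisor $p_1+\dots+p_g-p$ has degree $g-1$, so by Riemann--Roch $h^0(C,[D_p])=1+h^1(C,[D_p])=1+h^0(C,[K_C-D_p])$ where $K_C-D_p$ has degree $g-1$ as well; one then argues that $h^0([D_p])=1$ forces $\div(s)$ to be the \emph{unique} effective divisor in the class, and compares it directly with the effective divisors of the form $p_1+\dots+\hat p_i+\dots+p_g$, showing $p$ must equal one of the $p_i$.

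The cleanest route, which I would actually write up, is to reduce to the generic case and then remove the genericity by a specialization/semicontinuity argument: the set of $(p;p_1,\dots,p_g)$ with $\theta(\tau,p_1+\dots+p_g-p)=0$ is closed, and its fibre over generic $(p_1,\dots,p_g)$ is exactly $\{p_1,\dots,p_g\}$ by the Riemann--Roch count above; one checks the fibre cannot jump to all of $C$ because $\theta\circ\AJ_{p_1,\dots,p_g}$ is not identically zero (its pullback is a section of a positive-degree line bundle on $C$, hence has only finitely many zeroes, in fact exactly $g$ counted with multiplicity since $\deg\AJ^*\Theta=g$ by \eqref{eq:AJdifferential} and the fact that $\Theta$ is a principal polarization). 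Counting: $\AJ^*[\Theta]$ has degree $g$ on $C$, the $g$ points $p_1,\dots,p_g$ already account for $g$ zeroes with multiplicity, so there can be no others. The main obstacle is handling coincidences among the $p_i$ and the possibility of $p$ being a base point of $[p_1+\dots+p_g]$ — i.e.\ making the multiplicity bookkeeping rigorous when points collide — which is why phrasing the argument via the degree-$g$ count of $\AJ^*[\Theta]$ rather than via case analysis is the safer path.
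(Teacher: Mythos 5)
Your main route — the pullback $\AJ^*\Theta$ has degree $g$ on $C$, the points $p_1,\dots,p_g$ already supply $g$ zeroes, hence there are no others — is exactly the argument the paper indicates for this Fact (the paper does not prove it, it only sketches this degree count), and your Riemann--Roch reformulation via \Cref{thm:Riemannsingularity} is a sensible way to flesh it out. However, there are two genuine gaps. First, your justification that $\theta\circ\AJ_{p_1,\dots,p_g}$ is not identically zero (``it is a section of a positive-degree line bundle, hence has only finitely many zeroes'') is a non sequitur: a section of a positive-degree bundle can be the zero section, and in fact $\theta\circ\AJ_{p_1,\dots,p_g}\equiv 0$ precisely when $h^0(C,[p_1+\dots+p_g])\ge 2$ (this is part of Riemann's vanishing theorem). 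For the same reason your claim that the genericity can be dispensed with is false: on a genus $2$ curve with $p_2=\iota(p_1)$ the hyperelliptic conjugate, $p_1+p_2$ is the $g^1_2$, so $h^0=2$ and $\theta(p_1+p_2-p)$ vanishes for \emph{every} $p$. The Fact therefore really does need the non-speciality hypothesis $h^0(C,[p_1+\dots+p_g])=1$ (true for general $p_i$, and consistent with the ``general divisor'' used later for the Baker--Akhiezer function), and the correct way to get non-vanishing under that hypothesis is to choose $p$ outside the support of the unique effective divisor in $|p_1+\dots+p_g|$, so that $h^0(C,[p_1+\dots+p_g-p])=0$.

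Second, when the $p_i$ are allowed to collide, the bare degree count does not finish the job: the zero divisor of $\theta\circ\AJ$ has degree $g$ and each \emph{distinct} point among the $p_i$ contributes at least one zero, but with repetitions this accounts for fewer than $g$ zeroes and leaves room for a zero outside $\{p_1,\dots,p_g\}$; you flag this obstacle but the degree count alone does not remove it. The clean fix is a tightened version of your first paragraph: if $\theta\circ\AJ_{p_1,\dots,p_g}(p)=0$, then by \Cref{thm:Riemannsingularity} there is an effective divisor $E\sim p_1+\dots+p_g-p$, so $E+p$ is an effective divisor in $|p_1+\dots+p_g|$ containing $p$; since $h^0(C,[p_1+\dots+p_g])=1$, the only effective divisor in that linear system is $p_1+\dots+p_g$ itself, whence $p\in\{p_1,\dots,p_g\}$. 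This argument handles coincidences automatically and needs no semicontinuity or specialization; the degree count is then only needed if you want the stronger, divisor-level statement that the zero divisor is exactly $p_1+\dots+p_g$ with multiplicities.
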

To prove this fact, we need to show that there are no further zeroes. This can be done by checking that the degree of the function $\theta\circ \AJ$ on the curve~$C$ is~$g$, i.e.~that it is a section of some degree $g$ bundle on~$C$. We are not going to prove this in general here, but the following exercise for $g = 1$ is easy. 
\begin{exercise}
    Show that for any $\tau \in \HH_g, z \in \CC^g$, we have $\theta(\tau,z) = \theta(\tau, -z)$. What is $\Theta$ in genus $1$? (Can you see modularity of $\theta(\tau, z)$ with respect to $\tau$ for $g = 1$.)
\end{exercise}
Why are the theta functions on the curve interesting and useful? For example, it turns out that since we understand their zero loci so explicitly, we can write down {\em any} meromorphic function on~$C$ as a rational expression in theta functions. To do this, suppose $f: C \to \CC\PP^1$ is a meromorphic function, with its divisor defined in \eqref{eq:defdiv}. As discussed before, the divisors of functions have degree zero, as the numbers of zeroes and poles, counted with multiplicity, are the same (being the degree of the map~$f$ over $0$ and $\infty\in\CC\PP^1$ respectively); this is to say $\sum m_i=0$. 

\begin{proposition}\label{thm:functionsintheta}
    For any choice of general points $p_1, \dots, p_{g-1} \in C$, for a function~$f:C\to\CC\PP^1$ with $\div(f)=\sum m_i x_i$, we have
    \begin{equation}\label{eq:ftheta}
        f(x) = \const\cdot \prod \limits_{i = 1}^N \theta^{m_i}(p_1 + \dots + p_{g - 1} + x_i - x)\,.
    \end{equation}
\end{proposition}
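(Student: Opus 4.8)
The idea is to compare the two sides of \eqref{eq:ftheta} as sections of line bundles on $C$ and show that their ratio is a nowhere-vanishing holomorphic function, hence constant. First I would set $g_i(x) \coloneqq \theta\circ\AJ_{p_1,\dots,p_{g-1},x_i}(x) = \theta(\tau, p_1+\dots+p_{g-1}+x_i-x)$ for each zero/pole $x_i$ of $f$. By \Cref{fact:zerotheta} (applied with the $g$-tuple $p_1,\dots,p_{g-1},x_i$), the function $g_i$, viewed as a section of the appropriate line bundle on $C$, vanishes exactly at the $g$ points $p_1,\dots,p_{g-1},x_i$, each to first order, provided these points are distinct — this is where the genericity of the $p_j$ is used. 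Then $F(x)\coloneqq\prod_{i=1}^N g_i(x)^{m_i}$ is an a priori meromorphic object on $C$; I would compute its divisor of zeroes and poles. Since $\sum m_i=0$, the contributions of the common points $p_1,\dots,p_{g-1}$ to $\div(F)$ cancel (each appears with total exponent $\sum m_i = 0$), and what remains is exactly $\sum m_i x_i = \div(f)$.

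Next I would argue that $F$ is genuinely a well-defined meromorphic function on $C$, not merely a multivalued object or a section of a nontrivial line bundle. A priori $\theta\circ\AJ$ is a section of the pullback of the theta line bundle, but in the product $\prod g_i^{m_i}$ the automorphy factors from \Cref{ex:thetamodularity} combine with exponent $\sum m_i=0$, so the quasi-periodicity multipliers cancel and $F$ descends to an honest single-valued meromorphic function on $C$. One must be slightly careful: the quasi-periodicity factor in \Cref{ex:thetamodularity} is $\exp(\pi i(-2m_2^t z - m_2^t\tau m_2))$, which depends on the argument $z = p_1+\dots+p_{g-1}+x_i-x$ through the linear term; since the $x_i$-dependent and constant parts are killed by $\sum m_i=0$ but the $z$ carries the variable $x$ with total coefficient $-\sum m_i = 0$ as well, both the additive shifts and the $x$-dependence of the multiplier cancel. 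Hence $F$ is a bona fide meromorphic function on $C$ with $\div(F)=\div(f)$.

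Finally, two meromorphic functions on a compact Riemann surface with the same divisor differ by a nonzero constant: $f/F$ is then a holomorphic function on the compact curve $C$ with no zeroes (since zeroes and poles of $f$ and $F$ coincide with multiplicity), hence bounded, hence constant by the maximum principle. This gives $f = \const\cdot F$, which is exactly \eqref{eq:ftheta}. The main obstacle — the step requiring real care rather than routine bookkeeping — is the verification that $F$ is single-valued on $C$, i.e.\ that the theta automorphy factors cancel in the weighted product; this is precisely where the hypothesis $\deg\div(f)=\sum m_i=0$ enters essentially, and it is also where one must handle the degenerate configurations (some $x_i$ equal, or some $x_i$ among the $p_j$) by a limiting or genericity argument, which is why the statement asks for \emph{general} points $p_1,\dots,p_{g-1}$.
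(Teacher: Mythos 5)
Your main argument is exactly the paper's proof: using \Cref{fact:zerotheta}, each factor $\theta^{m_i}(p_1+\dots+p_{g-1}+x_i-x)$ has divisor $m_i(p_1+\dots+p_{g-1}+x_i)$, the contribution of $p_1+\dots+p_{g-1}$ cancels because $\sum m_i=\deg\div(f)=0$, so the right-hand side of \eqref{eq:ftheta} has divisor $\sum m_i x_i=\div(f)$, and the ratio with $f$ is a nowhere-zero holomorphic function on the compact curve $C$, hence constant.

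The one place where you go beyond the paper --- the single-valuedness of the product, which you rightly flag as the delicate step and which the paper silently suppresses --- is where your reasoning is not quite right. Transporting $x$ around a cycle shifts every argument $z_i=p_1+\dots+p_{g-1}+x_i-x$ by the same lattice vector $m_1+\tau m_2$, and by \Cref{ex:thetamodularity} the total multiplier of the product is
\[
\exp\Bigl(2\pi i\, m_2^t\textstyle\sum_i m_i z_i\Bigr)
=\exp\Bigl(2\pi i\, m_2^t\textstyle\sum_i m_i \AJ(x_i)\Bigr),
\]
since the $x$-dependent and the $p_1+\dots+p_{g-1}$ parts are indeed killed by $\sum m_i=0$. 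But the surviving $x_i$-dependent part is \emph{not} killed by $\sum m_i=0$: it is the Abel--Jacobi image of $\div(f)$, which vanishes only modulo the lattice, precisely by Abel's theorem (because $\sum m_i x_i$ is principal), and to make the multiplier literally equal to $1$ one must in addition choose the lifts/paths for the $x_i$ suitably (splitting multiplicities into separate lifts if some $|m_i|>1$). So the genuine input at this step is Abel's theorem plus a choice of representatives, not the degree condition alone --- which is exactly the connection the paper points to in the remark and footnote immediately following the proposition. This does not affect your divisor computation, which is the whole of the paper's proof, but the justification you give for single-valuedness as written would not survive scrutiny.
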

Here by general we mean more precisely that $\dim H^0(C, p_ 1 + \dots + p_{g - 1}) = 1$, i.e.~that $p_1+\dots+p_{g-1}$ is a smooth point of the theta divisor. 

\begin{proof}
    Note that
    \[\div(\theta^{m_i}(p_1 + \dots + p_{g-1} + x_i - x)) = m_i \cdot (p_1 + \dots + p_{g-1} + x_i)\,,\]
    by \Cref{fact:zerotheta}, and thus the divisor of the right-hand-side of \eqref{eq:ftheta} is equal to
    \[\sum \limits_{i = 1}^N m_i(p_1 + \dots + p_{g-1}) + \sum \limits_{i = 1}^N m_ix_i = \left(\sum \limits_{i = 1}^N m_i\right) (p_1 + \dots + p_{g-1}) + \sum m_i x_i = \sum m_ix_i\,.\]
    Thus the product $\prod \limits_{i = 1}^N \theta^{m_i}(p_1 + \dots + p_{g - 1} + x_i - x)$ has the same divisor of zeroes and poles as the function $f$, and thus their ratio is a meromorphic function on~$C$ that has no zeros or poles, and is thus constant.
\end{proof}

This provides us with a way to construct all meromorphic functions on Riemann surfaces explicitly by using theta functions. Note, however, that the explicit formula is very strange as the expression visibly depends on the choice of a general point $p_1 + \dots + p_{g - 1}$ on the theta divisor. There are of course many such choices, but somehow~the resulting function~$f$ is independent of any choice made. This is to say that the ambiguities in the choices of~$p_i$ cancel. For your own benefit, try to see how this relates to Abel's theorem. \footnote{Abel's theorem is that a degree zero divisor $\sum m_ix_i$ on a curve~$C$ is principal if and only if its image $\sum m_i x_i\in \Pic^0(C)$ is equal to zero, which is obvious from the algebraic definition but not from the analytic one; we do not pursue this further here.}

\subsubsection*{Summary of this section} We have reviewed the analytic and algebro-geometric constructions of the Jacobian of a complex curve, and related them to each other. The Schottky problem is the problem of characterizing Jacobians among all principally polarized abelian varieties, and the Riemann theta singularity theorem provides rich information on the geometry of the theta divisor of a Jacobian, which we will use below to approach the Schottky problem.

\section{Geometry of Theta Divisors}\label{sec:theta}
We now further investigate the geometry of theta divisors of Jacobians by applying Riemann's theta singularity theorem. Recall that the Riemann theta singularity theorem equates the multiplicity of the theta function at a point $D$ in the Jacobian $\Pic^{g-1}(C)$ to the number of sections of the corresponding line bundle $[D]$. So what this does is it relates the geometry of the curve to the geometry of its Jacobian. There is no analog of this for a general abelian variety, and we will now use this to derive some interesting properties of Jacobians that we will then see are not shared by general abelian varieties.

\subsection{Weil reducibility}
Our first  goal is to deduce from Riemann theta singularity theorem the following Weil reducibility theorem. 
\begin{theorem}[Weil's reducibility~1957~\cite{Weil1957}]\label{thm:Weilreducibility}
For any curve $C$, and for any points $p, q, r, s \in C$ with $p \ne q$
\[\left(\Theta \cap \Theta_{p-q}\right) \subset \left(\Theta_{p-r} \cup \Theta_{s-q}\right)\,.\]
\end{theorem}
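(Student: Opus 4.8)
The plan is to use Riemann's theta singularity theorem to translate the asserted containment of theta translates into a statement about sections of line bundles on $C$, which can then be checked divisor-theoretically. First I would unwind the notation: a point $D\in\Jac(C)=\Pic^{g-1}(C)$ lies on $\Theta_{a-b}$ exactly when $D+b-a\in\Theta$, i.e.\ (by \Cref{thm:Riemannsingularity}) when $h^0(C,[D+b-a])>0$, which by \Cref{rmk:Dsf2} means $[D+b-a]$ is effective, i.e.\ $D\sim E+a-b$ for some effective divisor $E$ of degree $g-1$. So the statement $\Theta\cap\Theta_{p-q}\subset\Theta_{p-r}\cup\Theta_{s-q}$ becomes: if $D$ is a degree $g-1$ class with both $[D]$ effective and $[D+q-p]$ effective, then $[D+r-p]$ is effective or $[D+q-s]$ is effective.

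Next I would set up the divisors. Since $[D]$ is effective, write $D\sim D_0$ with $D_0\ge 0$ of degree $g-1$. Since $[D+q-p]$ is effective, write $D+q-p\sim D_1$ with $D_1\ge 0$; equivalently $D_0+q\sim D_1+p$, so the degree $g$ effective divisors $D_0+q$ and $D_1+p$ are linearly equivalent, hence the degree $g$ line bundle $\cL\coloneqq[D_0+q]=[D_1+p]$ has (at least) two sections whose zero divisors are $D_0+q$ and $D_1+p$. The key case distinction is whether $p$ (equivalently $q$, by symmetry) is a base point of the linear system $|\cL|$. If $p$ is \emph{not} a base point, there is a section of $\cL$ not vanishing at $p$; a suitable combination then produces an effective divisor in $|\cL|$ containing $p$ with the right multiplicity structure — more precisely, since $D_0+q\in|\cL|$ and $p$ is not a base point, I would argue that there is an effective divisor $D_2\in|\cL|$ with $D_2\ge p$ and $D_2\ge s$ is \emph{not} forced, but rather split: either $|\cL(-p-s)|\ne\emptyset$, giving $D+q-s$ effective after subtracting, or else... — here I must be careful. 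The cleanest route is: from $D_0+q\sim D_1+p$ we get $D_0\sim D_1+p-q$; I want $D_0+p-r$ effective or $D_0+q-s$... let me instead directly chase: $[D+r-p]$ effective $\iff$ $[D_1-q+r]$ effective (using $D+q-p\sim D_1$), and $[D+q-s]$ effective $\iff [D_0+q-s]$ effective. So I need: $[D_1+r-q]\ge 0$ or $[D_0+q-s]\ge 0$.

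The main obstacle, and the heart of the argument, is exactly this dichotomy, and I expect to resolve it via the \emph{base-point} dichotomy for the pencil of sections inside $\cL=[D_0+q]$: consider $V\coloneqq H^0(C,\cL)$, which contains (the sections cutting out) $D_0+q$ and $D_1+p$. If the evaluation map $V\to\cL|_q$ is nonzero, i.e.\ $q$ is not a base point of $|\cL|$, then $h^0(\cL(-q))=h^0(\cL)-1$ and I can find $s_2\in V$ vanishing at $s$ (if $\dim V\ge 2$) or handle $\dim V=1$ directly; then $D_2\coloneqq\div(s_2)\ge s$ with $D_2\in|\cL|$, and since $D_2\sim D_0+q$ we get $D_0+q-s\sim D_2-s\ge 0$, i.e.\ $[D+q-s]$ effective, done. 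If instead $q$ \emph{is} a base point of $|\cL|$, then every divisor in $|\cL|$ contains $q$; in particular $D_1+p\ge q$, so $p=q$ unless $D_1\ge q$; since $p\ne q$ by hypothesis, $D_1\ge q$, hence $D_1-q\ge 0$ and therefore $D_1+r-q\ge 0$ for any $r$, i.e.\ $[D+r-p]$ is effective, done. So the whole proof reduces to this clean base-point-of-$q$ case analysis on the pencil $|\cL|$ inside the degree $g$ system, together with the Riemann theta singularity dictionary; the only genuinely delicate point is the bookkeeping of multiplicities when some of $p,q,r,s$ coincide or already appear in $D_0,D_1$, which I would check case by case but not belabor here.
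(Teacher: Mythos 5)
Your argument is correct and is essentially the paper's proof: after the Riemann theta singularity translation, both arguments work inside the degree-$g$ linear system containing the two effective divisors ($|D+p|$ in the paper, your $|\cL|=|D_0+q|$ — your opposite sign convention for the translate $\Theta_a$ merely relabels $(p,q,r,s)\mapsto(q,p,s,r)$ and is harmless since the statement is quantified over all such quadruples), and your dichotomy ``$q$ is or is not a base point of $|\cL|$'' is a cosmetic variant of the paper's dichotomy $h^0=1$ versus $h^0\ge 2$, with the same two mechanisms: pick a section vanishing at the prescribed extra point, or deduce a divisor containment. The one dangling branch in your write-up, ``handle $\dim V=1$ directly'' in the non-base-point case, is vacuous — if $q$ is not a base point then some section of $\cL$ does not vanish at $q$ while the section cutting out $D_0+q$ does, so $h^0(\cL)\ge 2$ automatically — and the multiplicity bookkeeping you defer is likewise a non-issue, the hypothesis $p\ne q$ being used only to pass from $\mult_q(D_1+p)\ge 1$ to $D_1\ge q$.
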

In the statement of the theorem, and in what follows, we will write $\Theta_a$ for the translate of the theta divisor by~$a$. More precisely, we recall that the natural addition map of divisors descends to a map $\Pic^0(C)\times\Pic^{g-1}(C)\to\Pic^{g-1}(C)$, i.e.~to an action on the Jacobian. So thinking of $p - q$ as a divisor in $\Pic^0(C)$, $\Theta_{p - q}$ is then this translate of the theta function; analytically, it is the zero locus, in the variable $z\in\Jac(C)$, of $\theta(z+p-q)$.

\begin{remark}
Note that there is something very strange about \Cref{thm:Weilreducibility}. Indeed, both $\Theta$ and $\Theta_{p - q}$ are codimension $1$ subvarieties of the Jacobian. As the Jacobian is indecomposable, they are in fact both irreducible as by definition theta divisor is the image of $C^{\times(g-1)}/S_{g-1}$. Thus their intersection is some union of irreducible codimension~$2$ subvarieties of the Jacobian, which depends on $p$ and $q$. The theorem claims that this intersection is contained in the union of two other codimension~$1$ subvarieties of the Jacobian. However, the points $r$ and $s$ only appear on the right-hand-side of the theorem, in $\Theta_{p-r}$ and $\Theta_{s-q}$! This means that the codimension~$2$ subvariety $\Theta \cap \Theta_{p-q}$, which does not depend on~$r$ and~$s$, is contained in a two-parameter family of unions of two such irreducible codimension~$1$ varieties, obtained by varying~$s$ and~$r$. If we remove either translate, then the conclusion is verifiably false, and indeed what happens generically is that $\Theta\cap\Theta_{p-q}$ has precisely two irreducible components, one contained in~$\Theta_{p-r}$, and the other in~$\Theta_{s-q}$. We encourage you to think about this as you are reading the proof below.
\end{remark}
\begin{proof}
The only technique used is the Riemann theta singularity theorem. Suppose $L\in\Jac(C)$ lies in the intersection (from now on by abuse of language and notation we will interchangeably speak of divisors and line bundles on curves, and when necessary also of their equivalence classes in the Jacobian, see \Cref{rmk:Dsf1}):
\[L \in \Theta \cap \Theta_{p - q}\,.\]
We want to check that $L$ is then contained in one of the two indicated translates of the theta divisor. Now what does it mean that $L$ is contained in the intersection of the theta divisor and its translate?  By the Riemann theta singularity theorem, $L\in\Theta$ means that the corresponding line bundle has at least one section, i.e.~$h^0(C, L) \ge 1$. Similarly, $L\in\Theta_{p-q}$ means that $h^0(C,L+p-q)\ge 1$. Thus what we need to prove is
\[ h^0(C,L)\ge 1\hbox{ and }h^0(C,L+p-q)\ge 1\Rightarrow h^0(C,L+p-r)\ge 1\hbox{ or }h^0(C,L+s-q)\ge 1.\]
It is now useful to think of sections of a line bundle~$L$ as meromorphic functions on the curve with poles of orders bounded below by the corresponding divisor~$D$ (see \Cref{rmk:Dsf2}). In particular, $h^0(C, L +  p - q) \ge 1$ means that there exists a meromorphic function~$f$ on $C$ that may have poles at $D$ (the divisor corresponding to~$L$) and $p$ (counted with multiplicity), and vanishing at~$q$. Since $H^0(C, L + p)$ is similarly the space of meromorphic functions that may have poles at $D$ and $p$, and thinking similarly of $H^0(C,L)$, we have the inclusions
\begin{equation}\label{eq:contain}
H^0(C, D + p - q)\subseteq H^0(C, D + p)\supseteq H^0(C,D).    
\end{equation}
We now investigate separately the two cases $h^0(C, D + p) = 1$ and $h^0(C, D + p) \ge 2$.

{\em Case 1:} $h^0(C, D + p) = 1$. In this case by \eqref{eq:contain} the space $H^0(C,D+p)$ must be simply equal to~$H^0(C,D+p-q)=\CC\cdot f$. However, by the right containment in that equality we must also have $H^0(C,D)=\CC\cdot f$, which is to say that the function~$f$ does not have a pole at~$p$ (or, if the support of the divisor~$D$ contains~$p$, its pole order at~$p$ is still bounded by the multiplicity of~$p$ in~$D)$. But then by definition $f\in H^0(C,D-q)$, as it does not have a pole at~$p$ and we already know it vanishes at~$q$. Similarly to \eqref{eq:contain}, we have $H^0(C,D-q)\subseteq H^0(C,D+s-q)$, and thus $h^0(C,D+s-q)\ge 1$, which by using the Riemann theta singularity in reverse implies $L\in \Theta_{s-q}$.

{\em Case 2:} $h^0(C, D + p)\ge 2$. In this case there exist two linearly independent sections $f_1, f_2 \in H^0(C, D + p)$. But then there exists some linearly combination $f\coloneqq\alpha_1f_1+\alpha_2f_2$, for suitable $\alpha_1,\alpha_2\in\CC$ that vanishes at $r$ (just consider the ratio of the values $-f_1(r)/f_2(r)$). But then since the poles of~$f$ are at worst the same as the poles of~$f_1,f_2$, we see that $0\ne f\in H^0(C,D+p-r)$, and thus again using the Riemann theta singularity shows that $L\in \Theta_{p-r}$.
\end{proof}

\subsection{Weil reducibility from the analytic viewpoint}\label{sec:analyticWeil}

Let's try to understand what Weil reducibility means analytically in terms of functions. We argue it informally to see how it works but all the statements can be made rigorous.

First, what does it mean that $z$ lies on the theta divisor? It means $\theta(z) = 0$. Similarly, $z \in \Theta_{p - q}$ if and only if $\theta(z + p - q) = 0$. So $z \in \Theta \cap \Theta_{p - q}$ means that $\theta(z) = 0=\theta(z + p - q)$.
Now $z \in \Theta_{p - r} \cup \Theta_{s - q}$ means that either $\theta(z + p - r) = 0$ or $\theta(z + s - q) = 0$ which is of course the same as to say the product $\theta(z + p - r)\theta(z + s - q)$ is equal to zero. So Weil reducibility is equivalent to the following claim:
\begin{equation}\label{eq:analyticWeil}
\theta(z) = 0=\theta(z + p - q) \Longrightarrow \theta(z + p - r)\theta(z + s - q) = 0.
\end{equation}
To further use this, it would be great to derive some stronger or more precise statement. More precisely, we want to write an equation that will imply \eqref{eq:analyticWeil} --- and then we will hope that in fact this equation would be equivalent to \eqref{eq:analyticWeil}. How could this happen? So if two functions $\theta(z)$ and $\theta(z + p-q)$ both vanish, then the function~$\theta(z + p - r)\theta(z + s - q)$ also vanishes. Said differently, this means that the function $\theta(z + p - r)\theta(z + s - q)$ should lie in the ideal generated by $\theta(z)$ and $\theta(z + p - q)$. For this to happen there must exist holomorphic functions $F(z)$ and $G(z)$ such that 
\begin{equation}\label{eq:analyticWeil2}
F(z)\theta(z) + G(z)\theta(z + p - q) = \theta(z + p - r)\theta(z + s - q)
\end{equation}
for all~$z\in\CC^g$ (and this would certainly imply \eqref{eq:analyticWeil}). We are writing these as functions on $\CC^g$, but theta functions are really  sections of some line bundles on the Jacobian. So what is $\theta(z + p - r)\theta(z + s - q)$ a section of? By definition, $\theta(z)$ is a section of the polarization bundle~$\Theta$, so $\theta(z + p - r)$ is a section of $\Theta_{p-r}$, and thus altogether $\theta(z + p - r)\theta(z + s - q)$ is a section of some translate of $2\Theta$ \footnote{From here onwards we use the standard algebraic geometry convention of writing $2\Theta$ for $\Theta^{\otimes 2}$, etc.}. In fact the translates of the two factors add up, and thus altogether $\theta(z + p - r)\theta(z + s - q)$ lies in $H^0(\Jac(C), 2\Theta_{p+s-r-q})$, which means we take the line bundle $\Theta^{\otimes 2}$, and tensor it with $\cO_C(p + s - r - q)$, which is a line bundle in $\Pic^0(C)$ that we also denoted $[p+s-r-q]$ before. 

So the right-hand-side of \eqref{eq:analyticWeil2} is a section of a line bundle, and it is natural to expect each of the two summands on the left to also be sections of the same line bundle. For $G(z)\theta(z + p - q)$ to be a section of $2\Theta_{p + s - r - q}$, $G(z)$ must be a section of $\Theta^{\otimes 2}\otimes \cO_C(p + s - r - q) \otimes \Theta^{-1}\otimes\cO_C(q -p)=\Theta\otimes\cO_C(s-r)$. But the bundle $\Theta_{s-r}$ is a translate of the principal polarization, and thus only has one section $\theta(z + s - r)$, up to a constant factor. By the same reasoning $F(z)$ must be a constant multiple of $\theta(z + p + s - r - q)$, and we thus expect to obtain for some $A,B\in\CC$ the equation
\begin{equation}\label{eq:analyticWeil3}
A\theta(z + p + s - r - q)\theta(z) + B\theta(z + s - r)\theta(z + p - q) = \theta(z + p - r)\theta(z + s - q)\,.
\end{equation}
This obviously implies \eqref{eq:analyticWeil2}. As all the factors now live in the same bundle, \eqref{eq:analyticWeil3} is actually equivalent to \eqref{eq:analyticWeil2}. 

\begin{fact}
Equations \eqref{eq:analyticWeil}, \eqref{eq:analyticWeil2} and \eqref{eq:analyticWeil3} are all equivalent, and equivalent to the Weil reducibility.
\end{fact}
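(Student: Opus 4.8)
The plan is to establish the two easy implications and then close the loop by deducing \eqref{eq:analyticWeil3} from \eqref{eq:analyticWeil}. The easy direction is $\eqref{eq:analyticWeil3}\Rightarrow\eqref{eq:analyticWeil2}\Rightarrow\eqref{eq:analyticWeil}$: taking $F(z)=A\,\theta(z+p+s-r-q)$ and $G(z)=B\,\theta(z+s-r)$ turns \eqref{eq:analyticWeil3} into \eqref{eq:analyticWeil2}, and evaluating \eqref{eq:analyticWeil2} at any common zero of $\theta(z)$ and $\theta(z+p-q)$ gives \eqref{eq:analyticWeil}. I would also record at the outset that \eqref{eq:analyticWeil} is merely the analytic translation of \Cref{thm:Weilreducibility}: since $z\in\Theta_a$ means $\theta(z+a)=0$, taking $a=0,\,p-q,\,p-r,\,s-q$ identifies \eqref{eq:analyticWeil} with the inclusion $\Theta\cap\Theta_{p-q}\subset\Theta_{p-r}\cup\Theta_{s-q}$. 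So the real content is the implication $\eqref{eq:analyticWeil}\Rightarrow\eqref{eq:analyticWeil3}$, for which I would carry out rigorously the line-bundle bookkeeping already sketched informally above.

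The key is to view all three theta products in \eqref{eq:analyticWeil3} as sections of one line bundle $L$ on $\Jac(C)$, a translate of $2\Theta$. Setting $L\coloneqq\cO_{\Jac(C)}(\Theta_{p-r}+\Theta_{s-q})$, the section $\theta(z+p-r)\theta(z+s-q)$ lies in $H^0(\Jac(C),L)$; since translates of $\Theta$ add up (the theorem of the square) one has $L\cong\cO(\Theta+\Theta_{p+s-r-q})\cong\cO(\Theta_{s-r}+\Theta_{p-q})$, which exhibits the two left-hand products of \eqref{eq:analyticWeil3} as sections of $L$ as well. As $p\ne q$, the divisors $\Theta$ and $\Theta_{p-q}$ are distinct and irreducible, so $Z\coloneqq\Theta\cap\Theta_{p-q}$ is a complete intersection of codimension $2$ with Koszul resolution
\[
0\longrightarrow\cO(-\Theta-\Theta_{p-q})\longrightarrow\cO(-\Theta)\oplus\cO(-\Theta_{p-q})\longrightarrow\mathcal{I}_Z\longrightarrow 0\,.
\]
Tensoring by $L$ and taking cohomology, the left-hand term becomes $\cO(\Theta_{p-r}+\Theta_{s-q}-\Theta-\Theta_{p-q})$, a line bundle algebraically equivalent to zero, hence a point of $\Pic^0(\Jac(C))$ which one computes to be $\phi_\Theta(s-r)$; for $s\ne r$ it is nontrivial, so all its cohomology vanishes. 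The bundles $L(-\Theta)\cong\cO(\Theta_{p+s-r-q})$ and $L(-\Theta_{p-q})\cong\cO(\Theta_{s-r})$ are translates of the principal polarization, each with one-dimensional $H^0$ and no higher cohomology. The long exact sequence then yields $H^1(\mathcal{I}_Z\otimes L)=0$ and a canonical isomorphism
\[
H^0(\mathcal{I}_Z\otimes L)\;\cong\;H^0\bigl(L(-\Theta)\bigr)\oplus H^0\bigl(L(-\Theta_{p-q})\bigr)\;=\;\CC\cdot\theta(z+p+s-r-q)\theta(z)\ \oplus\ \CC\cdot\theta(z+s-r)\theta(z+p-q)\,,
\]
the two summands being obtained by multiplying the unique section of $L(-\Theta)$, resp.\ $L(-\Theta_{p-q})$, by the section $\theta(z)$, resp.\ $\theta(z+p-q)$, cutting out $\Theta$, resp.\ $\Theta_{p-q}$.

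To conclude, I would observe that \eqref{eq:analyticWeil} says precisely that the section $\theta(z+p-r)\theta(z+s-q)\in H^0(\Jac(C),L)$ vanishes on $Z$; since the complete intersection $Z$ is Cohen--Macaulay its ideal sheaf $\mathcal{I}_Z$ is saturated, and this section — vanishing on $Z$, and to order at least one along the dense open locus where $\Theta$ meets $\Theta_{p-q}$ transversally — lies in $H^0(\mathcal{I}_Z\otimes L)$. By the displayed isomorphism it is therefore a linear combination $A\,\theta(z+p+s-r-q)\theta(z)+B\,\theta(z+s-r)\theta(z+p-q)$, which is exactly \eqref{eq:analyticWeil3}. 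Together with the trivial implications, this establishes the equivalence of \eqref{eq:analyticWeil}, \eqref{eq:analyticWeil2}, \eqref{eq:analyticWeil3} and \Cref{thm:Weilreducibility}. The degenerate configurations, in particular $s=r$ (where the two left-hand products coincide and $\phi_\Theta(s-r)$ is trivial), I would handle by continuity, since $A$ and $B$ solve a linear system whose coefficients are values of theta functions and hence depend holomorphically on $p,q,r,s$.

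The step I expect to be the main obstacle is precisely the passage from the set-theoretic containment provided by \Cref{thm:Weilreducibility} to genuine membership of $\theta(z+p-r)\theta(z+s-q)$ in the saturated ideal sheaf $\mathcal{I}_Z$, i.e.\ the control of vanishing multiplicities along $Z$. The remaining ingredients — vanishing of higher cohomology of $\Theta$-translates and of nontrivial degree-$0$ line bundles on an abelian variety, and the identification of which translate of $2\Theta$ each product is a section of — are standard, but the last of these must be done carefully to make the displayed isomorphism literally correct.
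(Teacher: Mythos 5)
Your route is the natural one, and it is worth noting at the outset that the paper itself offers no proof of this Fact: it explicitly defers to Koszul cohomology and to \cite{Beauville-Debarre1986} and \cite[p.~34]{Fay1973}. Your Koszul-resolution argument is exactly the skeleton of that standard approach, and most of it is sound: the identifications $L(-\Theta)\cong\cO(\Theta_{p+s-r-q})$, $L(-\Theta_{p-q})\cong\cO(\Theta_{s-r})$, the vanishing of all cohomology of the nontrivial degree-zero bundle $\cO(\Theta_{p-r}+\Theta_{s-q}-\Theta-\Theta_{p-q})$ (nontrivial precisely because $s\ne r$ on the curve and Abel--Jacobi is injective), the resulting two-dimensionality of $H^0(\mathcal{I}_Z\otimes L)$ with the basis you exhibit, the implications \eqref{eq:analyticWeil3}$\Rightarrow$\eqref{eq:analyticWeil2}$\Rightarrow$\eqref{eq:analyticWeil}, and the identification of \eqref{eq:analyticWeil} with \Cref{thm:Weilreducibility} are all correct.

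The genuine gap is the step you flag but do not close. To use your displayed isomorphism you need $\theta(z+p-r)\theta(z+s-q)$ to lie in $H^0(\mathcal{I}_Z\otimes L)$, where $\mathcal{I}_Z$ is generated locally by $\theta(z)$ and $\theta(z+p-q)$, i.e.\ the ideal of the \emph{scheme-theoretic} intersection; but \eqref{eq:analyticWeil}, equivalently \Cref{thm:Weilreducibility}, only gives vanishing on the underlying set. The unmixedness of the complete intersection does reduce membership to a check at the generic point of each codimension-two component $W$ of $Z$, but at such a point the local ideal $(\theta,\theta_{p-q})$ is radical only if $\Theta$ and $\Theta_{p-q}$ meet generically transversally along $W$; if they were tangent along $W$, a section vanishing to order one on $W$ would not lie in the ideal. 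Your parenthetical appeal to ``the dense open locus where $\Theta$ meets $\Theta_{p-q}$ transversally'' presupposes that this locus is dense in \emph{every} component of $Z$, which is precisely what is not established --- and this reducedness/multiplicity statement is the nontrivial content for which the paper cites Beauville--Debarre and Fay. Until it is supplied (or the implication is proved by another route), \eqref{eq:analyticWeil}$\Rightarrow$\eqref{eq:analyticWeil3} is not proved. Separately, drop the closing remark about handling $s=r$ ``by continuity'': for $s=r$ the left side of \eqref{eq:analyticWeil3} becomes $(A+B)\,\theta(z)\theta(z+p-q)$, which cannot equal $\theta(z+p-r)\theta(z+r-q)$ for generic $r$, so the identity genuinely fails there (and in your Koszul computation $H^1$ of the now-trivial degree-zero bundle no longer vanishes); the Fact is implicitly about configurations in general position, and the constants $A,B$ do not extend continuously to that limit.
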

The proof of this fact uses Koszul cohomology and does not fall within the scope of these lectures, see eg.~\cite{Beauville-Debarre1986}, \cite[p.~34]{Fay1973}. Moreover, technically for the result we will prove this equivalence will not be necessary.
\begin{remark}
Equation \eqref{eq:analyticWeil3} is a functional equation for the theta function, and it must hold for some constants $A$ and $B$. The constants $A$ and $B$ will depend on $p, q, r, s$, but the point is that they do not depend on~$z$. It is an equation that must hold for any $z \in \CC^g$, or for any point in the abelian variety, depending on how one thinks about it. 
\end{remark}

Geometrically equation \eqref{eq:analyticWeil3} means that if we have an arbitrary quadruple of points $p,q,r,s\in C$, there is a linear dependence between three sections of $2\Theta_{p+s-r-q}$ on the Jacobian. Let us try to appreciate this result: by construction the bundle $\Theta$ itself has only one section, but how many sections does the bundle $2\Theta$ have? Here is another fact which we will not fully prove.

\begin{fact}
For any principally polarized abelian variety $h^0(A, 2\Theta) = 2^g$. 
\end{fact}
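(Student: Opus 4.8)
The plan is to compute $h^0(A,2\Theta)$ directly from the explicit description of sections of powers of the theta bundle in terms of theta functions with characteristics. Working in the analytic model $A = A_\tau = \CC^g/(\ZZ^g+\tau\ZZ^g)$, a section of $n\Theta$ corresponds to a holomorphic function $s$ on $\CC^g$ satisfying the transformation law dictated by the quasi-periodicity of $\theta^{\otimes n}$; explicitly, for $m_1,m_2\in\ZZ^g$,
\[
s(z+m_1+\tau m_2) = \exp\!\bigl(\pi i(-2n\, m_2^t z - n\, m_2^t\tau m_2)\bigr)\, s(z),
\]
which one reads off from \Cref{ex:thetamodularity} with $n$ copies. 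The strategy is then: (i) observe invariance under $z\mapsto z+m_1$ forces a Fourier expansion $s(z) = \sum_{\nu\in\ZZ^g} c_\nu \exp(2\pi i\, \nu^t z)$; (ii) impose the quasi-periodicity under $z\mapsto z+\tau m_2$ and match Fourier coefficients, obtaining a recursion $c_{\nu+n m_2} = (\text{explicit nonzero exponential factor})\cdot c_\nu$; (iii) conclude that the coefficients $c_\nu$ are freely determined by the finitely many residue classes $\nu \bmod n\ZZ^g$, of which there are exactly $n^g$. Setting $n=2$ yields $h^0(A,2\Theta)=2^g$.

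Concretely I would carry this out as follows. First I would write down the functional equation above and reduce it, via the translation-by-$\ZZ^g$ part, to the Fourier side. Then, substituting the Fourier series into the $\tau$-quasi-periodicity relation and using $\exp(2\pi i\,\nu^t(z+\tau m_2)) = \exp(2\pi i\,\nu^t\tau m_2)\exp(2\pi i\,\nu^t z)$, I would compare the coefficient of $\exp(2\pi i\,\nu^t z)$ on both sides. After completing the square in the exponents — the standard manipulation — this gives $c_{\nu} = (\text{unit})\cdot c_{\nu - n m_2}$ for all $m_2$, so the map $s\mapsto (c_\nu)_{\nu\in(\ZZ/n\ZZ)^g}$ is a well-defined linear injection into $\CC^{n^g}$. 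For surjectivity one exhibits an explicit basis: the theta functions with characteristics $\theta\!\left[\begin{smallmatrix}\ve\\0\end{smallmatrix}\right](n^{-1}\cdot\,,\,)$-type expressions, or more cleanly the functions $\theta\!\left[\begin{smallmatrix}\ve/n\\0\end{smallmatrix}\right](\tau, nz)$ for $\ve\in(\ZZ/n\ZZ)^g$ (suitably normalized so the bundle is exactly $n\Theta$ rather than a translate), whose Fourier supports are the distinct residue classes mod $n$, hence are linearly independent and span. Specializing $n=2$ gives the $2^g$ functions $\theta\!\left[\begin{smallmatrix}\ve/2\\0\end{smallmatrix}\right](\tau,2z)$, $\ve\in(\ZZ/2\ZZ)^g$, as an explicit basis of $H^0(A,2\Theta)$.

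The main obstacle, and the only place requiring genuine care rather than bookkeeping, is the exponent matching in step (ii)–(iii): one must track the quadratic form $m_2^t\tau m_2$ against the cross term $\nu^t\tau m_2$ correctly, complete the square, and verify the resulting recursion factor is a nowhere-zero holomorphic function of $z$ (in fact it should come out independent of $z$ after the square is completed, which is the sign that the computation is right) — getting an $n$ versus $n^2$ or a stray factor of $2$ wrong here is exactly the "customary typo in the order of matrix multiplication" the paper warns about. A secondary point is the bundle-translate discrepancy: the sections $\theta^{\otimes n}$ of the honest $n$-th power versus translates $n\Theta_a$ differ only by a fixed exponential-times-translation, which changes neither $h^0$ nor the dimension count, so it is harmless but should be acknowledged. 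I would note that this is a standard computation (carried out in Mumford's \emph{Tata Lectures on Theta} and in \cite{Beauville-Debarre1986}), and for the purposes of these lectures one may simply cite it; the conceptual payoff is that $2^g$ sections of $2\Theta$ are far more than the $g+1$ one would need for the linear-dependence statement \eqref{eq:analyticWeil3} among three sections, which is consistent with — and much weaker than — the full $2^g$-dimensional picture.
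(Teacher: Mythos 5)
Your argument is correct in substance but takes a genuinely different route from the paper. The paper's sketch is cohomological: ampleness of $\Theta$ kills the higher cohomology, $h^0(A,\Theta)=1$ together with Riemann--Roch gives the top self-intersection $\Theta^g=g!$, and then $(2\Theta)^g=2^g\,\Theta^g$ forces $h^0(A,2\Theta)=\chi(2\Theta)=2^g$. You instead do the classical Fourier-theoretic count: sections of $n\Theta$ are holomorphic functions with the $n$-fold quasi-periodicity, invariance under $\ZZ^g$ gives a Fourier series, and the $\tau\ZZ^g$-relation is a recursion identifying the coefficient space with functions on $(\ZZ/n\ZZ)^g$, so $h^0=n^g$. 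What your route buys is that it is elementary (no vanishing theorem, no intersection theory; only convergence from $\Im\tau>0$, which you should invoke for surjectivity if you go the ``prescribe $c_\nu$ on residue classes and sum'' way) and that it simultaneously proves the basis statement the paper asserts without proof, namely that the second-order theta functions \eqref{eq:Thetavarepsilon} span $H^0(A,2\Theta)$; what the paper's route buys is brevity and independence of the analytic model, at the cost of assuming Kodaira-type vanishing and Riemann--Roch on abelian varieties. One concrete slip to fix: your proposed explicit basis $\theta\left[\begin{smallmatrix}\ve/n\\0\end{smallmatrix}\right](\tau,nz)$ is wrong as written --- tracking the quasi-periodicity under $z\mapsto z+\tau m_2$ shows these transform as sections of $n^2\Theta$, which is exactly the $n$ versus $n^2$ trap you yourself flagged. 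The correct functions are $\theta\left[\begin{smallmatrix}\ve\\0\end{smallmatrix}\right](n\tau,nz)$ with $\ve\in\tfrac1n\ZZ^g/\ZZ^g$, i.e.~for $n=2$ precisely the $\Theta[\ve](\tau,z)=\theta\left[\begin{smallmatrix}\ve\\0\end{smallmatrix}\right](2\tau,2z)$ of \eqref{eq:Thetavarepsilon}; with that correction, their Fourier supports lie in distinct residue classes mod $n$ and your surjectivity argument goes through.
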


Here is a quick sketch of a proof. As $\Theta$ is an ample bundle, $h^i(A, \Theta)=0$ for any $i>0$. The fact that $h^0(A, \Theta) = 1$ then implies that the top self-intersection number $\Theta^g$ on~$A$ is equal to $g!$. But the top self-intersection number of $2\Theta$ differs from the top self-intersection number of $\Theta$ by a factor of $2^g$. As $h^i(C, 2\Theta)$ still vanishes for all $i>0$, it follows that $h^0(A, 2\Theta)=2^g\cdot h^0(A,\Theta)=2^g$. 

Moreover, still for any principally polarized abelian variety, and not just for Jacobians, we can conveniently write down an explicit basis for $H^0(A, 2\Theta)$, given by the so-called {\em theta functions of the second order}
\begin{equation}\label{eq:Thetavarepsilon}
\Big\lbrace\Theta[\ve](\tau, z) \coloneqq \theta\left[\begin{smallmatrix} \ve \\ 0 \end{smallmatrix}\right](2\tau, 2z)\Big\rbrace \quad \text{for all } \ve \in \tfrac{1}{2}(\ZZ \slash 2\ZZ)^g.
\end{equation}
Here $\Theta[\ve]$ is a notation for a theta function of the second order, which should not be confused with the theta divisor $\Theta$. The fact that these are sections of $2\Theta$ is elementary and easy to check, but the fact that they form a basis for sections of $2\Theta$ requires a proof, which again we omit here.

The next fact is a special case of a much more general statement called the isogeny theorem.
\begin{fact}[Riemann's bilinear relations] For any $\tau\in\HH_g$, and any $x,y\in\CC^g$
\begin{equation}\label{eq:isogeny}
\theta(\tau, x + y)\theta(\tau, x - y) = \sum \limits_{\ve \in \frac{1}{2}(\ZZ \slash 2\ZZ)^g}\Theta[\ve](\tau,x) \cdot \Theta[\ve](\tau,y).
\end{equation}
\end{fact}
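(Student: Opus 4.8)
The plan is to prove the addition formula \eqref{eq:isogeny} by the standard technique of comparing two sides as functions of one variable, say $x$, with $y$ treated as a parameter, and identifying them via their transformation behavior under the lattice together with a normalization. First I would fix $\tau$ and $y$, and set
\[
H(x) \coloneqq \theta(\tau, x+y)\,\theta(\tau,x-y), \qquad K(x) \coloneqq \sum_{\ve\in\frac12(\ZZ/2\ZZ)^g}\Theta[\ve](\tau,x)\,\Theta[\ve](\tau,y).
\]
Using \Cref{ex:thetamodularity}, one computes the quasi-periodicity of $H$ in $x$: translating $x\mapsto x+m_1+\tau m_2$ multiplies $\theta(\tau,x+y)$ and $\theta(\tau,x-y)$ by factors whose product is $\exp(\pi i(-4 m_2^t x - 2 m_2^t\tau m_2))$, i.e.\ $H$ is a section of (a translate of) $2\Theta$. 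Likewise each $\Theta[\ve](\tau,x)=\theta\left[\begin{smallmatrix}\ve\\0\end{smallmatrix}\right](2\tau,2x)$ transforms under $x\mapsto x+m_1+\tau m_2$ by $\exp(2\pi i(\cdots))$ in a way that (after summing over $\ve$, which permutes the characteristics or introduces sign characters that cancel) gives $K$ the same automorphy factor as $H$. Hence $H/K$, wherever defined, descends to a meromorphic function on $A_\tau$, and more precisely $H$ and $K$ both lie in the $2^g$-dimensional space $H^0(A_\tau, 2\Theta)$ of which \eqref{eq:Thetavarepsilon} is a basis.

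Next I would expand both sides in terms of this basis. Writing $H(x)=\sum_\ve c_\ve(y)\,\Theta[\ve](\tau,x)$, the coefficients $c_\ve(y)$ are, by the same automorphy computation now applied in the variable $y$, themselves sections of $2\Theta$ in $y$ with the same transformation law; since the space of such is spanned by the $\Theta[\ve]$, one gets $c_\ve(y)=\sum_{\ve'} a_{\ve\ve'}\Theta[\ve'](\tau,y)$ for constants $a_{\ve\ve'}$ depending only on $\tau$. So the problem reduces to showing $a_{\ve\ve'}=\delta_{\ve\ve'}$. To pin down the constants, I would substitute the Fourier--type series definitions directly: expanding $\theta(\tau,x\pm y)$ as sums over $n,n'\in\ZZ^g$ and changing summation variables to $u=n+n'$, $v=n-n'$ splits the lattice $\ZZ^g\times\ZZ^g$ into the sublattice where $u,v$ have the same parity, and the parity class of $v$ modulo $2$ is exactly the characteristic $\ve$; collecting terms reproduces precisely $\sum_\ve \Theta[\ve](\tau,x)\Theta[\ve](\tau,y)$ with all coefficients equal to $1$. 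This is the genus-$g$ version of the classical genus-$1$ computation and is a bookkeeping exercise with the exponents $\pi i n^t(\tau n+2z)$.

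The main obstacle, and the step I would be most careful about, is the index bookkeeping in the change of variables $n,n'\rightsquigarrow (n+n')/2,(n-n')/2$: one must verify that the quadratic form $n^t\tau n + n'^t\tau n'$ becomes $\tfrac12(u+v)^t\tau(u+v)+\tfrac12(u-v)^t\tau(u-v)$, track how $2\tau$ (not $\tau$) appears in $\Theta[\ve]$, and correctly handle the half-integer shift $\ve\in\tfrac12(\ZZ/2\ZZ)^g$ so that the parity residue of $v$ matches the characteristic in \eqref{eq:Thetavarepsilon}. An alternative, cleaner route that avoids explicit series manipulation is: both sides are sections of the same line bundle on $A_\tau\times A_\tau$ (pull back $2\Theta$ under $(x,y)\mapsto(x+y,x-y)$ versus under the two projections), that line bundle has a one-dimensional, or at least explicitly controlled, space of sections invariant under the residual symmetry $y\mapsto -y$, and then one fixes the single remaining constant by evaluating at $x=y=0$, where both sides equal $\sum_\ve \Theta[\ve](\tau,0)^2$. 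I would present the proof via this invariance-plus-normalization argument, relegating the direct series verification to a remark, since the bundle-theoretic version is exactly in the spirit of the rest of the section and the paper explicitly allows citing \eqref{eq:Thetavarepsilon} as a basis of $H^0(A,2\Theta)$.
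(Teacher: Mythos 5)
Your primary argument --- the direct series manipulation --- is correct, and it is worth noting that the paper itself does not prove this Fact at all: it only explains the isogeny interpretation ($\pi(x,y)=(x+y,x-y)$, $\pi^*(\Theta\boxtimes\Theta)=2\Theta\boxtimes 2\Theta$) and remarks that a direct computation with the second-order theta functions works. Your computation is exactly that omitted computation: the change of variables $u=n+n'$, $v=n-n'$ identifies $\ZZ^g\times\ZZ^g$ with the pairs $(u,v)$ of equal parity, and writing $u=2m+\delta$, $v=2m'+\delta$ with $\ve=\delta/2$ turns the exponent into $(m+\ve)^t(2\tau)(m+\ve)+2(m+\ve)^t(2x)$ plus the same expression in $(m',y)$, which is precisely $\sum_\ve\Theta[\ve](\tau,x)\Theta[\ve](\tau,y)$ with $\Theta[\ve]$ as in \eqref{eq:Thetavarepsilon}. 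One small slip in your description of the key verification: $n^t\tau n+n'^t\tau n'=\tfrac14\bigl[(u+v)^t\tau(u+v)+(u-v)^t\tau(u-v)\bigr]=\tfrac12\bigl(u^t\tau u+v^t\tau v\bigr)$, not $\tfrac12$ of the two displayed terms; the resulting factor $\tfrac12$ is exactly what combines with the half-integer shift to produce the $2\tau$ in $\Theta[\ve]$. Also, the preliminary reduction to constants $a_{\ve\ve'}$ via automorphy in $x$ and then $y$ is fine but redundant once you carry out the series computation, which proves the identity outright.

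The route you say you would actually present --- ``invariance plus normalization'' --- has a genuine gap as stated. The subspace of $H^0(A_\tau\times A_\tau,2\Theta\boxtimes 2\Theta)$ you need to land in is the image of $\pi^*$, i.e.\ the sections invariant (for the appropriate linearization) under translation by $\ker\pi$, which is the diagonally embedded two-torsion; invariance under $y\mapsto -y$ is far too weak to cut the $4^g$-dimensional space down to a line, so as written it does not yield proportionality of the two sides. Moreover the proposed normalization is circular: the equality $\theta(\tau,0)^2=\sum_\ve\Theta[\ve](\tau,0)^2$ is itself the case $x=y=0$ of \eqref{eq:isogeny}, so it cannot be used to fix the constant. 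The argument can be repaired --- check $\ker\pi$-invariance of the right-hand side by the standard half-period transformation of the $\Theta[\ve]$, then fix the scalar by matching one Fourier coefficient --- but that is essentially the direct computation again. So present the series argument as the proof and keep the isogeny picture as motivation, which is also how the paper frames it.
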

One can in fact prove this by a direct computation using the definition of theta functions of second order, but we do not dwell on this here. So why is it called an isogeny theorem? It's called that because if we take an abelian variety $A$ times itself, then there is a very interesting map 
\[\begin{array}{rcl}
\pi: A \times A & \longrightarrow & A \times A\\
(x, y) & \longmapsto & (x + y, x - y).
\end{array}
\]
This is an isogeny of abelian varieties which means it is a map of abelian varieties of finite degree. What is the kernel of this map? The kernel are the points that map to zero in $A \times A$, and these are precisely the points of the form $(x,x)\in A\times A$ such that $2x=0\in A$; such $x$'s are called two-torsion points of~$A$, and are sometimes denoted~$A[2]$. So the kernel of this map are the two-torsion points of $A\times A$ that lie on the diagonal. Note that $A \times A$ has a natural polarization as a box product of principal polarizations: $\Theta \boxtimes \Theta$. The box product means the following: $A \times A$ has two projections $p_1$ and $p_2$ to the two factors of $A$, and the polarization is
\[\
\Theta \boxtimes \Theta : =  p_1^*\Theta \otimes p_2^*\Theta\,.
\]
What if we take the pullback of this polarization on $A\times A$ under $\pi$? The pullback $\pi^*(\Theta \boxtimes \Theta)$ can be easily computed to be  simply $2\Theta \boxtimes 2\Theta$. So how does this relate to  Riemann's bilinear relations? Well, $\theta(\tau,u)\theta(\tau,v)$ is a section of $\Theta\boxtimes \Theta$ on the right (where we use $u,v$ as variables on that copy of $A\times A$), and thus $\theta(\tau,x + y)\theta(\tau,x - y)$ is precisely a section of $\pi^*(\Theta\boxtimes\Theta)$, which is to say it is a section of $2\Theta\boxtimes2\Theta$. But as $\lbrace \Theta[\ve](\tau,x)\rbrace$ is a basis of sections of $2\Theta$ on the first factor on the left, and similarly $\lbrace \Theta[\ve](\tau,y)\rbrace$ is a basis of sections of $2\Theta$ on the second factor, it follows that $\theta(\tau,x+y)\theta(\tau,x-y)$ can be written as some linear combination $\sum \alpha_{\ve_1,\ve_2}\Theta[\ve_1](\tau,x)\Theta[\ve_2](\tau,y)$. Riemann's bilinear relation is then the statement that the coefficients $\alpha_{\ve_1,\ve_2}$ are simply the Kr\"onecker delta symbol.

Now we need another fact to proceed --- the statement that the line bundle $2\Theta$ is very ample on~$A/\pm 1$. This is to say, consider the map $A\to\CC\PP^{2^g - 1}$ given by the linear system $|2\Theta|$, i.e.~we send a point $z \in A$ to the set of values of a basis of sections of $2\Theta$. Let us call this map $\Kum$, and the claim is then

\begin{fact}[Lefschetz Theorem] 
The Kummer map
\[\begin{array}{rcl}
\Kum: A & \xrightarrow{\vert 2\Theta \vert} & \CC\PP^{2^g - 1}\\
z & \longmapsto & \{\Theta[\ve](\tau, z)\}_{\ve \in (\ZZ \slash 2\ZZ)^g},
\end{array}\]
is an embedding of the quotient $A \slash \pm 1$ into $\CC\PP^{2^g - 1}$.
\end{fact}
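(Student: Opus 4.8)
\emph{Overall strategy.} The plan is to check the three things that, for a map from the complete variety $A/\pm1$, add up to a closed embedding: that $\Kum$ is a well-defined morphism (equivalently, $|2\Theta|$ is base-point-free), that it is injective on $A/\pm1$, and that it is an immersion (injective on Zariski tangent spaces at every point). First I would note that $\Kum$ really factors through $A/\pm1$: from the exercise $\theta(\tau,z)=\theta(\tau,-z)$, together with the invariance of $\theta\left[\begin{smallmatrix}\ve\\0\end{smallmatrix}\right]$ under integer translations of its characteristic (and $-\ve\equiv\ve$ since $2\ve\in\ZZ^g$), one gets $\Theta[\ve](\tau,-z)=\Theta[\ve](\tau,z)$, so every coordinate of $\Kum$ is even. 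For base-point-freeness, suppose $\Theta[\ve](\tau,z_0)=0$ for all $\ve$; setting $x=z_0$ in Riemann's bilinear relation \eqref{eq:isogeny} gives $\theta(\tau,z_0+y)\,\theta(\tau,z_0-y)=0$ for every $y\in\CC^g$, and since $\CC^g$ is connected and a product of holomorphic functions vanishes identically only if one factor does, this would force $\theta(\tau,\cdot)\equiv 0$, a contradiction. So $\Kum$ is a morphism.

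\emph{Injectivity.} If $\Kum(z_1)=\Kum(z_2)$, then $\Theta[\ve](\tau,z_1)=c\,\Theta[\ve](\tau,z_2)$ for all $\ve$ and some $c\in\CC^{*}$. Substituting this into \eqref{eq:isogeny} with $x=z_1$ and comparing with the $x=z_2$ instance yields the identity of holomorphic functions of $y$
\[
\theta(\tau,z_1+y)\,\theta(\tau,z_1-y)=c\,\theta(\tau,z_2+y)\,\theta(\tau,z_2-y).
\]
Reading both sides as sections, in the variable $y$, of a translate of $2\Theta$ on $A$ and comparing zero divisors (using $\Theta=-\Theta$) gives the equality of effective divisors $\Theta_{z_1}+\Theta_{-z_1}=\Theta_{z_2}+\Theta_{-z_2}$. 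For $A$ indecomposable $\Theta$ is reduced and irreducible, so unique factorization of effective divisors forces $\{\Theta_{z_1},\Theta_{-z_1}\}=\{\Theta_{z_2},\Theta_{-z_2}\}$; and since the polarization is principal no nonzero translation fixes $\Theta$, so $\Theta_a=\Theta_b$ implies $a=b$. Hence $z_1=z_2$ or $z_1=-z_2$, i.e.\ $\Kum$ is injective on $A/\pm1$. (For decomposable $A$ the statement still holds but needs a short separate argument, which I would omit since Jacobians are indecomposable.)

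\emph{The immersion property at smooth points.} Fix $z\in A$ and $v\in\CC^g\cong T_zA$ with $d\Kum(v)=0$; this means $\partial_v\Theta[\ve](\tau,z)=\lambda\,\Theta[\ve](\tau,z)$ for all $\ve$ and some $\lambda$. Differentiating \eqref{eq:isogeny} along $v$ in the $x$-variable and specializing $x=z$ gives, for the meromorphic function $h\coloneqq\partial_v\log\theta(\tau,\cdot)$ on $A$, the identity $h(z+y)+h(z-y)=\lambda$ for all $y$. If $z$ is \emph{not} a two-torsion point then $\Theta_z\ne\Theta_{-z}$, so $y\mapsto h(z+y)$ and $y\mapsto h(z-y)$ have disjoint polar divisors, and for their sum to be constant each must be holomorphic on the compact torus $A$; hence $\partial_v\log\theta$ is constant, so $\partial_v\theta=\mu\,\theta$, which forces $\theta(\tau,z+tv)=e^{\mu t}\theta(\tau,z)$ and therefore $\Theta+tv=\Theta$ for all $t\in\CC$. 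Principality of the polarization then gives $v=0$. So away from the $2^{2g}$ two-torsion points $\Kum$ is an immersion, and there it is a local isomorphism onto its image.

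\emph{The main obstacle: the two-torsion points.} At a two-torsion point $z$ the involution $-1$ acts on $T_zA$ by $-\mathrm{id}$, so $A/\pm1$ has a quotient singularity there and "immersion" must be understood as surjectivity of $\Kum$ on completed local rings; concretely one must show that the quadratic parts of the $\Theta[\ve]$ at such a point span the full space of quadratic forms on $T_zA$, so that the cotangent cone of the singularity is generated. I expect essentially all the real work of the theorem to lie here. The standard route is to invoke the heat equation, which expresses the second-order $z$-derivatives of $\theta$ through its first-order $\tau$-derivatives, thereby reducing the span statement to a linear-independence statement for an explicit family of theta functions of $\tau$; alternatively one can appeal to the general Lefschetz theorem and the theta-group description of $H^0(A,2\Theta)$, as in Mumford's treatment. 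In a fully self-contained write-up this two-torsion analysis — together with, if one insists on generality, the decomposable case in the injectivity step — is the only non-formal part, everything else following from \eqref{eq:isogeny} and its derivative.
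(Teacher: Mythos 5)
The paper states this Lefschetz result only as a ``Fact'' and explicitly omits any proof, so there is no in-paper argument to compare with; what you have written is the standard line of attack, and most of it is sound. Base-point-freeness and separation of points via the bilinear relation \eqref{eq:isogeny} and comparison of the divisors of $y\mapsto\theta(z_i+y)\theta(z_i-y)$ are correct, as is the immersion argument away from two-torsion, modulo two small repairs: the polar divisors of $y\mapsto h(z\pm y)$ are \emph{distinct}, not disjoint (two translates of $\Theta$ always meet for $g\ge 2$), so the right deduction is that at a generic point of one translate the other term is regular, forcing $\partial_v\theta$ to vanish along all of the (irreducible) divisor $\Theta$; and $h=\partial_v\log\theta$ is not a meromorphic function on $A$ but only quasi-periodic on $\CC^g$ --- it is the symmetric combination $h(z+y)+h(z-y)$ that descends to $A$, after which a Liouville/quasi-periodicity argument gives $\partial_v\theta=\mu\theta$ and hence $v=0$.

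Two genuine issues remain. First, the crux of the theorem --- that at each of the $2^{2g}$ two-torsion points the quadratic parts of the $\Theta[\ve]$ span all of $\mathrm{Sym}^2 T_z^*A$, so that $\Kum$ is an embedding of the quotient singularity and not merely injective there --- is only gestured at (heat equation, Mumford's theta groups) and not carried out; you say yourself that ``essentially all the real work'' lies there, so as it stands the proposal is an outline rather than a proof precisely at the step that needs the most care. Second, your parenthetical claim that the statement ``still holds'' for decomposable $A$ is false: for $(A,\Theta)=(A',\Theta')\times(A'',\Theta'')$ the sections of $2\Theta$ are products of sections on the factors, so $\Kum_A(z',z'')$ is the Segre product $\Kum_{A'}(z')\otimes\Kum_{A''}(z'')$ and identifies $(z',z'')$ with $(\pm z',\pm z'')$ independently; the map therefore factors through $A/(\pm1\times\pm1)$ and is not injective on $A/\pm1$. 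So the irreducibility of $\Theta$ that you invoke in the injectivity and immersion steps is not a convenience but a genuine hypothesis, and the Fact as stated should be read (as the paper implicitly does, since only Jacobians matter there) for indecomposable principally polarized abelian varieties.
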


Note that $\Theta[\ve](\tau, z)$ are all even functions of $z$ by definition, so the values of $\Kum$ at $z$ and $-z$ are certainly the same, and thus the Kummer map is well-defined on the quotient $A/\pm 1=A/z\sim -z$. The claim is that these are the only points which will have the same image under the Kummer map. The full statement of Lefschetz theorem includes the claim that any higher multiple of the principal polarization, for example~$3\Theta$, is already very ample on~$A$, i.e.~defines an embedding of~$A$ itself into a suitable projective space. The fact we state here is that for~$2\Theta$ it is almost an embedding, up to this plus-minus one business. The proof of Lefschetz theorem requires some technology we did not develop, and thus we omit it.
 
{\em Warning}: the quotient $A \slash \pm 1$ is not smooth; indeed the fixed points of the involution sending $x\mapsto -x$ are precisely the points such that $x=-x$, which is to say $2x=0$, so these are the two-torsion points $A[2]$ again, and they will play a special role in some of what follows.

\begin{example}
For $g = 1$, what we get for $A \slash \pm 1$ is an elliptic curve quotient by $\pm 1$, and $\Kum$ is a map from this to $\CC\PP^1$. Well, we know that an elliptic curve is a double cover of $\CC\PP^1$ branched at four points, and the four points are exactly the two-torsion points.

For $g = 2$, the image $\Kum(A/\pm 1)\subset\CC\PP^3$ is the so-called Kummer surface. It is a degree 16 hypersurface (i.e.~given by an equation of degree 16 in theta functions of the second order), which is singular at the 16 points that are images of the two-torsion points $A[2]$. An explicit equation for the Kummer surface can be written, and the moduli $\cA_2$ of abelian surfaces can be studied via this map. This is a very rich theory, first comprehensively explored in the 1905 classical book \cite{Hudson1905}.

For $g = 3$, the geometry becomes very complicated because we have a threefold embedded into~$\CC\PP^7$, and there are many equations for the image. There are numerous recent research papers related to this story.
\end{example}

Now we come back to \eqref{eq:analyticWeil3}, which we rewrite in a more symmetric form
\begin{equation}\label{eq:analyticWeil4}
A\cdot\theta(z)\theta(z + p + r - s - q) + B\cdot\theta(z + p - q)\theta(z + r - s) + C\cdot \theta(z + p - s)\theta(z + r - q) = 0\,,
\end{equation}
with some constants $A,B,C\in\CC$, and to which we now apply Riemann's bilinear relations. To simplify formulas, by a small abuse of notation we will sometimes think of $\Kum(z)$ as a point in $\CC^{2^g}$ rather than $\CC\PP^{2^g-1}$, and we will then use the scalar product notation $\Kum(x) \cdot \Kum(y)$ to mean $\sum \limits_{\ve \in \frac{1}{2}(\ZZ \slash 2\ZZ)^g} \Theta[\ve](x) \cdot \Theta[\ve](y)$. Applying Riemann's bilinear relation \eqref{eq:isogeny} to equation \eqref{eq:analyticWeil4} then yields

\begin{align*}
& \quad A \cdot \Kum\left(z + \tfrac{p + r - s - q}{2}\right) \cdot \Kum\left(\tfrac{p + r - s - q}{2}\right)\\ &+ B\cdot \Kum\left(z + \tfrac{p + r - s - q}{2} \right) \cdot \Kum\left(\tfrac{p + s - r - q}{2}\right) \\
& + C \cdot \Kum\left(z + \tfrac{p + r - s - q}{2}\right) \cdot \Kum\left(\tfrac{p + q - r - s}{2}\right) = 0\,,
\end{align*}
which can be rearranged as
\begin{align*}
&\left(A \cdot \Kum\left(\tfrac{p + r - s - q}{2}\right) + B \cdot \Kum\left(\tfrac{p + s - r - q}{2}\right) + C \cdot \Kum\left(\tfrac{p + q - r - s}{2}\right) \right)\\ &\cdot\Kum\left(z + \tfrac{p + r - s - q}{2}\right) = 0, \quad \forall z\,.
\end{align*}
To make sense of dividing by $2$, we can think of this as an equation with variables in~$\CC^g$, but actually noticing the multiplication by 2 in the definition of the theta functions of the second order, no choice needs to be made.
 
Recall that $\Kum\left(z + \frac{p + r - s - q}{2}\right)$ is a vector made up by the values of a basis of sections of $2\Theta_{p+r-s-q}$. The basis of sections is linearly independent, as functions of $z$, and thus cannot satisfy any linear relation with constant (in~$z$) coefficients. This forces the sum in the parenthesis to be zero, i.e.
\begin{equation}\label{eq:trisecantlinear}
A \cdot \Kum\left(\tfrac{p + r - s - q}{2}\right) + B \cdot \Kum\left(\tfrac{p + s - r - q}{2}\right) + C \cdot \Kum\left(\tfrac{p + q - r - s}{2}\right) = 0.
\end{equation}
This is an equation for three vectors to be linearly dependent in $\CC^{2^g}$, that is for them to lie in a plane. Equivalently, it can be written as
\begin{equation}\label{eq:trisecantwedge}
\Kum\left(\frac{p + q - r - s}{2}\right) \bigwedge \Kum\left(\frac{p + r - s - q}{2}\right) \bigwedge \Kum\left(\frac{p + s - r - q}{2}\right) = 0 
\end{equation}
for any $p, q, r, s \in C$. Projectivizing this equation, it means that the three corresponding points in~$\CC\PP^{2^g - 1}$ must be collinear, that is the line passing through two of them has to go through the third one, forming a so-called {\em trisecant line} of the image $\Kum(\Jac(C))\subset\CC\PP^{2^g-1}$.

What we have just proved is called Fay-Gunning's trisecant identity: 

\begin{theorem}[Trisecant identity, see~\cite{Fay1973, Mumford1975, Gunning1982b, Welters1983}]
For any $C$,  and any $p, q, r, s \in C$, the Kummer images of the points
\[\frac{p + q - r - s}{2}, \quad \frac{p + r - s - q}{2}, \quad \frac{p + q - s - r}{2}\]
are collinear in~$\CC\PP^{2^g-1}$.
\end{theorem}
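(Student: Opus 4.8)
The plan is to read off the assertion as the projectivization of the linear relation \eqref{eq:trisecantlinear} — equivalently, the vanishing of the triple wedge \eqref{eq:trisecantwedge} — and to obtain that relation by chaining together the ingredients assembled above: Weil's reducibility \Cref{thm:Weilreducibility}, its analytic reformulation, and Riemann's bilinear relation \eqref{eq:isogeny}. The genuinely nontrivial step, and the one I expect to be the main obstacle, is the passage from the set-theoretic inclusion in Weil reducibility to the \emph{functional} identity \eqref{eq:analyticWeil3}: one needs not only that $\theta(z+p-r)\theta(z+s-q)$ lies in the ideal generated by $\theta(z)$ and $\theta(z+p-q)$, but that the coefficient functions can be taken to be constant multiples of honest theta functions, so that every term is a section of one and the same translate of $2\Theta$. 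This is a Koszul-cohomology/syzygy statement; I would invoke it as the \emph{Fact} recorded above (see \cite{Fay1973, Beauville-Debarre1986}) rather than reprove it, and treat everything downstream as formal.

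Granting \eqref{eq:analyticWeil3}, I would first move all terms to one side and relabel to reach the symmetric form \eqref{eq:analyticWeil4}, valid for constants $A,B,C\in\CC$ that are \emph{not} all zero — indeed one of them is $\pm 1$, being the coefficient of the term that was the right-hand side. Next I would apply Riemann's bilinear relation \eqref{eq:isogeny} to each of the three products, in the shape $\theta(z+a)\,\theta(z+b)=\Kum\big(z+\tfrac{a+b}{2}\big)\cdot\Kum\big(\tfrac{a-b}{2}\big)$. The bookkeeping to check is that in all three terms the ``sum'' half-argument coincides, $\tfrac{a+b}{2}=\tfrac{p+r-s-q}{2}$, while the three ``difference'' half-arguments are exactly the Kummer arguments $\tfrac{(p+q)-(r+s)}{2},\ \tfrac{(p+r)-(q+s)}{2},\ \tfrac{(p+s)-(q+r)}{2}$ coming from the three ways of splitting $\{p,q,r,s\}$ into pairs (using $\Kum(-v)=\Kum(v)$ for the first term). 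Factoring out the common vector then turns \eqref{eq:analyticWeil4} into
\begin{multline*}
\Kum\left(z+\tfrac{p+r-s-q}{2}\right)\\
\cdot\left(A\,\Kum\left(\tfrac{p+r-s-q}{2}\right)+B\,\Kum\left(\tfrac{p+s-r-q}{2}\right)+C\,\Kum\left(\tfrac{p+q-r-s}{2}\right)\right)=0
\end{multline*}
for every $z$.

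Finally I would invoke that $\{\Theta[\ve]\}$ is a basis of $H^0(\Jac(C),2\Theta)$, so that the functions $z\mapsto\Theta[\ve]\big(z+\tfrac{p+r-s-q}{2}\big)$ are linearly independent; hence the vector in the large parenthesis, being constant in $z$, must vanish. This is precisely \eqref{eq:trisecantlinear}. Since $A,B,C$ are not all zero and each $\Kum(\cdot)$ is a nonzero vector (the linear system $|2\Theta|$ being base-point-free), this is a nontrivial linear dependence among the three Kummer images in $\CC^{2^g}$, i.e.\ the three points are collinear in $\CC\PP^{2^g-1}$, which is \eqref{eq:trisecantwedge} and the claim.
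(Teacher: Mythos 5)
Your proposal is correct and follows essentially the same route as the paper: Weil reducibility, the functional identity \eqref{eq:analyticWeil3} (with its equivalence to the ideal-membership statement invoked as the Koszul-cohomology Fact rather than proved), Riemann's bilinear relation applied term by term with the common sum-argument $z+\tfrac{p+r-s-q}{2}$, and linear independence of the $\Theta[\ve]$ to extract \eqref{eq:trisecantlinear}. Your added remarks that $A,B,C$ are not all zero and that the Kummer vectors are nonzero are correct refinements of the same argument, not a different approach.
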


Let's take a minute to see why Fay's trisecant identity is really a very striking and surprising condition. Indeed, $\Kum(\Jac(C))\subset\CC\PP^{2^g-1}$ is a $g$-dimensional variety. Given any two points on the Kummer variety, consider the line through these two points. The set of points on all such lines for all pairs of points of the Kummer variety is called the secant variety of $\Kum(\Jac(C))$ in~$\CC\PP^{2^g-1}$, and it has expected dimension $2g+1$ ($g$ for each of the two points, and an extra 1 for moving along the line). Then we are asking whether this secant variety intersects the Kummer variety again, i.e.~whether there exists a line secant to the Kummer variety that intersects it in a third point (of course this third point has to be not one of the two original points). But in general for dimension reasons one expects a $k$-dimensional and an $\ell$-dimensional subvariety of $\CC\PP^N$ to intersect when $k+l\ge N$. However, for us, for $g$ large enough $2g+1+g\ll 2^g-1$, and thus having a trisecant is a very rare condition! Even if we varied over all Kummer images of all abelian varieties, this would only add $\tfrac{g(g+1)}{2}$ to the dimension count, and thus for dimension reasons alone one would expect that for~$g$ very large, no Kummer variety ever has a trisecant line. However, the above theorem says that for {\em all} Jacobians of curves and for {\em all} quadruples of points on the curve, the Kummer variety has a trisecant line.

The following wonderful theorem of Gunning claims that the existence of infinitely many of such trisecants characterizes the Kummer images of Jacobians.

\begin{theorem}[Gunning~1982~\cite{Gunning1982b}]\label{thm:Gunning'sthm1}
If a principally polarized abelian variety $(A, \Theta) \in \cA_g$ is such that its Kummer image $\Kum(A)$ has a family of trisecant lines, of the form $\Kum(p+a_1)\wedge \Kum(p+a_2)\wedge \Kum(p+a_3)=0$, for $a_1,a_2,a_3\in A$ fixed distinct points, and for~$p$ varying in a one-parameter family, then~$A$ is the Jacobian of a curve.
\end{theorem}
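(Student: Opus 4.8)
The plan is to reverse-engineer the computation of Section~\ref{sec:analyticWeil} that took us \emph{from} Weil reducibility \emph{to} the trisecant identity, and turn the hypothesis of infinitely many trisecants back into a functional equation for $\theta$ that will let us reconstruct a curve. Concretely, suppose the Kummer image has trisecants $\Kum(p+a_1)\wedge\Kum(p+a_2)\wedge\Kum(p+a_3)=0$ for fixed distinct $a_1,a_2,a_3\in A$ and $p$ running over a one-parameter family. By the linear independence of the second-order theta functions $\{\Theta[\ve]\}$ as functions of their argument, and running Riemann's bilinear relation \eqref{eq:isogeny} \emph{backwards}, collinearity of these three Kummer points is equivalent to the existence of constants $A(p),B(p),C(p)$, not all zero, with
\[
A\,\theta(z+b_1)\theta(z+c_1)+B\,\theta(z+b_2)\theta(z+c_2)+C\,\theta(z+b_3)\theta(z+c_3)=0\qquad\forall z\in\CC^g,
\]
where the $b_i,c_i$ are the appropriate halves of sums and differences of $a_1,a_2,a_3$ and a base point, chosen exactly as in \eqref{eq:analyticWeil4}--\eqref{eq:trisecantwedge} so that all six arguments differ pairwise by the $a_i$'s. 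The first step is thus bookkeeping: pin down, from the three fixed points $a_1,a_2,a_3$, the three translation vectors $U,V,W$ (differences among the $a_i$) and reorganize the identity into the standard ``three-term theta relation'' shape.

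The heart of the argument is then the analysis of this one-parameter family of theta relations. Differentiating along the parameter $p$ (equivalently, along the tangent direction to the family) and combining with the undifferentiated relations produces a hierarchy of identities; one extracts from them a fixed meromorphic vector field — call it $\partial_U$ — on $A$ together with auxiliary functions, and shows that the $\tau$-function $\tau(x)=\theta(Ux+\text{const})$ built by restricting $\theta$ to the orbit of this vector field satisfies a bilinear Hirota-type equation. This is precisely Krichever's mechanism: a three-term theta relation that persists in a family forces $\theta$ to satisfy (a form of) the KP equation, or at least the first nontrivial flow of the KP hierarchy, which by the Krichever/Shiota circle of results recalled in Section~\ref{sec:BAfunction} and alluded to around Shiota's theorem in Section~\ref{sec:theta} implies that $(A,\Theta)$ is a Jacobian. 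Alternatively — and this is closer to Gunning's own route — one shows directly that the vectors $a_1-a_2$, $a_2-a_3$ (or suitable multiples) generate, via the flows they define on $A$, a group whose closure is $1$-dimensional, i.e.\ an algebraic curve $C\subset A$; one then checks that the trisecant identity on $C$ recovers the Fay identity, and invokes the restricted-Schottky-type input that a curve inside $A$ whose Abel--Jacobi-type secants behave this way must have $A$ as its Jacobian.

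The main obstacle I expect is the \emph{non-degeneracy and irreducibility} step: extracting an honest algebraic curve from the infinitesimal data. A priori the one-parameter family of $p$'s only gives a formal or analytic arc and a vector field with possible singularities (poles along translates of $\Theta$); one must show this arc is algebraic, that its Zariski closure is one-dimensional and not, say, contained in a proper abelian subvariety, and that it passes through the right points so that the trisecant identity on it degenerates to Fay's. Controlling where the functions $A(p),B(p),C(p)$ vanish — so that the relation never becomes trivial along the family — and ruling out the decomposable case (where the statement can genuinely fail) is where the real work lies; everything before that is the formal inversion of the Section~\ref{sec:analyticWeil} computation, and everything after is citing Shiota/Krichever. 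I would organize the write-up so that the formal inversion is done cleanly first, the KP/Hirota equation is derived as the key output, and then the conclusion is quoted from the characterization theorems already stated above.
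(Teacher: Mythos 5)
The paper itself does not prove this theorem: immediately after the statement it only records that Gunning's argument rests on the Matsusaka--Hoyt--Ran criterion (an abelian variety containing a curve that represents the minimal homology class $\tfrac{[\Theta]^{g-1}}{(g-1)!}$ is the Jacobian of that curve), and later the theorem is explicitly used as a black box. Measured against Gunning's actual argument, your proposal has two genuine gaps.

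Your main (KP) route cannot work here. The hypothesis fixes three \emph{distinct} points $a_1,a_2,a_3$, so the mutual displacements of the three secancy points are frozen; the one-parameter family only translates the whole configuration by $p$. The KP-type equation \eqref{eq:Kummerflex2} is derived in \Cref{sec:geometryofKummer} precisely by letting the secancy points \emph{collide} ($s\to p$, then $r\to p$, then $q\to p$), a degeneration the hypothesis does not permit. Differentiating the three-term theta relation along the family only yields derivatives of the coefficients $A(p),B(p),C(p)$ and of $\theta$ in the single direction tangent to the family of $p$'s; it cannot manufacture the fixed directions $U,V,W$ and the fourth-order operator appearing in \eqref{eq:Kummerflex2}. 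A persistent family of honest (non-degenerate) trisecants encodes a \emph{difference} equation for $\theta$, not the KP equation --- which is exactly why Krichever's treatment of the non-degenerate trisecant is a separate, discrete-integrable-system argument from the flex case --- so Shiota's theorem is not applicable, and would in any case be anachronistic for a 1982 result.

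Your alternative route misidentifies the curve. The closure of the subgroup generated by $a_1-a_2$ and $a_2-a_3$ is a union of translates of an abelian subvariety of $A$, of a priori arbitrary dimension --- not a curve. The curve in Gunning's proof is the locus $C=\{p\in A:\ \Kum(p+a_1),\,\Kum(p+a_2),\,\Kum(p+a_3)\ \text{collinear}\}$, which the hypothesis makes at least one-dimensional. The substantive work, absent from your outline, is to show that $C$ sits inside an explicit intersection of translates of $\Theta$, to compute its homology class and find that it equals the minimal class $\tfrac{[\Theta]^{g-1}}{(g-1)!}$, and then to conclude by the Matsusaka--Hoyt--Ran criterion. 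Your closing appeal to an unnamed ``restricted-Schottky-type input'' is precisely where this named criterion and the class computation must go; without them the argument does not close.
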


This theorem provides a solution of the Schottky problem, telling us when a principally polarized abelian variety $(A,\Theta)$ is the Jacobian of a curve~$C$. However, there is a caveat: we need to have a one-dimensional family of trisecants to start with! And indeed~$C$ is not going to be some abstract curve, but will rather turn out to be the locus of~$p\in A$ giving a trisecant as above. So to apply it we need an abelian variety and a curve inside it to start with, so this theorem is really a solution of the restricted Schottky problem. 

The proof of \Cref{thm:Gunning'sthm1} uses the Matsusaka-Hoyt-Ran Criterion \cite{Matsusaka1959, Hoyt1963, Ran1981} characterizing Jacobians of curves by the fact that the so-called minimal homology class $\tfrac{[\Theta]^{g-1}}{(g-1)!}\in H^2(A,\ZZ)$ can be represented by an actual complex curve in~$A$. Matsusaka-Hoyt-Ran Criterion is a beautiful result with a beautiful proof, giving another solution to the restricted Schottky problem. The proof is not so hard in retrospect but we will not state it here as it is in a different direction from where we are going. We thus return to investigating trisecants more closely.

\subsection{Trisecants of Kummer varieties}\label{sec:geometryofKummer}
The trisecant is very geometric, but maybe not quite analytic. If we look at equation \eqref{eq:trisecantwedge}, we are allowed to vary $p, q, r, s$ along the curve, in particular we can take the limit as some of the points approach each other (this was already advocated by Mumford \cite{Mumford1984}; see also \cite{Welters1983}). In the limit this will be a differential equation satisfied by the theta functions, and we now follow through this viewpoint methodically to see where it leads.

First we take the limit as $s\to p$. Just setting $s=p$ in \eqref{eq:trisecantwedge} gives the Kummer image of $\frac{q - r}{2}$ in the first factor, the Kummer image of $\frac{r - q}{2}$ in the second factor, and the Kummer image of $\frac{2p - r - q}{2}$ in the last factor, so we obtain
\begin{equation*}\label{eq:trisecantlimitfalse1}
\Kum\left(\tfrac{q - r}{2}\right) \bigwedge \Kum\left(\tfrac{r - q}{2}\right) \bigwedge \Kum\left(\tfrac{2p - r - q}{2}\right) = 0\,.
\end{equation*}
But the Kummer map is an even function of the argument $z$, that is $\Kum(z)=\Kum(-z)$, and thus the first two vectors are simply equal, so this equation is trivially satisfied. Thus we need to take into account the next order term in the Taylor expansion of the trisecant as $s$ approaches $p$, by taking derivatives: in the first factor we should get $q - r$ plus an infinitesimal $p - s$ term, in the second factor we get $r - q$ plus the same infinitesimal $p - s$ term. Formally, we write $s = p - U \ve$, so that \eqref{eq:trisecantwedge} yields
\begin{equation}\label{eq:trisecantlimitU}
\Kum\left(\frac{q - r + U\ve}{2}\right) \bigwedge \Kum\left(\frac{r - q + U\ve}{2}\right) \bigwedge \Kum\left(\frac{2p - r - q}{2}\right) = 0.
\end{equation}
As we are taking the limit $\ve\to 0$, in the last factor we simply dropped the $-U\ve$ in the argument, to obtain the lowest (constant in $\ve$) order term in the expansion. Expanding this equation in powers of $\ve$, the constant term is \eqref{eq:trisecantlimitfalse1}, which is automatically zero, while the vanishing of the coefficient of $\ve$ implies 
\begin{equation}\label{eq:trisecantlimit1}
\Kum\left(\frac{q - r}{2}\right) \bigwedge \partial_U \Kum\left(\frac{q - r}{2}\right) \bigwedge \Kum\left(\frac{2p - r - q}{2}\right) = 0.
\end{equation}
Note that by \eqref{eq:AJdifferential} $U$ is actually the first derivative of the Abel-Jacobi map AJ$: C \to \Jac(C)$ at the point~$p$:
\[U = (\omega_1(p), \dots, \omega_g(p))^t\,.\]

We now take further limits, denoting $V$ the second  derivative of the Abel-Jacobi map $\AJ:C \to \Jac(C)$ at $p$, and letting $r$ approach $p$ in \eqref{eq:trisecantlimit1}. Then letting $r=p+U\ve+V\ve^2$ yields
\begin{equation}\label{eq:trisecantlimitV}
\Kum\left(\tfrac{q - p - U\ve - V\ve^2}{2}\right) \bigwedge \Kum\left(\tfrac{p - q + U\ve + V \ve^2}{2}\right) \bigwedge \Kum\left(\tfrac{p - q - U\ve - V\ve^2}{2}\right) = 0.
\end{equation}
Expanding this at $\ve=0$, using \eqref{eq:trisecantlimit1}, the lowest order nonzero term is of order $\ve^2$, which yields, when we now set $p=0$ (as we are free to choose the basepoint in the definition of the Abel-Jacobi map) the following equation
\begin{equation}\label{eq:Kummerflex1}
\Kum(\tfrac{q}{2}) \bigwedge \partial_U \Kum(\tfrac{q}{2}) \bigwedge (\partial^2_U + \partial_V)\Kum(\tfrac{q}{2}) = 0,
\end{equation}
for any $q \in C$. This computation we leave as an exercise.

Geometrically, this is an equation that guarantees that we have a {\em flex line} to the Kummer variety at the point $\Kum(\tfrac{q}{2})$, where by a flex line we mean a line that is tangent to $\Kum(\Jac(C))$ at that point with multiplicity three. In this sense we view the flex line as the most degenerate trisecant line.

If we have a family of flex lines for the Kummer image, then following the same strategy we can actually take one more limit by letting $q$ approach $p(= 0)$ and expand the expression \eqref{eq:Kummerflex1} in $q$ to all orders. The lowest nontrivial term is then given by
\begin{exercise}
Let $q$ approach $p(= 0)$; then the lowest nontrivial term is of order~$\ve^4$, which gives
\begin{equation}\label{eq:Kummerflex2}
(\partial_U^4 - \partial_U\partial_W + \tfrac{3}{4}\partial_V^2 + c)\Kum(0) = 0\,,
\end{equation}
where $c$ is some constant and $W$ is the third order derivative of the Abel-Jacobi map at the chosen point.
\end{exercise}

\begin{remark}
Note that as $0$ is a two-torsion point of the Jacobian, the Kummer image is actually singular there, and since the Kummer map is even, the first and third derivatives of $\Kum(z)$ vanish at $z=0$, so that only the second and fourth derivatives appear in \eqref{eq:Kummerflex2}.
\end{remark}
\begin{remark}
If~$C$ is a hyperelliptic curve, and one starts with a Weierstrass point $p\in C$, then $\partial_V\Kum(0)=0$, and the equation above simplifies to
\[(\partial_U^4 - \partial_U\partial_W + c)\Kum(0) = 0\,.\]
\end{remark}

Since now we have an explicit differential equation \eqref{eq:Kummerflex2} for the Kummer image, we can use Riemann's bilinear relation to convert it to a differential equation for the usual Riemann theta function. 

\begin{exercise}\label{ex:getKP}
Use Riemann's bilinear relation to convert \eqref{eq:Kummerflex2} to a differential equation for the theta function at any $z\in\CC^g$.
\end{exercise}
This will turn out to be exactly the KP equation that will be introduced in \Cref{sec:KP} (where we will also solve this exercise).

Welters studied the infinitesimal version of Gunning's \Cref{thm:Gunning'sthm1} resulting from the above analysis.

\begin{theorem}[Welters~1983~\cite{Welters1983}] \label{thm:Weltersthm1}
    For $A \in \cA_g$, if there exists a fixed point $z\in A$ and a curve $C \subset A$ such that for any $p\in C$, the Kummer variety admits a flex line at $\Kum\left(p+\tfrac{z}{2}\right)$, then $A = \Jac(C)$.
\end{theorem}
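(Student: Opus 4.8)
The plan is to run, in the setting made easier by the presence of a whole one-parameter family of degenerate trisecants, the same kind of reconstruction that underlies Krichever's treatment of the single flex line in \Cref{sec:flex}. Fix a point $P \in C$ and a local coordinate $t$ on $C$ near $P$, and write the inclusion $C \hookrightarrow A$, lifted to the universal cover, as a power series $t \mapsto P + Ut + Vt^2 + Wt^3 + \cdots$ in $\CC^g$, so that $U \ne 0$, $V$, $W$, $\dots$ are the successive derivatives at $P$ of this Abel-Jacobi-type map. Differentiating the wedge relation \eqref{eq:Kummerflex1} along $C$ as in the passage from \eqref{eq:Kummerflex1} to \eqref{eq:Kummerflex2}, and then translating through Riemann's bilinear relation \eqref{eq:isogeny} as in \Cref{ex:getKP}, turns the flex-line hypothesis --- valid for \emph{every} $p \in C$ --- into the statement that $\tau(x_1, x_2, x_3) \coloneqq \theta\big(z + U x_1 + V x_2 + W x_3\big)$ satisfies the Hirota bilinear KP equation, together with the further relations produced by the higher-order terms of the same expansion.

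\textbf{From the equations to the Baker-Akhiezer function.} I would then reverse Krichever's construction of \Cref{sec:BAfunction}: the data above let one write down a candidate Baker-Akhiezer function on $A$,
\[
\psi(t, z) \coloneqq \frac{\theta\big(z + p(t) - P\big)}{\theta(z)}\,\exp\!\Big(\textstyle\int_0^t \lambda(s)\,ds\Big),
\]
with $\lambda$ a scalar fixed by normalization and $p(t)$ the lift used above, and check from the functional equations that, after the appropriate gauge, $\psi$ is meromorphic in $z \in A$ with a simple pole along the translated theta divisor, has the prescribed exponential essential singularity as $t$ approaches $P$, and satisfies a Schr\"odinger-type linear differential equation in $t$. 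That is, $\psi$ is, up to normalization, the Baker-Akhiezer function of the pointed curve $(C, P)$, with $z \in A$ playing the role of the linearized KP time variables.

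\textbf{Reconstruction and the main obstacle.} The uniqueness of the Baker-Akhiezer function --- the finite-dimensionality trick already used for \Cref{thm:L12commute} --- then identifies $A$, together with its embedded curve, with the output of Krichever's inverse construction, forcing $A \cong \Jac(C)$ as principally polarized abelian varieties, with $U$ the tangent direction of the Abel-Jacobi embedding and $\Theta$ the Riemann theta divisor. (Alternatively, the structure obtained in the first step can be fed into Gunning's \Cref{thm:Gunning'sthm1} through the Matsusaka-Hoyt-Ran criterion, by exhibiting the minimal class $\tfrac{[\Theta]^{g-1}}{(g-1)!}$ as the class of the curve $C \subset A$; or, once the KP equation is in hand, one may simply quote Shiota's theorem.) The step I expect to be genuinely hard is the passage from the \emph{formal}, local solution of the equations to an \emph{algebraic} conclusion: one has to show that the series involved converge, that the pole divisor of $\psi$ is exactly $\Theta$ and nothing worse, and that the curve reconstructed from the Baker-Akhiezer data is literally $C$ rather than some merely isogenous substitute. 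This is exactly the circle of difficulties occupying the bulk of \Cref{sec:flex}, only lightened here by the abundance of flex lines at our disposal.
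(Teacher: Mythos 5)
You should first note that the paper does not actually prove \Cref{thm:Weltersthm1}: it is quoted from Welters' 1983 paper and presented as the degenerate (flex) analogue of Gunning's \Cref{thm:Gunning'sthm1}, whose proof rests on the Matsusaka--Hoyt--Ran minimal-class criterion rather than on the Baker--Akhiezer machinery of \Cref{sec:flex}. So your proposal must stand on its own, and as written it has a genuine gap at its very first step. The hypothesis only says that at each point $\Kum\left(p+\tfrac{z}{2}\right)$, $p\in C$, \emph{some} flex line exists, i.e.\ that a relation of the shape \eqref{eq:K} holds with vectors $U(p), V(p)$ and constants depending on $p$ in an unspecified way. Nothing ties these vectors to the geometry of $C$: $U(p)$ need not be tangent to $C$ at $p$, and the data need not even vary holomorphically in $p$. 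In the paper's derivation of \eqref{eq:Kummerflex1} and \eqref{eq:Kummerflex2} the directions $U,V,W$ are the derivatives of the Abel--Jacobi embedding precisely because those flexes arise as limits of trisecants attached to quadruples of points of the curve; in your setting that link is part of what must be proved. Consequently ``differentiating the wedge relation along $C$'' does not yield the KP equation for $\theta(z+Ux_1+Vx_2+Wx_3)$ with $U,V,W$ the derivatives of $C\subset A$: differentiating in $p$ brings in the unknown derivatives of $U(p),V(p)$, and you never arrive at a fourth-order relation at one point in fixed directions.

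The later steps inherit this problem and add others. The ``reverse Krichever'' step assumes that $\theta(z+p(t)-P)\,\theta(z)^{-1}e^{\int\lambda}$ satisfies a linear Schr\"odinger-type equation in $t$; for a general principally polarized abelian variety containing a curve no such equation holds --- this is essentially the conclusion, and in Krichever's scheme it is exactly what the chain of \Cref{sec:flex} ($\lambda$-periodic wave solutions, the operators $L_m^Z$, the spectral curve, the proof that $j$ is an isomorphism) is designed to extract from \eqref{eq:L}; uniqueness of the Baker--Akhiezer function cannot be invoked before its existence is established. Even granting the KP equation, quoting Shiota (\Cref{conj:Novikov}) only gives that $A$ is the Jacobian of \emph{some} curve (and needs indecomposability, which your argument never uses), whereas the theorem asserts $A=\Jac(C)$ for the given curve; identifying the reconstructed curve with $C$ is precisely the minimal-class computation $[C]=\tfrac{[\Theta]^{g-1}}{(g-1)!}$ that the Matsusaka--Hoyt--Ran route demands, and you do not carry it out --- you only flag it as ``genuinely hard.'' Finally, the fallback of feeding the data into Gunning's \Cref{thm:Gunning'sthm1} is circular: that theorem requires a one-parameter family of honest trisecants with three distinct, fixed secancy shifts, and extending it to the fully degenerate flex case is exactly the content of Welters' theorem being proved.
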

The next stronger result of Welters \cite{Welters1984} further relaxes the assumption of the existence of a geometric one-dimensional family of flexes parameterized by a curve~$C$ to the existence of an infinitesimal formal germ of such a family. Morally, there is a clear approach to prove this strengthening: starting with a germ, one extends in to an actual family, and then applies the theorem above. That's how the proof proceeds, but we are not going to spell out the details. 

\begin{remark}
Note that an infinitesimal formal germ contains the information of an infinite jet at a point. Arguing as before by considering higher and higher order derivatives of the Abel-Jacobi map, Welters' result \cite{Welters1984} analytically is the statement that the full KP hierarchy characterizes Jacobians of curves (see also \cite{Mulase1984}). Note that from this point of view the KP hierarchy is just obtained as higher and higher order expansions of the family of flex lines, as we'll also discuss below.
\end{remark}

\Cref{thm:Weltersthm1} is a beautiful result, but again it  starts with a curve $C\subset A$ (or at least its infinite germ), and thus it gives a solution only to the restricted Schottky problem. The next theorem, due to Arbarello and De Concini, relaxed the condition of the existence of an infinite germ of a family of trisecants to just a finite order jet.

\begin{theorem}[Arbarello-De~Concini~1984~\cite{Arbarello-DeConcini1984}]\label{thm:ADfinitegerm}
For any $g$, there exists an $N = N(g)$ such that the existence of an $N$'th order formal jet of a family of flex lines of the Kummer variety characterizes Jacobians.
\end{theorem}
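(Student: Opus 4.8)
The plan is to prove this by reducing to Welters' theorem that an infinite formal germ of a one-parameter family of flex lines characterises Jacobians (\cite{Welters1984}, the formal strengthening of \Cref{thm:Weltersthm1}, which analytically says that the full KP hierarchy characterises Jacobians), the new input being a Noetherianity argument showing that finitely many jets already force such a germ to exist. First I would translate the hypothesis into analysis: continuing the computation that led from the trisecant identity \eqref{eq:trisecantwedge} through \eqref{eq:Kummerflex2} and \Cref{ex:getKP} to higher order, an $N$'th order formal jet of a family of flex lines of $\Kum(A_\tau)$ is equivalent, after applying Riemann's bilinear relation \eqref{eq:isogeny}, to the theta function $\theta(\tau,z)$ satisfying the first $N$ equations $E_1,\dots,E_N$ of the KP hierarchy along a string of directions $U_1, U_2, \dots \in \CC^g$ with $U_1 \ne 0$ (the successive derivatives of the germ of the curve in its Jacobian), where $E_1,\dots,E_N$ involve only $z$ and finitely many of the $U_i$.

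Next I would organise the stabilising chain. For each $N$, let $X_N \subseteq \cA_g$ be the locus of $(A_\tau,\Theta)$ for which there exist $z$ and finitely many $U_i$ with $U_1 \ne 0$ solving $E_1,\dots,E_N$. Because the equations are homogeneous in the $U_i$, one may normalise $U_1$ and realise the auxiliary parameters as a quasi-projective family over $\cA_g$, so that $X_N$ is constructible; clearly $X_1 \supseteq X_2 \supseteq \cdots$, hence the Zariski closures $\overline{X_N}$ form a descending chain of closed subvarieties of the Noetherian space $\cA_g$ and stabilise: $\overline{X_M} = \overline{X_{N(g)}}$ for all $M \ge N(g)$. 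By the trisecant identity (Fay--Gunning) every Jacobian lies in every $X_N$, the $U_i$ being the derivatives of the Abel--Jacobi map, so $\cJ_g \subseteq \overline{X_{N(g)}}$. For the reverse inclusion it suffices to show that a general point $\tau$ of each irreducible component $Z$ of $\overline{X_{N(g)}}$ lies in $\cJ_g$: such a $\tau$ lies in $X_M$ for every $M \ge N(g)$, so it carries, for each $M$, an $M$'th order jet solving $E_1,\dots,E_M$; if these can be chosen compatibly and assembled into one infinite formal germ, then \cite{Welters1984} gives $\tau \in \cJ_g$, and since $\cJ_g$ is closed, $Z \subseteq \cJ_g$. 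Doing this for every component yields $\overline{X_{N(g)}} = \cJ_g$, which is the assertion.

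The hard part will be exactly this last gluing: turning the a priori unrelated finite jets available at $\tau$ for each $M$ into a single coherent infinite germ. This is where the real content of Arbarello--De Concini's argument resides, and it is the point where one must use the structure of the KP hierarchy to control the auxiliary vectors $U_i$ --- so that, as $M$ grows, their allowed values stay in a fixed compact set rather than escaping to infinity --- and to show that the truncation maps between the solution loci of $E_1,\dots,E_{M+1}$ and of $E_1,\dots,E_M$ are dominant over a dense open subset of each component of $\overline{X_{N(g)}}$ (here the compactness of the $z$-direction, which lives in the torus $A_\tau$, helps). A subsidiary technical point is the bookkeeping in the previous step, namely compactifying the auxiliary parameter space carefully so that $\overline{X_N}$ really is an algebraic subvariety of $\cA_g$ and the chain genuinely stabilises there.
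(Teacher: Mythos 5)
Your overall strategy --- translate an $N$'th order jet of a family of flexes into the first $N$ equations of the KP hierarchy for $\theta$ along a string of directions $U_1,U_2,\dots$, use a Noetherian chain argument to extract a uniform $N(g)$, and then invoke the fact that an infinite formal germ characterizes Jacobians (\cite{Welters1984}, cf.~\Cref{thm:Weltersthm1}) --- is the expected frame, and it matches the logic sketched in the text after \Cref{thm:ADfinitegerm}: a finite but high-order jet must be prolonged to an infinitesimal germ, then to an actual one-parameter family of flexes, after which Gunning's criterion (\Cref{thm:Gunning'sthm1}) applies. But the proposal stops exactly where the content of the theorem begins. The stabilization $\overline{X_M}=\overline{X_{N(g)}}$ only tells you that at a very general point $\tau$ of a component there exist, for each $M$ \emph{separately}, some data $(z,U_1,\dots)$ solving $E_1,\dots,E_M$; nothing in the chain argument forces these data for different $M$ to be truncations of one another. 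Producing a single coherent infinite germ from these unrelated finite jets --- equivalently, proving that a jet of sufficiently high order prolongs order by order, i.e.~that the obstructions eventually vanish, or that the truncation maps between the solution loci are dominant with the auxiliary vectors staying bounded --- is precisely what Arbarello--De Concini prove, and you leave it as ``the hard part''. The auxiliary claims you propose to lean on (compactness of the allowed $U_i$ as $M\to\infty$, dominance of truncation maps over a dense open set) are not easier than the theorem: they are reformulations of the extension property. As written, the argument reduces the theorem to itself plus bookkeeping.

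There is also a gap in the bookkeeping itself. The conditions defining $X_N$ involve values and derivatives of $\theta(\tau,z)$, which are holomorphic but not algebraic in $(\tau,z)$, so it is not automatic that $X_N$ is Zariski-constructible in $\cA_g$, nor that the closures form a descending chain of \emph{algebraic} subvarieties; a descending chain of closed analytic subsets of a non-compact space need not stabilize. One must genuinely algebraize the situation (working with the equations and the finitely many auxiliary vectors on an appropriate finite-dimensional parameter space and using finite generation of the relevant ideals) before Noetherianity gives the uniform $N(g)$; you flag this as a ``subsidiary technical point'', but without it even the existence of $N(g)$ is not established. In short: both the Noetherian extraction of $N(g)$ and, above all, the prolongation of a finite jet to an infinite germ are missing, and the latter is the heart of \cite{Arbarello-DeConcini1984} and of the proof outline given in the paper.
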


The bound $N(g)$ is not given explicitly in \cite{Arbarello-DeConcini1984}, and if one follows the argument, it would certainly grow horribly as $g$ increases. The point, however, is that having a finite (but high) order jet of flexes suffices to extend this to an infinitesimal formal germ, and then it suffices to extend this to an actual curve. That's how the logic of the proof goes.

The next theorem was a conjecture of Novikov, proven by Shiota.
\begin{conjecture}[Novikov's conjecture; Theorem of~Shiota~1986~\cite{Shiota1986}]\label{conj:Novikov}
The existence of a $4$'th order jet of a family of flex lines of the Kummer variety characterizes Jacobians. Equivalently, a principally polarized (indecomposable) abelian variety is a Jacobian if and only if the associated theta functions satisfies the KP equation \eqref{eq:Kummerflex2}.
\end{conjecture}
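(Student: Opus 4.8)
The statement has two parts. The equivalence of the two formulations --- existence of a $4$'th order jet of a family of flex lines, versus the KP equation \eqref{eq:Kummerflex2} for~$\theta$ --- is the Taylor expansion already carried out above (the passage from \eqref{eq:trisecantwedge} through \eqref{eq:Kummerflex1} to \eqref{eq:Kummerflex2}, together with Exercise~\ref{ex:getKP}, which rewrites \eqref{eq:Kummerflex2} as the KP equation for the Riemann theta function). Granting this, and granting the easy direction ``Jacobian $\Rightarrow$ flex lines'' which is the Baker--Akhiezer construction of \Cref{sec:BAfunction}, the content to prove is: \emph{an indecomposable $(A,\Theta)\in\cA_g$ whose theta function satisfies the KP equation is the Jacobian of a genus~$g$ curve}. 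The plan is to run Krichever's algebro-geometric construction in reverse, reconstructing a spectral curve from the KP solution.

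Concretely, write $A=\CC^g/(\ZZ^g+\tau\ZZ^g)$ and let $U,V,W\in\CC^g$ be the directions in \eqref{eq:Kummerflex2}, so that $u(x,y,t):=2\,\partial_x^2\log\theta(Ux+Vy+Wt+z)$ solves the KP equation for every~$z$. First I would build from $u$ a formal Baker--Akhiezer wave function
\[
\psi(x,y,t;k)=\Bigl(1+\sum_{s\ge 1}\xi_s(x,y,t)\,k^{-s}\Bigr)e^{kx+k^2y+k^3t},
\]
equivalently a dressing operator $\Phi=1+\sum_{s\ge1}\phi_s\,\partial_x^{-s}$ conjugating $\partial_x$ to a Lax operator $L=\Phi\,\partial_x\,\Phi^{-1}$; the coefficients $\xi_s$ are determined recursively, and purely formally their consistency is exactly the KP hierarchy, whose first nontrivial member is the equation we assume. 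Simultaneously one must produce the higher flow directions $U=U_1,\,V=U_2,\,W=U_3,\,U_4,\,U_5,\dots\in\CC^g$, show they all lie in the same $g$-dimensional space and pairwise commute --- i.e.\ promote the single KP equation to the full hierarchy; this is forced by requiring $\psi$ to be genuinely single-valued, once its analytic nature is known.

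The step I expect to be the crux --- and which is the heart of Shiota's argument, the point later revisited and repaired by Arbarello--De Concini, Marini and Krichever --- is precisely that analytic control: one must show the formally-defined $\xi_s$ are in fact quasi-periodic \emph{meromorphic} functions on~$A$ (or a finite cover), with the numerator of $\psi$ essentially a translate of~$\theta$, so that $\psi$ has a single simple pole along $\{\theta(Ux+Vy+Wt+z)=0\}$ and no spurious zeros or poles. The delicate feature is that the $\CC U$-orbit of~$z$ in~$A$ may be dense, so one cannot argue one variable at a time; instead one propagates analyticity using all the KP flows at once, combined with the Hirota bilinear form, Riemann's bilinear relation \eqref{eq:isogeny}, and the Riemann theta singularity theorem \Cref{thm:Riemannsingularity} to bound the pole orders of the relevant $\theta$-combinations. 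A companion subtlety here is that the ``spectral curve'' produced below must be shown to be reduced, irreducible, and of arithmetic genus exactly~$g$, with the induced morphism $\Jac(\Gamma)\to A$ an isomorphism rather than merely an isogeny.

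Granting this, the endgame runs \Cref{sec:CommDO} backwards. The commuting flows $\partial_{t_n}$ act on the coefficients of~$L$, and the finite-dimensionality trick (as in Burchnall--Chaundy) yields a commutative ring of ordinary differential operators in~$x$ whose spectrum is an affine algebraic curve; adjoining the point $k=\infty$ with local parameter $k^{-1}$ compactifies it to a curve $\Gamma$ with a marked point. The common eigenvalues of these operators are meromorphic functions on~$A$ pulled back from~$\Gamma$, and indecomposability of $(A,\Theta)$ forces the Zariski closure of the orbit of the flows --- an abelian subvariety --- to be all of~$A$, so the Abel--Jacobi image of $\Gamma$ generates~$A$. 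Matching the analytic data ($U,V,W,\dots$ are the expansion coefficients at $k=\infty$ of the normalized second-kind differential, and the theta divisor matches Riemann's) identifies $A$ with $\Jac(\Gamma)$, exactly as in Gunning's \Cref{thm:Gunning'sthm1} and Welters' \Cref{thm:Weltersthm1}, and since $\Gamma$ has genus $g=\dim A$ this exhibits $(A,\Theta)$ as a Jacobian, completing the proof.
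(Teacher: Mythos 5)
The statement you are proving is quoted in the paper as Shiota's theorem and is not proved there: the paper only records the equivalence of the two formulations (the degeneration computation leading to \eqref{eq:Kummerflex2} and \eqref{eq:equationsontheta}), the easy direction via Baker--Akhiezer functions, and, in \Cref{sec:flex}, a description of where the difficulty of Shiota's proof lies. Your outline reproduces that standard strategy correctly at the level of headings --- formal wave function/dressing operator from the KP solution, promotion of the single equation to the full hierarchy, reconstruction of a commutative ring of ordinary differential operators and its spectral curve, identification of $A$ with the Jacobian --- but the one step you yourself flag as the crux is exactly the content of Shiota's theorem, and your plan contains no argument for it. ``This is forced by requiring $\psi$ to be genuinely single-valued, once its analytic nature is known'' is circular: local existence of the $\tau$-function extension follows formally from the KP equation, and the whole problem is the \emph{global} existence, which is obstructed by a cohomology class in $H^1(\CC^g\setminus\Sigma,\cV)$, where $\Sigma$ is the $D_1$-invariant part of $\Theta_1=\{\theta=D_1\theta=0\}$ and $\cV$ is the sheaf of $D_1$-invariant meromorphic functions with poles on $\Theta$. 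The hardest part of Shiota's work is precisely showing that $\Sigma$ is empty (ruling out the ``anomalous'' solutions), and none of the tools you invoke (Hirota form, Riemann's bilinear relation, the theta singularity theorem) is brought to bear on that obstruction in your plan.

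There is also a concrete error in your endgame: indecomposability of $(A,\Theta)$ does \emph{not} force the Zariski closure of the flow orbit to be all of $A$. Indecomposable is weaker than simple, and the closure $A_U=\overline{\langle Uz\rangle}$ can be a proper abelian subvariety of an indecomposable ppav; the paper's \Cref{sec:flex} explicitly warns that the argument would be much easier if $A$ were simple and spends most of its effort ($\lambda$-periodic normalization of the wave solutions, Hartogs extension across the codimension-two locus $\Sigma$, the comparison $\hat{\mathbf F}\subset\mathbf H$ of \Cref{lem:hatFspannedbyHi}) precisely to handle the case $A_U\subsetneq A$ and to show the spectral curve's Jacobian maps onto all of $A$. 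Indecomposability enters instead to guarantee that $\Theta$ is irreducible and that $\Theta_1$ (hence $\Sigma$) has codimension at least two, which is what makes the Hartogs-type extensions work. So as written, your proposal is a correct road map with the two decisive points --- the vanishing of the cohomological obstruction (emptiness of $\Sigma$) and the surjectivity of the flow orbit onto $A$ --- asserted rather than proved, and the second asserted for a reason that is false.
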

The infinite order jet is the KP hierarchy, and \Cref{conj:Novikov} is about the KP equation, the first nontrivial equation in the hierarchy. So why should one believe this conjecture? Somehow one wants to start from a fourth order jet of a family of flexes, and then inductively to argue that the higher order jets of a family of flexes exist, by proving at each step that the obstruction to the extension should vanish; since $(n+1)$-jet bundle is an extension of the $n$-jet bundle by $1$-jets, one hopes that the obstruction is the same at every step. So one might believe that the existence of a $4$-jet guarantees that the obstruction vanishes, and then it will also vanish for any higher order jets. This is morally what one would like to believe, but the proof is hard. There is also an algebro-geometric proof of Novikov's conjecture due to Arbarello, De Concini and Marini \cite{Arbarello-DeConcini1987, Marini1998}.

\smallskip
Notice that in all of the above progressively stronger results we start with the existence of a jet of a family of flex lines, and then the results are proven by extending this to a higher and higher order jet, then an infinite order jet, and then arguing the existence of an actual geometric curve of flex lines. In \cite{Welters1984} Welters made a much stronger conjecture that gets rid of jets altogether:
\begin{conjecture}[Welters~1984~\cite{Welters1984}]\label{conj:Welters1}
If the Kummer variety $\Kum(A)$ of some indecomposable principally polarized abelian variety has {\em a} trisecant line, $A$ is the Jacobian of some curve.
\end{conjecture}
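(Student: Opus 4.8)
The plan is to carry out Krichever's argument \cite{Krichever2006, Krichever2023}, and to keep things concrete I treat the most degenerate trisecant, a flex line; the general trisecant degenerates in one of three ways --- three distinct points of secancy, a tangent line meeting the Kummer variety again, or a flex line --- and the three cases are handled by similar but separate arguments, the flex line one being the subject of \Cref{sec:flex}. So assume $(A,\Theta)\in\cA_g$ is indecomposable and its Kummer image $\Kum(A)\subset\CC\PP^{2^g-1}$ admits a flex line: there are a nonzero $U\in\CC^g$, a vector $V\in\CC^g$, and a point $Z_0$ with
\[
\Kum(Z_0)\wedge\partial_U\Kum(Z_0)\wedge(\partial_U^2+\partial_V)\Kum(Z_0)=0 ,
\]
and I would build into the hypothesis the non-degeneracy that $Z_0$ is not a two-torsion point: at a two-torsion point $\Kum$ is even about $Z_0$, so $\partial_U\Kum(Z_0)=0$ and the condition is vacuous, hence such flexes carry no information and must be excluded.

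\emph{From the flex line to a linear equation.} Since $\Kum(A)$ spans $\CC\PP^{2^g-1}$, pairing the coplanarity relation with $\Kum(Z)$ for all $Z$ loses no information, and applying Riemann's bilinear relation \eqref{eq:isogeny} in reverse --- the same mechanism, run backwards, by which \Cref{sec:geometryofKummer} turned theta functional equations into wedge relations among Kummer images --- converts the flex-line condition into a bilinear functional equation for $\theta(\tau,\cdot)$ holding identically in $Z\in\CC^g$. Following \Cref{sec:BAfunction}, I would repackage this as the assertion that, for suitable constants $b_1,b_2$ and a suitable shift $W$ read off from the flex data, a Baker--Akhiezer-type wave function
\[
\psi(x,y;Z)=\frac{\theta\bigl(Z+xU+yV+W\bigr)}{\theta\bigl(Z+xU+yV\bigr)}\,e^{\,b_1x+b_2y}
\]
is a solution of a non-stationary Schr\"odinger equation
\[
\bigl(\partial_y-\partial_x^2+u(x,y)\bigr)\psi=0,\qquad u(x,y)=2\,\partial_x^2\log\theta\bigl(Z+xU+yV\bigr),
\]
for every $Z$: the poles of $\psi$ sit exactly on the translated theta divisor by \Cref{thm:Riemannsingularity}, and the flex-line equation is precisely the integrability condition making $\psi$ an eigenfunction along the $U$-direction.

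\emph{Upgrading one equation to a hierarchy.} The heart of the proof is to promote this single second-order equation to the full KP hierarchy --- equivalently, to manufacture from one flex line either a formal germ of a one-parameter family of flex lines through $Z_0$, or a maximal commutative ring of ordinary differential operators in $x$ with a common wave function, whose spectrum is then an algebraic curve $C$ as in \Cref{sec:CommDO}. The real difficulty is that, unlike for \Cref{thm:Weltersthm1}, \Cref{conj:Novikov} or \Cref{thm:ADfinitegerm}, a single flex gives no ``room'' for the usual inductive extension of jets against a fixed obstruction class; one must instead exploit the compactness of the torus $A$ directly. Concretely, one shows that requiring $\psi$ to descend to a genuine meromorphic object along each flow line of $A$ forces the higher flow directions $W_3,W_4,\dots\in\CC^g$ and commuting higher-order operators into existence, and --- this is the Shiota-type core --- that the auxiliary theta-like divisors on $A$ involved are reduced, which is exactly what stops the putative higher flows from developing obstructions. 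Granting this, one finishes either by invoking the germ version of \Cref{thm:Weltersthm1} due to Welters \cite{Welters1984}, or, more self-containedly and as in \Cref{sec:flex}, by continuing with the spectral curve $C$ just constructed.

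\emph{Reconstruction, and the main obstacle.} To conclude, Krichever's inverse spectral transform reconstructs a principally polarized abelian variety from the spectral data (the curve $C$ with a marked point and a local parameter, together with a degree-$g$ pole divisor) and expresses the potential through the theta function of $\Jac(C)$; comparing with $u(x,y)=2\,\partial_x^2\log\theta(Z+xU+yV)$ and using that $\theta$ determines the period matrix up to the $\Sp(2g,\ZZ)$-action yields $A\cong\Jac(C)$, while the built-in genericity of the flex forces $C$ to be irreducible of geometric genus exactly $g$, so the isomorphism is one of principally polarized abelian varieties rather than merely an isogeny. I expect the decisive obstacle to be the middle step: extracting from a \emph{single} flex line the global control of the pole divisor of the wave function on the compact torus, and the reducedness of the associated theta-divisor-type loci, that together produce the spectral curve. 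The initial reduction among the three degeneration types, and the exclusion of the flexes supported at two-torsion points where the condition is vacuous, are secondary but still necessary.
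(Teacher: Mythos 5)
Your coarse skeleton --- flex line $\Longleftrightarrow$ the equation \eqref{eq:L} for $\psi=\tfrac{\theta(Ux+Vy+Z+W)}{\theta(Ux+Vy+Z)}e^{px+Ey}$, then a commutative ring of ordinary differential operators and a spectral curve, then $A\cong\Jac$ --- is indeed Krichever's, and your restriction to the flex case away from two-torsion matches the scope of \Cref{sec:flex} (though the statement covers all three types of trisecant, which require separate arguments you only wave at). The genuine gap is your middle step. You propose to ``promote the single equation to the full KP hierarchy'' by manufacturing higher flow directions $W_3,W_4,\dots\in\CC^g$ on $A$ and invoking a ``Shiota-type'' reducedness statement to kill obstructions; but Krichever's proof deliberately does \emph{not} run through Shiota or through extending jets of flexes --- with one flex there is no jet to extend, which is exactly why the earlier theorems do not apply. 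The actual mechanism is: from \eqref{eq:D}/\eqref{eq:T} on $\Theta$ one shows formal wave solutions keep only \emph{simple} poles (\Cref{lem:xisimplepole}); one then fixes the translation-invariant $\lambda$-periodic normalization making the formal solution unique up to a $D_1$-invariant factor (\Cref{lem:phizykseries}, \Cref{cor:quasiperiodicphi}); this uniqueness, together with indecomposability (so $\Sigma$ has codimension $\ge 2$ and Hartogs applies), yields a pseudo-differential operator $\cL$ with coefficients globally meromorphic on $A$ (\Cref{lem:pseudoL}); and the commuting operators then come from pure finite-dimensionality: the residues $F_m=\Res_{\partial_x}\cL^m$ have at most double poles along $\Theta$ (\Cref{lem:Fmsecondorderpole}), hence lie in the $2^g$-dimensional space $H^0(A,2\Theta)$, so all but finitely many satisfy linear relations with \emph{constant} coefficients, and the corresponding combinations of $(\cL^m)_+$ commute with $\partial_y-\partial_x^2-u$. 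None of this is supplied, or even correctly anticipated, by your ``reducedness of auxiliary theta-like divisors'' step.

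Your concluding step is also not a proof as written. Once the spectral curve $\hat C$ exists, its arithmetic genus is a priori only bounded by $2^g$, and the Krichever--Mumford correspondence gives an embedding $j:A\setminus\Sigma\hookrightarrow\overline{\Pic}(\hat C)$ --- in the \emph{wrong} direction; asserting that ``$\theta$ determines the period matrix up to $\Sp(2g,\ZZ)$'' or that ``genericity of the flex forces $C$ to have genus exactly $g$'' assumes precisely what must be proved. The paper's argument is \Cref{lem:hatFspannedbyHi}: each $F_m$ is an $x$-derivative of a function with a simple pole on $\Theta$, hence lies in the span of $F_0$ and the functions $\partial_x\partial_{Z_i}\ln\theta$; since the KP flows on the compactified Jacobian are generated by the directions $\partial_{t_n}u=\partial_xF_n$, this shows the KP orbit stays inside $A$, producing $i_Z:\Jac(\hat C)\hookrightarrow A$ inverse to $j$ and hence the isomorphism of \Cref{thm:flextheta} (and even then only carried out when $\hat C$ is smooth). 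So the two decisive ingredients --- the $H^0(A,2\Theta)$ finite-dimensionality that converts one flex into a commutative ring of operators, and the containment of the KP directions in the tangent space of $A$ that forces $j$ to be an isomorphism --- are absent; you correctly flag where the difficulty sits, but the route you propose for it is the one the proof avoids.
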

The precise conjecture actually has three versions, by considering a usual non-degenerate trisecant through three distinct points, by considering a curve tangent to the Kummer variety at one point and intersecting it at another, and by considering the fully degenerate case of the existence of one flex line. Though this conjecture follows in the spirit of the above theorems, it is a very different statement. There is no curve, not even a jet of a curve to start with, and thus \Cref{conj:Welters1} provides a solution to the full Schottky problem, not just to the restricted Schottky problem.

All three cases of the Welters' conjecture were proven by Krichever in the breakthrough works \cite{Krichever2006,Krichever2010}. The ultimate goal of these notes is to present Krichever's proof of the most degenerate case of Welters' conjecture:

\begin{theorem}[Krichever~2010~\cite{Krichever2010}]\label{thm:FLEX}
The existence of one flex line of the Kummer variety characterizes Jacobians of curves among all indecomposable principally polarized abelian varieties.
\end{theorem}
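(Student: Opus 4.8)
The plan is to follow Krichever's integrable-systems strategy. From a single flex line one first extracts a formal Baker--Akhiezer wave function attached to the affine line $\{Ux\}\subset A$; the central point is then to prove that this wave function globalizes, in the sense that its coefficients are genuine meromorphic functions on~$A$, after which one reads off an algebraic spectral curve~$\Gamma$ and invokes the Baker--Akhiezer construction of \Cref{sec:BAfunction}, run backwards, to identify $(A,\Theta)$ with $\Jac(\Gamma)$. I now describe the three steps.

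\emph{Step 1: from the flex line to a linear problem.} A flex line of $\Kum(A)$ is, by the computation leading to \eqref{eq:Kummerflex2} combined with Riemann's bilinear relation \eqref{eq:isogeny} (cf.\ \Cref{ex:getKP}), equivalent to the existence of vectors $U\neq 0,\,V,\,W\in\CC^g$, a point $Z\in\CC^g$ and a scalar~$c$ such that $\theta_\tau$ satisfies the corresponding KP-type bilinear equation. I would reinterpret this, as Krichever does, through an auxiliary linear problem: the potential $u(x,y,t)\coloneqq-2\,\partial_x^2\log\theta(Ux+Vy+Wt+Z)$ should solve the KP equation, equivalently the operator $\partial_y-\partial_x^2-u$ (together with the companion operator $\partial_t-\partial_x^3-\tfrac32 u\partial_x-w$) should admit a formal Bloch-type wave solution of the schematic shape
\[
\psi(x,y,t;\lambda)\;=\;\frac{\theta\bigl(Ux+Vy+Wt+Z+r(\lambda)\bigr)}{\theta\bigl(Ux+Vy+Wt+Z\bigr)}\,\exp\!\bigl(\lambda x+\lambda^2 y+\lambda^3 t\bigr)\bigl(1+O(\lambda^{-1})\bigr),
\]
with $r(\lambda)$ a formal power series in $\lambda^{-1}$. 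The first task is to verify that the flex identity, together with its higher jets produced by differentiating \eqref{eq:isogeny}, makes the recursion for the coefficients of this series solvable; this is a direct, if lengthy, unwinding of \eqref{eq:Kummerflex2}.

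\emph{Step 2: globalizing the wave function.} This is where the real work lies, and I expect it to be the main obstacle. A priori the coefficients in the $\lambda^{-1}$-expansion of $\psi$ are only formal, obtained by indefinite integration; one must prove they are genuine meromorphic functions on~$A$ --- more precisely, that $\psi$ is an honest Bloch solution whose polar divisor on~$A$ is a single translate of the theta divisor $\Theta$, and that this divisor moves linearly as $(x,y,t)$ vary. Following Krichever, I would prove this by induction on the order in $\lambda^{-1}$: the obstruction to extending meromorphicity at each step is a section of an explicit bundle on~$A$, and it must be shown to vanish as a consequence of the flex identity --- this is Krichever's ``key lemma''. It is at this point that the indecomposability of $A$ and the hypothesis $U\neq 0$ are used essentially, to prevent the putative spectral data from degenerating, and it is here that one rules out the possibility of the flex condition holding only ``formally'', with no compact spectral curve underneath. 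In effect one reverse-engineers the pole structure that the forward construction of \Cref{sec:BAfunction} attaches to Jacobians.

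\emph{Step 3: the spectral curve and conclusion.} Once $\psi$ is known to be a true Bloch solution, restrict to $y=t=0$: then $\psi(x,0,0;\lambda)$ is a common eigenfunction of the commutative ring $\cR$ of ordinary differential operators in~$x$ that preserve the linear span of the wave functions, so the finite-dimensionality trick of \Cref{sec:CommDO} together with the Burchnall--Chaundy theorem presents $\cR$ as the affine coordinate ring of an algebraic spectral curve $\Gamma$, with $\lambda^{-1}$ a local parameter at a smooth puncture $\infty\in\Gamma$. From the quasi-periodicity of $\psi$ one checks that $\Gamma$ is integral of arithmetic genus at most~$g$; feeding $(\Gamma,\infty)$ together with the line bundle encoded by the polar divisor of $\psi$ into the Baker--Akhiezer construction of \Cref{sec:BAfunction} reproduces exactly $(A,\Theta)$ with its principal polarization, which forces $\Gamma$ to be smooth of genus~$g$ and $A=\Jac(\Gamma)$. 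This shows a flex line forces $A$ to be a Jacobian; the converse --- that every Jacobian carries flex lines --- is the degeneration of Fay's trisecant identity recorded in \eqref{eq:Kummerflex1}--\eqref{eq:Kummerflex2}, and the two directions together give the stated characterization.
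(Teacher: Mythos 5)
Your outline captures the right spirit (flex $\Rightarrow$ auxiliary linear problem $\Rightarrow$ commuting operators $\Rightarrow$ spectral curve), but it has two genuine gaps. First, your Step 1 assumes more than the hypothesis gives: a single flex line is equivalent only to the second-order condition \eqref{eq:K}, i.e.\ to the single linear equation \eqref{eq:L} for $\psi$ of the form \eqref{eq:psiintheta}, involving the two vectors $U,V$ and scalars $p,E$. It does \emph{not} give you the vector $W$, the third time $t$, the companion third-order operator, or the statement that $u=2\partial_x^2\log\theta(Ux+Vy+Z)$ solves the KP equation \eqref{eq:KP}; that four-term equation \eqref{eq:Kummerflex2} only arises from a one-parameter \emph{family} of flexes. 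Starting from ``$u$ solves KP'' amounts to reducing to Novikov's conjecture (\Cref{conj:Novikov}), which is precisely the route Krichever's proof avoids. The actual proof works only with \eqref{eq:L}, whose key consequence is the equation \eqref{eq:D} (equivalently \eqref{eq:T} on $\Theta$) for the zeros of $\tau$; it is \eqref{eq:D} that makes the recursion for the wave coefficients solvable with only simple poles (\Cref{lem:xisimplepole}). Relatedly, your Step 2 globalizes the wrong object: the wave function itself is never made global at this stage — it is constructed only locally in $Z$, on the subvariety $A_U$, normalized by $\lambda$-periodicity (\Cref{lem:phizykseries}, \Cref{cor:quasiperiodicphi}), and is unique only up to a $D_1$-invariant factor. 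What is global on $A$ are the coefficients of the pseudo-differential operator $\cL$ and the residues $F_m=\Res_{\partial_x}\cL^m$, because the $D_1$-invariant ambiguity cancels and because $\Sigma$ has codimension $\ge 2$ (this is where indecomposability enters), allowing Hartogs extension (\Cref{lem:pseudoL}, \Cref{lem:Fmsecondorderpole}). The global wave function only appears \emph{after} the spectral curve exists, by pulling back the Baker–Akhiezer function of $\hat C$ (\Cref{lem:globalization}).

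Second, your concluding step is unsupported where the proof is hardest. You assert that quasi-periodicity forces the spectral curve to have arithmetic genus at most $g$ and that the Baker–Akhiezer construction ``reproduces exactly $(A,\Theta)$.'' In fact the commutative ring is produced by linear relations among the $F_m\in H^0(A,2\Theta)$, so a priori $\hat g=\dim\hat{\mathbf F}$ can be as large as $2^g$, and one only obtains a holomorphic embedding $j:A\setminus\Sigma\hookrightarrow\overline{\Pic}(\hat C)$ (\Cref{lem:ztoAzcorrespondence}) — which by itself does not identify $A$ with $\Jac(\hat C)$. The missing ingredient is \Cref{lem:hatFspannedbyHi}: every $F_m$ is an $x$-derivative of a meromorphic function with a simple pole on $\Theta$, hence lies in the span of $1$ and $\partial_x\partial_{Z_i}\log\theta$. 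This shows the KP flows, which sweep out $\Jac(\hat C)$, are induced by constant vector fields on $A$, giving the reverse map $i_Z:\Jac(\hat C)\hookrightarrow A$ and forcing $j$ to be an isomorphism. Without an argument of this kind (or some substitute bounding the spectral data by the geometry of $A$), your Step 3 does not close the proof.
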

Notice that this theorem, and the way we have to think about, is disconnected from what we had above, because if somehow we have a jet we can try to extend it, but here we simply do not have a jet to start with. In the previous discussion in this section, we were conveniently taking limits, but here we could not take any limits if we have just one flex line. This is the goal for the rest of the lectures!

\begin{remark}
Recall that the Kummer variety is singular at the two-torsion points, so the tangent space to it at a two-torsion point is a tricky business. We  will thus prohibit all of our secants to go through any two-torsion points. There is an open conjecture, called the~$\Gamma_{00}$ conjecture \cite{vanGeeman-vanderGeer1986}, that characterizes Jacobians in terms of the geometry of the base locus of a linear subsystem of $|2\Theta|$ associated to a two-torsion point. The $\Gamma_{00}$ conjecture remains completely open, except for the easiest case \cite{GrushevskyGamma}.
\end{remark}

\begin{remark}
The analogous characterization of the Prym varieties among all principally polarized abelian varieties, via a symmetric pair of quadrisecants of the Kummer variety, was obtained by Grushevsky and Krichever in \cite{Grushevsky-Krichever2010}, and the integrable system used there is a discrete analogue of Novikov-Veselov hierarchy. Such integrable discretization was investigated in \cite{DGNS2007} earlier by Doliwa, Grinevich, Nieszporski and Santini. Taimanov \cite{Taimanov1990} might be the first one who tried to use the Novikov-Veselov equation to resolve the analogue of Novikov's conjecture for Prym varieties. 
\end{remark}

In the rest of these lecture notes, we try to give a relatively complete proof of Krichever's \Cref{thm:FLEX} characterizing Jacobians of curves by the existence of one flex line of the Kummer variety. For that we will allow ourselves to freely use Gunning's \Cref{thm:Gunning'sthm1} whose proof requires a whole set of different tools that we do not develop here. 

\subsubsection*{Summary of this section} Starting just from Riemann's theta singularity theorem, we have deduced Weil's reducibility for the intersection of a theta divisor of a Jacobian with its suitable translate. This turns out to be equivalent to the existence of trisecant lines of the Kummer image of the Jacobian in $\CC\PP^{2^g-1}$. Furthermore, the existence of a family of trisecants characterizes Jacobians among all principally polarized abelian varieties. Krichever proved Welters' conjecture that the existence of one trisecant characterizes Jacobians --- and giving an outline of the proof of this for the case of one flex line is the goal of the remaining lectures.

\section{Baker-Akhiezer Functions and the KP Hierarchy}\label{sec:BAfunction}

Now we switch gears again to differential equations and develop some key techniques for the proof of Krichever's theorem in the flex line case. We describe in this section a technique for obtaining solutions of certain differential equations using algebro-geometric data, as discussed for motivation right at the beginning of these notes. The setup is remarkably versatile, and in full generality uses the so-called Baker-Akhiezer functions introduced by Krichever in 1970s.

Let $C$ be a curve of genus $g$, and let~$p_{\infty} \in C$ be a point. Let $k^{-1}$ be a local coordinate on $C$ around $p_{\infty}$, so that $k(p_{\infty}) = \infty$. If we think about this, this is a lot of data! The moduli space of curves of genus $g$ is a finite-dimensional space, and we choose a point on it, which then adds one complex dimension, but choosing a local coordinate $k^{-1}$, at all orders in the Taylor expansion, means choosing infinitely many terms, and is thus infinite-dimensional datum. The space of such data admits a forgetful map to $\cM_{g,1}$, with fibers that are spaces of all local coordinates at~$p_\infty$, which are infinite-dimensional linear spaces.

\subsection{The Baker-Akhiezer functions}

\begin{propdef}\label{def:Baker-Akhiezer}
For any $(C, p_{\infty}, k^{-1})$ as above and for any general divisor $D = p_1 + \dots + p_g\in \Div^g(C)$, there exists a unique function $\psi(x, p)$ (of the variables $x \in \CC, p\in C$), called the {\em Baker-Akhiezer function}, such that 
\begin{enumerate}
\item for any fixed~$x$, $\psi$ is a meromorphic function on $C \setminus \{p_{\infty}\}$, with simple poles at $D$, and holomorphic elsewhere on $C \setminus \{p_{\infty} \cup D\}$;
\item $\psi$ has an essential singularity at $p_{\infty}$, such that $\psi(x, p)e^{-kx}$ is holomorphic around $p_{\infty}$, and equal to $1$ at $p_{\infty}$.
\end{enumerate}
\end{propdef}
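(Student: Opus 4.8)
The plan is the classical Krichever construction: prove \emph{existence} by writing $\psi$ down explicitly in terms of the Riemann theta function and a differential of the second kind, and prove \emph{uniqueness} by a one-line Riemann--Roch argument applied to the ratio of two candidates. It is worth keeping in mind that near $p_\infty$ the function $\psi(x,\cdot)$ will look exactly like the formal eigenfunction of \Cref{sec:CommDO}, namely $\psi=e^{kx}\big(1+\xi_1(x)k^{-1}+\xi_2(x)k^{-2}+\cdots\big)$; the role of the extra datum $(C,p_\infty,k^{-1},D)$ is precisely to promote that formal series to an honest, genuinely single-valued function on the curve.

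First I would assemble the ingredients for existence. Let $\Omega$ be the unique meromorphic differential on $C$ whose only singularity is a double pole at $p_\infty$ with principal part $dk$ in the coordinate $k^{-1}$, normalized by $\oint_{A_j}\Omega=0$ for all $j$; let $U=(U_1,\dots,U_g)$ with $2\pi i\,U_j=\oint_{B_j}\Omega$, and set $q(p):=\int_{p_0}^{p}\Omega$, which near $p_\infty$ has the expansion $q=k+O(k^{-1})$. Writing $\AJ$ for the Abel--Jacobi map and $K$ for the Riemann constant, the candidate is
\[
\psi(x,p)\ :=\ c(x)\,\frac{\theta\big(\AJ(p)+Ux-\AJ(D)-K\big)}{\theta\big(\AJ(p)-\AJ(D)-K\big)}\,\exp\!\big(x\,q(p)\big).
\]
Three checks remain. (i) Single-valuedness: both $\AJ(p)$ and $q(p)$ are defined only up to periods, but going around a $B_j$-cycle the theta quotient picks up a factor $\exp(-2\pi i\,x U_j)$ (by the quasi-periodicity of \Cref{ex:thetamodularity}) while $\exp(xq)$ picks up $\exp(2\pi i\,x U_j)$, since $U$ is exactly the $b$-period vector of $\Omega$; these cancel, and the $A_j$-monodromy is trivial by periodicity of $\theta$ and the $a$-normalization of $\Omega$. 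The identification of $U$ with the $b$-periods is a manifestation of Riemann's bilinear relations, which also shows $U$ coincides with the leading Taylor coefficients of the normalized differentials $\omega_j$ at $p_\infty$ --- the same vector that appeared in \Cref{sec:geometryofKummer}. (ii) Poles: by the Riemann theta singularity theorem, for general $D$ the zeros of $\theta(\AJ(p)-\AJ(D)-K)$ on $C$ are exactly the $g$ points of $D$, each simple, and the numerator is nonzero there, so $\psi$ has simple poles precisely along $D$ and is holomorphic on $C\setminus(\{p_\infty\}\cup D)$. (iii) At $p_\infty$: the theta quotient is holomorphic and nonzero there, while $\exp(xq)=e^{kx}(1+O(k^{-1}))$, so $\psi e^{-kx}$ is holomorphic at $p_\infty$; choosing $c(x)$ to be the reciprocal of its value at $p_\infty$ enforces the normalization.

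For uniqueness, let $\psi_1,\psi_2$ both satisfy (1)--(2). Their ratio $\psi_1/\psi_2$ is a genuinely single-valued function on all of $C$: near $p_\infty$ the essential singularities cancel and the quotient is holomorphic with value $1$, and away from $p_\infty$ the only possible poles are at the zeros of $\psi_2$, because the poles of $\psi_1$ are bounded by $D$, which also bounds those of $\psi_2$. For general $D$ (and general $x$) the divisor of zeros of $\psi_2(x,\cdot)$ on $C\setminus\{p_\infty\}$ is effective of degree $g$ --- the degree count is a residue computation around $p_\infty$ using that $dk$ has no residue there --- and is non-special, since those zeros trace out a straight line in $\Jac(C)$ as $x$ varies and therefore avoid $\Theta$ for general $x$. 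A non-special effective divisor of degree $g$ has a one-dimensional space of sections, so the only meromorphic functions with poles bounded by it are constants; since $\psi_1/\psi_2$ equals $1$ at $p_\infty$ it is identically $1$, giving $\psi_1\equiv\psi_2$ for general $x$, hence for all $x$ by continuity.

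The main obstacle is check (i): verifying rigorously that the explicit product above descends to a well-defined function on $C$, with no spurious zeros or poles introduced by the theta factors, is the only step that is not a formal manipulation, and it is where the precise bookkeeping of periods, the Riemann bilinear relations, and the definition of the Riemann constant all have to be handled carefully. A secondary but genuine point is genericity: the hypothesis that $D$ be a general divisor is used twice --- once to get simple poles exactly along $D$ in existence, and once to guarantee non-speciality of the zero divisor of $\psi_2$ in uniqueness --- and should be read into the phrase ``general divisor'' in the statement.
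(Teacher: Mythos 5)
Your proposal is correct and follows essentially the same route as the paper: existence via the explicit theta-quotient times $\exp(x\int\Omega)$ with the $B$-period vector $U$ making the expression single-valued, and uniqueness by showing the ratio of two candidates is a meromorphic function with poles bounded by the non-special degree-$g$ zero divisor of $\psi$, hence constant and equal to $1$. The only differences are cosmetic (you normalize by a factor $c(x)$ where the paper uses the extra quotient $\theta(D)/\theta(Ux+D)$ and the constant $b$ in the exponent, and you carry a $2\pi i$ in the definition of $U$), and you supply slightly more detail on the period bookkeeping and the degree count of zeros than the paper does.
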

What the above means is that $\psi(x, p)$ is a function of $p \in C$, with $x \in \CC$ as a parameter. The claim is that there exists a unique function with these properties. Akhiezer considered the problem of the uniqueness of Baker-Akhiezer function at least for hyperelliptic curves \cite{Akhiezer1961}, and Baker derived many differential equations satisfied by abelian functions \cite{Baker1897, Baker1907} including essentially the KP equation, which we are going to discuss in the following. The hyperelliptic case in its full generality was completed by Its and Matveev \cite{Its-Matveev1975} and people from the Novikov school \cite{Dubrovin-Matveev-Novikov1976}. The Baker-Akhiezer function in full generality, as defined above, is due to Krichever \cite{Krichever1977}.

\begin{proof}[Proof of \Cref{def:Baker-Akhiezer}]

\noindent Uniqueness: we first prove the uniqueness of the Baker-Akhiezer function $\psi(x, p)$ which is the easy part of the proposition. If $\psi(x, p)$, $\tilde\psi(x, p)$ are two functions satisfying the conditions above, then their quotient $\tfrac{\tilde\psi(x, p)}{\psi(x, p)}$ has no essential singularity at $p$ (the essential singularity cancels) and is a meromorphic function with poles at the divisor of zeros $D'$ of $\psi(x, p)$. By the following explicit construction of $\psi(x, p)$ we would see that $D'$ is also a general divisor, so the ratio is necessarily a constant. As $\frac{\tilde\psi(x, p_{\infty})}{\psi(x, p_{\infty})}=1$, this constant is necessarily $1$.

\noindent Existence: $\psi(x, p)$ satisfying conditions (1) and (2) can be explicitly constructed by the following formula
\begin{equation}\label{eq:Baker-Akhiezer1}
 \psi(x, p) \coloneqq \frac{\theta(\int_{p_{\infty}}^p \omega + Ux + D)\theta(D)}{\theta(\int_{p_{\infty}}^{p}\omega + D)\theta(Ux + D)} \cdot\exp\left(\int_{p_0}^{p}\Omega(p)x - bx\right),
\end{equation}
where $\omega = (\omega_1, \dots, \omega_g)^t$ is the basis of abelian differentials normalized as in \eqref{eq:normalizeddifferential}, 
$\Omega(p)$ is the meromorphic differential on~$C$ with a unique second order pole at $p_\infty$, with singular part given by
\[\Omega(p) \sim dk, \quad \text{as } p \to p_{\infty}\]
(note that since $k(p_\infty)=\infty$, this prescribes a second order pole!), and satisfying the normalization condition
\[\int_{A_i} \Omega(p) = 0, \qquad \forall 1 \le i \le g\,.\]
The constant $b$ is chosen so that condition (2) is satisfied, that is 
\[\int_{p_0}^p \Omega(p) = k + b + o(1).\]
Finally, the constant vector $U = (U_1, \dots, U_g)^t$ is 
\[U_j \coloneqq \int_{B_j}\Omega(p)\,,\]
and $D = (D_1, \dots, D_g)^t$ is the Abel-Jacobi image of~$D$, given analytically by
\[D_j \coloneqq - \sum \limits_{i = 1}^g \int_{p_0}^{p_i} \omega_j - K_j, \qquad 1 \le j \le g\,, \]
where $K = (K_1, \dots, K_g)^t$ is the Riemann constant, explicitly given by
\begin{equation}\label{eq:Riemannconstant}
K_j \coloneqq\frac{\tau_{jj}}{2} - \sum \limits_{l \ne j}\left(\int_{A_l} \omega_l(p)\int_{p_0}^p \omega_j\right), \quad 1 \le j \le g\,.
\end{equation}
In the above expressions, $p_0 \ne p_{\infty} \in C$ is any chosen fixed point. 

Note that the function $\psi$ given by \eqref{eq:Baker-Akhiezer1} has poles exactly at the points of the divisor~$D$, and by construction it has an essential singularity at $p_{\infty}$ with the right asymptotics prescribed by condition (2) with the right normalization. The only thing that needs to be checked is that $\psi(x, p)$ is indeed a well-defined function on $C$, i.e.~ that the expression above is single-valued on~$C$. This is equivalent to $\psi(x, p)$ being invariant when the point $p$ goes around an arbitrary cycle $\gamma \in H_1(C, \ZZ)$. In particular we can verify this on the basis $A_1, \dots, A_g, B_1, \dots, B_g$, and this invariance follows from the quasi-periodicity of the $\theta$-function in \Cref{ex:thetamodularity} and the normalizations above.

From \eqref{eq:Baker-Akhiezer1}, we also see that the divisor of zeroes of $\psi(x, p)$ is $Ux + D$, more precisely, is the inverse image of the vector $-(Ux + D)$ under the Abel-Jacobi map. As the inverse image of $-D$ is non-special by the genericity assumption, its small perturbation $-(Ux + D)$ is also a non-special divisor on $C$ for $x$ sufficiently small. 
\end{proof}

\begin{remark}
Why should we believe that things such as the Baker-Akhiezer function exist? Maybe we shouldn't believe it easily --- there are actually a lot of choices involved in \eqref{eq:Baker-Akhiezer1}: the basis of $H_1(C, \ZZ)$ is not canonical, the point $p_0 \in C$ in the definition of $D$ is arbitrary, and so forth. At the end, all these ambiguities went away, and we obtained a well-defined function on~$C$. There are some reasons, however, that make this construction believable. Recall that in \Cref{thm:functionsintheta} we constructed arbitrary meromorphic functions on a curve from theta functions, and there were also many choices involved. In some sense all we did here was to add an explicit exponential factor to create an essential singularity of a specified form.
\end{remark}
\begin{remark}
Why is the Baker-Akhiezer function  useful? After all, it is just a meromorphic function on $C\setminus\{p_\infty\}$ with an essential singularity at~$p_\infty$. Much of its use comes from the free extra parameter~$x$, which we will now exploit. Indeed, consider the derivative of $\psi(x,p)$ with respect to~$x$. This will again be a meromorphic function on~$C\setminus\{p_\infty\}$ with a different essential singularity at~$p_\infty$. By equating the asymptotics at essential singularities, we will be able to obtain some differential equations satisfied by $\psi$, which is what we are heading for now.
\end{remark}

The definition of the Baker-Akhiezer function can be easily generalized to the multi-variable case. Let $\ttt\coloneqq (t_1, t_2, t_3, \dots)$ be a sequence of parameters, then in the same setting as \Cref{def:Baker-Akhiezer} there exists a unique Baker-Akhiezer function $\psi(\ttt, p)$ such that
\begin{enumerate}
    \item $\psi$ is a meromorphic function on $C \setminus\{p_{\infty}\}$, with simple poles at $D$, and holomorphic elsewhere on $C \setminus \{p_{\infty} \cup D\}$;
    \item $\psi$ has essential singularity at $p_{\infty}$, such that $\psi(\ttt, p)\exp\left(-\sum \limits_{i = 1}^{\infty}k^it_i\right)$ is holomorphic around $p_{\infty}$, and equal to $1$ at $p_{\infty}$.
\end{enumerate}
Such a function $\psi(\ttt, p)$ is given by a similar explicit formula:
\begin{equation}\label{eq:Baker-Akhiezer2}
    \psi(\ttt, p) = \frac{\theta\left(\int_{p_{\infty}}^p \omega + \sum \limits_{i = 1}^{\infty}U^{(i)} t_i + D\right)\theta(D)}{\theta\left(\int_{p_{\infty}}^{p}\omega + D\right)\theta\left(\sum \limits_{i = 1}^{\infty}U^{(i)} t_i + D\right)} \exp\left(\sum \limits_{i = 1}^{\infty}\left(\int_{p_0}^{p}\Omega_i(p) - b_i\right)t_i\right),
\end{equation}
where each $\Omega_i(p)$ is the meromorphic differential of the second kind (recall that this classical terminology means all of its residues are zero) with the unique singularity at $p_{\infty}$ of the form
\[\Omega_i(p) \sim dk^i, \quad \text{as } p \to p_{\infty} \]
and satisfying
\[\int_{A_j} \Omega_i(p) = 0, \qquad 1 \le j \le g\,.\]
The constants $b_i$'s are chosen so that (2) is satisfied, that is
\[\int_{p_0}^p\Omega_i(p) = k^i + b_i + o(1)\,,\]
and the vector $U^{(i)} = \left(U^{(i)}_1, \dots, U^{(i)}_g\right)^t$ is then given by
\[U^{(i)}_j \coloneqq \int_{B_j}\Omega_i(p)\,.\]
Here we are not going to worry about in what sense the infinite sums involving $t_i$'s will converge, as we will only use the first three variables $t_1 = x, t_2 = y, t_3 = t$ in the following, setting all higher $t_i$ to zero.

\subsection{The KP hierarchy}\label{sec:KP}

Recall that in \Cref{sec:CommDO} we were solving the eigenvalue problem for a differential operator $L$ in one variable, and we required the normalized eigenfunction to have the form \eqref{eq:normalizedeigenfunctionpsi}  consisting of an exponential function multiplied by a formal series in $k^{-1}$. The Baker-Akhiezer function \eqref{eq:Baker-Akhiezer1} is something of the same sort. What is the difference between $\psi$ defined in \eqref{eq:normalizedeigenfunctionpsi} and the Baker-Akhiezer function \eqref{eq:Baker-Akhiezer1}? The main difference is that the discussion in \Cref{sec:CommDO} is purely formal and local, but the Baker-Akhiezer function $\psi(x, p)$ is a meromorphic function on the Riemann surface $C\setminus \{p_\infty\}$ --- that is, it is a global object. Now the goal is quite clear: we want to unite these two approaches! The advantage of the global solutions is that we have a whole set of tools from geometry of Riemann surfaces. Somehow if we can get functions of this sort, they are not just formal, but they are actual functions on the curve with an essential singularity and we can pass from formal solutions to actual geometric solutions. So we should see that the Baker-Akhiezer functions indeed solve some eigenvalue problem. For that we will use their uniqueness: noting that if we have more than one way to construct functions with the same prescribed essential singularity at some point $p_{\infty} \in C$, for example by taking differential operators acting on the Baker-Akhiezer function $\psi$ or by multiplying it with a function on the curve, then they should be the same function. Indeed, we have the following

\begin{theorem}\label{thm:BAaseigenfunction}
For any meromorphic function 
\[E: C \to \CC\PP^1\]
on the algebraic curve $C$ with an $n$'th order pole at $p_{\infty}$ and holomorphic elsewhere, there exists a unique $n$'th order differential operator in one variable,
\[L = \sum \limits_{i = 0}^n u_i(x)\frac{d^i}{dx^i}\]
such that
\[L\psi(x, p) = E(p)\psi(x, p)\,,\]
where $\psi(x, p)$ is the Baker-Akhiezer function \eqref{eq:Baker-Akhiezer1}.
\end{theorem}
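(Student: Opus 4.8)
The plan is to pin down $L$ by matching the expansions of $L\psi$ and $E\psi$ at the essential singularity $p_\infty$, and then to promote this purely local matching to the global identity on $C$ by the rigidity argument that underlies the uniqueness part of \Cref{def:Baker-Akhiezer}.

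First I would write out the local data at $p_\infty$ in the coordinate $k^{-1}$: by construction $\psi(x,p)e^{-kx}=\sum_{s\ge 0}\xi_s(x)k^{-s}$ near $p_\infty$ with $\xi_0\equiv 1$, and the given function expands as $E(p)=e_nk^n+e_{n-1}k^{n-1}+\cdots$ with $e_n\neq 0$ and all $e_j$ constant. Since $\frac{d^i}{dx^i}\big(\xi_s(x)k^{-s}e^{kx}\big)=\big(\sum_{l=0}^i\binom{i}{l}\xi_s^{(i-l)}(x)\,k^{l-s}\big)e^{kx}$, collecting in $(L\psi-E\psi)e^{-kx}$ the coefficient of $k^{n-m}$ for $m=0,1,\dots,n$ and setting each equal to zero produces an explicit \emph{triangular} system: the coefficient of $k^{n}$ gives $u_n(x)=e_n$ (a constant!), and for $m\ge 1$ the coefficient of $k^{n-m}$ expresses $u_{n-m}(x)$ in terms of $e_n,\dots,e_{n-m}$, of $\xi_0,\dots,\xi_m$ and their $x$-derivatives, and of the already-determined $u_{n-m+1},\dots,u_n$. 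No integration is needed at any step — in contrast with the constructions of \eqref{eq:normalizedeigenfunctionpsi} and \eqref{eq:Baker-Akhiezer1} — so $u_0(x),\dots,u_n(x)$, and hence $L$, are produced uniquely; this already settles the uniqueness claim, since any $L$ with $L\psi=E\psi$ must in particular satisfy this system of asymptotic equations.

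It then remains to verify that the operator $L$ just constructed satisfies $L\psi=E\psi$ globally on $C$, not merely to top order at $p_\infty$. Set $\Phi(x,p)\coloneqq L\psi(x,p)-E(p)\psi(x,p)$. For fixed $x$, since $L$ differentiates only in the variable $x$, inspection of \eqref{eq:Baker-Akhiezer1} shows that $L\psi$ is, like $\psi$, a function of $p$ with at most simple poles along $D$ and holomorphic on $C\setminus(\{p_\infty\}\cup D)$; the same holds for $E\psi$ since $E$ is holomorphic away from $p_\infty$; and by the very choice of the $u_i$ the product $\Phi(x,p)e^{-kx}$ is holomorphic at $p_\infty$ and vanishes there. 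Thus $\Phi$ has all the defining properties of a Baker-Akhiezer function for $(C,p_\infty,k^{-1},D)$ except that the value of its exponential-stripped part at $p_\infty$ is $0$ rather than $1$. Now running the uniqueness argument of \Cref{def:Baker-Akhiezer}: the quotient $\Phi/\psi$ has no essential singularity at $p_\infty$ and is a global meromorphic function on $C$ whose polar divisor is bounded by the zero divisor of $\psi$ — the degree-$g$ divisor with Abel-Jacobi image $-(Ux+D)$, which is non-special for $D$ generic and $x$ generic (in particular small) — so by Riemann-Roch $\Phi/\psi$ is constant, and since $\Phi e^{-kx}\to 0$ whereas $\psi e^{-kx}\to 1$ at $p_\infty$, this constant is $0$. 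Hence $\Phi\equiv 0$ for generic $x$, and by the holomorphic dependence of $\psi$ and of the $u_i$ on $x$, for all $x$.

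The genuine content here, and the step I expect to be the crux, is precisely this passage from the local construction of $L$ at $p_\infty$ to the global identity: it rests on the rigidity of meromorphic functions on $C$ with a prescribed non-special polar divisor, i.e.~on Riemann-Roch, just as in \Cref{def:Baker-Akhiezer}. The minor points needing care are that $Ux+D$ is non-special only on a dense open set of $x$, so one concludes $\Phi\equiv 0$ there and extends by continuity in $x$; and that the hypothesis $e_n\neq 0$ is exactly what forces $L$ to have order exactly $n$, with (constant) leading coefficient $e_n$.
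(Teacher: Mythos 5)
Your proposal is correct and follows essentially the same route as the paper: determine the coefficients $u_n,\dots,u_0$ uniquely by a triangular (integration-free) matching of the asymptotic expansions of $L\psi$ and $E\psi$ at $p_\infty$, then observe that $L\psi-E\psi$ satisfies all the defining conditions of the Baker-Akhiezer function except that its normalized value at $p_\infty$ is $0$, so the uniqueness/Riemann--Roch rigidity argument forces it to vanish identically. If anything, your write-up is slightly more careful than the paper's, which phrases the local matching only against the leading term $k^nu_n$ rather than the full Laurent tail of $E$ down to order $k^0$, as you correctly do.
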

\begin{proof}
We can first expand the Baker-Akhiezer function \eqref{eq:Baker-Akhiezer1} in the form of \eqref{eq:normalizedeigenfunctionpsi}, i.e.~we expand the fractional part involving various theta functions
\[\psi(x, p)\exp\left(-k(p)(x - x_0)\right)\]
in terms of powers of $k^{-1}$. We then have
\begin{claim}
For any series of the form \eqref{eq:normalizedeigenfunctionpsi} there exists a unique operator $L$ such that
\begin{equation}\label{eq:congruenceL1}
L\psi(x, k) \equiv k^n \psi(x, k)u_n \mod O(k^{-1}e^{k(x - x_0)}).
\end{equation}
\end{claim}
\begin{proof}[Proof of the claim]
The coefficients of $L$ can be found successively from \eqref{eq:recursivexi} for $l = 1, \dots, n$ (note that in \eqref{eq:recursivexi} we normalized $u_n$ to be $1$ and this is compensated by the $u_n$ on the right-hand side of~\eqref{eq:congruenceL1}). 
\end{proof}
Now we consider the function $L\psi(x, p) - E(p)\psi(x, p)$ defined on $C$. It satisfies all the requirements defining the Baker-Akhiezer function $\psi(x, p)$ except one, that is its value at $p_{\infty}$ is equal to zero. From the uniqueness of the Baker-Akhiezer function, it follows that $L\psi(x, p) = E(p)\psi(x, p)$. 
\end{proof}

Let $\cA(C, p_{\infty})$ be the ring of meromorphic functions on $C$ that are holomorphic on $C\setminus\{p_\infty\}$ and may have an arbitrary pole order at~$p_{\infty}$. Then by \Cref{thm:BAaseigenfunction} the Baker-Akhiezer function \eqref{eq:Baker-Akhiezer1} gives a homomorphism~$\Lambda$ from $\cA(C, p_{\infty})$ into the ring of linear differential operators: for any  $E(p) \in \cA(C, p_{\infty})$, we associate with it the differential operator constructed in \Cref{thm:BAaseigenfunction}. Since $\cA(C, p_{\infty})$ is a commutative ring, its image under $\Lambda$ is also commutative. Combining the results from \Cref{sec:CommDO} and the discussion in this section, we have

\begin{theorem}[Krichever~1977~\cite{Krichever1977}]\label{thm:spectralcurve}
For arbitrary differential operators in one variable $L_1, L_2$ such that $[L_1, L_2] =0$, and $\gcd(\ord(L_1), \ord (L_2)) = 1$, there exists a curve $C$ defined as $\{(\alpha, \beta) \in \CC^2: Q(\alpha, \beta) = 0\}$, a point $p_{\infty} \in C$, and a local coordinate $k^{-1}$ around~$p_\infty$, such that $\psi(x, p)$ is a common eigenfunction of $L_1$ and $L_2$. Moreover, $Q(L_1, L_2) = 0$.    
\end{theorem}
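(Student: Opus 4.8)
The plan is to reverse-engineer the geometric data $(C, p_\infty, k^{-1}, D)$ directly from the commuting pair $L_1, L_2$, and then to match the two constructions via the uniqueness of the Baker--Akhiezer function. First I would invoke the Burchnall--Chaundy theorem to produce the polynomial $Q(\alpha,\beta)$ with $Q(L_1, L_2) = 0$, and take the (projective, compactified) curve $C_0 = \{Q(\alpha,\beta) = 0\}$ as the candidate spectral curve. The point $p_\infty$ is the point added in the compactification: since $L_1 = \tfrac{d^n}{dx^n} + \dots$ and $L_2 = \tfrac{d^m}{dx^m} + \dots$ have leading terms of coprime orders, the curve $Q = 0$ has a single branch at infinity where $\alpha \sim k^n$, $\beta \sim k^m$ for a suitable local parameter $k^{-1}$; coprimality of $n, m$ is exactly what guarantees this branch is unibranch and gives a well-defined local coordinate $k^{-1}$ at $p_\infty$. (I would gloss over the possible singularities of $C_0$, as the excerpt explicitly allows, passing if needed to the normalization.)

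Next I would fix a generic $x_0$ and use \Cref{thm:L12commute} together with \Cref{cor:formalsolutionsall}: the common formal eigenfunction $\psi(x, k; x_0)$ of $L_1$ satisfies $L_1 \psi = k^n \psi$ and $L_2 \psi = A(k)\psi$ for a scalar $A(k)$ independent of $x_0$. Thus on the curve $C_0$ the pair $(\alpha,\beta) = (k^n, A(k))$ traces out points satisfying $Q(\alpha,\beta) = 0$ — indeed $A(k)$, being built from the centralizer action, must be the value of $\beta$ along the branch at infinity. This identifies the formal spectral parameter $k$ of \Cref{sec:CommDO} with the local coordinate $k^{-1}$ at $p_\infty$ on $C_0$, and shows the formal eigenfunction $\psi(x,k;x_0)$ has precisely the exponential-times-series shape \eqref{eq:normalizedeigenfunctionpsi} that the Baker--Akhiezer function has near $p_\infty$.

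It then remains to choose the divisor $D$ so that the Baker--Akhiezer function $\psi(x,p)$ of \Cref{def:Baker-Akhiezer} attached to $(C_0, p_\infty, k^{-1}, D)$ coincides with the formal $\psi(x,k;x_0)$. The key point is that the pole divisor $D$ of the Baker--Akhiezer function is recovered intrinsically from $L_1$: the common eigenspace $\cL(E)$ of $L_1$ is $n$-dimensional, and $\psi(x, k; x_0)$, viewed as a function of $p \in C_0$ over a fixed generic $x$, is the unique section of a degree-$g$ line bundle; its poles define $D$. Once $D$ is fixed this way, both $\psi(x,p)$ and (the analytic continuation of) $\psi(x,k;x_0)$ are meromorphic on $C_0 \setminus \{p_\infty\}$ with the same pole divisor $D$ and the same normalized essential singularity at $p_\infty$, so by the uniqueness clause of \Cref{def:Baker-Akhiezer} they are equal. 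Applying \Cref{thm:BAaseigenfunction} to the functions $E_1(p) = \alpha$ and $E_2(p) = \beta$ in $\cA(C_0, p_\infty)$ produces differential operators annihilating $\psi$ with eigenvalues $\alpha = k^n$ and $\beta$; by the uniqueness in that theorem these operators are exactly $L_1$ and $L_2$. Finally, $Q(L_1, L_2) = 0$ is just $Q(\alpha,\beta) = 0$ read back through $\Lambda$, i.e.\ it is already supplied by Burchnall--Chaundy. I expect the main obstacle to be the careful handling of the point at infinity and the singularities of $C_0$: verifying that coprimality of $n$ and $m$ really forces a single unibranch point $p_\infty$ with a genuine local coordinate $k^{-1}$, and that passing to the normalization does not disturb the matching of $D$ and of the essential singularity, is the technical heart of the argument.
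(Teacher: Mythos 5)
Your proposal follows essentially the same route the paper takes: Burchnall--Chaundy supplies $Q$ and the affine spectral curve, coprimality of the orders gives the single unibranch point $p_\infty$ with local coordinate $k^{-1}$ (where $E=k^n$), and the formal common eigenfunction of \Cref{sec:CommDO} is matched with the Baker--Akhiezer function via its uniqueness, after which \Cref{thm:BAaseigenfunction} recovers $L_1,L_2$. The paper only sketches this ("combining the results from \Cref{sec:CommDO} and the discussion in this section"), and the technical points you flag as the heart of the matter — the behavior at infinity, singularities/normalization, and the fact that the normalized eigenfunction extends to a global meromorphic function on $C\setminus\{p_\infty\}$ with a general degree-$g$ pole divisor — are exactly the ones the paper also defers to \cite{Krichever1977}.
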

\begin{remark}
When the orders of $L_1$ and $L_2$ are not coprime, the situation is more elaborate and much less well-understood, though with a lot of progress for example in \cite{Krichever-Novikov1978, Krichever-Novikov1979, Krichever-Novikov1980, Grunbaum1988, Previato-Wilson1992}.
\end{remark}

This theorem says that for any commuting differential operators $L_1,L_2$ of coprime orders in one variable, there exists a curve $C$ given by the polynomial equations satisfied by $L_1$ and $L_2$, and the corresponding Baker-Akhiezer functions constructed starting from~$C$ give common eigenfunctions of~$L_1$ and~$L_2$. Recall that in \Cref{sec:CommDO} we learned that if we have two commuting differential operators in one variable, then they satisfy a polynomial equation. This polynomial equation defines an algebraic curve in $\CC^2$, which should be compactified either in $\CC\PP^2$ or in $\CC\PP^1 \times \CC\PP^1$. The point at infinity, however, is a singular point which in general has $n$ branches coming together, and if we want to separate the branches we should set $E= k^n$ at $p_{\infty}$. The curve $C$ here does not have to be smooth and the point at infinity is not smooth. What we could do is take the closure of the curve $C$ in $\CC\PP^2$ or $\CC\PP^1 \times \CC\PP^1$, and then take its normalization at infinity. Alternatively, one could consider Baker-Akhiezer functions directly on singular curves, as long as the point~$p_\infty$ is smooth.

In the next theorem, we are going to use a slightly more general Baker-Akhiezer function, i.e.~by setting $t_1 = x$, $t_2 = y$, $t_3 = t$, and setting all other time parameters to be zero in \eqref{eq:Baker-Akhiezer2}, to obtain the Baker-Akhiezer function $\psi(x, y, t, p)$. We have then

\begin{theorem}\label{thm:BAassolution}
Let $t_1 = x$, $t_2 = y$, $t_3 = t$, $t_4 = t_5 = \dots = 0$ in \eqref{eq:Baker-Akhiezer2}. Then for $\psi(x, y, t, p)$ there exist unique differential operators $L_1 = \sum \limits_{i = 0}^2 u_i(x, y, t)\frac{\partial^i}{\partial x^i}$ and $L_2 = \sum \limits_{i = 0}^3 v_i(x, y, t)\frac{\partial^i}{\partial x^i}$ such that 
\[L_1\psi = \frac{\partial \psi}{\partial y}, \qquad L_2\psi = \frac{\partial \psi}{\partial t}\,.\]
\end{theorem}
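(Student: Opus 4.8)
The plan is to exploit the uniqueness of the Baker--Akhiezer function, exactly as in the proof of \Cref{thm:BAaseigenfunction}. The key observation is that for fixed $(x,y,t)$, each of the functions $\partial_x^i\psi$ ($i=0,1,2,3$), $\partial_y\psi$ and $\partial_t\psi$ is meromorphic on $C\setminus\{p_\infty\}$ with at most simple poles at the general divisor $D$ --- this is immediate from the explicit formula \eqref{eq:Baker-Akhiezer2}, since the poles in $p$ arise only from the zeros of the $\ttt$-independent factor $\theta(\int_{p_\infty}^p\omega+D)$ in the denominator, and differentiating in the parameters $x,y,t$ cannot raise their order. The only difference between these functions lies in the order of the essential singularity at $p_\infty$: writing $\psi = e^{kx+k^2y+k^3t}\bigl(1+\sum_{s\ge 1}\xi_s(x,y,t)k^{-s}\bigr)$ near $p_\infty$ with $\xi_s$ holomorphic, applying $\partial_x$ multiplies the leading term by $k$, applying $\partial_y=\partial_{t_2}$ by $k^2$, and applying $\partial_t=\partial_{t_3}$ by $k^3$. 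This is precisely why we look for $L_1$ of order $2$ and $L_2$ of order $3$ in $\partial_x$.

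First I would construct $L_1$. Since $\partial_y\psi$ and $\partial_x^2\psi$ both have leading term $k^2 e^{kx+k^2y+k^3t}$, their difference grows at most like $k\, e^{kx+k^2y+k^3t}$ at $p_\infty$. Choose $u_1(x,y,t)$ to cancel the coefficient of $k\, e^{kx+k^2y+k^3t}$ in $\partial_y\psi - \partial_x^2\psi - u_1\partial_x\psi$, and then $u_0(x,y,t)$ to cancel the coefficient of $k^0 e^{kx+k^2y+k^3t}$; this is the same term-by-term procedure used in \eqref{eq:recursivexi} and in the claim inside the proof of \Cref{thm:BAaseigenfunction}, and it determines $u_0,u_1$ uniquely as functions of $(x,y,t)$ only. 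Set $L_1 \coloneqq \partial_x^2 + u_1\partial_x + u_0$. Then $R_1 \coloneqq \partial_y\psi - L_1\psi$ is meromorphic on $C\setminus\{p_\infty\}$ with at most simple poles at $D$, and by construction $R_1\, e^{-(kx+k^2y+k^3t)} = O(k^{-1})$ near $p_\infty$, so $R_1$ vanishes at $p_\infty$.

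It remains to see $R_1\equiv 0$, and here I would argue exactly as for the uniqueness of the Baker--Akhiezer function. The ratio $R_1/\psi$ is a global meromorphic function on $C$: the essential singularities at $p_\infty$ cancel (and in fact $R_1/\psi\to 0$ there, since $R_1$ vanishes while $\psi\, e^{-(kx+k^2y+k^3t)}\to 1$), and at the points of $D$ the at most simple pole of $R_1$ is killed by the simple pole of $\psi$. Hence $R_1/\psi$ is holomorphic away from the $g$ zeros of $\psi$ on $C\setminus\{p_\infty\}$, which --- for $(x,y,t)$ in a neighbourhood of the origin --- form a general (non-special) effective divisor of degree $g$, as noted at the end of the proof of \Cref{def:Baker-Akhiezer}. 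Since $h^0$ of a general degree-$g$ divisor equals $1$ by Riemann--Roch, the only such meromorphic functions are constants; as $R_1/\psi$ vanishes at $p_\infty$ it is identically $0$, i.e.\ $L_1\psi=\partial_y\psi$. The construction of $L_2=\partial_x^3 + v_2\partial_x^2 + v_1\partial_x + v_0$ is identical, matching now the coefficients of $k^3,k^2,k,k^0$ times $e^{kx+k^2y+k^3t}$ in $\partial_t\psi$ (using $\partial_x^3\psi$ for the leading term and then $v_2,v_1,v_0$ in turn) and invoking the same vanishing argument for the remainder $R_2 \coloneqq \partial_t\psi - L_2\psi$. Uniqueness of $L_1$ and $L_2$ among operators of the stated orders is then automatic: if $L_1'$ also worked, $(L_1-L_1')\psi=0$, and reading off the coefficients of $k^2,k,k^0$ times $e^{kx+k^2y+k^3t}$ at $p_\infty$ forces $L_1-L_1'=0$, since the functions $k^i e^{kx+k^2y+k^3t}$ are linearly independent; likewise for $L_2$.

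The main thing to be careful about is the bookkeeping at $p_\infty$: one must check that exactly $2$ (resp.\ $3$) subleading coefficients in the expansion need to be matched before the remainder becomes $O(k^{-1}e^{kx+k^2y+k^3t})$, and that the coefficients $u_i,v_i$ produced this way genuinely depend only on $(x,y,t)$ and not on $p$ --- both follow from the recursion already set up in the proof of \Cref{thm:BAaseigenfunction}. A secondary point is the genericity of the zero divisor of $\psi$, which is exactly what makes the Riemann--Roch step give $h^0=1$; this is where the hypothesis that $D$ is general is used. I expect no real obstacle beyond this, since the conceptual content --- uniqueness of the Baker--Akhiezer function --- is already in hand. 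As a by-product, equating $\partial_t(\partial_y\psi)=\partial_y(\partial_t\psi)$ and applying uniqueness once more yields the compatibility $[\partial_y - L_1,\,\partial_t-L_2]=0$, which is the KP equation; this is a separate statement, not part of what is asked here.
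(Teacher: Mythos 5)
Your proposal is correct and follows essentially the same route as the paper: expand $\psi$ near $p_\infty$, determine the coefficients of $L_1,L_2$ recursively so that the remainders are $O(k^{-1})e^{kx+k^2y+k^3t}$, and then kill the remainders by the uniqueness mechanism for Baker--Akhiezer functions. You simply spell out the last step (the ratio $R_i/\psi$ being a meromorphic function with poles bounded by a general degree-$g$ divisor, hence constant, hence zero) in more detail than the paper, which just invokes uniqueness directly.
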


\begin{proof}
As in the proof of \Cref{thm:BAaseigenfunction}, we can first expand $\psi(x, y, t, p)$ in the form
\[\psi(x, y, t, k) = \left(\sum \limits_{s = 0}^{\infty} \xi_s(x, y, t)k^{-s}\right) e^{kx + k^2 y + k^3 t}\,.\]
Then there exist unique operators of the form $L_1$ and $L_2$ in \Cref{thm:BAassolution} such that
\begin{align*}
    & \left(L_1 - \frac{\partial}{\partial y}\right)\psi(x, y, t, k) \equiv 0 \mod O(k^{-1})e^{kx + k^2y + k^3t}\\
    & \left(L_2 - \frac{\partial}{\partial t}\right)\psi(x, y, t, k) \equiv 0 \mod O(k^{-1})e^{kx + k^2y + k^3t}.     
\end{align*}
The coefficients of $u_i, v_i$ can be found from similar recursive relations as in the proof of \Cref{thm:BAaseigenfunction}. And again we finish the proof of the theorem by the uniqueness of the Baker-Akhiezer function on the curve $C$ with prescribed singularities.
\end{proof}

\begin{remark}
In \Cref{thm:BAassolution} we prescribed the essential singularity at $p_{\infty}$ to be $\exp(kx + k^2 y + k^3t)$. More generally, one could prescribe the exponential part to be $\exp\left(kx + Q(k)y + R(k)t\right)$, where $Q$ and $R$ are arbitrary polynomials of $k$. Then there would still exist unique differential operators $L_1, L_2$ of orders $\deg Q$ and $\deg R$, respectively, such that
\[L_1\psi = \frac{\partial \psi}{\partial y}, \qquad L_2\psi = \frac{\partial \psi}{\partial t}\,.\]
The proof is the same.
\end{remark}

For $L_1$ and $L_2$ constructed in \Cref{thm:BAassolution} we have
\begin{theorem}
\[\left[L_1 - \frac{\partial}{\partial y}, L_2 - \frac{\partial}{\partial t}\right] = 0\,.\]
\end{theorem}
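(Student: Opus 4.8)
The plan is to use the uniqueness of the Baker–Akhiezer function as the central tool, exactly as in the proofs of \Cref{thm:BAaseigenfunction} and \Cref{thm:BAassolution}. Write $M_1 \coloneqq L_1 - \frac{\partial}{\partial y}$ and $M_2 \coloneqq L_2 - \frac{\partial}{\partial t}$, so that by construction $M_1\psi = 0$ and $M_2\psi = 0$, where $\psi = \psi(x,y,t,p)$ is the Baker–Akhiezer function of \Cref{thm:BAassolution}. Then $[M_1, M_2]$ is an operator involving only $\frac{\partial}{\partial x}$ and $\frac{\partial}{\partial y}, \frac{\partial}{\partial t}$ to first order, and applying it to $\psi$ gives $[M_1,M_2]\psi = M_1 M_2 \psi - M_2 M_1 \psi = 0$. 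The key point is then to argue that an operator of this shape that annihilates $\psi$ must be identically zero — this is the analogue of the "finite-dimensionality trick" from \Cref{sec:CommDO}, but now in the global (Baker–Akhiezer) setting rather than the formal one.

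First I would compute the order in $\frac{\partial}{\partial x}$ of $[M_1, M_2]$. Since $L_1$ has $x$-order $2$ and $L_2$ has $x$-order $3$, the commutator $[L_1, L_2]$ has $x$-order at most $4$; but the cross terms $[L_1, -\frac{\partial}{\partial t}] = -\frac{\partial L_1}{\partial t}$ and $[-\frac{\partial}{\partial y}, L_2] = \frac{\partial L_2}{\partial y}$ have $x$-orders at most $2$ and $3$ respectively, and $[\frac{\partial}{\partial y}, \frac{\partial}{\partial t}] = 0$. A direct check of the top symbols shows that the coefficient of $\frac{\partial^4}{\partial x^4}$ in $[L_1,L_2]$ vanishes because $L_1$ has leading coefficient $u_2$ and $L_2$ leading coefficient $v_3$, both only depending on $x,y,t$, so the leading symbols commute; in fact, after normalizing as in \Cref{ex:1}, one may take $u_2 = v_3 = 1$, and then $[M_1,M_2]$ has $x$-order at most $2$ and contains no $\frac{\partial}{\partial y}$ or $\frac{\partial}{\partial t}$ derivatives at all. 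So $[M_1, M_2]$ is an ordinary differential operator in $x$ alone (with coefficients depending on $x,y,t$) of order $\le 2$, say $[M_1, M_2] = \sum_{i=0}^{2} w_i(x,y,t)\frac{\partial^i}{\partial x^i}$.

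Now I would run the uniqueness/finite-dimensionality argument. The Baker–Akhiezer function $\psi(x,y,t,p)$ depends on the point $p \in C$, and for fixed $(x,y,t)$ it is a genuinely $p$-dependent family: expanding near $p_\infty$ in the local coordinate $k^{-1}$ exhibits the explicit dependence $\psi \sim \left(1 + O(k^{-1})\right)e^{kx + k^2 y + k^3 t}$, so as $p$ (equivalently $k$) varies we obtain an infinite-dimensional family of functions of $x$ (with $y,t$ as parameters) — more precisely, for each value of $k$ we get a solution of $[M_1,M_2]\psi = 0$, and these are linearly independent over $\CC$ for the same reason as in the proof of \Cref{thm:L12commute}. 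But $[M_1,M_2]$ is a differential operator in $x$ of order $\le 2$ whose kernel (for fixed $y,t$) is at most $2$-dimensional unless the operator is identically zero. Since $\psi(x,y,t,p)$ lies in this kernel for all $p$, the operator must vanish: $w_0 = w_1 = w_2 = 0$ identically, i.e. $[M_1 - \frac{\partial}{\partial y}, L_2 - \frac{\partial}{\partial t}]$... I mean $[L_1 - \frac{\partial}{\partial y}, L_2 - \frac{\partial}{\partial t}] = 0$.

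I expect the main obstacle to be bookkeeping the symbol computation carefully enough to be sure that $[M_1, M_2]$ really has no $\frac{\partial}{\partial y}$, $\frac{\partial}{\partial t}$ terms and $x$-order at most $2$ — one must check that the potentially dangerous top-order terms in $x$ cancel, using that the leading coefficients of $L_1$ and $L_2$ are functions of $(x,y,t)$ only and (after normalization) can be taken constant. A secondary subtlety is making rigorous the claim that the family $\{\psi(x,y,t,p)\}_{p \in C}$, regarded as functions of $x$ with $y,t$ fixed, spans an infinite-dimensional space; this follows, just as in \Cref{sec:CommDO}, from the explicit $k$-dependence of the expansion of $\psi$ at $p_\infty$, since distinct values of $k$ give functions with distinct exponential leading behavior $e^{kx + k^2 y + k^3 t}$ and hence linearly independent. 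Once these two points are in hand, the conclusion is immediate.
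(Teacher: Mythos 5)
Your proposal is correct and follows essentially the same route as the paper: the commutator annihilates $\psi(x,y,t,k)$ for all $k$, it reduces (via $[L_1-\partial_y,L_2-\partial_t]=\partial_t L_1-\partial_y L_2+[L_1,L_2]$) to a finite-order differential operator in $x$ alone, and the finite-dimensionality trick forces it to vanish. The extra symbol bookkeeping you do to pin down the exact $x$-order is harmless but unnecessary — any finite order suffices for the argument.
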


\begin{proof}
We essentially know how to prove this. Recall that in \Cref{sec:CommDO} on formal differential operators we showed that if for two differential operators $L_1$ and $L_2$ a formal function $\psi$ was a common eigenfunction for each value of $x$, then they must commute. The proof used the finite-dimensionality trick: indeed, $\psi$ gives an infinite-dimensional kernel for the commutator of $L_1$ and $L_2$ which is a finite order differential operator. Here the commutator $\left[L_1 - \frac{\partial}{\partial y}, L_2 - \frac{\partial}{\partial t}\right]$ annihilates $\psi(x, y, t, k)$ for any $x, y, t, k$, while we note that the commutator itself contains differentiation only with respect to $x$, and is thus a finite order differential operator:
\[\left[L_1 - \frac{\partial}{\partial y}, L_2 - \frac{\partial}{\partial t}\right] = \frac{\partial}{\partial t}L_1 - \frac{\partial}{\partial y}L_2 + [L_1, L_2]\,.\]
Thus the kernel of this commutator must still be finite-dimensional, but it contains a one-parameter family of functions $\psi(x, y, t, k)$. This implies that the commutator is identically zero. 
\end{proof}

Next we consider the central example of constructing solutions of the Kadomtsev-Petviashvili (KP) equation using the above theorem.
\begin{example}
Let
\[\psi(x, y, t, p) = \left(1 + \xi_1(x, y, t)k^{-1} + \xi_2(x, y, t)k^{-2} + \dots\right) \exp \left(kx + k^2 y + k^3 t\right).\]
be the asymptotics of the essential singularity in the coordinate~$k^{-1}$ near~$p_{\infty}$. What are then the differential operators constructed by the procedure above, explicitly? It turns out that
\[L_1 = \frac{\partial^2}{\partial x^2} + u\,, \qquad L_2 = \frac{\partial^3}{\partial x^3} + \frac{3u}{2}\frac{\partial}{\partial x} + w\,,\]
where 
\[u = 2 \frac{\partial\xi_1}{\partial x}\,, \quad \text{and} \quad w = 3 \frac{\partial \xi_2}{\partial x} + 3 \frac{\partial^2 \xi_1}{\partial x^2} + \xi_1 \frac{\partial \xi_1}{\partial x}\,.\]
As we have an explicit expression of the Baker-Akhiezer function $\psi(x, y, t, p)$ in terms of data from algebraic curves, we will be able to compute all the coefficients $\xi_s$, in particular we have
\begin{equation}\label{eq:potentialuintheta}
u(x, y, t) = 2 \left.\frac{\partial^2}{\partial x^2}\ln \theta(U^{(1)}x + U^{(2)} y + U^{(3)} t + Z)\right\vert_{p_{\infty}},
\end{equation}
where $Z \in \CC^g$ is a vector which can be explicitly computed but whose expression will not be so relevant for our discussion in the following. \footnote{There is usually also a constant vector we need to add to $u$, but it can always be set to $0$ by a simple shift of the independent variables (see \cite{Dubrovin1981}).} 

On the other hand, the commutation relation
\[\left[L_1 - \frac{\partial}{\partial y}, L_2 - \frac{\partial}{\partial t}\right] = 0\,,\]
gives the following differential equation for $u$:
\begin{equation}\label{eq:KP}
\frac{3}{4}u_{yy} = \frac{\partial}{\partial x}\left(u_t - \frac{3}{2}u u_x - \frac{1}{4}u_{xxx}\right).
\end{equation}
This is finally the so-called {\em Kadomtsev-Petviashvili (KP)} equation, and as we reached this crucial point in our presentation, many remarks and pointers to related and more general constructions are merited.
\end{example}

\begin{remark}\label{rmk:KPhierarchy}
We can consider the more general Baker-Akhiezer function \eqref{eq:Baker-Akhiezer2}. By the same argument, for each~$t_k$ we have a differential operator $L_k$ such that $\left(L_k - \frac{\partial}{\partial t_k}\right)$ annihilates $\psi(\ttt, p)$. By considering the commutators among these operators we get the so-called Zakharov-Shabat (aka zero-curvature) equations
\[\left[L_n - \frac{\partial}{\partial t_n}, L_m - \frac{\partial}{\partial t_m}\right] = 0\,.\]
This gives an infinite sequence of differential equations for the coefficients of the operators~$L_k$, and altogether they form an integrable hierarchy, called the KP hierarchy.
\end{remark}
\begin{remark}
Note that the Korteweg-de Vries (KdV) equation is a special case of \eqref{eq:KP}. Indeed, if~$u$ does not depend on $y$, then $u(x, t)$ satisfies the following simpler nonlinear partial differential equation
\begin{equation}\label{eq:KdV}
u_t - \frac{3}{2}u u_x - \frac{1}{4}u_{xxx} = \,0.
\end{equation}
Recalling how $u$ is constructed, we see that this happens when there exists a function $E:C\to\CC\PP^1$ with a unique double pole at $p_\infty$, in which case the curve $C$ is hyperelliptic. KdV also arises in many other places: in the study of intersection theory on the moduli space of curves, shallow water waves, and in lots of other beautiful mathematics.
\end{remark}
\begin{remark}
In most computations it is convenient to set $p_0=0$ in \eqref{eq:Baker-Akhiezer1}, as there is then still enough freedom of choice left in this whole story. The vector $Z \in \CC^g$ is the Abel-Jacobi image of the general divisor $D$. Note that if we have some equation which is satisfied for a general point of an abelian variety, then it is satisfied at every point of an abelian variety. Thus we can choose $D$ freely, and thus any point~$Z$ in the abelian variety would work. Geometrically there is a surjective map from the $g$'th symmetric power of the curve $C$ onto the Jacobian
\[\text{Sym}^g C \twoheadrightarrow \Jac(C)\,, \]
and so every point of the Jacobian can be written as a sum of Abel-Jacobi images of~$g$ points of~$C$, and $Z\in\Jac(C)$ can thus be arbitrary. So we do not really need~$p_0$, but we do want to emphasize that \eqref{eq:KP} is an equation valid for all $Z$.
\end{remark}

\begin{remark}\label{rmk:KPgeometry}
There is an algebro-geometric viewpoint on the KP hierarchy, by using the infinite-dimensional Grassmannian (see \cite{Sato1981, Segal-Wilson1985} and the in-depth survey \cite{mulase}), which we now briefly discuss and make connections with the theory we are developing here. The theory of Krichever on algebro-geometric solutions of KP hierarchy begins with an algebraic curve and the Baker-Akhizer function on it. Crucially, the normalized Baker-Akhiezer function with a prescribed essential singularity is unique, which allowed us to deduce that it satisfied certain linear differential equations. The compatibility conditions of these equations turn out to be the KP hierarchy. 
It is possible to go the other way around, starting from a system of compatible linear differential equations, and then trying to construct their common eigenfunctions.
This theory, initiated by Sato, begins with a pseudo-differential operator
\begin{equation}\label{eq:pseudoL}
    \cL = \partial_x + u_2 \partial_x^{-1} + u_3\partial_x^{-2} + \dots,
\end{equation}
where $\partial_x = \frac{\partial}{\partial x}$ and $\partial_x^{-1}$ is the formal inverse of $\partial_x$, i.e.~$\partial_x^{-1}\partial_x = \partial_x \partial_x^{-1} = 1$, and $u_i$'s are functions that may depend on infinitely many time variables $\ttt = (t_1, t_2, t_3, \dots)$, which will serve as deformation parameters. We require the composition of the pseudo-differential operators to satisfy the generalized Leibniz rule, i.e.~that multiplying by~$f(x)$ and then applying a pseudo-differential operator $\partial_x^{m}$ is given by
\begin{equation}\label{eq:partialmf}
   \partial_x^{m} \circ f  = \sum \limits_{j \ge 0}\binom{m}{j}\partial_x^j(f)\partial_x^{m - j}, 
\end{equation}
where the binomial coefficient $\binom{m}{j}$ is defined as
\[\binom{m}{j} = \frac{m(m - 1) \cdots (m - j + 1)}{j(j-1) \cdots 1}.\]
By definition, the KP hierarchy is the isospectral deformation of the pseudo-differential operator $\cL$, that is we consider the formal eigenvalue problem\footnote{The meaning of equation \eqref{eq:eigenvalueproblemcL} is as follows: we apply each term of $\cL$ to each term of $\psi$ viewed as a function of $x$ according to the rule \eqref{eq:partialmf} and reorganize the results according to the powers of $\partial_x$, and we require it to equal the multiplication by $k$ operator, that is the coefficient of $\partial_x^0$ in $\cL \psi$ is $k$, and the coefficients of all other $\partial_x^i, i \ne 0$ vanish.}
\begin{equation}\label{eq:eigenvalueproblemcL}
    \cL \psi = k\psi,
\end{equation}
and  deform the coefficients $u_i$ of $\cL$ in such a way that $k \in\CC$ is constant, independent of $\ttt$. The pseudo-differential operator $\cL$ defined in \eqref{eq:pseudoL} and the associated eigenvalue problem \eqref{eq:eigenvalueproblemcL} are universal in the sense that for any $n$'th order differential operator such as the one defined in \eqref{eq:Lcanonical}, and its associated eigenvalue problem \eqref{eq:normalizedeigenvalueproblem}, there is an associated pseudo-differential operator $\cL = L^{1 \slash n}$ and the corresponding eigenvalue problem of the form \eqref{eq:eigenvalueproblemcL} equivalent to the original one for $L$, and the pseudo-differential operator $\cL$ viewed as a formal symbol has more freedom to be manipulated algebraically. By the {\em KP flows} we refer to the time evolutions of the pseudo-differential operator $\cL$ with respect to $\ttt$. If $\psi$ is deformed so that
\begin{equation}\label{eq:psideformBn}
    \partial_{t_n}\psi = \cB_n\psi,
\end{equation}
where $\cB_n$ is an $n$'th order pseudo-differential operator, then the compatibility condition between \eqref{eq:eigenvalueproblemcL} and \eqref{eq:psideformBn} is
$$
    \partial_{t_n}(\cL \psi)  = (\partial_{t_n}\cL)\psi + \cL \partial_{t_n}\psi
     = (\partial_{t_n}\cL)\psi + \cL \cB_n\psi
     = \partial_{t_n}(k\psi) = \cB_n\cL\psi
$$
which gives the following Lax form of the KP hierarchy
\begin{equation}\label{eq:LaxpaircL}
    \partial_{t_n}(\cL) = [\cB_n, \cL]\,.
\end{equation}
Here the left-hand side is a pseudo-differential operator of degree at most $-1$, so that the pseudo-differential operator $\cB_n$ is required to commute with~$\cL$ up to~$\partial_x^{-1}$, and a canonical choice of such a~$\cB_n$ is then~$\cB_n = (\cL^n)_{\ge 0}$, that is the differential part of the pseudo-differential operator $\cL^n$, as $\cL^n$ obviously formally commutes with $\cL$ to all orders. For example, we have $\cB_2 = (\cL^2)_{\ge 0} = \partial_x^2 + 2u_2$ and $\cB_3 = (\cL^3)_{\ge 0} = \partial_x^3 + 3u_2 + 3(u_{2, x} + u_3)$, and then the Lax representation \eqref{eq:LaxpaircL}  of the KP hierarchy implies
\begin{equation*}
    \left\{\begin{array}{l}
         u_{2, t_2} = u_{2, xx} + 3u_{3, x}\\
         2u_{2, t_3} = 3(u_{2, x} + u_3)_{t_2} - (u_{2, xx} - 3u_{3, x} + 3u_2^2)_x.
    \end{array}
    \right.
\end{equation*}
After denoting $y=t_2,t=t_3$ and eliminating~$u_3$ from the system, the function $u \coloneqq 2u_3$ satisfies the KP equation in the form \eqref{eq:KP}. When  $\psi$ is constructed using the Baker-Akhiezer function on an algebraic curve, these two theories are in fact equivalent to each other (see \cite{Segal-Wilson1985}). 
\end{remark}

\begin{remark}\label{rmk:Sato}
Let $\cE$ be the ring of pseudo-differential operators in $x$ with multiplication given by the generalized Leibniz rule \eqref{eq:partialmf}. Let $\cE x$ be the maximal left ideal of $\cE $ generated by $x \in \cE $. Then $\cE = \cE x \oplus \CC(\!(\partial^{-1})\!)$ where  $\CC(\!(\partial^{-1})\!)$ is the ring of pseudo-differential operators with constant coefficients, i.e., each element $P \in \cE$ can be uniquely written as $P = Qx + R$, where $Q \in \cE$, and $R \in \CC(\!(\partial^{-1})\!)$. The $\CC(\!(\partial^{-1})\!)$ part can be further identified with the ring $V \coloneqq \CC(\!(k^{-1})\!)$ by replacing $\partial$ by $k$. Then we have the natural direct sum decompositions of complex vector spaces and a morphism~$f$ between them:
\begin{equation}\label{eq:directsumE}
\begin{array}{rcl}
    f:\cE  = \cD  \oplus \cE^{(-1)} & \longrightarrow & \cE \slash \cE x \cong V = V_+ \oplus V_- \coloneqq \CC[k] \oplus k^{-1}\CC[\![k^{-1}]\!]\\
    P = Qx + R & \mapsto & R
\end{array}
\end{equation}
where $\cD \subset \cE$ is the subalgebra of differential operators and $\cE^{(-1)} \subset \cE$ is the subalgebra of pseudo-differential operators of order at most $-1$. As $\mathcal{D} = \mathcal{D}x \oplus \CC[\partial]$ and $\cE^{(-1)} = \cE^{(-1)}x \oplus \CC[\![\partial^{-1}]\!]$, the projection map $f$ is compatible with the two decompositions, that is we have $V_+=f(\cD)$ and $V_-=f(\cE^{(-1)})$. Note that as $V \cong \cE \slash \cE x$, there is a natural left $\cE$-module structure on $V$.

There are now several equivalent ways to define Sato's Universal Grassmannian Manifold (UGM). One way is to define it as the set of right $\cD $-submodules $\cF \subset \cE $ of $\cE $ such that
\begin{equation}\label{eq:Edirectsummanddecomp}
    \cE  = \cF \oplus \cE^{(-1)}.
\end{equation}
Keeping only the complex vector space structure of $\cE \slash \cE x$, Sato's UGM can be given in the following more familiar form: under the projection map $f$, the module $\cF$ can be identified  with the set of linear subspaces $U\subset V$ such that the map
\begin{equation}\label{eq:gammaU}
      \gamma_{U}: U \rightarrow {V} \slash {V}_- \cong {V}_+
\end{equation}
is Fredholm of index zero, i.e.~such that $\gamma_U$ has finite-dimensional kernel and cokernel, of the same dimension (in fact general Fredholm operators with nonzero index can be allowed, but they can all be identified with the index zero case by a proper shift of indices; for simplicity of exposition, we will only work with the index zero case). Roughly speaking, this means that Sato's UGM parametrizes all subspaces of $V$ that are comparable with $V_+$.

There are two more spaces that can be identified with Sato's UGM, up to an automorphism, which are relevant to KP theory.
\begin{enumerate}
    \item The group of monic $0$'th order pseudo-differential operators $\cW$ can be bijectively identified with Sato's UGM by sending $\cW \ni W \longmapsto W^{-1}\cD \in \textrm{UGM}$, that is, $\cF \in \cE$ as we described above is a cyclic right $\cD$ module generated by some $0$'th order pseudo-differential operator $W^{-1}$  (we will see shortly why the choice of $W^{-1}$ is made).
    \item For each pseudo-differential operator $\cL$ of the form \eqref{eq:pseudoL}, there exists a monic $0$'th order pseudo-differential operator $W$ such that
    \[
      \cL = W \circ \partial \circ W^{-1}.
    \]
    Such $W$ is unique up to multiplying it on the right by a monic $0$'th order pseudo-differential operator that commutes with $\partial$, that is by a monic $0$'th order pseudo-differential operator with constant coefficients. Thus the space of first order pseudo-differential operators~$\cL$ of the form \eqref{eq:pseudoL} can be identified with the quotient $\cW \slash \cW_c$, where $\cW_c$ is the space of monic $0$'th order pseudo-differential operators with constant coefficients.
\end{enumerate}
Thus each pseudo-differential operator $\cL$ in the form \eqref{eq:pseudoL} also defines a point in Sato's UGM. To make such an identification explicit we need to choose a lift of the Lax operator \eqref{eq:pseudoL} to a point in UGM, however the choice of such a lift would not affect the solutions of the KP hierarchy \eqref{eq:LaxpaircL}.

To provide some intuition for the construction presented here, one may draw an analogy to the finite-dimensional case. In this analogy, $\cL$~corresponds to a (diagonalizable) matrix, $\partial$~to the diagonal matrix of its eigenvalues, and~$W$ to the  matrix of eigenvectors of~$\cL$, which is uniquely determined only up to normalization.

Note that the operator~$W^{-1}$ conjugates~$\cL$ into $\partial$, and the corresponding linear systems \eqref{eq:eigenvalueproblemcL} and \eqref{eq:psideformBn} for $\partial$ can be easily solved by $\psi_0 = e^{\sum k^it_i}$ with $t_1 = x$. The wave function $\psi$ for $\cL$ can then be formally represented as $\psi = W\psi_0$, which is again unique up to multiplication by a series with constant coefficients of the form $1 + c_1k^{-1} + c_2k^{-2} + \cdots$ coming from the action of $\cW$ on $\psi_0$. The system \eqref{eq:psideformBn} then defines an infinite sequence of commuting vector fields on Sato's UGM under the constraints \eqref{eq:LaxpaircL} (see \cite{Sato1981}), which can be viewed as the action of elements $\ttt = (t_1, t_2, t_3, \cdots) \in \CC^{\infty} =: T$ of an additive torus on Sato's UGM.
    
The point of Sato's theory is that the nonlinear dynamical system \eqref{eq:LaxpaircL} is in fact linearized on Sato's UGM! To see this let's pretend that the Lie algebras $\cE , \cD , \cE^{(-1)}$ have corresponding Lie groups $G, G_+, G_-$, and pretend also that we have the group factorization $G = G_- \cdot G_+$ associated with the direct sum decomposition \eqref{eq:directsumE}, so that Sato's UGM can be identified with the homogeneous space $G/G_-$. Then we introduce the action of a multiplicative torus by integrating the action of the additive torus described above.

Formally, this means that we set $\exp\left(\sum \cL_0^it_i\right) \coloneqq g_-^{-1}(\ttt)g_+(\ttt) \in G_- \cdot G_+$, where $\cL_0$ is the initial value of the pseudo-differential operator $\cL$, i.e.~$\cL_0$ is obtained by setting all the time variables $\ttt = (t_1, t_2, t_3, \cdots)$ to zero, and $g_-(\mathbf{0}) = g_+(\mathbf{0}) = 1$. Then it can be checked directly that $\cL(\ttt) = g_-(\ttt)\, \cL_0\, g_-^{-1}(\ttt)$. As $\cL(\ttt) = g_-(\ttt)\,W(\mathbf{0})\, \partial W^{-1}(\mathbf{0}) g_-^{-1}(\ttt)$, a fixed choice of~$W(\mathbf{0})$ defines the map $\cL(\ttt) \longmapsto W(\ttt)^{-1}G_+ = W(\mathbf{0})^{-1}\,g_-^{-1}(\ttt)\,G_+ \in G/G_-$  from the space of first order pseudo-differential operators to Sato's UGM; despite the ambiguity of the choice of of $W(\mathbf{0})^{-1}G_+$ related to the normalization of $\psi$, note that $\cL$ is independent of these choices. The resulting  point in Sato's UGM can then be identified with
\begin{align*}
    W(\mathbf{0})^{-1}g_-^{-1}(\ttt)G_+ & = W(\mathbf{0})^{-1}\,g_-^{-1}(\ttt)\,g_+(\ttt)G_+ \\
    & = W(\mathbf{0})^{-1}\, \exp \left(\sum L_{\mathbf{0}}^it_i\right)\,G_+\\
    & = \exp \left(\sum \partial^it_i \right)\,W(\mathbf{0})^{-1}\,G_+ \in \text{UGM}.
\end{align*}
Under this identification, as~$\cL$ evolves according to \eqref{eq:LaxpaircL}, the initial point $\cL_{\mathbf{0}} \longmapsto W(\mathbf{0})^{-1}G_+$ on Sato's UGM evolves under the action of
$$\exp \left(\sum \partial^it_i \right)\,W(\mathbf{0})^{-1}\,G_+\,,$$ which is the simplest linear dynamics for the action of~$\partial_{t_n} = \partial^n$. To see that this is indeed linear, consider the simplest linear ODE $\dot{y} = ay$; its solution is given by $y = e^{at}y_0$ where $y_0 = y(0)$ is the initial value. The dynamics (evolution in $\ttt$) we are considering $W(\mathbf{0})^{-1}\,G_+ \longmapsto \exp \left(\sum \partial^it_i \right)\,W(\mathbf{0})^{-1}\,G_+  \in \text{UGM}$ is exactly of this form in multiple time variables. In this sense, the dynamics are linearized on Sato's UGM.
\end{remark}

\begin{remark}\label{rem:flows2}
There is also an  algebro-geometric interpretation of the KP flows on the Jacobian of the associated spectral curve. From \eqref{eq:potentialuintheta} we see that the KP flows are linearized on the Jacobian of the spectral curve: $U^{(i)}$'s in the argument $U^{(1)} x + U^{(2)} y + U^{(3)}t$ of the theta function are constant vectors which are independent of the time variables $\bf{t}$'s. Since the theta function is a section of a line bundle on the Jacobian, KP flows are linearized on the Jacobian. Now let's have a closer look what at the directions $U^{(n)}$. Let $\omega_j(z) = f_j(z)dz = \sum_{\ell = 0}^{\infty} \alpha_{\ell}^{(j)}z^{\ell}dz, 1 \le j \le g$ be the normalized basis of holomorphic differentials (see \eqref{eq:normalizeddifferential}), where $z = \frac{1}{k}$ is a local coordinate around~$p_{\infty}$. Note that in this local coordinate, the normalized abelian differential of the second kind $\Omega_j(p)$ has a singularity of the form
\[\Omega_j(p) \sim dk^j \sim -j\frac{dz}{z^{j + 1}}, \qquad \text{as } p \to p_{\infty}.\]
Then it is a standard fact that~$\Omega_j(p)$ satisfies the following relation with the holomorphic differentials (this can be proved using Riemann's bilinear relations, see \cite[p.~69, Eq.~(3.8.2)]{Farkas-Kra1992})
\begin{equation}\label{eq:12abeliandifferential}
    U^{(n)}_j = \int_{B_j}\Omega_n = -2\pi i \alpha_{n-1}^{(j)}, \qquad 1 \le j \le g,\ n \ge 1.    
\end{equation}
By definition we have $\alpha_{n}^{(j)} = \frac{f_j^{(n)}(0)}{n !}$.
The Abel-Jacobi map~$AJ$ is given by integrating holomorphic differentials, and thus its derivative is the vector of values of the holomorphic differentials, and thus the vector $\vec{f}(z) = (f_1(z), f_2(z), \dots, f_g(z))$ is a tangent vector to the Abel-Jacobi image of the spectral curve in its Jacobian (see~\eqref{eq:Abel-Jacobimap}). From \eqref{eq:12abeliandifferential} we thus compute
\begin{equation}\label{eq:12abeliandifferential2}
    U^{(n)} = -\frac{2\pi i}{(n-1)!}\vec{f}^{(n-1)}(0) = -\left.\frac{2\pi i}{(n-1)!}\frac{d^n\text{AJ}}{dz^n}\right\vert_{z = 0}\,.
\end{equation}
to be the higher derivatives of the Abel-Jacobi map.
\end{remark}

\begin{remark}\label{rmk:UGM}
Combining the last two remarks we see that Jacobian varieties of algebraic curves are nothing but some finite-dimensional orbits of KP hierarchy in Sato's UGM. So one strategy of proving that Jacobian varieties are characterized by the property of their theta function satisfying the KP hierarchy (the stronger Novikov's conjecture was that just one KP equation sufficed) is to show that no abelian varieties other than Jacobians can appear as finite-dimensional orbits of Sato's UGM, and this is the strategy adopted in \cite{Mulase1984}. The spectral curve arises naturally as the spectrum of a commutative subalgebra of the ring of differential operators $\cD $. This is what Mulase explains in \cite{mulase}, and also explained to us in personal communication.
\end{remark}

Formula \eqref{eq:potentialuintheta} is valid for any curve $C$, for any parameters $x, y, t \in \CC$, and for any point $p_0 \in C$. For any such data the formula is the same, and the theta function intrinsically knows which curve we are taking. The discussion in \Cref{sec:CommDO} was completely formal, and we can now realize that it was modeled on a neighborhood of a point on a Riemann surface where the Baker-Akhiezer function has the prescribed essential singularity. Since we are on a curve, the previous formal computation and the uniqueness of Baker-Akhiezer function tell us that we actually have constructed something geometric. That is what happened so far.

\medskip
Finally, to see the connection between the KP equation and equations obtained in \Cref{sec:geometryofKummer} by degenerating the trisecants, we have \footnote{This is the promised solution to \Cref{ex:getKP}}

\begin{proposition}
KP equation \eqref{eq:KP} with $u$ as in \eqref{eq:potentialuintheta} is equivalent to 
\begin{equation}\label{eq:equationsontheta}
\partial_U^4\Theta[\ve] - 4\partial_U \partial_W \Theta[\ve] + 3 \partial_V^2\Theta[\ve] + \const \Theta[\ve] = 0\qquad\forall \ve \in \frac{1}{2}(\ZZ\slash 2\ZZ)^g \,.
\end{equation}
\end{proposition}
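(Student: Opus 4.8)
The plan is to rewrite the KP equation in Hirota bilinear form and then feed it into Riemann's bilinear relation \eqref{eq:isogeny}. Set $\tau(x,y,t):=\theta\!\left(U^{(1)}x+U^{(2)}y+U^{(3)}t+Z\right)$, so that $u=2\partial_x^2\ln\tau$ by \eqref{eq:potentialuintheta}; abbreviate $U:=U^{(1)},\ V:=U^{(2)},\ W:=U^{(3)}$, and note that since the argument of $\theta$ is affine-linear in $(x,y,t)$, the operators $\partial_x,\partial_y,\partial_t$ act on $\tau$ as the directional derivatives $\partial_U,\partial_V,\partial_W$ on $\theta$. First I would substitute $u=2\partial_x^2\ln\tau$ into \eqref{eq:KP}. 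Writing $D_\bullet$ for the Hirota derivative, $D_x^m f\cdot g:=\partial_s^m\big|_{s=0}\big(f(x+s)g(x-s)\big)$, a direct computation using the standard identities $\tfrac{D_x^4\tau\cdot\tau}{\tau^2}=u_{xx}+3u^2$, $\tfrac{D_xD_t\tau\cdot\tau}{\tau^2}=\partial_x^{-1}u_t$, $\tfrac{D_y^2\tau\cdot\tau}{\tau^2}=\partial_x^{-2}u_{yy}$ shows that \eqref{eq:KP} is equivalent to $\partial_x^2\Phi=0$, where
\[
\Phi\;:=\;\frac{\bigl(D_x^4-4D_xD_t+3D_y^2\bigr)\,\tau\cdot\tau}{\tau^2}\,.
\]

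Next I would show that $\Phi$ is constant. Each Hirota bracket $D_x^aD_t^bD_y^c\,\theta\cdot\theta$, written via $\partial_s$-derivatives of $\theta(z+Us)\theta(z-Us)$ and its multivariable analogues, transforms under the period lattice with the same automorphy factor as $\theta^2$, because the $s$-dependent exponential factors cancel; hence $\Phi$ descends to a meromorphic function on the abelian variety $A=\CC^g/\Lambda$, with polar locus contained in a translate of the theta divisor. The equation $\partial_x^2\Phi=0$ says that $\Phi$ is affine-linear along every orbit of the translation flow in the direction $U$. But a one-variable meromorphic function with identically vanishing second derivative is entire and affine --- since $\CC$ minus a discrete set is connected --- and so has no poles; restricting $\Phi$ to a $U$-orbit through a prospective pole then yields a contradiction, so $\Phi$ has no poles at all. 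A holomorphic function on the compact variety $A$ is constant, hence $\Phi\equiv c$; equivalently $\bigl(D_x^4-4D_xD_t+3D_y^2\bigr)\tau\cdot\tau=c\,\tau^2$.

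Finally I would feed this bilinear identity into \eqref{eq:isogeny}. Evaluating \eqref{eq:isogeny} at the pair $(z,Us)$ gives $\theta(z+Us)\theta(z-Us)=\sum_{\ve}\Theta[\ve](z)\,\Theta[\ve](Us)$; differentiating in $s$ at $s=0$, and using the two-variable version for the mixed term $D_xD_t$, one obtains, as an identity in $z\in\CC^g$,
\[
\bigl(D_x^4-4D_xD_t+3D_y^2\bigr)\tau\cdot\tau\;=\;\sum_{\ve\in\frac12(\ZZ/2\ZZ)^g}\Bigl(\partial_U^4\Theta[\ve](0)-4\partial_U\partial_W\Theta[\ve](0)+3\partial_V^2\Theta[\ve](0)\Bigr)\,\Theta[\ve](z)\,,
\]
while $\tau^2=\theta(z)^2=\sum_{\ve}\Theta[\ve](0)\,\Theta[\ve](z)$, the case of \eqref{eq:isogeny} with second argument $0$. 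The Baker--Akhiezer construction produces the KP solution for every base point $Z$, so these identities hold for all $z$; since $\{\Theta[\ve]\}_{\ve}$ is a basis of $H^0(A,2\Theta)$, hence a linearly independent family of functions of $z$, the relation $\Phi\equiv c$ can be read off coefficient by coefficient as
\[
\partial_U^4\Theta[\ve](0)-4\partial_U\partial_W\Theta[\ve](0)+3\partial_V^2\Theta[\ve](0)\;=\;c\,\Theta[\ve](0)\qquad\forall\,\ve\in\tfrac12(\ZZ/2\ZZ)^g\,,
\]
which, with $\const:=-c$, is exactly \eqref{eq:equationsontheta} (the derivatives being evaluated at the origin). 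All steps are reversible: \eqref{eq:equationsontheta} asserts that the two sections $\bigl(D_x^4-4D_xD_t+3D_y^2\bigr)\tau\cdot\tau$ and $-\const\,\tau^2$ of this translate of $2\Theta$ coincide, i.e.\ $\Phi\equiv-\const$, and undoing the computation of the first step returns \eqref{eq:KP}.

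I expect the two slightly delicate points to be: first, the bookkeeping in the $u\mapsto\tau$ substitution, so as to land on exactly the coefficients $(1,-4,3)$ in the Hirota form that match the normalization of \eqref{eq:KP} as written; and second, the argument that the ``integration constant'' $\Phi$ is genuinely constant. For the latter one uses both that the Hirota brackets of $\theta$ transform like $\theta^2$ (so $\Phi$ is an honest meromorphic function on $A$) and that a meromorphic function on $A$ which is affine-linear along a translation flow must be holomorphic, hence constant --- the only case needing a separate word being when the flow direction $U$ is tangent to a proper subtorus, which one can dispatch using the quasi-periodicity of $\theta$ directly. Everything downstream of $\Phi\equiv c$ is a purely formal consequence of \eqref{eq:isogeny} and the linear independence of the second-order theta functions.
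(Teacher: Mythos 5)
Your proposal is correct and follows essentially the same route as the paper: substitute $u=2\partial_x^2\ln\theta$, argue that the resulting bilinear expression divided by $\theta^2$ is constant on the abelian variety (the point where indecomposability, i.e.\ irreducibility of $\Theta$, enters, exactly as in the paper), then expand via Riemann's bilinear relation \eqref{eq:isogeny} and conclude by linear independence of the $\Theta[\ve]$. Your Hirota-operator bookkeeping and the direct evaluation of \eqref{eq:isogeny} at the pair $(z,Us)$ are only notational variants of the paper's passage to the variables $w_1=\tfrac{1}{2}(z_1+z_2)$, $w_2=\tfrac{1}{2}(z_1-z_2)$ and dropping of odd-order derivatives by evenness.
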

Here $\Theta[\ve] = \Theta[\ve](0)$, where $\Theta[\ve](z)$ is defined in \eqref{eq:Thetavarepsilon}.

\begin{proof}
This is again a direct computation. After substituting $u$ of the form \eqref{eq:potentialuintheta} into \eqref{eq:KP}, we obtain 
\[-2\partial_x^2\left[\frac{3\theta_{yy}\theta - 3 \theta_y^2 + 4\theta_x \theta_t - 4 \theta_{xt} \theta +  \theta_{xxxx}\theta - 4 \theta_{xxx}\theta_x + 3\theta_{xx}^2}{\theta^2}\right] = 0\,,\]
(where from now on we use the subscript to denote the partial derivative in the corresponding variable). Assuming that the abelian variety is indecomposable, so that $\Theta$ is irreducible, this implies the equation
\begin{equation} \label{eq:thetaequations}
\theta_{xxxx}\theta - 4 \theta_{xxx}\theta_x + 3\theta_{xx}^2 + 4\theta_x \theta_t - 4 \theta_{xt} \theta + 3\theta_{yy}\theta - 3 \theta_y^2 + 8c \theta^2 = 0.
\end{equation}
for any $z \in A$. Note that \eqref{eq:thetaequations} is a quadratic equation for $\theta(z)$, while the equation \eqref{eq:equationsontheta} we want to show is a linear equation on $\Theta[\varepsilon](z)$, so to obtain it we would better be able to pull out a common factor. To do that we first write equation \eqref{eq:thetaequations} in the bilinear form. We will use $z_1, z_2$ to denote the independent variables in the two factors of the bilinear form, and we will see that the correct variables to be used to decouple the bilinear form would be $w_1 = \frac{1}{2}(z_1 + z_2), w_2 = \frac{1}{2}(z_1 - z_2)$.

We introduce the directional derivative operators
\[
\left\{\begin{array}{ll}
D_{1, z_1} \coloneqq \sum \limits_{j = 1}^g U_j \frac{\partial}{\partial z_{1j}}, \qquad & D_{1, z_2} \coloneqq \sum \limits_{j = 1}^g U_j \frac{\partial}{\partial z_{2j}} \\
D_{2, z_1} \coloneqq \sum \limits_{j = 1}^g V_j \frac{\partial}{\partial z_{1j}}, \qquad & D_{2, z_2} \coloneqq \sum \limits_{j = 1}^g V_j \frac{\partial}{\partial z_{2j}} \\
D_{3, z_1} \coloneqq \sum \limits_{j = 1}^g W_j \frac{\partial}{\partial z_{1j}}, \qquad & D_{3, z_2} \coloneqq \sum \limits_{j = 1}^g W_j \frac{\partial}{\partial z_{2j}} 
\end{array}\right.
\]
and, by analogy, operators $D_{1, w_1}, D_{1, w_2}, D_{2, w_1}, D_{2, w_2}, D_{3, w_1}, D_{3, w_2}$, where all $z$ are substituted for $w$, and they are connected by the relations
$$
D_{1, z_1} = \tfrac{1}{2}(D_{1, w_1} + D_{1, w_2}),\  D_{2, z_1} = \tfrac{1}{2}(D_{2, w_1} + D_{2, w_2}), \ D_{3, z_1} = \tfrac{1}{2}(D_{3, w_1} + D_{3, w_2})\,,
$$
$$
D_{1, z_2} = \tfrac{1}{2}(D_{1, w_1} - D_{1, w_2}),\  D_{2, z_2} = \tfrac{1}{2}(D_{2, w_1} - D_{2, w_2}),\   D_{3, z_2} = \tfrac{1}{2}(D_{3, w_1} - D_{3, w_2})\,. 
$$
Then \eqref{eq:thetaequations} can be written in the form
\begin{equation}\label{eq:thetaequations2}
\begin{aligned}
\Big[D_{1, z_1}^4 - 4D_{1, z_1}^3D_{1, z_2} &+ 3D_{1, z_1}^2 D_{1, z_2}^2 + 4D_{1, z_1}D_{3, z_2} - 4D_{1, z_1}D_{3, z_1}\\ &+ 3D_{2, z_1}^2 - 3 D_{2, z_1}D_{2, z_2} + 8c\Big]\cdot\theta(z_1)\theta(z_2) \vert_{z_1 = z_2} = 0.
\end{aligned}
\end{equation}
We now express the operator in the square brackets in terms of the $D_{1, w_i}, D_{2, w_i}$ and $D_{3, w_i}$ and apply it to the right hand side of Riemann's addition theorem \eqref{eq:isogeny} in the following form
\[\theta(z_1)\theta(z_2) = \sum \limits_{\ve \in \frac{1}{2}(\ZZ_2)^g} \Theta[\ve]\left(\tfrac{z_1 + z_2}{2}\right) \Theta[\ve] \left(\tfrac{z_1 - z_2}{2}\right)\,,\]
for $w_1 = z, w_2 = 0$. Since $\Theta[\ve, 0](w)$ is even, it suffices to leave the even powers of $D_{1, w_2}, D_{2, w_2}$ and $D_{3, w_2}$ in the resulting expression. We obtain
\[\left[D_{1, w_2}^4 - 4D_{1, w_2}D_{3, w_2} + 3 D_{2, w_2}^2 + 16c\right] \left.\sum \limits_{\varepsilon \in \frac{1}{2}(\ZZ /  2\ZZ)^g} \Theta[\ve](w_1)\Theta[\ve](w_2) \right\vert_{\substack{w_1 = z \\ w_2 = 0}} = 0\,.\]
Note that the $2^g$ functions $\Theta[\ve](z), \ve \in \frac{1}{2}(\ZZ /  2\ZZ)^g$, are linearly independent (they form a basis in the space of sections of $2\Theta$). Equating the coefficients of these functions to zero we obtain the system \eqref{eq:equationsontheta}. This finishes the proof.
\end{proof}

\subsubsection*{Summary of this section} 
Starting from the data of a complex curve with a marked point and a local coordinate at that point, we have reviewed Krichever's construction of Baker-Akhiezer functions. These functions are functions on a curve with one controlled essential singularity, and they naturally provide solutions of partial differential equations. It turns out that these functions provide solutions to the KP hierarchy, which relates to flex lines of the Kummer variety studied in the previous section, and also to the questions on pairs of commuting differential operators from the first section.

\section{Krichever's Proof of Welters' Trisecant Conjecture in the Flex Line Case}\label{sec:flex}

We are heading towards Krichever's proof of the flex line case of Welters' conjecture, that is for the most degenerate trisecant of the Kummer variety, which is a line in $\CC\PP^{2^g-1}$ tangent to $\Kum(A)$ at one point, with multiplicity three.

Our presentation follows most closely the discussion in \cite{Krichever2006}, though we provide much more detail.

\subsection{The analytic meaning of a flex line}
First recall that by \eqref{eq:Kummerflex1} the existence of a flex line of a Kummer variety at a point $q \in A\setminus A[2]$ is equivalent to the existence of vectors $V, U \in \CC^g$, and $p, E \in \CC$ such that the following differential equation holds
\begin{equation}\label{eq:K}
(\partial_V + \partial^2_U - 2p \partial_U + (p^2 - E)) \cdot \Kum(q \slash 2) = 0.
\end{equation}
So  the existence of a flex line of the Kummer variety is a second order differential equation on the Kummer image. Indeed, given a tangent vector to the Kummer image at some point, for it to be tangent to the Kummer variety with multiplicity three, there is a condition on  second derivatives, and \eqref{eq:K} is this condition! This is one side of the story. 

On the other hand, the Baker-Akhiezer function
\begin{equation}
\begin{aligned}
    \psi(x, y, t, p) &= \frac{\theta(\int_{p_{\infty}}^p \omega + U^{(1)}x + U^{(2)}y + U^{(3)}t + D)\theta(D)}{\theta(\int_{p_{\infty}}^{p}\omega + D)\theta(U^{(1)}x + U^{(2)}y + U^{(3)}t + D)}\\ &\cdot \exp\left( \left(\int_{p_0}^{p}\Omega_1(p) - b_1\right)x + \left(\int_{p_0}^p\Omega_2(p) - b_2\right)y + \left(\int_{p_0}^p\Omega_3(p) - b_3\right)t\right)\,,
\end{aligned}
\end{equation}
satisfies partial differential equations
\begin{equation}\label{eq:KPZSpair}
    L_1\psi = \frac{\partial \psi}{\partial y}, \qquad L_2\psi = \frac{\partial \psi}{\partial t}\,
\end{equation}
for
\[L_1 = \partial_x^2 + u, \qquad L_2 = \partial_x^3 + \frac{3}{2}u\partial_x + w\,,\]
and furthermore the commutativity
\[[L_1 - \tfrac{\partial}{\partial y}, L_2 - \tfrac{\partial}{\partial t}] = 0\,,\]
is equivalent to the  KP equation. Novikov's \Cref{conj:Novikov}, proven by Shiota, states that if the theta function satisfies the KP equation, then the abelian variety is the Jacobian of a curve. Gunning's \Cref{thm:Gunning'sthm1} and Welters' \Cref{thm:Weltersthm1} show that if a Kummer variety has a family of trisecant lines or a family of flex lines, then it is a Jacobian. We want to characterize Jacobians by the existence of one flex, which we would like to write as a differential equation
\begin{equation}\label{eq:L}
(\partial_x^2 + u(x, y))\psi = \partial_y \psi
\end{equation}
for $u$ given by equation \eqref{eq:potentialuintheta}, expressed in terms of the theta function. To deduce the characterization by one flex line, one could try to reduce to Novikov's conjecture by showing that if the first equation in~\eqref{eq:KPZSpair} is satisfied, 
then the second equation in~\eqref{eq:KPZSpair} is also satisfied. This is not how Krichever's proof goes, though, as it does not use Shiota's result directly. Krichever's proof, which we follow closely, involves only equation \eqref{eq:L} with
\begin{equation} \label{eq:psiintheta}
\psi = \frac{\theta(Ux + Vy + D + Z)}{\theta(Ux + Vy + Z)} e^{px + Ey}.
\end{equation}

\begin{theorem}[Krichever~2006~\cite{Krichever2006}]\label{thm:Kricheverflex}
An indecomposable principally polarized abelian variety $(A, \Theta)$ is the Jacobian of an algebraic curve if and only if there exist $g$-dimension vectors $U \ne 0, V, D$ such that equation \eqref{eq:L} (equivalently \eqref{eq:K}) is satisfied.
\end{theorem}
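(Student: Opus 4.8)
The plan is to prove the two implications separately. The forward direction (``only if'') is essentially a consequence of \Cref{sec:theta}: if $A=\Jac(C)$, the trisecant degeneration \eqref{eq:Kummerflex1} already shows that $\Kum(A)$ has a flex line at $\Kum(q/2)$ for every $q\in C$, with $U,V$ the first two derivatives of the Abel--Jacobi map at a base point; fixing one such $q$, setting $D:=q$, and reading off from \eqref{eq:Kummerflex1} the scalars expressing $(\partial_U^2+\partial_V)\Kum(D/2)$ through $\Kum(D/2)$ and $\partial_U\Kum(D/2)$ produces $p,E$, while \eqref{eq:psiintheta} is the restriction of the Baker--Akhiezer function of $C$ to a fixed point $P_0\in C$ and to $t_3=0$, which solves \eqref{eq:L} by \Cref{thm:BAassolution}. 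For the equivalence of \eqref{eq:L} and \eqref{eq:K} I would substitute the ansatz \eqref{eq:psiintheta} into \eqref{eq:L} with $u=2\partial_x^2\log\theta(Ux+Vy+Z)$, clear denominators to obtain a Hirota-type bilinear identity for $\theta(Ux+Vy+D+Z)\cdot\theta(Ux+Vy+Z)$, observe that requiring it for a Zariski-dense set of $Z$ makes it hold for all arguments, and then apply Riemann's bilinear addition theorem \eqref{eq:isogeny} (as in the last proof of \Cref{sec:BAfunction}) to convert it into \eqref{eq:K}. This reduces everything to the statement that a single flex line forces $A$ to be a Jacobian.

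The core of the reverse direction is to attach to the hypothesis a formal Baker--Akhiezer wave function and to prove that its coefficients are globally meromorphic. Starting from \eqref{eq:L} for all $Z$, I would look for a formal solution $\psi=\bigl(1+\sum_{s\ge1}\xi_s(x,y)k^{-s}\bigr)e^{kx+k^2y}$; the $\xi_s$ satisfy a recursion of the form $2\,\partial_x\xi_{s+1}=\partial_x^2\xi_s+u\,\xi_s-\partial_y\xi_s$ and are therefore determined by integration in $x$ up to functions of $y$. The crucial step --- and the new input in Krichever's argument --- is to fix those integration constants so that $\xi_s=\tau_s(Ux+Vy+Z)/\theta(Ux+Vy+Z)$ with $\tau_s$ entire on $\CC^g$ and suitably quasi-periodic, i.e.\ so that $\xi_s$ extends to a meromorphic function on $A$ with poles only along the translate of the theta divisor. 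One proves this by induction on $s$: the obstruction to choosing such a global $\tau_{s+1}$ is a holomorphic section of a line bundle on $A$, and it vanishes precisely because the hypothesis \eqref{eq:L}/\eqref{eq:K} holds for all $Z$.

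With the globally meromorphic wave function available, I would then reconstruct an actual curve from it. Analyzing the pole divisor of $\psi$ in the variable $x$ for generic fixed $y,Z$ --- these poles being the zeros of $\theta(Ux+Vy+Z)$ --- the global structure of the $\xi_s$ together with \eqref{eq:L} forces the poles to be simple and their motion in $x$ to be that of a finite-gap potential; this spectral data yields an algebraic curve $\Gamma$ with a smooth point $p_\infty$ and local coordinate $k^{-1}$ for which $\psi$ is the genuine Baker--Akhiezer function. Then $U$ and $V$ are periods of normalized differentials of the second kind on $\Gamma$, so they appear among the first derivatives of the Abel--Jacobi embedding of $\Gamma$ into $A$; this produces a one-parameter family of flex lines of $\Kum(A)$ at $\Kum(P+z/2)$ with $P$ in the Abel--Jacobi image of $\Gamma$, whence Gunning's \Cref{thm:Gunning'sthm1} (equivalently Welters' \Cref{thm:Weltersthm1}) identifies $A$ with $\Jac(\Gamma)$ --- indecomposability of $A$ ruling out a nontrivial product, and a dimension count forcing $\Gamma$ to have arithmetic genus $g$; if $\Gamma$ is singular one replaces it by its normalization, which changes nothing near $p_\infty$.

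The hard part will be the inductive construction of the globally meromorphic coefficients $\xi_s$: showing that the extension obstruction vanishes at every order is delicate, and it is exactly where the flex-line hypothesis is genuinely used --- this is what makes one flex line suffice, with no jet or family of flexes assumed in advance, in contrast to the earlier theorems of Welters, Arbarello--De Concini, and Shiota. The second substantial point, passing from the formal wave function to an honest Riemann surface by analyzing the pole dynamics of $\psi$, is also technically heavy, but it follows the uniqueness-of-Baker--Akhiezer philosophy of \Cref{sec:BAfunction}: a function with one controlled essential singularity and the correct meromorphic behaviour must come from an algebraic curve.
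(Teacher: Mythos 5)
Your forward direction and the reduction of \eqref{eq:L} to \eqref{eq:K} via Riemann's bilinear relations match the paper, and the first half of your converse --- formal wave solutions $\psi=e^{kx+k^2y}\bigl(1+\sum_s\xi_s k^{-s}\bigr)$ whose coefficients are made globally meromorphic with simple poles along $\Theta$ by a quasi-periodic normalization and a cohomological vanishing --- is also the paper's route (\Cref{lem:xisimplepole} through \Cref{cor:quasiperiodicphi}; the real work there is the $\lambda$-periodic normalization on the subtorus $A_U$ and the codimension-two bound on $\Sigma$ coming from indecomposability, which your sketch compresses but does not contradict).

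There are two genuine gaps in the second half. First, you assert that the pole dynamics of $\psi$ ``force a finite-gap potential'' and hence yield an algebraic curve. But every formal wave solution admits a pseudo-differential dressing operator $\cL$ with $\cL\psi=k\psi$; what produces an honest algebraic spectral curve is a \emph{second commuting differential operator}, and the mechanism for it is absent from your argument. In the paper this is \Cref{lem:Fmsecondorderpole}: the residues $F_m=\Res_{\partial_x}\cL^m$ have at most double poles along $\Theta$, hence lie in the $2^g$-dimensional space $H^0(A,2\Theta)$, hence satisfy linear relations \eqref{eq:cmi} with constant coefficients, and these relations convert the truncations $(\cL^m)_+$ into genuine commuting differential operators $L_m$. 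Without this finite-dimensionality input there is no curve.

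Second, your conclusion invokes Gunning/Welters for a family of flexes at $\Kum(P+z/2)$ with $P$ ranging over the Abel--Jacobi image of $\Gamma$ ``in $A$'' --- but the curve you construct lives a priori in its own Jacobian, not in $A$, and transporting it into $A$ is precisely the remaining hard point, not a formality. The paper finishes differently, without Gunning: \Cref{lem:hatFspannedbyHi} shows the $F_m$ are spanned by $1$ and the logarithmic derivatives $\partial_x\partial_{Z_i}\ln\theta$, so the KP flows $\partial_{t_n}u=\partial_xF_n$ are induced by translation-invariant vector fields on $A$; this yields an embedding $i_Z:\Jac(\hat C)\hookrightarrow A$ inverse to the embedding $j:A\setminus\Sigma\hookrightarrow\overline{\Pic}(\hat C)$ of \Cref{lem:ztoAzcorrespondence}, and comparing dimensions forces both to be isomorphisms. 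If you insist on a Gunning-based ending you must first prove the analogue of \Cref{lem:hatFspannedbyHi} to place the curve inside $A$ --- at which point the double-embedding argument is already complete and Gunning is superfluous.
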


\begin{remark}
Note one important caveat here, in that we explicitly exclude the case of decomposable abelian varieties, that is products of lower dimensional abelian varieties. Indeed, if $(A,\Theta)=(A',\Theta')\times (A'',\Theta'')$, then the theta divisor $\Theta=(\Theta'\times A'')\cup (A'\times\Theta'')$ is reducible, the theta function is the product $\theta(\tau,z)=\theta'(\tau',z')\cdot\theta(\tau'',z'')$, and the singular locus of the theta divisor $\Sing\Theta$ contains a $(g-2)$-dimensional subvariety $\Theta'\times\Theta''$ (in fact, this is a characteristic property of the locus of decomposable abelian varieties, see \cite{Ein-Lazarsfeld1997}). This allows us to easily construct decomposable counterexamples to the theorem above and to many statements that we will have, by taking a product of a Jacobian of a curve and an abelian variety that is not a Jacobian, and making sure that all the differentiation happens only in the direction of the Jacobian. The way the indecomposability assumption shows up in the proof is in using the classical fact that the theta divisor~$\Theta$ of an indecomposable principally polarized abelian variety~$A$ is irreducible (see eg.~\cite[p.~86, Lem.~10]{Igusa1972}, so that in particular its intersection with any other irreducible $(g-1)$-dimensional subvariety of~$A$ has dimension~$g-2$.
\end{remark}

Here is an observation due to Krichever which he always thought was of crucial importance, while underappreciated. This is a computational lemma, and it says \eqref{eq:L} is equivalent to \eqref{eq:K} and they imply the following

\begin{lemma}
\eqref{eq:L} $\Longleftrightarrow$ \eqref{eq:K} and they imply that the equation
\begin{equation}\label{eq:T}
\begin{aligned}
\left((D_2\theta)^2 - 2(D_1^2\theta)^2\right)D_1^2\theta - 2\left(D_1^2\theta D_1^3\theta + D_2\theta D_1D_2\theta\right)D_1\theta\\ + \left(D_2^2\theta - 2D_1^4\theta\right)(D_1\theta)^2 = 0 
\end{aligned}
\end{equation}
holds on the theta divisor, where $D_1\theta \coloneqq D_U\theta$ and $D_2\theta \coloneqq D_V\theta$ are the partial derivatives of the theta function with respect to some vectors $U, V \in \CC^g$.
\end{lemma}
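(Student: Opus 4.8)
The plan is to unwind the definitions and reduce everything to a computation with the explicit Baker--Akhiezer-type ansatz \eqref{eq:psiintheta}. First I would establish the equivalence \eqref{eq:L} $\Longleftrightarrow$ \eqref{eq:K}: substituting $\psi$ of the form \eqref{eq:psiintheta} into \eqref{eq:L}, one applies $\partial_x$ and $\partial_y$ to the ratio of theta functions times $e^{px+Ey}$, using that $\partial_x$ acts on the argument $Ux+Vy+Z$ as the directional derivative $D_U=D_1$, and $\partial_y$ as $D_V=D_2$. Collecting terms and clearing the denominator $\theta(Ux+Vy+Z)^2$, the equation \eqref{eq:L} becomes a bilinear relation in $\theta(Ux+Vy+D+Z)$ and $\theta(Ux+Vy+Z)$ and their $D_1,D_2$ derivatives, valid for all $x,y$; since $Ux+Vy+Z$ sweeps out a dense subset of $A$ (or by analyticity), this is precisely the second-order condition \eqref{eq:K} on the Kummer image after translating through Riemann's bilinear relation \eqref{eq:isogeny}, exactly as in the passage \eqref{eq:Kummerflex1}--\eqref{eq:K}. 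This direction is essentially bookkeeping once the right variables are chosen.

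Next I would derive \eqref{eq:T}. The key idea is to evaluate the bilinear form of \eqref{eq:L}/\eqref{eq:K} \emph{on the theta divisor}, i.e.\ at points $z$ where $\theta(z)=0$. Write $z=Ux+Vy+Z$ and impose $\theta(z)=0$. The ansatz $\psi=\theta(z+D)e^{px+Ey}/\theta(z)$ has a pole along $\{\theta(z)=0\}$; the requirement that \eqref{eq:L}, i.e.\ $(\partial_x^2+u-\partial_y)\psi=0$ with $u=2\partial_x^2\log\theta(z)$, hold identically forces cancellation of the most singular parts of $(\partial_x^2+u-\partial_y)\psi$ along the theta divisor. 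Concretely, expanding $\psi$ near a generic point of $\Theta$ in the transverse direction and matching the coefficients of the leading Laurent terms (orders corresponding to the triple pole, double pole, etc.) produces a hierarchy of equations; the ones coming from the top-order poles involve only $\theta$ and its $D_1,D_2$ derivatives restricted to $\Theta$, and after eliminating the auxiliary data $D$, $p$, $E$ one is left with a single relation among $D_1\theta$, $D_2\theta$, $D_1^2\theta$, $D_1^3\theta$, $D_1^4\theta$, $D_1D_2\theta$, $D_2^2\theta$ holding on $\{\theta=0\}$. Organizing the elimination so that the result is exactly the symmetric combination displayed in \eqref{eq:T} is the substance of the lemma.

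The main obstacle I expect is the elimination/bookkeeping step: one must carefully track several orders in the Laurent/Taylor expansion transverse to $\Theta$ (the expansion parameter being essentially $D_1\theta$, which vanishes to first order on $\Theta$), keep the contributions of $u=2\partial_x^2\log\theta$ correctly — note $u$ itself has a double pole on $\Theta$ governed by $(D_1\theta)^{-2}$ times $D_1^2\theta\cdot\theta - (D_1\theta)^2$ type combinations — and then show that the constants $p,E$ and the direction $D$ drop out, leaving \eqref{eq:T} with no residual free parameters. A clean way to package this is to observe that $D_1^2\theta\cdot\theta-(D_1\theta)^2$ and similar Wronskian-type expressions are the natural building blocks (they are exactly what $\partial_x^2\log\theta$ produces), rewrite \eqref{eq:L} as a polynomial identity in $\theta$ and its derivatives valid off $\Theta$, and then take the residue (or the restriction of the appropriate leading coefficient) along $\Theta$; the vanishing of that residue is \eqref{eq:T}. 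I would also remark that indecomposability of $(A,\Theta)$ is used exactly as in the earlier proposition, to know $\Theta$ is irreducible so that an identity holding on a dense open subset of $\Theta$ holds on all of it, and so that dividing out common factors supported on $\Theta$ is legitimate.
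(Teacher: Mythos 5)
Your overall strategy --- Laurent-expand $\psi$ and $u$ transversally to $\Theta$ along the $U$-line, match coefficients order by order, and eliminate the auxiliary data --- is the same as the paper's, and the first half (the equivalence \eqref{eq:L} $\Leftrightarrow$ \eqref{eq:K} via Riemann's bilinear relations) is fine. But there is a genuine gap in your mechanism for producing \eqref{eq:T}. You claim the relation comes from ``the top-order poles'' and that ``the vanishing of that residue is \eqref{eq:T}.'' It is not. Writing $\psi=\alpha/(x-\tilde x)+\beta+\gamma(x-\tilde x)+\dots$ and $u=-2(x-\tilde x)^{-2}+v+w(x-\tilde x)+\dots$ at a zero $\tilde x(y)$ of $\theta(Ux+Vy+Z)$, the coefficient of $(x-\tilde x)^{-3}$ in $(\partial_x^2+u-\partial_y)\psi$ vanishes identically, and the coefficients of $(x-\tilde x)^{-2},(x-\tilde x)^{-1},(x-\tilde x)^{0}$ give three equations, $\alpha\dot{\tilde x}+2\beta=0$, $-\dot\alpha+\alpha v-2\gamma=0$, $-\dot\beta+\gamma\dot{\tilde x}+\beta v+\alpha w=0$. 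Each of these, including the residue condition, still contains the unknown Laurent coefficients $\alpha,\beta,\gamma$ and their $y$-derivatives; no single one of them is \eqref{eq:T}, and with only these three equations the elimination does not close.

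The missing idea is to differentiate the leading relation $\alpha\dot{\tilde x}+2\beta=0$ with respect to $y$ and substitute the other two relations for $\dot\alpha$ and $\dot\beta$; everything then collapses to the single scalar identity $\alpha(\ddot{\tilde x}+2w)=0$, i.e.\ $\ddot{\tilde x}=-2w$ since $\alpha\neq 0$. Equation \eqref{eq:T} is precisely the translation of this identity into theta derivatives: $w$ is read off as the appropriate Taylor coefficient of $u=2D_1^2\ln\theta$ at $\tilde x$ (a computation with $2(\theta^2\theta'''-3\theta\theta'\theta''+2(\theta')^3)/\theta^3$ giving $((D_1\theta)^2D_1^4\theta+D_1\theta D_1^2\theta D_1^3\theta+(D_1^2\theta)^3)/(D_1\theta)^3$), while $\ddot{\tilde x}$ is obtained by twice implicitly differentiating $\theta(\tilde x(y)U+Vy+Z)\equiv 0$ --- and it is only this second implicit differentiation that produces the terms $D_2^2\theta$ and $D_1D_2\theta$ appearing in \eqref{eq:T}. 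Without the $y$-differentiation step your ``hierarchy plus elimination'' cannot generate these terms, so the plan as stated would not yield \eqref{eq:T}.
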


\begin{remark}
Equation \eqref{eq:T} is valid on the theta divisor, that is for~$z$ such that~$\theta(z)=0$, and there is no easy way to go from \eqref{eq:T} to \eqref{eq:K} in principle, that is there is no easy way to get from an equation on theta divisor to an equation for the theta function globally. Similarly, there is no easy way at the level of equations to go from \eqref{eq:T} to flexes. Krichever's proof of the flex line case of the Welters' conjecture, however, uses equation \eqref{eq:T}, and Igor always believed that \eqref{eq:T} was deeper and more conceptual than the existence of a geometric flex of the Kummer variety.
\end{remark}

\begin{proof}
The equivalence of \eqref{eq:K} and \eqref{eq:L} follows from Riemann's bilinear relations (this is a review exercise). 

Now let $u$ and $\psi$ be as in \eqref{eq:potentialuintheta} and \eqref{eq:psiintheta}. Consider a general point $\tilde{x}U + yV + Z$ in the theta divisor $\Theta$, and expand in $x-\tilde x$. We thus obtain
\begin{align*}
\psi & = \frac{\alpha}{x - \tilde{x}} + \beta + \gamma\cdot (x - \tilde{x}) + \delta\cdot (x - \tilde{x})^2 + \dots,\\
u & = -\frac{2}{(x - \tilde{x})^2} + v + w\cdot(x - \tilde{x}) + \dots.
\end{align*}
We view $\tilde{x}, \alpha, \beta, \gamma, \delta, v, w$, as functions of $y$, and we may assume that $\alpha \ne 0$. Equating the coefficients of $(x - \tilde{x})^i$ for $i = -2, -1, 0$ in equation \eqref{eq:L} we get
\begin{align*}
 \alpha \dot{\tilde{x}} + 2 \beta & = 0\,,\\
-\dot{\alpha} + \alpha v - 2 \gamma & = 0\,,\\
- \dot \beta + \gamma \dot{\tilde{x}} + \beta v + \alpha w & = 0\,.
\end{align*}
Here and in the following we will use the dot to denote the derivatives with respect to $y$, and will use the prime to denote the derivatives with respect to $x$. Taking the $y$-derivative of the first equation and using all three equations to simplify yields
\begin{equation}\label{eq:ddottildex}
\alpha\cdot (\ddot{\tilde{x}} + 2w) = 0 \Rightarrow \ddot{\tilde{x}} = -2w \quad (\hbox{since }\alpha \ne 0).
\end{equation}
We compute $w$ by recalling the expression of $u$ from \eqref{eq:potentialuintheta}, and compute $\ddot{\tilde{x}}$ by using that $\theta\left(\tilde{x}(y)U + yV + Z\right) \equiv 0$ is identically zero. This yields the equation
\begin{equation} \label{eq:equationsonTheta}
\begin{aligned}
& -D_1^2\theta (D_2\theta)^2 + 2 D_1\theta D_2\theta D_1D_2\theta - (D_1\theta)^2 D_2^2\theta\\ = &  -2(D_1\theta)^2 D_1^4\theta -2 D_1\theta D_1^2\theta D_1^3\theta -2 (D_1^2\theta)^3
\end{aligned}
\end{equation}
on the theta divisor, which is of course just another form of \eqref{eq:T}.

The detailed computation goes as follows. We first expand
\[u = 2 \partial^2_x\ln \theta(xU + yV + Z) = -\frac{2}{(x - \tilde{x})^2} + v + w\cdot(x - \tilde{x}) + \dots\,,\]
so $w$ is the constant term of $2D^3_1\ln\theta \vert_{x = \tilde{x}}$, but then we also compute
\begin{align*}
& \theta = D_1\theta \cdot (x - \tilde{x}) + \frac{D_1^2\theta}{2} \cdot (x - \tilde{x})^2 + \frac{D_1^3\theta}{3!} \cdot (x - \tilde{x})^3 + \dots,\\
& \theta' = D_1\theta + D_1^2\theta \cdot (x - \tilde{x}) + \frac{D_1^3\theta}{2} \cdot (x - \tilde{x})^2 + \frac{D_1^4\theta}{3!} \cdot (x - \tilde{x})^3 + \dots,\\
& \theta'' = D_1^2\theta + D_1^3\theta \cdot (x - \tilde{x}) + \frac{D_1^4\theta}{2} \cdot (x - \tilde{x})^2 + \frac{D_1^5\theta}{3!} \cdot (x - \tilde{x})^3 + \dots,\\
& \theta''' = D_1^3\theta + D_1^4\theta \cdot (x - \tilde{x}) + \frac{D_1^5\theta}{2} \cdot (x - \tilde{x})^2 + \frac{D_1^6\theta}{3!} \cdot (x - \tilde{x})^3 + \dots,
\end{align*}
and
\[2D_1^3\ln \theta = 2 \frac{\theta^2\theta''' - 3 \theta\theta'\theta'' + 2(\theta')^3}{\theta^3}\,.\]
As $\theta^3 = (D_1\theta \cdot (x - \tilde{x}))^3 + o((x - \tilde{x})^3)$, we have by definition
\begin{align*}
w = & \text{constant term of $2 \tfrac{\theta^2\theta''' - 3 \theta\theta'\theta'' + 2(\theta')^3}{\theta^3}$ at $x = \tilde{x}$}\\
 = & \tfrac{2[(D_1\theta)^2 D_1^4\theta + 2(D_1\theta \cdot \frac{1}{2}D_1^2\theta) D_1^3\theta]}{(D_1\theta)^3} \\
& -\tfrac{6[D_1\theta(D_1\theta \cdot \frac{1}{2}D_1^4\theta + D_1^2\theta D_1^3\theta + \frac{1}{2}D_1^2\theta D_1^3 \theta) + \frac{1}{2}D_1^2\theta(D_1\theta D_1^3 \theta + (D_1^2\theta)^2) + \frac{1}{6}D_1^3\theta D_1\theta D_1^2\theta]}{(D_1\theta)^3} \\
& + \tfrac{4[3 \cdot \frac{1}{6}D_1^4\theta  (D_1\theta)^2 + 6 \cdot D_1\theta  D_1^2 \theta \cdot \frac{1}{2} D_1^3 \theta + (D_1^2\theta)^3]}{(D_1\theta)^3}\\
= & \tfrac{(D_1\theta)^2 D_1^4\theta + D_1\theta D_1^2\theta D_1^3\theta + (D_1^2\theta)^3}{(D_1\theta)^3}\,.
\end{align*}
To calculate $\ddot{\tilde{x}}$ we use $\theta\left(\tilde{x}(y)U + yV + Z\right) \equiv 0$, from which by differentiating we obtain
\begin{align*}
& D_1\theta \cdot \dot{\tilde{x}} + D_2\theta = 0,\\
& (D_1^2 \theta \cdot \dot{\tilde{x}} + D_2D_1\theta) \cdot \dot{\tilde{x}} + D_1\theta \cdot \ddot{\tilde{x}} + D_1D_2\theta \cdot \dot{\tilde{x}} + D_2^2\theta = 0.
\end{align*}
Combining the above two equalities yields
\[\ddot{\tilde{x}} = \frac{-D_1^2\theta (D_2\theta)^2 + 2 D_1\theta D_2\theta D_1D_2\theta - (D_1\theta)^2 D_2^2\theta}{(D_1\theta)^3}\,.\]
Thus using \eqref{eq:ddottildex} finally gives
\begin{align*}
& -D_1^2\theta (D_2\theta)^2 + 2 D_1\theta D_2\theta D_1D_2\theta - (D_1\theta)^2 D_2^2\theta\\ = & -2(D_1\theta)^2 D_1^4\theta -2 D_1\theta D_1^2\theta D_1^3\theta -2 (D_1^2\theta)^3\,.
\end{align*}
\end{proof}

\subsection{Formal solutions of \eqref{eq:L} with simple poles}
Krichever's strategy of the proof is as follows: think about the equation in the form \eqref{eq:K}, which is a partial differential equation for theta functions. If we somehow had another differential operator $L_2$ commuting with $L_1$, then we could try to construct a spectral curve. The moment we have a spectral curve, then hopefully the abelian variety is going to be the Jacobian of that spectral curve. There are certain things to check, for example we should check that the spectral curve is smooth etc., but that is mostly a matter of technical computations for experts. So morally the moment we have constructed another differential operator commuting with~$L_1$, we should be able to declare victory.

The main idea of Shiota's proof of the Novikov conjecture is to show that if $u$ is as in \eqref{eq:potentialuintheta} and satisfies the KP equation, then it can be extended to a $\tau$-function of the KP hierarchy, as a global holomorphic function of the infinite number of variables $\ttt = \{t_i\}$. Local existence directly follows from the KP equation, and then crucially one needs global existence of the $\tau$-function. The core of the problem is that a priori there is a homological obstruction to the global existence of~$\tau$. Let the subvariety $\Theta_1\subset A$ be defined by the equations $\Theta_1 \coloneqq \{z\in A : \theta(z) = D_1\theta(z) = 0\}$. As we will see, the reason for singling out this locus is that there are nonzero solutions of the KP equation that vanish on $\Theta_1$ and cannot be extended uniquely to solutions of the KP hierarchy (see \cite[p.~359]{Shiota1986}, where Shiota called such solutions anomalous). The $D_1$-invariant subset $\Sigma$ of $\Theta_1$ will be called the singular locus. Then the obstruction is controlled by the cohomology group $H^1(\CC^g \setminus \Sigma, \cV)$, where $\cV$ is the sheaf of $D_1$-invariant meromorphic functions on $\CC^g \setminus\Sigma$ with poles along $\Theta$. The hardest part of Shiota's work is the proof that the locus $\Sigma$ is empty, which ensures the vanishing of $H^1(\CC^g, \cV)$. 

In implementing the above, it will be crucial to recall that we are working with an indecomposable abelian variety. Indeed, if the abelian variety were decomposable as a product $(A',\Theta')\times (A'',\Theta'')$, and $D_1$ were a partial derivative only in the direction of the first factor, then $A'\times\Theta''\subset \Theta_1$ would be a $(g-1)$-dimensional component of the locus~$\Theta_1$, invalidating many of the constructions below.

For indecomposable principally polarized abelian varieties, Welters' conjecture in the flex line case is then transformed into the following statement whose proof will be our main goal in the rest of these lecture notes.
\begin{theorem}\label{thm:flextheta}
An indecomposable principally polarized abelian variety $(A, \Theta)$ is the Jacobian of a curve if and only if there exist $0\ne U, V\in\CC^g$, such that for each $Z \in \CC^g \setminus \Sigma$ equation \eqref{eq:ddottildex} for the function $\tau(x, y) = \theta(Ux + Vy + Z)$ is satisfied, i.e.~\eqref{eq:T} is valid on the theta divisor~$\Theta$.
\end{theorem}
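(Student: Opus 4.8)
The \emph{only if} direction is already within reach of what has been developed. If $A=\Jac(C)$, fix a smooth point $p_\infty\in C$ and a local parameter $k^{-1}$, and take $U\coloneqq U^{(1)}$, $V\coloneqq U^{(2)}$, the first two KP directions, which by \Cref{rem:flows2} are the first and second derivatives of the Abel--Jacobi map at $p_\infty$. Krichever's Baker--Akhiezer construction of \Cref{sec:BAfunction} then gives $u$ of the form \eqref{eq:potentialuintheta} and $\psi$ of the form \eqref{eq:psiintheta} satisfying \eqref{eq:L}; equivalently \eqref{eq:K} holds by Riemann's bilinear relations, and the computational Lemma above yields \eqref{eq:T} on all of $\Theta$, a fortiori off $\Sigma$. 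One may instead read off the flex-line equation directly as the limit of Fay's trisecant identity performed in \Cref{sec:geometryofKummer}. The content is therefore the converse, and the plan is to manufacture, starting only from \eqref{eq:T}, a spectral curve whose Jacobian is $A$.

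\textbf{Steps 1--2 (formal wave functions and the spectral curve).} Given \eqref{eq:T} on $\Theta\setminus\Sigma$, and with $u$ of the form \eqref{eq:potentialuintheta}, I would form the formal wave function $\psi=\bigl(\sum_{s\ge0}\xi_s k^{-s}\bigr)e^{kx+k^2y}$ with $\xi_0\equiv1$ and $\xi_{s+1}$ determined from $\xi_s$ by the recursion $2\,\partial_x\xi_{s+1}=(\partial_y-\partial_x^2-u)\xi_s$, so that $\psi$ formally solves $(\partial_y-\partial_x^2-u)\psi=0$. Because $u$ has a double pole along $\Theta$ with the distinguished leading coefficient $-2$, one computes that $\xi_1=-D_1\theta/\theta$ (up to a function of $y$) has only a simple pole along $\Theta$, and that the obstruction to $\xi_2$ --- hence, by propagation, to every $\xi_s$ --- having at worst a simple pole along $\Theta$ is precisely \eqref{eq:T}. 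Having thus produced a one-parameter family of formal solutions with controlled poles, I would upgrade them to genuine meromorphic Bloch solutions $\psi(x,y;z)$ on $A\setminus\Sigma$: using the lattice-translation action together with the pole motion $x=\tilde x(y)$ governed by \eqref{eq:equationsonTheta}, one builds wave functions with a single simple pole along a translate of $\Theta$ and prescribed Bloch multipliers in $x$ and $y$. The set of admissible spectral data, compactified at $k=\infty$ where $\psi\sim e^{kx+k^2y}$, is then shown to be a curve $\Gamma$ carrying a marked smooth point $p_\infty$ and local parameter $k^{-1}$, and $\psi(x,y;P)$ for $P\in\Gamma$ is a Baker--Akhiezer function; equivalently, for every meromorphic function on $\Gamma$ regular off $p_\infty$ one gets, as in \Cref{thm:BAaseigenfunction}, an ordinary differential operator in $x$ annihilating the corresponding combination, producing a commutative ring of such operators with spectrum $\Gamma$.

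\textbf{Step 3 (emptiness of the singular locus --- the crux).} The previous steps construct $\Gamma$ and the wave function only over $\CC^g\setminus\Sigma$; to globalize one must prove $\Sigma=\emptyset$, and this is where I expect the real difficulty to lie, in complete parallel with Shiota's proof of Novikov's conjecture. Indecomposability of $(A,\Theta)$ is indispensable here: for a decomposable $A$ with $U$ tangent to one factor, $\Theta_1=\{\theta=D_1\theta=0\}$ would contain a divisor and the construction would collapse, whereas for indecomposable $A$ the irreducibility of $\Theta$ forces $\Theta_1$ to have codimension $2$. The obstruction to extending the wave solutions across $\Sigma$ is then measured by a cohomology group of the shape $H^1(\CC^g\setminus\Sigma,\cV)$, with $\cV$ the sheaf of $D_1$-invariant meromorphic functions with poles along $\Theta$, and one shows this vanishes by a dimension-and-positivity argument, forcing $\Sigma=\emptyset$ and the global existence of $\Gamma$ and $\psi$.

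\textbf{Step 4 (conclusion).} With $\Gamma$ now globally defined and embedded in $A$ by $P\mapsto$ the translate recording the pole of $\psi(\cdot\,;P)$, and with $\psi$ a genuine Baker--Akhiezer function on $(\Gamma,p_\infty,k^{-1})$, Riemann's bilinear relations \eqref{eq:isogeny} turn $\psi$ into a one-parameter family of collinear Kummer triples, i.e.\ a family of flex lines of $\Kum(A)$ parameterized by $\Gamma$; Gunning's \Cref{thm:Gunning'sthm1} then yields $A=\Jac(\Gamma)$. It remains to check that $\Gamma$ (or its normalization) is smooth and that the vectors $U,V$ one started with are the directions $U^{(1)},U^{(2)}$ attached to $(\Gamma,p_\infty,k^{-1})$, so that the hypothesis is recovered on the nose. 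Alternatively one bypasses Gunning via \Cref{thm:spectralcurve}: the commutative ring of $x$-operators from Step 2 has spectral curve $\Gamma$ and $u$ is its finite-gap potential, which by \eqref{eq:potentialuintheta} forces $A=\Jac(\Gamma)$.
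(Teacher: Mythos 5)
Your overall architecture is recognizably the right one (formal wave solutions with poles controlled by \eqref{eq:T}, a periodic normalization, a spectral curve, and a Jacobian at the end), and your treatment of the "only if" direction is fine. But the converse as you have written it is missing its central mechanism: you assert that "the set of admissible spectral data \dots is then shown to be a curve $\Gamma$," yet nothing in your Steps 1--2 forces the spectral data to be one-dimensional, and this is precisely where the only genuinely global input enters. Krichever's argument first fixes the $\lambda$-periodic wave solution uniquely up to a $D_1$-invariant factor, reads off from it a pseudo-differential operator $\cL$ with $\cL\psi=k\psi$ whose coefficients descend to $A$ (\Cref{lem:pseudoL}), and then proves that the residues $F_m=\Res_{\partial_x}\cL^m$ have at most second-order poles along $\Theta$ (\Cref{lem:Fmsecondorderpole}), i.e.\ lie in the $2^g$-dimensional space $H^0(A,2\Theta)$. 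The resulting linear relations with \emph{constant} coefficients among the $F_m$ are what produce honest commuting differential operators $L_m=(\cL^m)_+ +\sum_i c_{m,i}(\cL^i)_+$, hence a commutative ring $\cA^Z$ and, via the Burchnall--Chaundy/Krichever correspondence (\Cref{lem:ztoAzcorrespondence}), the spectral curve. Without this step your curve $\Gamma$ is not constructed, and the "commutative ring with spectrum $\Gamma$" you invoke is circular.

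Your Step 3 also points the proof in the wrong direction. Establishing $\Sigma=\emptyset$ is the hardest part of Shiota's proof of the Novikov conjecture, and the point of Krichever's argument is to \emph{avoid} it: indecomposability gives $\operatorname{codim}\Sigma\ge 2$, the $\lambda$-periodic normalization makes all the relevant quantities (the coefficients $u_s$ of $\cL$, the $F_m$, the eigenvalues $a_m$) independent of the residual ambiguity in $\psi$ and hence globally defined on $\CC^g\setminus\Sigma$, and Hartogs' theorem then extends them across $\Sigma$; the emptiness of $\Sigma$ is only observed a posteriori for Jacobians. A "dimension-and-positivity argument" for $H^1(\CC^g\setminus\Sigma,\cV)=0$ is neither available at this level of generality nor needed. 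Finally, your endgame is incomplete in either variant: to apply Gunning's \Cref{thm:Gunning'sthm1} you must first know that the family of flexes (equivalently, the Abel--Jacobi image of $\Gamma$) actually lies in $A$, and to argue "$u$ is the finite-gap potential of $\Gamma$, hence $A=\Jac(\Gamma)$" you must rule out $\dim\Jac(\Gamma)$ being as large as $2^g$. The paper supplies the missing ingredient as \Cref{lem:hatFspannedbyHi}: the span $\hat{\mathbf{F}}$ of the $F_m$ is contained in the span of $1$ and the functions $\partial_x\partial_{Z_i}\ln\theta$, so the KP flows are tangent to $A$, giving an embedding $i_Z\colon\Jac(\hat C)\hookrightarrow A$ which, played against the opposite embedding $j$ of \eqref{eq:XinPicGamma}, forces an isomorphism --- no appeal to Gunning is made at this stage.
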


Since our starting point is equation \eqref{eq:L}, let's consider abstractly the properties of solutions of this equation. We assume that the potential $u(x, y)$ has the form $u(x, y) = 2 \partial_x^2 \ln \tau(x, y)$, where $\tau(x, y)$ is some entire function of the complex variable $x$ smoothly depending on a parameter $y$. Here we do not assume that~$\tau(x, y)$ has anything to do with the theta function to begin with, morally we are thinking of~$\tau$ as a function of~$x$ with parameter~$y$, and represent $\tau$ as an infinite product
\[\tau(x, y) = c(y) \prod \limits_i (x - x_i(y))\,,\]
where $x_i(y)$ are the individual values of $x$ for which $\tau$ is zero --- these are functions of~$y$. The point of this assumption is that $u(x, y)$ then has the following expansion around a simple zero~$x_i(y)$ of $\tau(x, y)$:
\begin{equation}\label{eq:uxiexpansion}
u(x, y) = \frac{-2}{(x - x_i(y))^2} + v_i(y) + w_i(y)(x - x_i(y)) + \dots
\end{equation}
Crucially, note that the residue of $u$ at $\tilde{x} = x_i(y)$ is automatically zero because $u$ is the $x$ derivative of a meromorphic function.
Then the corresponding solution of equation \eqref{eq:L} must have the form
\begin{equation}\label{eq:psixiexpansion}
\psi = \frac{\alpha}{x - \tilde{x}} + \beta + \gamma (x - \tilde{x}) + \delta (x - \tilde{x})^2 + \dots.
\end{equation}
We want to consider a formal solution of \eqref{eq:L} of the form
\begin{equation}\label{eq:wavepsixyk}
\psi(x, y, k) = e^{kx + (k^2 + b)y} \left(1 + \sum \limits_{s = 1}^{\infty} \xi_s(x, y) k^{-s}\right).
\end{equation}
(Such solutions are typically called `wave solutions' in the integrable systems literature, but we will avoid this terminology here, as we do not explain the motivation).
Then we will show that the assumptions in \Cref{thm:flextheta} are necessary and sufficient for the local existence of solutions of \eqref{eq:L} such that
\[\xi_s(x,y) = \frac{\tau_s(Ux + Vy + Z, y)}{\theta(Ux + Vy + Z)}, \qquad Z \not\in \Sigma\,,\]
where $\tau_s(Z, y)$, as a function of $Z$, is holomorphic in some open domain in $\CC^g$. In order for $\psi$ given by \eqref{eq:wavepsixyk} to satisfy \eqref{eq:L}, the functions $\xi_s$ must be defined recursively by the equation
\begin{equation}\label{eq:xisrecursion}
2\partial_x \xi_{s + 1} = \partial_y \xi_s - \partial_x^2 \xi_s - (u - b)\xi_s = \partial_y \xi_s - \partial_x^2 \xi_s + 2(\partial_x\xi_1)\xi_s\,.
\end{equation}
Therefore, the global existence of $\xi_s$, which is equivalent to the global existence of the function $\tau_s(Z, y)$, is controlled by the cohomology group $H^1(\CC^g \setminus \Sigma, \cV)$ as discussed above. Solutions of the recursion relations \eqref{eq:xisrecursion} involve some arbitrariness. At the local level the main problem is then to find a translation invariant normalization of $\xi_s$ such that the resulting wave solution $\psi$ given by \eqref{eq:wavepsixyk} is unique up to a $D_1$-invariant factor.

We now proceed to implement this outline of the proof. Substitute $u(x, y)$ into \eqref{eq:L}, and note that in our derivation of\eqref{eq:ddottildex}, we only used the fact that $u$ and $\psi$ have the form \eqref{eq:uxiexpansion} and \eqref{eq:psixiexpansion} respectively and that they are solutions of \eqref{eq:L}. So we still have
\begin{equation}\label{eq:D}
\ddot{\tilde{x}} = -2w\,,
\end{equation}
where $w$ is one of the coefficients in the expansion of $u$. This is an equation for every zero of a function in two variables. We now prove the following 

\begin{lemma}\label{lem:xisimplepole}
Suppose that equations \eqref{eq:D} for the zeros of $\tau(x, y)$ hold. Then there exist meromorphic solutions of \eqref{eq:L} that have simple poles at $x_i$ and are holomorphic everywhere else.
\end{lemma}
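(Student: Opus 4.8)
The plan is to construct the meromorphic solution $\psi$ directly by the method of undetermined coefficients, order by order in a Laurent expansion around each zero $x_i(y)$ of $\tau$, and to show that the recursion for the coefficients is solvable precisely because of equation \eqref{eq:D}. Concretely, first I would fix a zero $\tilde x = x_i(y)$ of $\tau(x,y)$ and posit that near $x=\tilde x$ the solution has the form \eqref{eq:psixiexpansion}, i.e.\ a simple pole with residue $\alpha(y)$ followed by a Taylor tail with coefficients $\beta,\gamma,\delta,\dots$ which are functions of $y$ alone. Substituting this ansatz together with the expansion \eqref{eq:uxiexpansion} of $u$ into equation \eqref{eq:L} $(\partial_x^2 + u)\psi = \partial_y\psi$, and equating coefficients of $(x-\tilde x)^i$ for $i=-3,-2,-1,0,1,\dots$, produces a sequence of equations. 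The key point, already seen in the proof of the previous lemma, is that the coefficient of $(x-\tilde x)^{-3}$ forces nothing new (it is automatically satisfied because $\Res_{\tilde x} u = 0$), the coefficients of $(x-\tilde x)^{-2}$, $(x-\tilde x)^{-1}$, $(x-\tilde x)^{0}$ give the three relations among $\alpha,\beta,\gamma,\dot{\tilde x},v,w$ displayed in the proof of the Lemma, and their compatibility — obtained by $y$-differentiating the first relation and eliminating using the other two — is exactly the condition $\ddot{\tilde x} = -2w$, which is our hypothesis \eqref{eq:D}. So the hypothesis is precisely what removes the obstruction at the pole.

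Next I would show that, given \eqref{eq:D}, the remaining coefficients $\gamma,\delta,\dots$ of the Taylor tail are determined recursively with no further obstruction: the coefficient of $(x-\tilde x)^{i}$ for $i\ge 1$ is a first-order linear ODE in $y$ for the $(i{+}1)$-st Taylor coefficient in terms of the lower ones and the (known) Taylor coefficients of $u$, so it can be integrated, introducing only one constant of integration at each order — this reflects the arbitrariness mentioned after \eqref{eq:xisrecursion}. Thus a formal Laurent solution exists near each $x_i(y)$. To upgrade this to an honest meromorphic function of $x$ on the relevant domain, I would invoke the standard existence theory for the linear ODE \eqref{eq:L} in the variable $x$ with the meromorphic potential $u$: away from the zeros of $\tau$ the coefficients are holomorphic, so solutions extend holomorphically; across a simple zero $x_i(y)$ the local analysis above shows that there is a one-dimensional space of solutions with at worst a simple pole (and the generic such solution has exactly a simple pole, with residue $\alpha\ne 0$), while the second solution of the ODE is holomorphic there. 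Patching these local solutions — using that the monodromy/connection data are consistent because $u$ is globally meromorphic in $x$ — yields a meromorphic solution of \eqref{eq:L} with only simple poles, located at the $x_i$, and holomorphic elsewhere, which is the assertion of \Cref{lem:xisimplepole}.

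The main obstacle I anticipate is not the pole analysis — that is forced cleanly by \eqref{eq:D} — but rather making precise the passage from the formal/local picture to a genuinely single-valued meromorphic solution on the domain in the $x$-plane, i.e.\ controlling how the one-parameter family of local solutions glues as $x$ moves and as one passes from one zero $x_i$ to another. Concretely one must check that the constants of integration can be chosen coherently so that the local germs are restrictions of one function; this is where one uses that all residues of $u$ vanish (so no logarithmic terms appear) and that, $y$ being merely a parameter here, the $x$-dependence is governed by a bona fide second-order linear ODE whose solution space is two-dimensional with well-understood behavior at the regular singular points $x_i$. I would present this gluing step carefully but keep the routine ODE estimates implicit, emphasizing that the only genuinely new input — the one that makes the meromorphic (rather than merely multivalued or higher-order-pole) solution exist — is the hypothesis \eqref{eq:D} on the motion of the zeros of $\tau$.
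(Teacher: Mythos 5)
Your proposal proves a different (and in part ill-posed) statement, and it misses the computation that is the actual content of the lemma. The ``meromorphic solutions'' in \Cref{lem:xisimplepole} are the formal wave solutions \eqref{eq:wavepsixyk}, $\psi = e^{kx+(k^2+b)y}\bigl(1+\sum_{s\ge1}\xi_s(x,y)k^{-s}\bigr)$, and the assertion is that the coefficients $\xi_s$, defined recursively by $2\xi_{s+1}' = \partial_y\xi_s - (u-b)\xi_s - \xi_s''$ (equation \eqref{eq:xisrecurrencesystem}), can all be chosen meromorphic in $x$ with at most \emph{simple} poles at the $x_i$. The danger is that each step of the recursion involves $\xi_s''$ and an integration in $x$, so a priori the pole order could grow by one at every step and logarithms could appear. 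The proof is an induction on $s$ with two checks: (i) the third-order poles of $-(u-b)\xi_s$ and $-\xi_s''$ at $x_i$ cancel, so $\xi_{s+1}'$ has at most a double pole and $\xi_{s+1}$ at most a simple one; (ii) the residue of the right-hand side at $x_i$, namely $\dot r_{s,-1}-(v_i-b)r_{s,-1}+2r_{s,1}$, vanishes --- and the role of hypothesis \eqref{eq:D} ($\ddot x_i=-2w_i$) is precisely to make this residue condition propagate from $s$ to $s+1$, so that no log terms are ever produced. None of this appears in your write-up. Your Laurent analysis of the single expansion \eqref{eq:psixiexpansion} only re-derives the compatibility condition \eqref{eq:D} at the pole, i.e.\ it reproves the \emph{necessity} already established in \eqref{eq:ddottildex}; it does not address why the infinite recursion for the $\xi_s$ closes up with bounded pole order, which is the whole point of the lemma.

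The globalization step also rests on a false premise: \eqref{eq:L} is $(\partial_x^2+u)\psi=\partial_y\psi$, a PDE, not a second-order linear ODE in $x$ with $y$ as a parameter. There is no two-dimensional solution space, no ``second solution holomorphic at $x_i$,'' and no monodromy or regular-singular-point theory to invoke; the Cauchy data for \eqref{eq:L} in the $x$-direction consists of two \emph{functions} of $y$, so the gluing of local germs as you describe it cannot be carried out. (Even at the purely local level your recursion is not the one that occurs: matching the coefficient of $(x-\tilde x)^i$ for $i\ge1$ determines the $(i+2)$-nd Laurent coefficient \emph{algebraically}, with nonzero coefficient $i(i+3)$, rather than through a first-order ODE in $y$.) To repair the argument, work with the $k$-expansion \eqref{eq:wavepsixyk} from the start and run the induction on $s$ sketched above; the global meromorphy in $x$ of each $\xi_s$ then comes for free from integrating a meromorphic function whose residues vanish, not from any ODE gluing.
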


This is a very technical looking statement. But surprisingly enough, it's a key to everything in the proof. Let us explain what happens here. We have  \eqref{eq:L} where $u$ has a second order pole at $x=x_i(y)$, so  $\psi$ is going to have a first order pole there. We want to know that we can write the expansions of $u$ and $\psi$ and that this all works infinitely far in the power series, crucially with {\em only first order poles}. Typically what might have happened is that we have $\psi$, which is an infinite series with coefficients $\xi_s$ that are obtained recursively by solving differential equations. Note that if $\xi_s$ has a pole of order $a$, and if we differentiate it with respect to $x$, then the order of pole is typically going to increase to $a+1$. So one would expect that when one tries to solve for~$\xi_s$ recursively, they would need to have higher and higher order poles. For that not to happen requires some miraculous cancellations, so that the pole orders stay bounded. This will eventually allow us to stay in some finite-dimensional space, and obtain solutions there.

\begin{proof}[Proof of \Cref{lem:xisimplepole}]
The proof is a direct computation, verifying that the recursive expressions for the coefficients of higher order poles of $\xi_s$ do indeed cancel at every step in the recursive derivation. Indeed, substituting \eqref{eq:wavepsixyk} into \eqref{eq:L} yields a recursive system of equations
\begin{equation}\label{eq:xisrecurrencesystem}
2\xi'_{s + 1} = \partial_y\xi_s - (u-b)\xi_s - \xi''_s\,. 
\end{equation}
We will prove by induction that this system has meromorphic solutions with simple poles at all the zeros $x_i$ of $\tau$. For that let us expand $\xi_s$ (for a fixed given~$s$) around $x_i$ as
\[\xi_s = \frac{r_{s,-1}}{x - x_i} + r_{s,0} + r_{s,1}(x - x_i) + \dots.\]
Suppose that $\xi_s$ are defined and that equation \eqref{eq:xisrecurrencesystem} has a meromorphic solution. What would then prevent $\xi_{s + 1}$ from having the same form? We need to make sure that (1) $\xi_{s + 1}$ would not have higher order poles; and (2) $\xi_{s + 1}$ does not have a log term, which means that if the right-hand side of \eqref{eq:xisrecurrencesystem} has zero residue at $x = x_i$, i.e.~if
\[\Res_{x_i}\left(\partial_y \xi_s - (u-b) \xi_s - \xi''_s\right) = \dot{r}_{s,-1} - (v_i - b)r_{s,-1} + 2r_{s,1} = 0\,,\]
then the residue of the next equation also vanishes.

For (1) we note that only the second term $-(u-b)\xi_s$ and third term $-\xi''_s$ on the right-hand side of \eqref{eq:xisrecurrencesystem} would have a third order pole, and it is easy to check that these third order poles precisely cancel each other. Thus $\xi_{s+1}'$ has at most a second order pole, and thus $\xi_{s+1}$ has at most a simple pole.

To prove (2) we proceed as follows. From \eqref{eq:xisrecurrencesystem} it follows that the coefficients of the Laurent expansion for $\xi_{s + 1}$ are equal to
\begin{align*}
2r_{s + 1,-1} & = -\dot{x}_i r_{s,-1} -2r_{s, 0}\,,\\
2r_{s+1, 1} & = \dot{r}_{s,0} - \dot{x}_ir_{s,1} - w_ir_{s,-1} - (v_i - b)r_{s, 0}\,.
\end{align*}
These equations imply
\[
\begin{aligned}\dot{r}_{s + 1, -1} &- (v_i - b)r_{s + 1,-1} + 2r_{s+1, 1}\\ = &-\frac{1}{2}(\ddot{x}_i + 2w_i)r_{s,-1} - \frac{1}{2}\dot{x}_i[\dot{r}_{s,-1} - (v_i - b)r_{s,-1} + 2r_{s,1}] = 0\,,\end{aligned}\]
and thus the next term on the right-hand side of \eqref{eq:xisrecurrencesystem} has no residue, and the lemma is proved.
\end{proof}

Recall that our secret goal is to construct a spectral curve by finding another differential operator $L_2$ so that $\psi$ is also its eigenfunction ----- so that $L_1$ would commute with~$L_2$. Not every solution of equation \eqref{eq:L} arises from an algebraic curve, but we know that Baker-Akhizer functions have the desired property! So somehow we need to impose some extra conditions to normalize solutions of equation \eqref{eq:L} such that under these conditions the normalized solution is more or less unique. As the formal power series part of $\psi(x, y, k)$ is expected to come from theta functions, it is natural to impose some quasi-periodicity conditions on the coefficients $\xi_s$.

\subsection{$\lambda$-periodic solutions of \eqref{eq:L}}
In the next step we are going to fix a translation invariant normalization of $\xi_s$, called $\lambda$-periodicity, which will then define solutions of \eqref{eq:L} uniquely up to multiplication by an $x$-independent factor; these will be the so-called $\lambda$-periodic (wave) solutions of~\eqref{eq:L}. As mentioned before, direction~$U$ is most important, for possible existence of anomalous solutions. We will thus require~$\psi$ to be invariant under translation by~$U$, calling such solutions $\lambda$-periodic.
We will then construct $L_2$ for which the $\lambda$-periodic solutions will be eigenfunctions, and while these $L_2$ will only be locally defined, by the suitable uniqueness of normalization they will turn out to be independent of the $x$-independent multiplicative factor, as we will eventually see. 

Let $A_U \coloneqq \langle Uz\rangle \subseteq A$ be the Zariski (or analytic) closure in $A$ of the set $\{Uz\}_{z \in \CC}$. In other words, since we are taking the closure of an abelian subgroup, this is the minimal abelian subvariety of $A$ containing $\{Uz\}$, which a priori can be of arbitrary dimension. Morally we want ``$\psi$ to be a solution on the abelian subvariety $A_U$'', though this is not exactly what we are going to be able to say. Note that if this can be done and if the dimension of $A_U$ is $g$, then we are done because then we will have constructed a global function $\psi$, and our machinery will quickly work. So the difficulty happens if the span of $U$ is a much smaller abelian variety $A_U\subsetneq A$. In particular, much of the argument below becomes simply unnecessary if~$A$ is assumed to be simple, i.e.~to not contain any proper nontrivial abelian subvariety. We encourage a novice reader to first think through the argument below assuming that~$A$ is simple, and that thus $A=A_U$, and only then at the second reading focus on the extra complications present in the general case.

In the construction, we will need to avoid the bad locus $\Sigma$ defined before. As already discussed, as the abelian variety is assumed to be indecomposable, it follows that $\dim\Theta_1=g-2$. Furthermore, we want~$A_U$ not to be contained in $\Sigma$, which is achieved simply by taking a generic translate of $A_U$ (and we'll abuse notation to denote $A_U$ that translate). Then furthermore for any fixed sufficiently small $y$, we will also have $A_U + Vy \not\subset \Sigma$. Let $\pi: \CC^g \to A = \CC^g \slash \Lambda$ be the universal cover of $A$, and let $\CC^d\coloneqq \pi^{-1}(A_U)$. Recall that \Cref{lem:xisimplepole} is valid for any $\tau$ holomorphic with simple poles satisfying equation \eqref{eq:D}, where we have not made any use of the assumption in \Cref{thm:Kricheverflex} that $u(x, y)$ is expressed in terms of theta function --- and now we use this generality. Note that if we restrict the theta function to the affine subspace $\CC^d + Vy$, and define
\[\tau(Z, y) \coloneqq \theta(Z + Vy), \qquad Z \in \CC^d\,,\]
then the function $u(Z, y) \coloneqq 2 D_1^2\ln\tau(Z, y)$ is periodic with respect to the lattice $\Lambda_U \coloneqq \Lambda \cap \CC^d$, and has a double pole along the divisor $\Theta^U(y) = (\Theta - Vy) \cap \CC^d$. Solutions of equation \eqref{eq:L} are then expected to be quasi-periodic with respect to~$\Lambda_U$. For $\lambda \in \Lambda_U$ and a function $f:\CC^d\to\CC$ we say that it is $\lambda$-periodic if $f(Z + \lambda) = f(Z)$ for all~$Z \in \CC^d$. We first prove

\begin{lemma}\label{lem:phizykseries}
Let equation \eqref{eq:D} for $\tau(Ux + Z, y)$ hold and let $\lambda$ be a vector of the sublattice $\Lambda_U = \Lambda \cap \CC^d \subset \CC^g$. Then
\begin{enumerate}
\item equation \eqref{eq:L} with the potential $u(Ux + Z, y)$ has a solution of the form $\psi = e^{kx + k^2y}\phi(Ux + Z, y, k)$ such that the coefficients $\xi_s(Z, y)$ of the formal series
\begin{equation}\label{eq:formalseriesphizyk}
\phi(Z, y, k) = e^{by} \left(1 + \sum \limits_{s = 1}^{\infty} \xi_s(Z, y) k^{-s}\right)
\end{equation}
are $\lambda$-periodic meromorphic functions of the variable $Z \in \CC^d$ with a simple pole along the divisor $\Theta^U(y)$;
more precisely
\[\xi_s(Z + \lambda, y) = \xi_s(Z, y) = \frac{\tau_s(Z, y)}{\tau(Z, y)},\]
where $\tau_s$ are holomorphic functions of the variable $Z \in \CC^d$.
\item $\phi(Z, y, k)$ is unique  up to a factor $\rho(Z, k)$ that is $D_1$-invariant and holomorphic in $Z$, that is any other such solution has the form
\begin{equation}\label{eq:rhozkfactor}
\phi_1(Z, y, k) = \phi(Z, y, k)\rho(Z, k), \qquad \hbox{ with }D_1 \rho = 0.
\end{equation}
\end{enumerate} 
\end{lemma}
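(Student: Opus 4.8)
The plan is to solve the recursion \eqref{eq:xisrecursion} order by order in $k^{-1}$, while at each step choosing the arbitrary constant of integration in the $x$-direction (more precisely, the $D_1$-antiderivative ambiguity) so as to enforce $\lambda$-periodicity, and then to show that this requirement pins down $\phi$ up to a $D_1$-invariant factor. First I would set up the recursion: writing $\psi = e^{kx+k^2y}\phi$ with $\phi$ as in \eqref{eq:formalseriesphizyk}, substitution into \eqref{eq:L} gives exactly \eqref{eq:xisrecursion}, i.e. $2D_1\xi_{s+1} = \partial_y\xi_s - D_1^2\xi_s + 2(D_1\xi_1)\xi_s$, together with the base relation $2D_1\xi_1 = u - b$, which already forces $\xi_1 = D_1\ln\tau$ (up to a $D_1$-invariant function) since $u = 2D_1^2\ln\tau$; choosing $b$ appropriately makes $\xi_1$ well-defined. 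By \Cref{lem:xisimplepole} (which, as emphasized there, only used the local form \eqref{eq:uxiexpansion}, \eqref{eq:psixiexpansion} and equation \eqref{eq:D}, not the theta-function nature of $\tau$), each $\xi_s$ can be chosen to be meromorphic on $\CC^d$ with at worst simple poles along $\Theta^U(y)$; the content now is to make this choice $\lambda$-periodic.

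The inductive step is the heart of the matter. Assume $\xi_1,\dots,\xi_s$ have been constructed, $\lambda$-periodic, meromorphic with simple poles along $\Theta^U(y)$. The right-hand side $R_s \coloneqq \partial_y\xi_s - D_1^2\xi_s + 2(D_1\xi_1)\xi_s$ of \eqref{eq:xisrecursion} is then also $\lambda$-periodic; a priori it has poles of order up to three along $\Theta^U(y)$, but the pole-cancellation computation already carried out in the proof of \Cref{lem:xisimplepole} shows the third- and second-order parts cancel, and moreover (again from that computation) $R_s$ has zero residue along $\Theta^U(y)$ — this is precisely the assertion that $\dot r_{s,-1} - (v_i-b)r_{s,-1} + 2r_{s,1} = 0$, which propagates by equation \eqref{eq:D}. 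Hence $R_s$ has a $D_1$-antiderivative that is meromorphic with a simple pole along $\Theta^U(y)$; call it $\eta_{s+1}$, so $\xi_{s+1} = \tfrac12\eta_{s+1} + g_{s+1}$ for an arbitrary $D_1$-invariant $g_{s+1}$. It remains to show $g_{s+1}$ can be chosen so that $\xi_{s+1}$ is $\lambda$-periodic. The obstruction is the quantity $c_\lambda \coloneqq \eta_{s+1}(Z+\lambda) - \eta_{s+1}(Z)$, which is $D_1$-invariant (since $R_s$ is $\lambda$-periodic) and, crucially, is constant in $y$: differentiating the recursion one checks $\partial_y c_\lambda$ is a $D_1$-antiderivative of $\partial_y R_s$, and a bootstrap using the already-established periodicity of $\xi_1,\dots,\xi_s$ forces $\partial_y c_\lambda = 0$. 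A $D_1$-invariant holomorphic function on $\CC^d$ that is constant along the $\langle U\rangle$-orbits and bounded on the compact $A_U$ must be constant; one then absorbs this constant by a $y$-independent renormalization, i.e. adjusts $g_{s+1}$ — this is where one also invokes that $A_U \not\subset \Sigma$ so that the relevant functions are genuinely defined. The indecomposability hypothesis enters to guarantee $\dim\Theta_1 = g-2$, so that the poles stay in codimension one and the antiderivatives are controlled.

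For part (2), suppose $\phi_1$ is another solution of the stated form with $\lambda$-periodic coefficients. Then $\rho \coloneqq \phi_1/\phi$ is a formal series in $k^{-1}$ whose coefficients are meromorphic on $\CC^d$; comparing the recursion \eqref{eq:xisrecursion} for $\phi$ and $\phi_1$ shows order by order that $D_1\rho = 0$, and that the coefficients of $\rho$ have no poles (the simple poles of numerator and denominator along $\Theta^U(y)$ cancel), hence $\rho = \rho(Z,k)$ is $D_1$-invariant and holomorphic in $Z$, which is \eqref{eq:rhozkfactor}.

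The step I expect to be the main obstacle is establishing that the period obstruction $c_\lambda$ is independent of $y$ and hence can be killed: this is exactly the place where the assumption \eqref{eq:D} (equivalently \eqref{eq:T}) is used essentially rather than formally, and where the global geometry of $A_U$ inside the indecomposable $A$ — as opposed to the purely local analysis of \Cref{lem:xisimplepole} — has to be brought in. All the pole-order and residue bookkeeping is routine given the computation already done for \Cref{lem:xisimplepole}; the genuinely new input is the $\lambda$-periodicity normalization and its compatibility with the $y$-evolution.
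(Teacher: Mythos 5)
Your overall architecture (solve \eqref{eq:xisrecursion} order by order, use \Cref{lem:xisimplepole} for the pole bookkeeping, then fix the $D_1$-antiderivative ambiguity by imposing $\lambda$-periodicity) matches the paper's, and your part (2) is essentially right. But there is a genuine gap at exactly the step you flag as the main obstacle: you assert that the periodicity defect $c_\lambda(Z,y)=\eta_{s+1}(Z+\lambda,y)-\eta_{s+1}(Z,y)$ is independent of $y$, and the justification offered (``a bootstrap \dots forces $\partial_y c_\lambda=0$'') does not work. The defect of a $D_1$-antiderivative of a $\lambda$-periodic function is essentially the average of that function over the period $\lambda$, and nothing in \eqref{eq:D} or in the already-established periodicity of $\xi_1,\dots,\xi_s$ makes this average $y$-independent. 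Since the only correction you permit at level $s+1$ is a $D_1$-invariant $g_{s+1}$, whose defect is at best a \emph{constant} (coming from a linear form vanishing on $U$), a $y$-dependent obstruction cannot be absorbed and your induction stalls.

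The paper's induction is structured differently precisely to handle this: the integration ``constant'' is allowed to depend on $y$ and is inserted one level \emph{down}. One keeps the fully normalized $\xi_{s-1}$, a particular $\lambda$-periodic $\xi_s^0$, and writes $\xi_s=\xi_s^0+c_s(Z,y)$ with $c_s$ $D_1$-invariant and $\lambda$-periodic, so the periodicity of $\xi_s$ is untouched. This modifies the right-hand side of \eqref{eq:xisrecursion} at level $s+1$, and the corresponding particular solution $\xi_{s+1}^0=\xi_{s+1}^*+c_s\xi_1^0+\tfrac{(l,Z)}{2}\,\partial_y c_s$ acquires an extra defect proportional to $(l,\lambda)\,\partial_y c_s$. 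Demanding that $\xi_{s+1}^0$ be $\lambda$-periodic is then a first-order ODE in $y$ for $c_s(Z,y)$ whose source is the ($y$-dependent) defect of $\xi_{s+1}^*$; this is always solvable, and the residual freedom is the initial value $c_s(Z,0)$, a function of $Z$ alone --- which is exactly where the $y$-independent factor $\rho(Z,k)$ of part (2) comes from. So the correct mechanism is a delayed normalization in which periodicity at level $s+1$ determines the $y$-dependence of the constant at level $s$, not a vanishing claim for the obstruction at level $s+1$.
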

This uniqueness of~$\phi$ will be important in constructing a suitably globally well-defined differential operator for which it is an eigenfunction.
\begin{proof}
Recall that the functions $\xi_s(Z)$ are defined recursively by equations \eqref{eq:xisrecurrencesystem}.
We want to show that there are more or less unique solutions of these equations that are $\lambda$-periodic. We do this by induction on~$s$.

For the base case we note that a particular solution of the first equation $2\partial_x \xi_1 = b - u$ is given by the formula
\[2\xi_1^0 = -2\partial_x \ln \tau + (l, Z)b\,,\]
where $(l, Z)$ is a linear form on $\CC^d$ given by the scalar product of $Z$ with a vector $l \in \CC^d$ such that $(l, U) = 1$ and $(l, \lambda) \ne 0$. The constant $b$ is then fixed by the periodicity condition for $\xi_1^0$,
\[(l, \lambda)b = 2\partial_x \ln \tau(Z + \lambda, y) - 2 \partial_x \ln \tau(Z, y)\,.\]
Now by \Cref{lem:xisimplepole}, equation \eqref{eq:D} ensures the local solvability of \eqref{eq:xisrecurrencesystem} in any domain where $\tau(Z + Ux, y)$ has simple zeros, i.e.~ outside of the set $\Theta_1^U (y) \coloneqq (\Theta_1 - Vy) \cap \CC^d$ where the $x$-derivative of $\tau$ (along the $U$-direction) also vanishes. We want to claim that the normalized solution actually exists globally on~$\CC^d$ or on~$A_U$. For this let us analyze the singular locus $\Theta_1^U = \Theta \cap D_1\Theta$. Note that this set does not contain a $D_1$-invariant line because any such line is dense in $A_U$. Therefore, the sheaf $\cV_0$ of $D_1$-invariant meromorphic functions on $\CC^d \setminus \Theta_1^U(y)$ with poles along the divisor $\Theta^U(y)$ coincides with the sheaf of meromorphic $D_1$-invariant functions on $\CC^d$. Thus once we show the existence of $D_1$-invariant meromorphic functions $\xi_s^0$, they exist globally with simple pole along the divisor $\Theta^U(y)$ which is equivalent to the vanishing of the cohomology group $H^1(\CC^d \setminus \Theta_1^U(y), \cV_0)$. What is then the freedom we have in the choice of~$\xi_s$? Once we fix a particular $\lambda$-periodic solution $\xi_s^0$, the general global meromorphic solution is given by $\xi_s = \xi_s^0 + c_s$, where the constant of integration $c_s(Z, y)$ is a holomorphic $D_1$-invariant function of~$Z$.

Now let us proceed to the step of induction. The setup will be a little complicated, as we will be using \eqref{eq:xisrecurrencesystem} for~$s$ and~$s-1$, to both derive the next $\xi_{s+1}^0$ and simultaneously fix the constant of integration $c_s$. More precisely, the inductive assumption is that we have constructed the normalized $\lambda$-periodic solution $\xi_{s-1}$ and that we know one $\lambda$-periodic solution $\xi_s^0$ of \eqref{eq:xisrecurrencesystem} for $s-1$. Then as a step of induction we need to construct one $\lambda$-periodic solution $\xi_{s+1}^0$, and its existence will allow us to fix the constant of integration in determining the normalized $\lambda$-periodic solution~$\xi_s=\xi_s^0+c_s(Z,y)$. 

Indeed, let $\xi_{s + 1}^*$ be a solution of \eqref{eq:xisrecurrencesystem} for the given~$\xi_s^0$, which is not necessarily $\lambda$-periodic. Then it is straightforward to check that the function
\[\xi_{s+1}^0(Z, y) = \xi_{s+1}^*(Z, y) + c_s(Z, y)\xi_1^0(Z, y) + \frac{(l, Z)}{2}\partial_yc_s(Z, y)\]
is a solution of \eqref{eq:xisrecurrencesystem} for $\xi_s = \xi_s^0 + c_s$ (Here we use that $c_s(Z, y)$ is holomorphic and $D_1$-invariant). An arbitrary choice of a $\lambda$-periodic $D_1$-invariant function $c_s(Z, y)$ does not affect the periodicity of $\xi_s$, but it does affect the periodicity of~$\xi_{s + 1}^0$. So in order to make $\xi_{s + 1}^0$ periodic, the function $c_s(Z, y)$ has to satisfy the linear differential equation 
\[(l, \lambda)\partial_y c_s(Z, y) = 2 \xi_{s + 1}^*(Z + \lambda, y) - 2 \xi_{s + 1}^*(Z, y)\,.\]
This equation, together with an initial condition $c_s(Z) = c_s(Z, 0)$, uniquely defines $c_s(Z, y)$. The induction step is thus completed. 

At last we need to show that the $\lambda$-periodic solution of equation \eqref{eq:L} is unique up to some $y$-independent $D_1$-invariant factor. If $\phi_1$ is another $\lambda$-periodic formal series such that $e^{kx + k^2y}\phi_1$ is a solution of \eqref{eq:L} with the same $u(Ux + Z, y)$, then by our construction above the ratio of the two $\lambda$-periodic formal series $\phi_1$ and $\phi$ is $y$-independent and $D_1$-invariant (the ratio of two solutions of the same differential equation should be a constant with respect to $x$). Therefore, we have equation \eqref{eq:rhozkfactor}, where $\rho(Z, k)$ could be obtained by the evaluation of the two sides at $y = 0$. The lemma is thus proved.
\end{proof}

In \Cref{lem:phizykseries} the point $\lambda \in \Lambda_{U}$ is an arbitrary fixed lattice vector. Applying it repeatedly, we can construct a solution of equation \eqref{eq:L} that is quasi-periodic with respect to all of the lattice~$\Lambda_U$. This will further restrict the freedom of choice of~$\phi$.  

\begin{corollary}\label{cor:quasiperiodicphi}
Let $\lambda_1, \dots, \lambda_d\in\Lambda_U$ be linearly independent, and let $Z_0\in\CC^d$ be arbitrary. Then, under the assumptions of \Cref{lem:phizykseries}, there is a unique solution of equation \eqref{eq:L} such that the corresponding formal series $\phi(Z, y, k; Z_0)$ is quasi-periodic with respect to all of the lattice~$\Lambda_U$, i.e.~ 
\begin{equation}\label{eq:phiquasiperiodicity}
\phi(Z + \lambda, y, k; Z_0) = \phi(Z, y, k; Z_0)\mu_{\lambda}(k)\qquad \forall\lambda\in\Lambda_U
\end{equation}
and satisfies the normalization conditions
\begin{equation}\label{eq:mulambdainormalization}
\mu_{\lambda_i}(k) = 1, \quad\forall i=1,\dots,d;\qquad \text{ and }\phi(Z_0, 0, k; Z_0) = 1.
\end{equation}
\end{corollary}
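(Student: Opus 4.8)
The plan is to iterate \Cref{lem:phizykseries} over the generators $\lambda_1,\dots,\lambda_d$ of $\Lambda_U$ and then rigidify the remaining $D_1$-invariant ambiguity by imposing the normalizations \eqref{eq:mulambdainormalization}. First I would apply \Cref{lem:phizykseries} with $\lambda=\lambda_1$ to obtain a $\lambda_1$-periodic formal series $\phi^{(1)}(Z,y,k)$ whose coefficients $\xi_s$ are $\lambda_1$-periodic meromorphic functions on $\CC^d$ with simple poles along $\Theta^U(y)$. Part (2) of the lemma says that any such solution is unique up to multiplication by a $D_1$-invariant, holomorphic-in-$Z$, $\lambda_1$-periodic factor $\rho_1(Z,k)$. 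Now I want to additionally arrange periodicity in the direction $\lambda_2$. The key point is that, because equation \eqref{eq:L} is invariant under the translation $Z\mapsto Z+\lambda_2$ (as $u(Ux+Z,y)$ is $\Lambda_U$-periodic), the shifted series $\phi^{(1)}(Z+\lambda_2,y,k)$ is again a $\lambda_1$-periodic wave solution for the same $u$; hence by the uniqueness in \Cref{lem:phizykseries}(2) there is a $D_1$-invariant holomorphic factor $\sigma_2(Z,k)$ with
\[
\phi^{(1)}(Z+\lambda_2,y,k)=\phi^{(1)}(Z,y,k)\,\sigma_2(Z,k).
\]
Since $D_1\sigma_2=0$ and a $D_1$-invariant holomorphic function on $\CC^d$ is constant on the dense leaves $\{Z_0+\CC U\}$, and since $\sigma_2$ is also $\lambda_1$-periodic, one checks $\sigma_2(Z,k)=\sigma_2(k)$ is actually independent of $Z$; I can then correct $\phi^{(1)}$ by a suitable $\lambda_1$-periodic $D_1$-invariant factor (a ``half-logarithm'' of $\sigma_2$ along the $\lambda_2$-direction, interpolating $1$ at $Z_0$) to produce $\phi^{(2)}$ that is simultaneously $\lambda_1$- and $\lambda_2$-periodic.

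Iterating this over $\lambda_3,\dots,\lambda_d$ yields a series $\phi(Z,y,k)$ which is periodic in each of $\lambda_1,\dots,\lambda_d$, hence quasi-periodic with respect to all of $\Lambda_U$ in the sense of \eqref{eq:phiquasiperiodicity}; since $\lambda_1,\dots,\lambda_d$ are a basis of $\Lambda_U$, the multipliers automatically satisfy $\mu_{\lambda_i}(k)=1$, and $\mu_\lambda(k)$ for general $\lambda=\sum n_i\lambda_i$ is just the product $\prod\mu_{\lambda_i}(k)^{n_i}=1$ on the generators but in general a multiplicative character $\lambda\mapsto\mu_\lambda(k)$ on $\Lambda_U$ — wait, more carefully: periodicity in every $\lambda_i$ forces $\mu_\lambda\equiv1$ for all $\lambda\in\Lambda_U$ only if all generators have trivial multiplier, so in fact the construction gives a genuinely $\Lambda_U$-periodic $\phi$. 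The remaining ambiguity after imposing $\mu_{\lambda_i}(k)=1$ is multiplication by a $D_1$-invariant, $\Lambda_U$-periodic, holomorphic-in-$Z$ series $\rho(Z,k)$ with $\rho(Z,k)=1+O(k^{-1})$; but such a $\rho$ descends to a holomorphic function on the compact torus $A_U$ constant along the dense orbits of $U$, hence $\rho=\rho(k)$ depends only on $k$. Finally I fix $\rho(k)$ by the single remaining normalization $\phi(Z_0,0,k;Z_0)=1$, which pins it down uniquely. This proves existence and uniqueness.

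The main obstacle I expect is the bookkeeping in the iterative step: at each stage I must verify that the correction factor used to upgrade $\phi^{(j)}$ to $\phi^{(j+1)}$ can be chosen $D_1$-invariant, holomorphic in $Z$, and periodic with respect to the previously handled $\lambda_1,\dots,\lambda_j$, so that the earlier periodicities are not destroyed. This reduces to the cocycle-type computation that the multipliers $\sigma_i(k)$ arising at different stages are compatible — essentially that $\lambda\mapsto(\text{multiplier of }\phi)$ is a well-defined homomorphism $\Lambda_U\to(1+k^{-1}\CC[[k^{-1}]])^\times$, so that killing it on a basis kills it everywhere. The commutativity $[\lambda_i,\lambda_j]$-consistency is automatic because $\Lambda_U$ is abelian and the translations commute, but writing the ``partial-log'' correction factors explicitly (as was done for the constant $b$ and the functions $c_s(Z,y)$ in the proof of \Cref{lem:phizykseries}) requires care to stay inside the sheaf of $D_1$-invariant functions; the crucial geometric input, used repeatedly, is that $A_U=\langle Uz\rangle$ is exactly the Zariski closure of the $U$-orbit, so any $D_1$-invariant holomorphic function on (an open subset of) $\CC^d$ that is also $\Lambda_U$-periodic must be constant.
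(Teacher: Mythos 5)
There is a genuine gap, and it stems from a miscount of the lattice. The vectors $\lambda_1,\dots,\lambda_d$ are $d$ linearly independent vectors in $\Lambda_U=\Lambda\cap\CC^d$, which is a lattice of rank $2d$; they are \emph{not} a basis of $\Lambda_U$. Consequently your conclusion that the construction produces a genuinely $\Lambda_U$-periodic $\phi$ is false — and it cannot be true: the coefficients $\xi_s$ have a simple pole along $\Theta^U(y)$, and a fully $\Lambda_U$-periodic meromorphic function with only a simple pole along a principal polarization descends to $A_U$ and must be constant, since $h^0(A_U,\Theta)=1$. The whole point of the statement is that $\phi$ is only quasi-periodic, with multipliers $\mu_\lambda(k)$ that are nontrivial on the remaining $d$ generators of $\Lambda_U$ and can only be normalized to $1$ on the $d$ chosen ones (one of which is already achieved by \Cref{lem:phizykseries}, leaving $d-1$ conditions to fix the $(d-1)$-parameter ambiguity $e^{(b(k),Z)}$ with $(b(k),U)=0$).

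The second, related, error is your claim that the multiplier $\sigma_2(Z,k)$ is independent of $Z$ because a $D_1$-invariant, $\lambda_1$-periodic holomorphic function on $\CC^d$ is ``constant on the dense leaves $Z_0+\CC U$.'' Those leaves are dense in the compact torus $A_U$, not in the universal cover $\CC^d$, and $\sigma_2$ is not known to descend to $A_U$; for instance $e^{(b,Z)}$ with $(b,U)=0$ and $(b,\lambda_1)\in 2\pi i\ZZ$ is $D_1$-invariant, $\lambda_1$-periodic, holomorphic and nonconstant. This is exactly why the paper cannot trivialize the cocycle $\rho_\lambda(Z,k)$ direction by direction, and instead invokes \Cref{lem:Shiota}: a group-cohomology/Hodge-theoretic argument showing that there is a $D_1$-invariant holomorphic $f(Z,k)$ with $f(Z+\lambda,k)\rho_\lambda(Z,k)=f(Z,k)\mu_\lambda(k)$ for \emph{constant} multipliers $\mu_\lambda(k)$, unique up to the exponential of a linear form vanishing on $U$. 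Your outline correctly identifies the final step (fixing the residual scalar by $\phi(Z_0,0,k;Z_0)=1$), but the central mechanism — trivializing the $D_1$-invariant cocycle up to a character of $\Lambda_U$ — is missing and replaced by an argument that fails already for $d=2$.
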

For the proof of this corollary, we will use the following reformulated simplified version of \cite[Lem.~12(b)]{Shiota1986}.
\begin{lemma}\label{lem:Shiota}
Let $A_U=\CC^d/\Lambda_U$ be an abelian variety. If $F$ is a holomorphic function on $\CC^d$ such that
\[F(Z + \gamma) = F(Z)\sigma(\gamma, Z),\]
for all $Z\in\CC^d$ and all $\gamma\in\Lambda_U$, where the multiplicative factors $\sigma(\gamma, Z)$ are $D_1$-invariant holomorphic functions of $Z$, then there exists a $D_1$-invariant function $h(Z)$ and $\tilde{\sigma}$ such that the equation
    \[h(Z + \gamma)\sigma(\gamma, Z) = h(Z)\tilde{\sigma}(\gamma),\]
    holds. The ambiguity in the choice of $h$ and $\tilde{\sigma}$ corresponds to the multiplication by the exponent of a linear form in $Z$ vanishing on $U$, i.e.~
    \[h'(Z) = h(Z) e^{(b, Z)}, \qquad \tilde{\sigma}'_{\lambda} = \tilde{\sigma}_{\lambda}e^{(b, \lambda)}, \qquad (b, U) = 0\,,\]
    where $b$ is orthogonal to $U$.
\end{lemma}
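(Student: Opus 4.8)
\textbf{Proof plan for Lemma~\ref{lem:Shiota}.}
The plan is to reduce the statement to the standard theory of factors of automorphy (i.e.\ line bundles) on the abelian variety $A_U=\CC^d/\Lambda_U$, while carefully tracking the extra $D_1$-invariance. First I would observe that the hypothesis says precisely that $F$ is a holomorphic section of a line bundle $\mathcal{L}$ on $A_U$ whose factor of automorphy $\sigma(\gamma,Z)$ is constant along the dense one-parameter subgroup $\{Uz\}$; our goal is to trivialize $\mathcal{L}$ as much as the $D_1$-invariance allows, by finding a $D_1$-invariant ``quasi-constant'' $h$ so that $h(Z+\gamma)\sigma(\gamma,Z)=h(Z)\tilde\sigma(\gamma)$ with $\tilde\sigma$ independent of $Z$. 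Since $\sigma(\gamma,Z)$ is nowhere zero and holomorphic in $Z$, we may write $\sigma(\gamma,Z)=\exp(2\pi i\,f(\gamma,Z))$ locally; the cocycle condition $\sigma(\gamma_1+\gamma_2,Z)=\sigma(\gamma_1,Z+\gamma_2)\sigma(\gamma_2,Z)$ together with the $D_1$-invariance $D_1\sigma(\gamma,\cdot)=0$ forces $f(\gamma,Z)$ to be (up to a genuine constant) an affine-linear function of $Z$ that is constant in the $U$-direction, i.e.\ $f(\gamma,Z)=(a(\gamma),Z)+c(\gamma)$ with $(a(\gamma),U)=0$.

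Granting this normal form, the next step is purely the classical statement that any factor of automorphy given by exponentials of affine-linear forms can be brought, after multiplication by the factor coming from an entire function $h(Z)=\exp(\text{quadratic}+\text{linear})$, to a normalized (constant-in-$Z$, i.e.\ ``semicharacter'') form $\tilde\sigma(\gamma)$; this is exactly the Appell--Humbert normalization of line bundles on complex tori. I would choose the auxiliary entire function $h$ so that, in addition, $h$ is $D_1$-invariant --- this is possible because the linear/quadratic data we must kill all lives in directions orthogonal to $U$ (that was the content of $(a(\gamma),U)=0$), so the exponent of $h$ can be taken to depend only on the coordinates transverse to $U$. Then $h$ is $D_1$-invariant by construction, $\tilde\sigma(\gamma)$ is $Z$-independent, and the desired identity $h(Z+\gamma)\sigma(\gamma,Z)=h(Z)\tilde\sigma(\gamma)$ holds.

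Finally, for the ambiguity statement: if $(h,\tilde\sigma)$ and $(h',\tilde\sigma')$ both work, then $h'/h$ is a $D_1$-invariant holomorphic nowhere-zero function on $\CC^d$ whose quasi-periods $\tilde\sigma'_\lambda/\tilde\sigma_\lambda$ are $Z$-independent; hence $\log(h'/h)$ is an entire function whose increments under $\Lambda_U$ are constants, which forces it to be affine-linear, and $D_1$-invariance forces its linear part $(b,Z)$ to satisfy $(b,U)=0$, while the constant part is absorbed into rescaling. This gives exactly $h'(Z)=h(Z)e^{(b,Z)}$, $\tilde\sigma'_\lambda=\tilde\sigma_\lambda e^{(b,\lambda)}$ with $(b,U)=0$, as claimed.

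\smallskip
The main obstacle, and the step I would spend the most care on, is the first one: extracting the affine-linear normal form of $f(\gamma,Z)$ from the combination of the cocycle identity and $D_1$-invariance, and then checking that the Appell--Humbert-type correction function $h$ can genuinely be chosen $D_1$-invariant rather than merely chosen to normalize $\sigma$. The point is that a naive application of Appell--Humbert would produce an $h$ whose quadratic exponent involves the $U$-direction, destroying $D_1$-invariance; one must use that the obstruction classes involved are pulled back from the quotient torus $A_U/\langle Uz\rangle$ (equivalently, that the relevant Hermitian form vanishes on the line spanned by $U$) to descend the construction. Everything else is bookkeeping with factors of automorphy.
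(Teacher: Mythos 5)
The gap is in your first step. The cocycle identity together with $D_1$-invariance does \emph{not} force $\sigma(\gamma,Z)$ to be the exponential of a function affine-linear in $Z$: for any $D_1$-invariant entire function $q$ (say $q(Z)=(\ell,Z)^3$ with $(\ell,U)=0$), the coboundary $\sigma(\gamma,Z)=e^{q(Z+\gamma)-q(Z)}$ is $D_1$-invariant, satisfies the cocycle identity, and admits the global section $F=e^{q}$, yet its exponent is quadratic (or of arbitrarily high degree) in $Z$. Reducing a given cocycle to your normal form is only possible after correcting it by a coboundary $h(Z+\gamma)/h(Z)$ --- i.e.\ it already amounts to producing the very function $h$ whose existence is the content of the lemma --- so step (1) assumes what is to be proved; and step (2), which only allows correction factors of Appell--Humbert type $h=\exp(\mathrm{quadratic}+\mathrm{linear})$, cannot produce the required $h$ even in the example above, where $h=e^{-q}$ is forced. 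The conclusion of the lemma is about the cohomology class of $\sigma$, not about its pointwise shape, and that is exactly what your outline bypasses.

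The paper (following Shiota's Lemma 12(b)) performs this normalization cohomologically: the crossed homomorphism $\gamma\mapsto\sigma(\gamma,\cdot)$ defines a class in $H^1_{gr}(\Lambda_U,H^0(\CC^d,\cO))$, which via the Hochschild--Serre spectral sequence is identified with $H^1(A_U,\cO_{A_U})$; the Hodge-theoretic surjection $H^1(A_U,\CC)\twoheadrightarrow H^1(A_U,\cO_{A_U})$ then shows the class is represented by a cocycle $\tilde\sigma\in\Hom(\Lambda_U,\CC)$ with \emph{constant} values, i.e.\ $\sigma(\gamma)=\tilde\sigma(\gamma)\,h(\cdot+\gamma)\,h^{-1}$ for some entire $h$. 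The $D_1$-invariance of $h$, up to the stated ambiguity $e^{(b,Z)}$ with $(b,U)=0$, is deduced only afterwards, from the fact that applying $D_1$ to this relation makes $D_1\log h$ $\Lambda_U$-periodic, hence constant; so the transversality you invoke ("the obstruction lives orthogonal to $U$") comes out as a consequence of the cohomological normalization, not as an input to it. Your step (3) on the ambiguity is essentially correct and matches the paper's remark, but to repair the argument you would have to replace steps (1)--(2) by the statement that every such cocycle is cohomologous, within $D_1$-invariant holomorphic coefficients, to a constant one --- which is precisely the cohomological computation the paper carries out.
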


\begin{proof}
We denote by $\cO = \cO_{\CC^d}$ the structure sheaf of $\CC^d$, and by $\cO' = \cO'_{\CC^d} \coloneqq\text{Ker}(D_1: \cO \to \cO)$, the $D_1$-invariant sheaf on $\CC^d$. For $F(Z)$ as a function of $Z$ on $\CC^d$, assume that for any $\gamma \in \Lambda_U$,
\[F(Z + \gamma) = F(Z)\sigma(\gamma, Z),\]
where the multiplicative factors $\sigma(\gamma, Z)$ are $D_1$-invariant holomorphic functions on $Z$. Then we have $\sigma(\gamma, Z) \in H^0(\CC^d, \cO') \subset H^0(\CC^d, \cO)$. Hence the crossed homomorphism $\sigma: \gamma \mapsto \sigma(\gamma)$ defines a group cohomology class $\bar{\sigma} \in H_{gr}^1(\Lambda_U, H^0(\CC^d, \cO))$. We have an exact sequence
\[0 \to H^1(\Lambda_U, H^0(\CC^d, \cO)) \to H_{\Lambda_U}^1(\CC^d, \cO) \to H^1(\CC^d, \cO)^{\Lambda_U} = 0\]
by the Hochschild-Serre spectral sequence for equivariant cohomology \cite{Grothendieck1957}. Here $H_{\Lambda_U}^1(\CC^d, \cO)$ is the equivariant sheaf cohomology group, which in our case coincides with the ordinary cohomology $H^1(A, \cO_A)$ of the quotient space $A = \CC^d \slash \Lambda_U$ since the action of $\Lambda_U$ on $\CC^d$ is fixed point free. By the Hodge spectral sequence we have an exact sequence
\[\begin{tikzcd}
0 \ar[r] & H^0(A, \Omega_A^1) \ar[r] & H^1(A, \CC) \ar[r] \ar[d, equal] & H^1(A, \cO_A) \ar[r] \ar[d, equal] & 0\\
& & H_{gr}^1(\Lambda_U, \CC) \ar[r, two heads] & H_{gr}^1(\Lambda_U, H^0(\CC^d, \cO)).
\end{tikzcd}\]
Hence there exists $\tilde{\sigma} \in H_{gr}^1(\Lambda_U, \CC) = \text{Hom}(\Lambda_U, \CC)$ which is mapped to $\bar{\sigma} \in H_{gr}^1(\Lambda_U, H^0(\CC^d, \cO))$. Then there exists $h \in H^0(\CC^d, \cO)$ such that
\[\sigma(\gamma) = \tilde{\sigma}(\gamma) h(\cdot + \gamma) h^{-1}.\]
Since $\sigma$ as a function of $Z$ is $D_1$-invariant, we have $D_1h \in H^0(\CC^d, \cO)^{\Lambda_U} = \CC$. The ambiguity in the choice of $h$ and $\tilde{\sigma}$ corresponds to the multiplication by the exponent of a linear form in $Z$ vanishing on $U$, i.e.~
\[h'(Z) = h(Z) e^{(b, Z)}, \qquad \tilde{\sigma}'(\gamma) = \tilde{\sigma}(\gamma)e^{(b, \gamma)}, \qquad (b, U) = 0\,,\]
where $b$ is orthogonal to $U$. The lemma is proved.
\end{proof}
Now we proceed to prove \Cref{cor:quasiperiodicphi} of \Cref{lem:phizykseries}.

\begin{proof}[Proof of \Cref{cor:quasiperiodicphi}]
By Lemma \eqref{lem:phizykseries} for the given~$\lambda_1 \in \Lambda_U$ there exist wave solutions corresponding to $\phi$ which are $\lambda_1$-periodic. Moreover, from statement (2) of \Cref{lem:phizykseries} it follows that for any $\lambda \in \Lambda_U$,
\[\phi(Z + \lambda, y, k) = \phi(Z, y, k)\rho_{\lambda}(Z, k)\,,\]
where the multiplicative factors~$\rho_{\lambda}$ are $D_1$-invariant holomorphic functions. Then by \Cref{lem:Shiota}
there exists a $D_1$-invariant series $f(Z, k)$ with holomorphic in $Z$ coefficients and formal series $\mu_{\lambda}(k)$ with constant coefficients such that the equation
\[f(Z + \lambda, k)\rho_{\lambda}(Z, k) = f(Z, k)\mu_{\lambda}(k)\]
holds. The ambiguity in the choice of $f$ and $\mu$ corresponds to the multiplication by the exponent of a linear form in $Z$ vanishing on $U$, i.e.~
\[f'(Z, k) = f(Z, k) e^{(b(k), Z)}, \qquad \mu'_{\lambda}(k) = \mu_{\lambda}(k)e^{(b(k), \lambda)}, \qquad (b(k), U) = 0\,,\]
where $b(k) = \sum_s b_s k^{-s}$ is a formal series with vector-coefficients that are orthogonal to $U$. The vector $U$ is in general position with respect to the lattice. Therefore, the ambiguity can be uniquely fixed by imposing $(d - 1)$ normalizing conditions $\mu_{\lambda_i}(k) = 1, i > 1$. (Recall that $\mu_{\lambda_1}(k) = 1$ by construction.)

The formal series $f\phi$ is quasi-periodic and its multipliers satisfy \eqref{eq:mulambdainormalization}. Then, by these properties it is defined uniquely up to a factor which is constant in $Z$ and $y$. Therefore, for the unique definition of $\phi_0$, it is enough to fix its evaluation at $Z_0$ and $y = 0$. The corollary is proved.
\end{proof}

So what have we done so far? The first step was to find a formal solution that has at most simple poles, and the second step was to  find a formal $\lambda$-periodic solution, also with at most simple poles. It may seem that we have not done much yet, but at the next step we will already be able to construct an operator for which~$\psi$ is an eigenfunction. 

\subsection{Spectral curves}
We next claim that there exists a unique {\em pseudo}-differential operator~$\cL$ for which the $\lambda$-periodic solution of equation \eqref{eq:L}, with $u$ given by \eqref{eq:potentialuintheta}, is an eigenfunction. Of course what we really want is a differential operator, but this will come later.
\begin{lemma}\label{lem:pseudoL}
Under the assumptions of \Cref{thm:Kricheverflex}, there exists a unique pseudo-differential operator 
\[\cL(Z) = \partial_x + \sum \limits_{s = 1}^{\infty} u_s(Z)\partial_x^{-s}, \qquad Z \in A\,,\]
such that
\begin{equation}\label{eq:pseudoLeigenvalue}
\cL(Ux + Vy + Z)\psi = k\psi,
\end{equation}
where $\psi$ is the $\lambda$-periodic wave solution of \eqref{eq:L} corresponding to the unique $\phi$ constructed in \Cref{cor:quasiperiodicphi}.
\end{lemma}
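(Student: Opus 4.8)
The plan is to produce $\cL$ by the usual ``dressing'' recursion in powers of $k^{-1}$, and then to argue that every choice left open in \Cref{cor:quasiperiodicphi} drops out. Write the $\lambda$-periodic wave solution of \Cref{cor:quasiperiodicphi} as $\psi=e^{kx+(k^2+b)y}\widehat\phi$ with $\widehat\phi=1+\sum_{s\ge1}\xi_s(Ux+Z,y)k^{-s}$. First I would settle the purely formal fact: for any series of this shape there is a unique formal pseudo-differential operator $\cL=\partial_x+\sum_{s\ge1}u_s\,\partial_x^{-s}$, with coefficients $u_s$ meromorphic in $(x,y)$, such that $\cL\psi=k\psi$. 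Indeed, using $\partial_x^{-s}\bigl(e^{kx+(k^2+b)y}g\bigr)=e^{kx+(k^2+b)y}(\partial_x+k)^{-s}g$ and $(\partial_x+k)^{-s}=k^{-s}(1+k^{-1}\partial_x)^{-s}$, the identity $\cL\psi=k\psi$ is equivalent to $\partial_x\widehat\phi+\sum_{s\ge1}u_s\,k^{-s}(1+k^{-1}\partial_x)^{-s}\widehat\phi=0$, whose coefficient of $k^{-s}$ has the form $u_s+(\text{a differential polynomial in }\xi_1,\dots,\xi_s)=0$; this determines $u_s$ uniquely and recursively. In particular $u_1=-\partial_x\xi_1=\tfrac12(u-b)$ by the $s=0$ case of \eqref{eq:xisrecursion}, where $u=2\partial_x^2\ln\theta(Ux+Vy+Z)$ is the potential of \eqref{eq:L}, and every $u_s$ is a differential polynomial in $\xi_1,\dots,\xi_s$; since \Cref{lem:xisimplepole} guarantees that the $\xi_s$ are meromorphic with at most simple poles along the theta divisor, each $u_s$ is meromorphic there as well.

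The substance of the lemma is that $\cL$ does not see the remaining freedom in $\phi$, and in fact has coefficients pulled back from $A$. If $\phi$ is replaced by $\phi\,\rho(Z,k)$ with $D_1\rho=0$, then $\rho$, viewed as a function of $x$ through $Z=Ux+(\cdot)$, is independent of $x$; hence $\cL(\psi\rho)=(\cL\psi)\rho=k(\psi\rho)$, and by the uniqueness just proved the operator attached to $\phi\rho$ equals the one attached to $\phi$. Thus $\cL$ is independent of the normalizations \eqref{eq:mulambdainormalization}. The same computation, applied to the quasi-periodicity relation $\phi(Z+\lambda,y,k)=\phi(Z,y,k)\mu_\lambda(k)$ (whose multiplier is constant in $x$), shows that replacing the initial point $Z$ by $Z+\lambda$ leaves $\cL$ unchanged; hence the coefficients are $\Lambda_U$-periodic in $Z$, descend to meromorphic functions on $A_U$, and — the translate $A_U$ being generic and the coefficients being meromorphic — to meromorphic functions on $A$. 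Finally, the construction is covariant under translating the $y$-origin: $\psi(x,y+y_0,k;Z)$, up to the $x$-independent factor $e^{k^2y_0}$, is a $\lambda$-periodic wave solution for the potential $2\partial_x^2\ln\theta\bigl(Ux+Vy+(Z+Vy_0)\bigr)$, i.e.\ for initial point $Z+Vy_0$, so comparing the associated operators gives $u_s(x,y;Z)=u_s(x,0;Z+Vy)$; since $u_s(x,0;W)$ is a function of $Ux+W$, we conclude $u_s(x,y;Z)=u_s(Ux+Vy+Z)$ for well-defined meromorphic functions $u_s$ on $A$. This is the asserted $\cL(Z)$, and its uniqueness follows from the formal uniqueness above together with the fact that, by \Cref{cor:quasiperiodicphi}, $\psi$ is determined by \eqref{eq:L} up to an irrelevant $D_1$-invariant multiplier.

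The formal recursion is routine, so the main obstacle is the bookkeeping of the second paragraph: one must be certain that the two sources of ambiguity in $\phi$ — the $D_1$-invariant (equivalently $x$-independent) multiplier $\rho$ and the lattice multipliers $\mu_\lambda$ — both commute through $\cL$, which differentiates only in the $x$ (i.e.\ $U$-)direction, so that they affect neither $\cL$ nor its coefficients; this is the structural reason why $\cL$, unlike $\psi$, genuinely lives on $A$ (resp.\ on the chosen translate of $A_U$). A secondary point needing care is meromorphy: a differential polynomial in the simple-pole functions $\xi_s$ could a priori develop poles along $\Theta_1$, and one checks, exactly in the spirit of \Cref{lem:xisimplepole}, that the cancellations keep the poles confined to $\Theta$ (with orders growing in $s$). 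With $\cL$ in hand, the genuinely hard steps of \Cref{thm:Kricheverflex} — promoting suitable powers of $\cL$ to honest differential operators and extracting a smooth spectral curve — still lie ahead.
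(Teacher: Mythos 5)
Your formal recursion for the coefficients $u_s$ (via $\partial_x^{-s}(e^{kx}g)=e^{kx}(\partial_x+k)^{-s}g$), your observation that the $D_1$-invariant multiplier $\rho(Z,k)$ commutes through $\cL$ and hence does not affect it, and your $y$-translation argument showing $u_s(x,y;Z)=u_s(Ux+Vy+Z)$ all match the paper's proof. The gap is in the globalization step, concentrated in the sentence ``descend to meromorphic functions on $A_U$, and --- the translate $A_U$ being generic and the coefficients being meromorphic --- to meromorphic functions on $A$.'' A meromorphic function on a (generic translate of a) proper subtorus $A_U\subsetneq A$ does not extend to a meromorphic function on $A$, and ``generic'' is not an argument here; when $A_U\subsetneq A$ --- precisely the case the paper warns requires extra care --- your construction so far only produces $u_s$ along the single leaf $\CC^d+Vy+Z_0$. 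What is actually needed, and what the paper does, is: (i) run the construction simultaneously for \emph{every} initial point $Z\in\CC^g\setminus\Sigma$ (i.e.\ on every translate of $\CC^d$), and observe that the locally defined $u_s$ glue into a single meromorphic function on $\CC^g\setminus\Sigma$ because the ambiguity in $\phi$ is always a $D_1$-invariant factor that drops out of $\cL$; (ii) extend across $\Sigma$ by Hartogs' theorem, using that $\Sigma$ has codimension at least two --- this is exactly where the indecomposability hypothesis enters, and you never invoke it; (iii) establish periodicity with respect to the \emph{full} lattice $\Lambda$, not merely $\Lambda_U$, which in the paper comes from the relation $D_1\bigl(\phi(Z+\lambda',y,k)\phi^{-1}(Z,y,k)\bigr)=0$ for all $\lambda'\in\Lambda$; your quasi-periodicity multipliers $\mu_\lambda$ only give you $\Lambda_U$.

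Without (i)--(iii) the operator you have built is an $\cL$ defined along one orbit of the $U$-flow, not the family $\cL(Z)$, $Z\in A$, that the lemma asserts and that the subsequent construction of the abelian functions $F_m\in H^0(A,2\Theta)$ requires. Your closing remark about controlling poles along $\Theta_1$ is a reasonable instinct but is not where the difficulty lies at this stage (the paper only needs meromorphy of the $u_s$ here; the sharp pole bound is proved later, for $F_m$, by the dual-function argument).
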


As usual, we will construct $\cL$ simply by computing its coefficients $u_s$ term by term. A priori, $u_s$ are functions of both $x$ and $y$, as $\psi$ depends on $x$ and $y$, but here we would like to have~$u_s$ depend only on~$Ux + Vy + Z$, not on~$x$ and~$y$ individually. Note that a priori the~$u_s$ are meromorphic functions of~$Z \in \CC^g \setminus \Sigma$, but as discussed before, the singular locus~$\Sigma$ is of codimension at least two (remember that this comes from the assumption that the abelian variety is indecomposable), so that Hartogs' theorem applies and we obtain a global meromorphic function on $\CC^g$, which furthermore will automatically be periodic, and will thus descend to~$A$. The proof below makes this idea rigorous.

\begin{proof}
First we define~$\cL$ as the pseudo-differential operator with arbitrary coefficients $u_s(Z, y)$, which are functions of $Z$ and $y$.

Let $\psi$ be a $\lambda$-periodic wave solution. Substituting \eqref{eq:formalseriesphizyk} into \eqref{eq:pseudoLeigenvalue} gives a system of equations that recursively define $u_s(Z, y)$ as differential polynomials in $\xi_s(Z, y)$. Note that the coefficients of $\psi$ are local meromorphic functions of $Z$, but the coefficients of $\cL$ are well-defined global meromorphic functions on $\CC^g \setminus \Sigma$, because different $\lambda$-periodic wave solutions differ by multiplication  by a $D_1$-invariant factor, which does not affect the construction of~$\cL$. Since $\Sigma$ has codimension $\ge 2$, by Hartogs' theorem $u_s(Z, y)$ can be extended to a global meromorphic function on $\CC^g$.

The translation invariance of $u$ implies the translation invariance of the $\lambda$-periodic wave solutions. Indeed, for any constant $s$ the two series $\phi(Vs + Z, y - s, k)$ and $\phi(Z, y, k)$ yield $\lambda$-periodic solutions of the same equation. Therefore, they are equal, up to multiplication by some $D_1$-invariant factor. This factor does not affect $\cL$. Hence $u_s(Z, y) = u_s(Vy + Z)$.

The $\lambda$-periodic wave functions corresponding to $Z$ and $Z + \lambda'$ for any $\lambda' \in \Lambda$ also differ by multiplication by a $D_1$-invariant factor:
\begin{equation}\label{eq:lambdaperiodicwave}
D_1\left(\phi(Z + \lambda', y, k)\phi^{-1}(Z, y, k)\right) = 0.
\end{equation}
Hence $u_s$ are periodic with respect to $\Lambda$ and therefore are meromorphic functions on the abelian variety $A$. The lemma is proved.
\end{proof}

\begin{remark}
\begin{enumerate}
\item The existence and uniqueness of $\cL$ is by computation for any $Z \in \CC^g \setminus \Sigma$. Since in solving for~$u_s$ term by term, at some point we have to divide by the theta function, the product $u_s \cdot \theta$ will be holomorphic on $\CC^g \setminus \Sigma$, but then this product extends to $\CC^g$ by Hartogs' theorem. This is one place where somehow the bad locus is avoided. 
\item The second, more important, remark is that the $\cL$ we obtained is globally well-defined. Indeed, $\cL$ does not depend on the choice of the $\lambda$-periodic wave function $\psi$ because different $\psi$ differ by multiplication by a factor independent of~$x$, which does not influence the construction of~$\cL$.

\item One might start feeling a bit more optimistic because we have now constructed a pseudo-differential operator $\cL$ such that $\psi$ is its eigenfunction. However, for our framework to proceed, we need an actual {\em differential} operator, in particular so that eventually we could deduce some commutation relation from the finite-dimensionality trick, arguing that the kernel of a given differential operator is finite-dimensional. So how do we get a differential operator? Of course one can just chop off the `pseudo' part of $\cL$, but this would simply give $\partial_x$, which is certainly not what we want. However, the differential operator part $(\cL^m)_+$ of $\cL^m$, for~$m\in\NN$ (that is, we take all the terms of $\cL^m$ that have a non-negative power of $\partial_x$), is much more interesting, and will be what we use. The differential operators $(\cL^m)_+$ are of course related to the powers $\cL^m$, but in principle they are independent for different values of~$m$: when we compute $(\cL^m)_+$, all the terms up to $u_m$ will enter into the expression, so more and more of the expansion of $\cL$ will show up.
\end{enumerate}
\end{remark}

Now let's compute. We know that
\[[\partial_y - \partial_x^2 - u, \cL]\psi = 0\,,\]
which implies
\[[\partial_y - \partial_x^2 - u, \cL^m]\psi = 0\,.\]
We can then compute the commutator 
\begin{equation}\label{eq:partialyLmcommutator}
[\partial_y - \partial_x^2 - u, (\cL^m)_+]\psi = 2[\partial_x(\Res_{\partial_x}\cL^m)]\psi\,,
\end{equation}
where $\Res_{\partial_x}$ is simply defined to be the coefficient of $\partial_x^{-1}$. Thus to get a commutation relation we need to adjust our constructions to cancel these `residue' terms on the right-hand-side. We denote~$F_m(Z) \coloneqq \Res_{\partial_x}\cL^m$;  these are meromorphic functions of $Z\in A$, and crucially we prove

\begin{lemma}\label{lem:Fmsecondorderpole}
All functions $F_m$ have at most a second order pole along the theta divisor~$\Theta\subset A$. 
\end{lemma}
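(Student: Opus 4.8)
\emph{The plan} is to bound the poles of $F_m$ by writing them in terms of the \emph{dual} (adjoint) wave function. Alongside the $\lambda$-periodic wave solution $\psi$ of \eqref{eq:L}, consider the $\lambda$-periodic wave solution $\psi^+=\phi_0^+(Ux+Z,y,k)\,e^{-kx-k^2y}$, with $\phi_0^+=1+\sum_{s\ge1}\xi_s^+ k^{-s}$, of the formally adjoint equation $\partial_x^2\psi^++u\psi^+=-\partial_y\psi^+$. Since this is \eqref{eq:L} with $y\mapsto-y$, and \eqref{eq:D} is unchanged by that substitution, \Cref{lem:xisimplepole}, \Cref{lem:phizykseries} and \Cref{cor:quasiperiodicphi} apply to it verbatim: $\psi^+$ exists, the $\xi_s^+$ are meromorphic on $\CC^d$ with at most \emph{simple} poles along $\Theta^U$, and $\psi^+$ is unique up to a $D_1$-invariant factor, which — having poles only along $\Theta^U$, hence along no $D_1$-invariant line (every such line being dense in $A_U$) — is necessarily holomorphic. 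Writing $\cL=W\circ\partial_x\circ W^{-1}$ for a dressing operator of the operator $\cL$ of \Cref{lem:pseudoL}, so that $\phi_0:=1+\sum_{s\ge1}\xi_s k^{-s}$ satisfies $W(e^{kx})=\phi_0 e^{kx}$, we may normalize $\psi^+$ so that $(W^*)^{-1}(e^{-kx})=\phi_0^+ e^{-kx}$, whence $\cL^*\psi^+=k\psi^+$. The standard residue identity for dual wave functions (see e.g.\ \cite{Segal-Wilson1985}) then yields that $F_m=\Res_{\partial_x}\cL^m$ is, up to sign, the coefficient of $k^{-m-1}$ in the product $\phi_0\,\phi_0^+$; in particular $F_m$ is a finite sum of the $\xi_s$, the $\xi_s^+$, and the products $\xi_s^+\xi_t$.

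\emph{From a slice to $A$.} Since $\xi_s$ and $\xi_s^+$ have at most simple poles along $\Theta^U$, this expression shows that $F_m$, viewed as a meromorphic function on $\CC^d=\pi^{-1}(A_U)$, has at most a double pole along $\Theta^U=\Theta\cap A_U$. Now $F_m$ is a global meromorphic function on $A$, and $\Theta$ is irreducible because $(A,\Theta)$ is indecomposable, so $\ord_\Theta F_m\in\ZZ$ is well defined and equals the order of $F_m$ at any point of $\Theta^{\mathrm{sm}}$. As $A_U$ is a generic translate of $\langle Uz\rangle$ and $\Sing\Theta$ has codimension $\ge2$ in $A$, one has $\dim(A_U\cap\Sing\Theta)\le\dim A_U-2<\dim(A_U\cap\Theta)$, and $A_U\cap\Theta$ is nonempty — a nonzero effective divisor on the positive-dimensional $A_U$ cut out by the ample class $\Theta$ — so a general point $q\in A_U\cap\Theta$ lies in $\Theta^{\mathrm{sm}}$. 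Writing $\Theta=\{z_1=0\}$ near $q$ and letting $e\ge1$ be the order of vanishing of $z_1|_{A_U}$ along $A_U\cap\Theta$ at $q$, we get $\ord_{A_U\cap\Theta,\,q}(F_m|_{A_U})=e\cdot\ord_\Theta F_m$; since $\theta|_{\CC^d}$ also vanishes to order $e$ at $q$, the double-pole bound on the slice reads $e\cdot\ord_\Theta F_m\ge-2e$, hence $\ord_\Theta F_m\ge-2$, which is the assertion. (One could instead invoke Kleiman transversality to arrange $e=1$, but this is not needed.)

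\emph{The main difficulty} is the first step: making the identity $F_m=\pm(\text{coefficient of }k^{-m-1}\text{ in }\phi_0\phi_0^+)$ rigorous in this formal, $\lambda$-periodic, pseudo-differential setting, and in particular checking that $\psi^+$ can be normalized to be the dual wave function $(W^*)^{-1}(e^{-kx})$ of the \emph{same} dressing operator $W$. The residual $D_1$-invariant factors in $\psi$ and in $\psi^+$ must cancel exactly — this is where the uniqueness statements of \Cref{lem:phizykseries}(2) and \Cref{cor:quasiperiodicphi}, together with the holomorphicity of $D_1$-invariant functions with poles supported on $\Theta^U$, are essential — so that no lower-order terms contaminate the coefficient that computes $F_m$. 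Granting this, the residue identity is a formal computation with pseudo-differential operators, and the transfer from the slice $\CC^d$ to $A$ is the short dimension count above.
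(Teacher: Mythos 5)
Your proposal is essentially the paper's argument: the key mechanism in both is the dual (adjoint) wave function $\psi^+$, the observation that its coefficients have only \emph{simple} poles, and the residue identity expressing $F_m=\Res_{\partial_x}\cL^m$ as the coefficient of $k^{-m-1}$ in the product $\psi^+\psi$, so that $F_m$ inherits at most a double pole along $\Theta$. The one point you flag as ``the main difficulty'' --- matching your independently constructed $\lambda$-periodic solution of the adjoint equation with the dual wave function $(W^*)^{-1}e^{-kx}$ of the \emph{same} dressing operator $W$ --- is a genuine loose end in your write-up, but it dissolves if you reverse the order of construction, which is exactly what the paper does: \emph{define} $\psi^+\coloneqq (e^{-kx-k^2y})W^{-1}$ directly (left action of $W^{-1}$), so that the product identity $\psi^+\psi=(e^{-kx-k^2y}W^{-1})(We^{kx+k^2y})$ and the residue formula $\Res_k(\psi^+\cL^n\psi)=\Res_{\partial_x}(\cL^nWW^{-1})=F_n$ are automatic and no normalization matching is needed; then verify that this specific $\psi^+$ satisfies the adjoint equation $(\partial_y+\partial_x^2+u)\psi^+=0$, so that the recursion argument of \Cref{lem:xisimplepole} (whose cancellations are insensitive to the sign of $y$) applies to its coefficients and yields the simple-pole bound. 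With that reordering, the $D_1$-invariant ambiguities you worry about never enter: the product $\psi^+\psi$ is manifestly independent of the choice of $\lambda$-periodic normalization of $\psi$, since any such change replaces $W$ by $W\rho$ with $\rho$ a $0$'th order operator with $x$-independent coefficients, which cancels in $W^{-1}\cdot W$. Your concluding ``slice to $A$'' dimension count is a legitimate (and slightly more careful) way to pass from the pole bound along $\Theta^U$ on the affine slice to the pole bound along $\Theta$ on all of $A$; the paper handles this more tersely via the translation invariance $J_s(Z,y)=J_s(Z+Vy)$ and the globality of the $J_s$ established in \Cref{lem:pseudoL}.
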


This is a result of a direct computation that also involves some miraculous cancellation.
\begin{proof}
Note that for $\psi$ in \Cref{lem:pseudoL}, there exists a unique pseudo-differential operator $W$ such that
\begin{equation} \label{eq:psiPhi}
\psi = W e^{kx + k^2 y}, \qquad W = 1 + \sum \limits_{s = 1}^{\infty} w_s(Ux + Z_0, y)\partial_x^{-s}\,,
\end{equation}
and by a direct computation one checks that~$\cL$ can then be expressed explicitly as $\cL = W \partial_x W^{-1}$. The coefficients of $W$ are universal differential polynomials of $\xi_s$. Therefore $w_s(Z + Z_0, y)$ is a global meromorphic function of $Z \in \CC^d$ and a local meromorphic function of $Z_0 \notin \Sigma$.

Recall that the left action of a pseudo-differential operator is the formal adjoint action, so that  the left action of $\partial_x$ on a function $f$ is $(f\partial_x) = -\partial_x f$. Consider the dual function defined by the left action of the operator $W^{-1}$, that is define $\psi^+ \coloneqq (e^{-kx - k^2 y})W^{-1}$. If $\psi$ is a formal solution of \eqref{eq:L}, then $\psi^+$ is a solution of the adjoint equation
\[(\partial_y + \partial_x^2 + u)\psi^+ = 0\,.\]
The same arguments as before prove that if equations \eqref{eq:D} for poles of $u$ hold, then $\xi_s^+$ have at worst simple poles at the poles of $u$. Therefore, if $\psi$ is as in \Cref{lem:phizykseries}, then the dual solution is of the form $\psi^+ = e^{-kx - k^2y}\phi^+(Ux + Z, y, k)$, where the coefficients $\xi_s^+(Z + Z_0, y)$ of the formal series
\[\phi^+(Z + Z_0, y, k) = e^{-by} \left(1 + \sum \limits_{s = 1}^{\infty} \xi_s^+(Z + Z_0, y)k^{-s}\right)\]
are $\lambda$-periodic meromorphic functions of the variable $Z \in \CC^d$ with a simple pole along the divisor $\Theta^U(y)$.

The ambiguity in the definition of $\psi$ does not affect the product
\begin{equation}\label{eq:adjpsipsi}
\psi^+\psi = (e^{-kx - k^2y}W^{-1})(W e^{kx + k^2 y})\,.
\end{equation}
Therefore, although each factor is only a local meromorphic function on $\CC^g \setminus \Sigma$, the coefficients $J_s$ of the product
\[\psi^+ \psi = \phi^+(Z, y, k)\phi(Z, y, k) =: 1 + \sum \limits_{s = 2}^{\infty} J_s(Z, y)k^{-s}\,.\]
are global meromorphic functions of $Z$. Moreover, the translation invariance of $u$ implies that the functions~$J_s$ have the form $J_s(Z, y) = J_s(Z + Vy)$. Each of the factors in the left-hand side of \eqref{eq:adjpsipsi} has a simple pole along $t_{Vy}\Theta$. Hence $J_s(Z)$ is a meromorphic function on $A$ with a second-order pole along $\Theta$.

From the definition of $\cL$, it follows that
\[\Res_k(\psi^+(\cL^n\psi)) = \Res_k(\psi^+ k^n \psi) = J_{n + 1}\,.\]
On the other hand, using the identity
\[\Res_k(e^{-kx}\cD_1)(\cD_2 e^{kx}) = \Res_{\partial_x}(\cD_2 \cD_1)\,,\]
which holds for any two pseudo-differential operators, we get
\begin{equation} \label{eq:respartialLn}
\Res_k(\psi^+ \cL^n \psi) = \Res_k(e^{-kx}W^{-1})(\cL^n W e^{kx}) = \Res_{\partial_x} \cL^n = F_n.
\end{equation}
Therefore, $F_n = J_{n + 1}$ and the lemma is proved.
\end{proof} 

What's the use of \Cref{lem:Fmsecondorderpole}? Well, it means that we can think of $F_m$ as a section of the line bundle~$2\Theta$, that is $F_m \in H^0(A, 2\Theta)$. This is valid for any $m$, but the dimension of the space of sections is $\dim H^0(A,2\Theta)=2^g$, as we already know. Let then $\hat{\mathbf{F}}\subseteq H^0(A,2\Theta)\simeq \CC^{2^g}$ be the linear subspace generated by $\{F_m, m = 0, 1, \dots\}$, where we set $F_0 \coloneqq 1$ to be the constant function. Then for all but $\hat{g} \coloneqq \dim\hat{\mathbf{F}}$ positive integers $m$, there exist constants $c_{m, i}$ such that
\begin{equation}\label{eq:cmi}
F_m(Z) + \sum \limits_{i = 0}^{m - 1} c_{m, i}F_{i}(Z) = 0\,, 
\end{equation}
and then we can take the same linear combination of linear differential operators $(\cL^i)_+$ and obtain
\[[\partial_y - \partial_x^2 - u, (\cL^m)_+ + c_{m, m-1}(\cL^{m-1})_+ + \dots + c_{m, 0}(\cL^{0})_+] = 0\,.\]
Let $I$ denote the set of those $\hat{g}$ integers $m$ for which the $F_m(Z)$ is linearly independent from the previous $F_i$, i.e.~such that \eqref{eq:cmi} does not hold for any $c_{m,i}$. Then we have
\begin{lemma}\label{lem:eigenLm}
Let $\cL$ be the pseudo-differential operator corresponding to a $\lambda$-periodic function $\psi$ constructed above. Then for any $m\notin I$ the differential operator
\[L_m \coloneqq (\cL^m)_+ + \sum \limits_{i = 0}^{m - 1} c_{m, i}(\cL^{i})_+\]
has $\psi$ as an eigenfunction:
\begin{equation}\label{eq:Lneigenproblem}
L_m \psi = a_m(k)\psi, \qquad \hbox{ where }a_m(k) = k^m + \sum \limits_{s = 1}^{\infty} a_{m,s}k^{m - s}
\end{equation}
for some constants $a_{m, s}$.
\end{lemma}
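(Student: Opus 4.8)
The plan is to upgrade the pseudo-differential eigenvalue relation $\cL\psi = k\psi$ of \Cref{lem:pseudoL} into a genuine differential eigenvalue relation, using once more the uniqueness of $\lambda$-periodic wave solutions. Raising $\cL$ to powers gives $\cL^j\psi = k^j\psi$ for every $j$, and splitting $\cL^j = (\cL^j)_+ + (\cL^j)_-$ into its differential and its order-$\le -1$ parts yields
\[
  L_m\psi \;=\; \Big(k^m + \sum_{i=0}^{m-1} c_{m,i}k^i\Big)\psi \;-\; \cR\psi,
  \qquad
  \cR \;\coloneqq\; (\cL^m)_- + \sum_{i=0}^{m-1} c_{m,i}(\cL^i)_- ,
\]
where $\cR$ has order $\le -1$, so that $\cR\psi = e^{kx+k^2y}\chi$ with $\chi = \chi(Ux+Z,y,k)$ a formal series that is $O(k^{-1})$ and has coefficients meromorphic in $Z$.

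First I would check that $L_m\psi$, hence also $\cR\psi$, is again a solution of \eqref{eq:L}. By \eqref{eq:partialyLmcommutator} one has $[\partial_y - \partial_x^2 - u,\,(\cL^j)_+]\psi = 2\,[\partial_x F_j]\,\psi$ with $F_j = \Res_{\partial_x}\cL^j$; taking the combination defining $L_m$ replaces the right-hand side by $2\,[\partial_x(F_m + \sum_{i<m}c_{m,i}F_i)]\psi$, which vanishes because, for $m\notin I$, the relation \eqref{eq:cmi} makes the function $F_m + \sum_{i<m}c_{m,i}F_i$ identically zero on $A$, and hence so is its $\partial_x = D_U$-derivative. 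Thus $[\partial_y - \partial_x^2 - u, L_m]\psi = 0$, and since $\psi$ solves \eqref{eq:L} so does $L_m\psi$; subtracting the scalar-in-$k$ multiple $(k^m + \sum c_{m,i}k^i)\psi$ then shows $\cR\psi$ solves \eqref{eq:L} as well.

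Next I would bring $\chi$ under the uniqueness statement, i.e.\ establish that its coefficients have at most simple poles along $\Theta^U(y)$. Writing $L_m\psi = e^{kx+k^2y}\Psi$ with $\Psi = \sum_{j\le m}\Psi_j k^j$ and $\Psi_m\equiv 1$, substitution into \eqref{eq:L} gives for the coefficients $\Psi_j$ exactly the recursion \eqref{eq:xisrecurrencesystem}; starting from $\Psi_m\equiv 1$ and using that \eqref{eq:D} holds (this is the hypothesis of \Cref{thm:Kricheverflex}), the cancellations proved in \Cref{lem:xisimplepole}---the third-order poles of $-u\,\Psi_{j+1}$ and of $-D_U^2\Psi_{j+1}$ cancel, and the residue condition propagates---show inductively that every $\Psi_j$ has at most a simple pole. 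Hence $L_m\psi$, and therefore $\cR\psi$ and $\chi$, has only simple poles along $\Theta^U(y)$. Moreover, since $\cR$ has $\Lambda$-periodic coefficients, $\chi$ inherits from $\psi$ the quasi-periodicity $\chi(Z+\lambda,y,k) = \mu_\lambda(k)\,\chi(Z,y,k)$ for every $\lambda\in\Lambda_U$. Thus $\chi$ is a $\lambda$-periodic wave solution of \eqref{eq:L} with the same potential $u$, so by \Cref{lem:phizykseries}(2) (applied with one of the $\lambda_i$) we get $\chi = \rho(Z,k)\phi$ with $D_1\rho = 0$ and $\rho$ holomorphic in $Z$; comparing leading $k$-orders forces $\rho = O(k^{-1})$. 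Finally, because $\chi$ and $\phi$ carry the same multipliers $\mu_\lambda(k)$ for every $\lambda\in\Lambda_U$, the ratio $\rho$ is genuinely $\Lambda_U$-periodic; being also $D_1$-invariant and holomorphic on $\CC^d$, it descends to a holomorphic function on the abelian variety $A_U$ invariant under the dense one-parameter subgroup generated by $U$, hence constant in $Z$. Therefore $\cR\psi = \rho(k)\psi$ and
\[
  L_m\psi \;=\; \Big(k^m + \sum_{i=0}^{m-1} c_{m,i}k^i - \rho(k)\Big)\psi \;=\; a_m(k)\,\psi,
\]
with $a_m(k) = k^m + \sum_{s\ge 1}a_{m,s}k^{m-s}$ (the first $m$ coefficients being $c_{m,m-1},\dots,c_{m,0}$ and the remaining ones the Taylor coefficients of $-\rho$), which is the assertion.

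The step I expect to be the main obstacle is the pole control of $L_m\psi$: a priori the coefficients of $(\cL^m)_+$ are differential polynomials in the $u_s$ and could generate poles of arbitrarily high order, and it is only the constraint that $L_m\psi$ solves \eqref{eq:L}---together with \eqref{eq:D}---that pins the pole order down to one, via the same miraculous cancellation as in \Cref{lem:xisimplepole}. Everything after that is the soft argument that a $D_1$-invariant, $\Lambda_U$-periodic holomorphic function on $A_U$ is constant, which, as throughout the proof, relies on indecomposability to guarantee that $A_U$ is not contained in the bad locus and that $\{Ux\}$ is dense in $A_U$.
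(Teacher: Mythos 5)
Your overall strategy coincides with the paper's: use \eqref{eq:partialyLmcommutator} together with the relation \eqref{eq:cmi} to show that $L_m\psi$ is again a formal solution of \eqref{eq:L}, invoke the uniqueness of $\lambda$-periodic wave solutions to write $L_m\psi = a_m(Z,k)\psi$ with a $D_1$-invariant eigenvalue, and then argue that the eigenvalue is in fact constant. Two points of comparison are worth recording. First, your explicit pole-control step (re-running the recursion \eqref{eq:xisrecurrencesystem} from the leading coefficient $\Psi_m\equiv 1$ of $L_m\psi$ to show all coefficients have at most simple poles) is a legitimate way to place $L_m\psi$ under the hypotheses of \Cref{lem:phizykseries}(2), but the paper bypasses it entirely: since the ratio $a_m(Z,k)$ of two solutions is $D_1$-invariant, its polar locus would be a $D_1$-invariant divisor contained in a translate of $\Theta$, and no such divisor exists because the $D_1$-orbit of any point is dense in $A_U$; hence $a_m$ is automatically holomorphic, with no need to bound pole orders beforehand. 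Your route is more computational but buys the explicit identification $a_{m,s}=c_{m,m-s}$ for $s\le m$ inside the proof (the paper relegates this to a remark). Second, your decomposition $L_m\psi = (k^m+\sum_i c_{m,i}k^i)\psi - \cR\psi$ with $\cR$ of order $\le -1$ is a clean way to organize the leading behaviour of $a_m(k)$.

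The one place where your argument is genuinely incomplete is the final constancy step. You prove that $\rho$ is $\Lambda_U$-periodic and holomorphic on $\CC^d$, hence descends to the compact torus $A_U$ and is constant \emph{along} $A_U$. But the lemma asserts that the $a_{m,s}$ are constants on all of $A$, and when $A_U\subsetneq A$ the translate of $A_U$ through $Z$ is an extra parameter that your argument does not control. The missing step is exactly the paper's: the ambiguity in the choice of $\psi$ does not affect $a_m$, so its coefficients are well-defined holomorphic functions of $Z\in\CC^g\setminus\Sigma$; since $\Sigma$ has codimension at least two they extend holomorphically to $\CC^g$ by Hartogs; they are periodic with respect to the full lattice $\Lambda$ by \eqref{eq:lambdaperiodicwave}; and then Liouville's theorem on the compact $A$ gives constancy in all directions. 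With that addition your proof is complete.
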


\begin{proof}
First, note that from \eqref{eq:partialyLmcommutator}, it follows that for any $n \notin I$
\[[\partial_y - \partial_x^2 - u, L_n] = 0\,.\]
Hence if $\psi$ is a $\lambda$-periodic solution of \eqref{eq:L} corresponding to $Z \notin \Sigma$, then $L_n\psi$ is also a formal solution of the same equation. This implies the equation $L_n\psi = a_n(Z, k)\psi$, where $a$ is $D_1$-invariant. The ambiguity in the definition of $\psi$ does not affect $a_n$. Therefore, the coefficients of $a_n$ are well-defined global meromorphic functions on $\CC^g \setminus \Sigma$. The $D_1$-invariance of $a_n$ implies that $a_n$ is a holomorphic function of $Z\in\CC^g \setminus \Sigma$. Hence it has an extension to a holomorphic function of $Z\in\CC^g$. Equation \eqref{eq:lambdaperiodicwave} implies that $a_n$ is periodic with respect to the lattice $\Lambda$. Hence by Liouville's theorem~$a_n$ is independent of~$Z$. The lemma is proved.
\end{proof}

\begin{remark}
    Note that $a_{s, n} = c_{s, n}$ for $s \le n$. From  equation \eqref{eq:pseudoLeigenvalue}, we see that $\psi$ is an eigenfunction of $\cL$, and thus $\psi$ is an eigenfunction of $\cL^m + \sum \limits_{i = 0}^{m - 1} c_{m, i}(\cL^{i})$ with eigenvalue $k^m + \sum_{i  = 0}^{m - 1}c_{m, i}k^i$. Since the differential part of $(\cL^i)_+$ is the same as that of $\cL^i$, we only need some $a_{m, s},\, s > m$ to compensate the negative powers of $\partial_x$ to make $\psi$ an eigenfunction of $L_m$ while keeping $a_{m, s}=c_{m,s}$ for $s \le m$.
\end{remark}

For $m\notin I$ the operator $L_m$ can be regarded as a $(Z \in A \setminus \Sigma)$-parametric family of ordinary differential operators $L_m^Z$ whose coefficients have the form
\[L_m^Z = \partial_x^m + \sum \limits_{i = 1}^{m} u_{m, i}(Ux + Z)\partial_x^{m - i}\,.\]

We have the following easy
\begin{corollary}
The operators $L_m^Z$ and $L_n^Z$ for $m,n\notin I$ commute with each other, that is
\begin{equation}\label{eq:LnzLmzcommute}
[L_n^Z, L_m^Z] = 0, \qquad \forall Z \in A \setminus \Sigma.
\end{equation}
\end{corollary}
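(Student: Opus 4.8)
The plan is to use the finite-dimensionality trick, exactly as in the proofs of \Cref{thm:L12commute} and \Cref{cor:commutingDOring}. By \Cref{lem:eigenLm}, for every $m\notin I$ the operator $L_m$ has the $\lambda$-periodic wave solution $\psi$ as an eigenfunction, $L_m\psi = a_m(k)\psi$, where the eigenvalue $a_m(k)=k^m+\sum_s a_{m,s}k^{m-s}$ is a scalar series with constant coefficients, in particular independent of $x$, $y$, and $Z$. Since the coefficients of $L_m$ depend only on the combination $Ux+Vy+Z$, for each fixed $y$ the restriction of $L_m$ is the member $L_m^{Z+Vy}$ of the family, so it suffices to argue at $y=0$ and let $Z$ range over $A\setminus\Sigma$. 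Thus for each such $Z$ the function $\psi(x,0,k)=e^{kx}\bigl(1+\sum_{s\ge1}\xi_s(Ux+Z,0)k^{-s}\bigr)$ is a common eigenfunction of all the ordinary differential operators $L_m^Z$, $m\notin I$, in the single variable $x$.

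First I would record that the scalar eigenvalues commute among themselves, so that for any $m,n\notin I$
\[
[L_m^Z, L_n^Z]\,\psi(x,0,k) = \bigl(a_m(k)a_n(k) - a_n(k)a_m(k)\bigr)\psi(x,0,k) = 0
\]
for all $k$. The commutator $[L_m^Z, L_n^Z]$ is an ordinary differential operator in $x$ of finite order (at most $m+n-1$), whose coefficients are universal differential polynomials in the $u_{m,i}(Ux+Z)$ and $u_{n,i}(Ux+Z)$, and — crucially — it does not depend on $k$. Now I would invoke the finite-dimensionality trick: a nonzero ordinary differential operator of order $N$ has a space of local solutions of dimension at most $N$, whereas $\{\psi(x,0,k)\}_k$ is an infinite-dimensional family, as is already visible from the leading asymptotics $\psi(x,0,k)=e^{kx}(1+O(k^{-1}))$, which forces linear independence of any finite subcollection with distinct values of $k$. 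Since all these functions lie in the kernel of $[L_m^Z, L_n^Z]$, that operator must vanish identically, which is precisely \eqref{eq:LnzLmzcommute}; and since the argument runs separately for each $Z$, this holds for all $Z\in A\setminus\Sigma$.

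I do not anticipate a real obstacle here. The only point needing a little care is that $\psi$ has simple poles in $x$ along the zeros of $\tau$, so the finite-dimensionality statement should be applied on a small disc in the $x$-plane avoiding these poles, exactly as the formal/local discussion of \Cref{sec:CommDO} already envisions; on such a disc the solution space of $[L_m^Z,L_n^Z]$ is genuinely finite-dimensional and the $\psi(x,0,k)$ are genuine (meromorphic, hence on the disc holomorphic) functions, so the independence argument via the $e^{kx}$ factor goes through unchanged.
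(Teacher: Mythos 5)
Your proof is correct and follows essentially the same route as the paper: by \Cref{lem:eigenLm} the $\lambda$-periodic solution $\psi$ is a common eigenfunction of the $L_m^Z$ with scalar ($Z$- and $x$-independent) eigenvalues $a_m(k)$, so the commutator $[L_n^Z,L_m^Z]$ — a finite-order ordinary differential operator in $x$ — annihilates $\psi$ for every $k$, and the finite-dimensionality trick forces it to vanish. Your added remarks about working at $y=0$ and on a disc avoiding the poles of $\psi$ are sensible refinements of the paper's two-line argument but do not change its substance.
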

\begin{proof}
Indeed, from \eqref{eq:Lneigenproblem} it follows that the function $\psi$ for any parameter~$k$ lies in the kernel of the commutator $[L_n^Z, L_m^Z]$. This commutator is an ordinary differential operator, and thus by the finite-dimensionality trick it must be zero.
\end{proof}

We have finally obtained a commutative ring of differential operators! The following lemma is standard in the integrable systems literature.

\begin{lemma}[\cite{Krichever1977, Mumford1978, Segal-Wilson1985}] \label{lem:ztoAzcorrespondence}
Let $\cA^Z, Z \in A \setminus \Sigma$, be the commutative ring of ordinary differential operators generated by the operators $L_n^Z$. Then there is an irreducible algebraic curve $\hat C$ of arithmetic genus $\hat{g} = \dim\hat{\mathbf{F}}$ such that $\cA^Z$ is isomorphic to the ring $\cM(\hat C, p_{\infty})$ of meromorphic functions on $\hat C$ with the only pole (of an arbitrary order) at a smooth point $p_{\infty}$. The correspondence $Z \to \cA^Z$ defines a holomorphic embedding of $A \setminus \Sigma$ into the compactified Jacobian of $\hat C$, that is the set of torsion-free rank one sheaves $\cF$ on $\hat C$:
\begin{equation}\label{eq:XinPicGamma}
j: A \setminus \Sigma \hookrightarrow \overline{\Pic}(\hat C).
\end{equation}
\end{lemma}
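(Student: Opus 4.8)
The plan is to realize $\mathcal{A}^Z$ as the ring of functions on an affine curve, to compactify it by adjoining $p_{\infty}$ with local parameter $k^{-1}$, and then to identify the resulting spectral data with a point of $\overline{\Pic}(\hat C)$. First I would note that the abstract ring generated by $\{L_n^Z\}_{n\notin I}$ does not depend on $Z$: by \Cref{lem:eigenLm} each $L_n^Z$ acts on $\psi$ as multiplication by the fixed scalar series $a_n(k)=k^n+\cdots$, so the assignment $L_n^Z\mapsto a_n(k)$ gives an injective $\CC$-algebra homomorphism of $\mathcal{A}^Z$ into $\CC((k^{-1}))$ whose image is independent of $Z$, and the only relations among the $L_n^Z$ are those forced by \eqref{eq:cmi}, whose coefficients $c_{m,i}$ are $Z$-independent constants. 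Hence $\mathcal{A}^Z\cong\mathcal{A}$, a fixed finitely generated $\CC$-algebra which is a domain (it sits inside the ring of ordinary differential operators, on which the leading-symbol map is multiplicative into a polynomial ring), so $\hat C_0:=\operatorname{Spec}\mathcal{A}$ is an affine irreducible curve.

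Next I would compactify. The image of $\mathcal{A}$ in $\CC((k^{-1}))$ consists of series whose pole orders at $p_\infty$ form the numerical semigroup $S=\NN\setminus I$: indeed $L_m\in\mathcal{A}$ has order exactly $m$ for $m\notin I$, while if some $P\in\mathcal{A}$ had order $m\in I$ then subtracting suitable linear combinations of products of the $L_i$ would produce, by \eqref{eq:Lneigenproblem}, an operator annihilating the infinite family $\{\psi(\,\cdot\,,k)\}_k$, hence the zero operator by the finite-dimensionality trick of \Cref{sec:CommDO}, contradicting $m\in I$. Since $I$ is finite, $S$ is cofinite and contains two coprime integers, so the $k^{-1}$-adic completion of $\mathcal{A}$ is $\CC[[k^{-1}]]$; gluing the corresponding formal (or analytic) disk with coordinate $k^{-1}$ to $\hat C_0$ produces a complete irreducible curve $\hat C$ with a \emph{smooth} point $p_\infty$, local parameter $k^{-1}$, and $\mathcal{A}\cong\mathcal{M}(\hat C,p_\infty)$. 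Comparing the Riemann--Roch dimension $h^0(\hat C,\mathcal{O}(np_\infty))=n+1-\hat g$ for $n\gg 0$ with the dimension $n+1-\#(I\cap[0,n])$ of the order-$\le n$ part of $\mathcal{A}$ shows that the arithmetic genus of $\hat C$ equals $\#I=\dim\hat{\mathbf{F}}=\hat g$.

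Then, for fixed $Z\in A\setminus\Sigma$, I would attach to each $p\in\hat C\setminus\{p_\infty\}$ the common eigenvalue $(a_n(k(p)))_{n\notin I}$ of the operators $L_n^Z$, and let $\mathcal{F}_Z$ be the sheaf on $\hat C$ whose fibre over a generic $p$ is the one-dimensional space of common eigenfunctions of $\mathcal{A}^Z$ with that eigenvalue, evaluated at a fixed base point --- equivalently, the cokernel sheaf of the module presentation of $\mathcal{A}^Z$ over $\mathcal{M}(\hat C,p_\infty)$. Standard arguments (see \cite{Krichever1977,Mumford1978,Segal-Wilson1985}) show that $\mathcal{F}_Z$ is a torsion-free rank-one sheaf of the appropriate degree, and that the data $(\hat C,p_\infty,k^{-1},\mathcal{F}_Z)$ is inverse to Krichever's construction of commuting operators from spectral data: from it one recovers the Baker--Akhiezer function $\psi$, hence $\mathcal{L}(Z)$, hence the potential $u(\,\cdot+Z)$ via \eqref{eq:potentialuintheta}. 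This yields injectivity of $j\colon Z\mapsto[\mathcal{F}_Z]$ (distinct $Z$ give distinct shifts $u(\,\cdot+Z)$, since $U\ne 0$ and, by indecomposability, $\Theta$ is irreducible), holomorphy of $j$ (the Laurent coefficients of $\psi$, and hence the transition data of $\mathcal{F}_Z$, depend holomorphically on $Z$), and the immersion property (a nonzero tangent vector at $Z$ deforms $u$ nontrivially, hence deforms $\mathcal{F}_Z$ nontrivially).

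The step I expect to be the real obstacle is the global bookkeeping that makes $\hat C$ a genuine algebraic curve of the asserted arithmetic genus with $p_\infty$ smooth and $k^{-1}$ a local parameter: one must control $\mathcal{A}$ globally (finite generation, the exact pole-order semigroup, the completion at $p_\infty$) while allowing $\hat C$ to acquire singularities away from $p_\infty$, and one must check that $\mathcal{F}_Z$ stays rank-one torsion-free precisely at those singularities --- which is exactly what forces the target to be the compactified Jacobian $\overline{\Pic}(\hat C)$ rather than $\Pic(\hat C)$. Since this is the classical Burchnall--Chaundy--Krichever correspondence, carried out in detail in \cite{Krichever1977,Mumford1978,Segal-Wilson1985}, I would treat the scheme-theoretic and sheaf-theoretic core by reference, and write out in full only the two $Z$-dependent points specific to our situation: the $Z$-independence of the abstract ring $\mathcal{A}$, and the holomorphy and injectivity of $j$ on $A\setminus\Sigma$.
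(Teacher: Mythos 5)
Your proposal follows essentially the same route as the paper's proof: both treat the core --- the Burchnall--Chaundy--Krichever--Mumford dictionary between commutative rings of ordinary differential operators and spectral data $(\hat C, p_{\infty}, [k^{-1}]_1, \mathscr{F})$ --- as a black box cited to \cite{Krichever1977, Mumford1978, Segal-Wilson1985}, and both reduce the lemma to the two $Z$-dependent verifications: that $\hat C$ is independent of $Z$ (via the $Z$-independence of the eigenvalues $a_n(k)$ from \Cref{lem:eigenLm}), and that $j$ is holomorphic (via the holomorphic dependence of the coefficients of $L_n^Z$ on $Z$). Your explicit construction of $\hat C$ as $\operatorname{Spec}$ of the ring glued to a disk at $p_\infty$, and the genus count via the gap semigroup and Riemann--Roch, are more detailed than what the paper writes and are consistent with its remark that $I$ is the Weierstrass gap sequence at $p_\infty$. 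Note only that to get the arithmetic genus equal to $\#I$ exactly you need that no element of $\cA^Z$ has order lying in $I$; your reduction argument silently assumes that subtracting products of the $L_i$ always lands you outside $I$, and the clean way to close this is the maximality argument the paper gives \emph{after} the lemma (a commuting operator of order $\alpha\in I$ would force a constant-coefficient relation $F_\alpha=\sum c_{\alpha,i}F_i$, contradicting $\alpha\in I$). Either import that argument here or state the inequality $\hat g\le\#I$ at this stage.

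The one concrete omission is the behaviour of $j$ along $\Theta\setminus\Sigma$. Your sheaf $\mathscr{F}_Z$ is built from eigenfunctions normalized at the base point $x=0$, which presupposes that the coefficients of the $L_n^Z$ are regular at $x=0$, i.e.\ that $Z\notin\Theta$. Since $\Theta$ is a divisor, this is not a codimension-two defect that Hartogs' theorem can absorb: as written, your $j$ is defined and holomorphic only on $A\setminus(\Sigma\cup\Theta)$, whereas the lemma asserts an embedding of all of $A\setminus\Sigma$. The paper closes this gap by observing that the correspondence extends to rings whose coefficients are meromorphic at $x_0$ by shifting the base point, which on the spectral side corresponds precisely to sheaves violating the normalization \eqref{eq:H0H1}, i.e.\ to points of the theta divisor of $\overline{\Pic}(\hat C)$; you should add this extension step explicitly.
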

\begin{remark}
We have not discussed singular curves in these lecture notes, nor the notion of the compactified Jacobian for such a singular curve. Dealing with singular curves is a major technical difficulty in this subject, and much of the relevant machinery in this direction was developed by Mumford, who in particular in \cite{Mumford1978} proved the lemma above in the generality we stated it.

To keep our presentation manageable, we will {\em not} endeavor to discuss singular curves, torsion-free rank one sheaves, etc, and will restrict the arguments below to the special case when the spectral curve~$\hat C$ happens to be smooth. This will help to elucidate the main remaining ideas in Krichever's proof, while avoiding the important and difficult technical issues of dealing with compactified Jacobians of singular curves. The case of the above lemma for smooth spectral curves was proved by Krichever \cite{Krichever1977}, and we restrict outselves to this situation.
\end{remark}

\begin{proof}[Proof of \Cref{lem:ztoAzcorrespondence} under the assumption that~$\hat C$ is smooth]
Recall that \Cref{thm:spectralcurve} provides the bijection 
\begin{equation}\label{eq:Krichevercorrespondence}
        \cA \longleftrightarrow \{\hat C, p_{\infty}, [k^{-1}]_1, \mathscr{F}\}
\end{equation}
between commutative rings $\cA \subseteq \CC[\![x]\!][\tfrac{d}{dx}]$ of ordinary linear differential operators containing a pair of monic operators of coprime orders, and sets of algebraic data $\{\hat C, p_{\infty}, [k^{-1}]_1, \mathscr{F}\}$, where $\hat C$ is an algebraic curve which is the spectral curve of $\cA$, $p_{\infty} \in \hat C$ is a smooth point, $[k^{-1}]_1$ is the first jet of a local coordinate $k^{-1}$ on $\hat C$ around $p_\infty$, and  $\mathscr{F}$ is a torsion-free rank one sheaf on $\hat C$ (which is a line bundle when $\hat{C}$ is smooth) such that
\begin{equation}\label{eq:H0H1}
    H^0(\hat C, \mathscr{F}) = H^1(\hat C, \mathscr{F}) = 0.
\end{equation}
The correspondence becomes one-to-one if the rings $\cA$ are considered up to conjugation $\mathscr{A}' = g(x)\cA g(x)^{-1}$ for $g(x) \in \CC[\![x]\!], g(0) \ne 0$ (see \cite{Mumford1978}).

It remains to understand how the data on the right-hand side of \eqref{eq:Krichevercorrespondence} depends on $Z\in A\setminus\Sigma$? Let $\hat C^Z$ be the spectral curve corresponding to $\cA^Z$. The eigenvalues $a_m(k)$ of the operators $L_m^Z$ defined in \eqref{eq:Lneigenproblem} coincide with the Laurent expansions at $p_{\infty}$ of the meromorphic functions $a_m \in A(\hat C^Z, p_{\infty})$. As remarked in the proof of \Cref{lem:eigenLm} these eigenvalues are $Z$-independent. Hence the spectral curve $\hat C = \hat C^Z$ is $Z$-independent as well. 
    
The construction of the correspondence \eqref{eq:Krichevercorrespondence} implies that if the coefficients of the operators $\cA$ holomorphically depend on parameters then the algebro-geometric data also depends holomorphically on the parameters. In the construction, the spectral curve does not depend on the choice of the initial point $x_0 = 0$, but the sheaf $\mathscr{F} = \mathscr{F}_{x_0}$ does. Hence the map~$j$ is holomorphic away from $\Theta$. The correspondence \eqref{eq:Krichevercorrespondence} can be extended to the cases when the coefficients of the differential operators in $\cA$ are meromorphic at $x = x_0$ by a shift of the argument $x$ which corresponds to sheaves that violate condition \eqref{eq:H0H1}. Under such extension, $j$ extends holomorphically over $\Theta \setminus\Sigma$, as well. The lemma is proved.
\end{proof}
How do we know that the spectral curve $\hat C$ we have constructed is the curve we want? A priori, the genus $\hat{g}$ of $\hat C$ could be as large as $2^g$! To finish the proof of the main result we will need to show that the map~$j$ in~\eqref{eq:XinPicGamma} is in fact an isomorphism. Note that $a_m(k)$ in equation \eqref{eq:Lneigenproblem} are actually functions on the curve $\hat C$, and the index set $I$ is the so-called Weierstrass gap sequence of the point $p_{\infty}\in\hat C$.

A commutative ring $\cA$ of linear ordinary differential operators is called maximal if it is not contained in any bigger commutative ring. We want to argue that for a generic $Z$ the ring $\cA^Z$ is maximal. Suppose this is not the case, then there exists $\alpha \in I$, such that for each $Z\in A\setminus \Sigma$ there exists an operator $L_{\alpha}^Z$ of order $\alpha$ which commutes with all~$L_m^Z, m \notin I$. We claim that $L_{\alpha}^Z$ then also commutes with $\cL$. For $n \notin I$, since $[L_{\alpha}^Z, L_n^Z] = 0$, by \Cref{thm:L12commute}, $\psi$ is an eigenfunction of $L_{\alpha}^Z$ with some eigenvalue $A_\alpha^Z(k)$. But by construction of $\cL$, i.e.~\Cref{lem:pseudoL}, $\psi$ is a formal eigenfunction of $\cL$ with eigenvalue $k$, where $k^{-1}$ is the local coordinate around $p_{\infty} \in \hat C$. Then heuristically we may argue that we could formally express $L_{\alpha}^Z$ as $A_{\alpha}^Z(\cL)$, which obviously commutes with the pseudo-differential operator~$\cL$ itself.

Note that a differential operator commuting with $\cL$ up to order $O(1)$ can be represented in the form $L_{\alpha} = \sum_{i < \alpha} c_{\alpha, i}(Z)\cL_+^i$, where $c_{i, \alpha}(Z)$ are some $D_1$-invariant functions of $Z$. This is because all differential operators that commute with $\cL$ up to order $O(1)$ can be determined similarly to the computation in exercise \Cref{ex:L2L3commute}. Such differential operators form a vector space; note that by definition $L_+^i$ commutes with $\cL$ up to order $O(1)$, and since we have one such operator in each degree~$i$, they constitute a basis for this vector space, so that any $L_{\alpha}$ commuting with $\cL$ up to order $O(1)$ is a linear combination of $L_+^i$ with some coefficients that are independent of $x$, i.e.~with $D_1$-invariant coefficients.
$L_{\alpha}$ commutes with $\cL$ if and only if
\begin{equation}\label{eq:commutantofL}
F_{\alpha}(Z) = \sum \limits_{i = 0}^{\alpha - 1}c_{\alpha, i}(Z)F_i(Z) \qquad \hbox{ for }D_1 c_{i, \alpha} = 0.
\end{equation}
Note the difference between \eqref{eq:cmi} and \eqref{eq:commutantofL}: in \eqref{eq:cmi} the coefficients $c_{i, n}$ are constants. The $\lambda$-periodic wave solution of equation \eqref{eq:L} is a common eigenfunction of all commuting operators, i.e.~ $L_{\alpha}\psi = a_{\alpha}(Z, k)\psi$, where $a_{\alpha} = k^{\alpha} + \sum_{s = 1}^{\infty} a_{s, \alpha}(Z)k^{\alpha - s}$ is $D_1$-invariant. The same arguments as those used in the derivation of \eqref{eq:Lneigenproblem} show that the eigenvalue $a_{\alpha}$ is $Z$-independent and $a_{\alpha, s} = c_{\alpha, s}$, for $ s \le \alpha$. Therefore, the coefficients in \eqref{eq:commutantofL} are $Z$-independent which contradicts the assumption that $\alpha \notin I$, so that a linear relation must exist.

\smallskip
So far the $\lambda$-periodic wave function we have constructed is only a formal power series multiplied by an exponential function. Now we can finally prove the global existence of the wave function and this is achieved by comparing with the Baker-Akhiezer function thanks for the existence of the algebraic curve $\hat C$.

\begin{lemma}\label{lem:globalization}
Under the assumptions of \Cref{thm:Kricheverflex}, there exists a common eigenfunction $\psi = e^{kx}\phi(Ux + Z, k)$ of the commuting operators $L_m^Z$ such that the coefficients of the formal series
\[\phi(Z, k) = 1 + \sum \limits_{s = 1}^{\infty} \xi_s(Z)k^{-s}\]
are {\em global} meromorphic functions of $Z\in A$ with a simple pole along the divisor~$\Theta$.
\end{lemma}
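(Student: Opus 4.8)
The plan is to identify the formal $\lambda$-periodic wave function built so far with the genuine Baker--Akhiezer function of the spectral curve $\hat C$, and then to read off its global properties from the explicit theta-formula \eqref{eq:Baker-Akhiezer1}. Recall what \Cref{lem:ztoAzcorrespondence} supplies (in the smooth case to which we have restricted): the spectral curve $\hat C$ of genus $\hat g=\dim\hat{\mathbf F}$, the marked point $p_\infty$, the jet of the local coordinate $k^{-1}$, and the holomorphic embedding $j\colon A\setminus\Sigma\hookrightarrow\Pic(\hat C)$. Moreover, by the way $j$ is constructed in that proof out of the dependence of the sheaf $\mathscr F_{x_0}$ on the base point $x_0$, the translation $Z\mapsto Z+Ux$ on $A$ corresponds under $j$ to the straight-line flow on $\Pic(\hat C)$ generated by the vector $U^{(1)}$ attached to $(\hat C,p_\infty,k^{-1})$; I would record this matching of flows at the outset, since it is what lets the variable $x$ of our wave function talk to the variable $x$ of the Baker--Akhiezer function on $\hat C$.

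Next I would run Krichever's construction on $\hat C$ itself. By \Cref{def:Baker-Akhiezer}, for $\mathscr L$ in a Zariski-dense open subset of $\Pic(\hat C)$ there is a unique Baker--Akhiezer function $\hat\psi(x,p;\mathscr L)$, given by the explicit expression \eqref{eq:Baker-Akhiezer1} in terms of theta functions of $\hat C$. Expanding it at $p_\infty$ in powers of $k^{-1}$ gives $\hat\psi\,e^{-kx}=1+\sum_{s\ge1}\hat\xi_s\,k^{-s}$, where, reading off \eqref{eq:Baker-Akhiezer1}, each $\hat\xi_s$ depends on $x$ only through $U^{(1)}x$ inside its arguments and, as a function on $\Jac(\hat C)$ along the flow $j(Z+Ux)$, is meromorphic with at worst a simple pole along the theta divisor $\Theta_{\hat C}$ of $\hat C$.

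The central step is the identification. For $Z\in A\setminus\Sigma$ outside the codimension-one locus where $j(Z)$ is special, \Cref{thm:BAaseigenfunction} and \Cref{thm:spectralcurve}, together with \Cref{lem:ztoAzcorrespondence}, say that $\hat\psi(x,p;j(Z))$ is a common eigenfunction of every operator of the ring $\cA^Z$, normalized by $\hat\psi\,e^{-kx}\to1$ at $p_\infty$. On the other hand, by \Cref{lem:eigenLm} the $\lambda$-periodic wave solution $\psi$ of \eqref{eq:L} from \Cref{cor:quasiperiodicphi} is an eigenfunction of each $L_m^Z$ with $m\notin I$, and these operators generate $\cA^Z$; it is likewise normalized at $p_\infty$. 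By the uniqueness in \Cref{def:Baker-Akhiezer} --- equivalently, by \Cref{cor:formalsolutionsall} on the formal side --- the two coincide up to the residual $D_1$-invariant factor allowed by \Cref{lem:phizykseries}, which I absorb into a fixed normalization of $\psi$ at $x=0$; hence $\xi_s(Ux+Z)=\hat\xi_s(x;j(Z))$ on this open set. Globalization is then formal: each $\xi_s$ is the pull-back under the holomorphic map $j$ of a meromorphic function on $\Pic(\hat C)$, hence is meromorphic on $A\setminus\Sigma$; since $A$ is indecomposable we have $\dim\Theta_1=g-2$, so the $D_1$-invariant subvariety $\Sigma\subseteq\Theta_1$ has codimension at least $2$ in $A$, and Hartogs' theorem extends $\theta\cdot\xi_s$ to a holomorphic function on all of $A$. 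For the pole order, the identification shows that the polar divisor of $\xi_s$ is $j^{-1}(\Theta_{\hat C})$ with a simple pole, while \Cref{lem:phizykseries} --- whose $\lambda$-periodic construction can equally be performed on any generic translate of $A_U$ --- displays $\xi_s$ with a simple pole exactly along $\Theta$ on each such translate; invoking the translation-equivariance of the construction and the uniqueness up to translation of the effective representative of the principal polarization, one concludes $j^{-1}(\Theta_{\hat C})=\Theta$ and that the pole is simple.

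The main obstacle I expect is this last point: pinning the polar locus down to $\Theta$ itself --- not merely to some translate of it, nor to a divisor agreeing with $\Theta$ only on the lower-dimensional $A_U$ --- forces one to use the translation-equivariance of the whole construction carefully, and along the way one must check by direct inspection of \eqref{eq:Baker-Akhiezer1} that the Baker--Akhiezer coefficients really carry only simple poles. The identification-by-uniqueness is conceptually the heart of the matter, but it becomes routine once the matching of the $x$-flow on $A$ with the $U^{(1)}$-flow on $\Pic(\hat C)$ is established and the $D_1$-invariant ambiguity has been absorbed into the normalization.
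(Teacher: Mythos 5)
Your proposal is correct and follows essentially the same route as the paper: pull back the Baker--Akhiezer function of the smooth spectral curve $\hat C$ under the embedding $j$ of \Cref{lem:ztoAzcorrespondence}, read off global meromorphy and the simple pole along $\Theta$ from the explicit theta-quotient form of its coefficients, and extend across the codimension-$\ge 2$ locus $\Sigma$ by Hartogs' theorem. The extra care you take in matching the $x$-flow on $A$ with the $U^{(1)}$-flow on $\Pic(\hat C)$ and in identifying the pullback with the previously constructed $\lambda$-periodic solution via uniqueness is a sound elaboration of what the paper does more tersely (the paper simply defines $\psi\coloneqq j^*\hat\psi_{\BA}$ and notes that $Z\notin\Theta$ implies $j(Z)\notin\hat\Theta$), not a different argument.
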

As before, we will only prove this lemma under the assumption that the spectral curve $\hat C$ constructed in \Cref{lem:ztoAzcorrespondence} is smooth; the proof for the general case of singular $\hat C$ would follow the same lines as below if we had available a theory of Baker-Akhiezer functions on singular curves. There are some recent developments in this direction by Abenda-Grinevich \cite{Abenda-Grinevich2018}, Nakayashiki (also with Bernatska and Enolski) \cite{Nakayashiki2019, Bernatska-Enolski-Nakayashiki2020, Nakayashiki2020, Nakayashiki2024}, Kodama-Xie \cite{Kodama-Xie2021}, Agostini-Fevola-Mandelshtam-Sturmfels \cite{Agostini-Fevola-Mandelshtam-Sturmfels2023} from various perspectives. Still, the theory of Baker-Akhiezer functions on singular curves is quite difficult, and we avoid discussing it or giving the analytic arguments to bypass it. We thus only prove the lemma under the additional assumption that $\hat C$ is smooth.
\begin{proof}
Indeed, for smooth $\hat C$ there is a unique Baker-Akhiezer function \eqref{eq:Baker-Akhiezer1} normalized by the condition $\hat{\psi}_0 \vert_{x = 0} = 1$, which is of the form
\begin{equation}\label{eq:hatpsitheta}
\hat{\psi}_0 = \frac{\hat{\theta}(\hat{A}(p) + \hat{U}x + \hat{Z})\hat{\theta}(\hat{Z})}{\hat{\theta}(\hat{U}x + \hat{Z})\hat{\theta}(\hat{A}(p) + \hat{Z})}e^{x \int \Omega(p)}\,,
\end{equation}
where $\hat{A}: \hat C \hookrightarrow \Jac(\hat C)$ is the Abel map (to the usual Jacobian of a smooth curve), and we use hats to remind ourselves that all of this happens on the curve $\hat C$ (which so far is unrelated to the abelian variety $A$ that we started from).

Then the function
\begin{equation}\label{eq:hatpsiBA}
\hat{\psi}_{\BA} \coloneqq \frac{\hat{\theta}(\hat{A}(p) + \hat{U}x + \hat{Z})}{\hat{\theta}(\hat{U}x + \hat{Z})}e^{x \int \Omega(p)}
\end{equation}
is also a common eigenfunction of the commuting operators since the rest of the expression is independent of $x$.

In the neighborhood of $p_{\infty}$ the function $\hat{\psi}_{\BA}$ has the form
\[\hat{\psi}_{\BA} = e^{kx} \left(1 + \sum \limits_{s = 1}^{\infty} \frac{\tau_s(\hat{Z} + \hat{U}x)}{\hat{\theta}(\hat{U}x + \hat{Z})}k^{-s}\right), \]
where $\tau_s(\hat{Z})$ are some global holomorphic functions.

By \Cref{lem:ztoAzcorrespondence}, we have a holomorphic embedding $\hat{Z} = j(Z)$ of $A \setminus\Sigma$ into $\Jac(\hat C)$. We can then use this map to pull back the Baker-Akhiezer function on $\Jac(\hat C)$ to the abelian variety $A$, denoting $\psi \coloneqq j^*\hat{\psi}_{\BA}$. This is a globally well-defined function away from $\Sigma$. Note that if $Z \notin \Theta$, then $j(Z) \notin \hat{\Theta}$, hence the coefficients of $\psi$ are regular away from $\Theta$. Since the singular locus is at least of codimension $2$, once again by  Hartogs' theorem we can extend $\psi$ to all of $A$. This finishes the proof in the case of smooth  spectral curve~$\hat C$.
\end{proof}

Recall that by \Cref{rmk:KPgeometry}, the KP flows are linearized on the Jacobian of the spectral curve, and they are actually tangent to the image of the spectral curve under the Abel-Jacobi map. To prove that the map $j$ in \eqref{eq:XinPicGamma} is actually an isomorphism, our strategy is to show that KP flows are actually induced by abelian functions on $A$, and this is the content of the next lemma.

\begin{lemma}\label{lem:hatFspannedbyHi}
The vector space $\hat{\mathbf{F}}\subset H^0(A,2\Theta)$ generated by the abelian functions $\{F_0, F_m = \Res_{\partial_x}\cL^m\}$, is contained in the vector space $\mathbf{H}$ generated by $F_0$ and by the abelian functions $H_i \coloneqq \partial_x \partial_{Z_i}\ln \theta(Z)$.
\end{lemma}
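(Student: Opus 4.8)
The plan is to exploit the dual Baker–Akhiezer function and the residue identity $F_n=J_{n+1}$ from \Cref{lem:Fmsecondorderpole}, which already realized each $F_n$ as a coefficient of the product $\psi^+\psi$. The key point is to extract from this product the leading behavior along the directions $U$ and $Z_i$. Recall $\psi=We^{kx+k^2y}$ with $W=1+\sum_s w_s\partial_x^{-s}$ and $\psi^+=(e^{-kx-k^2y})W^{-1}$; writing $W^{-1}=1+\sum_s w_s^+\partial_x^{-s}$, one has $J_2=w_1+w_1^+$ and, more relevantly, the whole tower $J_s$ is built from the $\xi_s$ and $\xi_s^+$, which by \Cref{lem:phizykseries} and its dual version are of the form $\tau_s(Z,y)/\theta(Z)$ and $\tau_s^+(Z,y)/\theta(Z)$ with $\tau_s,\tau_s^+$ holomorphic. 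So each $F_n=J_{n+1}$ is manifestly a section of $2\Theta$ with numerator holomorphic.

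First I would make precise the relation between $\xi_1$ and $\ln\theta$: from the recursion \eqref{eq:xisrecursion} (equivalently the base case in the proof of \Cref{lem:phizykseries}, $2\xi_1=-2\partial_x\ln\tau+\text{linear}$), one has $\xi_1=-\partial_x\ln\theta(Z)+(\text{linear in }Z)$ up to a $D_1$-invariant term, hence $\partial_x\xi_1=-D_1^2\ln\theta=-\tfrac12 u$, consistent with \eqref{eq:potentialuintheta}. Then I would compute $J_2=-\psi^+\psi$'s $k^{-2}$-coefficient explicitly: since $\psi^+\psi$ is $x$-independent up to the exponential, its logarithmic derivative in $x$ vanishes, and comparing the $k^{-1}$ orders of $\partial_x(\psi^+\psi)=0$ forces $\partial_x(\xi_1+\xi_1^+)=0$, i.e.\ $\xi_1^+=-\xi_1$ up to $D_1$-invariants; therefore $J_2=0$ and $F_1=0$, so $F_1$ trivially lies in $\mathbf H$. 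For general $n$ the idea is the same: each $J_{n+1}$ is a universal differential polynomial in the $\xi_s$, and since every $\xi_s$ and $\xi_s^+$ is, modulo $D_1$-invariant functions, a polynomial in the derivatives $\partial_x^{j}\partial_{Z_i}\ln\theta$ of $\ln\theta$ (this follows inductively from \eqref{eq:xisrecursion}, as the right-hand side only involves $u=2D_1^2\ln\theta$, already of this form, and $\partial_y$- and $\partial_x$-derivatives of lower $\xi_s$), the product $\psi^+\psi$ has coefficients that are such polynomials. The content of the lemma is then the sharper claim that $F_n=J_{n+1}$, when written as a section of $2\Theta$, is actually a \emph{linear} combination of $F_0=1$ and the $H_i=\partial_x\partial_{Z_i}\ln\theta$ — i.e.\ all the higher and nonlinear terms cancel against each other or combine into total derivatives that are themselves of the form $\partial_x\partial_{Z_i}\ln\theta$.

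The cleanest route to that sharper claim, and the step I expect to be the main obstacle, is to argue at the level of divisors and dimensions rather than by brute-force identities: $F_n\theta^2$ is holomorphic and $F_n$ has at worst a double pole along $\Theta$, so $F_n\in H^0(A,2\Theta)$; I would show that in fact the \emph{polar part} of $F_n$ along $\Theta$ is a combination of $F_0$ and the $H_i$, because near a generic point of $\Theta$, writing $\theta$ in terms of a local equation $t$ for $\Theta$, the double-pole coefficient of $J_{n+1}=\psi^+\psi|_{k^{-(n+1)}}$ is governed by the product of the simple-pole coefficients of $\xi_1$ and $\xi_1^+$, which are (up to sign) $-\partial_x t/t \cdot$ (derivative of $t$), giving exactly the $2\Theta$-boundary behavior of $H_i=\partial_x\partial_{Z_i}\ln\theta$; subtracting the appropriate linear combination $\sum_i c_i^{(n)}H_i+c_0^{(n)}F_0$ leaves a section of $2\Theta$ with at worst a \emph{simple} pole along $\Theta$, but $H^0(A,2\Theta-\Theta)=H^0(A,\Theta)$ is one-dimensional, spanned by $\theta$ itself, which is not a quotient with holomorphic numerator over $\theta^2$ unless it vanishes — hence the remainder is a multiple of $F_0$, and absorbing that shows $F_n\in\mathbf H$. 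The delicate points are (i) justifying that the simple-pole reduction lands in $H^0(A,2\Theta-\Theta)$ rather than something larger, which uses the indecomposability of $A$ so that $\Theta$ is irreducible and the only sections of $2\Theta$ vanishing on $\Theta$ are multiples of $\theta$; and (ii) checking that the coefficients $c_i^{(n)}$ can indeed be chosen constant (independent of $Z$), which follows because both $F_n$ and the $H_i$ are genuine abelian functions on $A$ and a meromorphic relation among abelian functions with the stated pole orders has constant coefficients. I would present the argument in this divisor-theoretic form, relegating the explicit computation of $J_{n+1}$ to the observation that it is a differential polynomial in $\ln\theta$, and emphasizing that the real input is the Riemann theta singularity/irreducibility of $\Theta$ together with $h^0(A,2\Theta)=2^g$ and $h^0(A,\Theta)=1$.
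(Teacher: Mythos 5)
There is a genuine gap, and also a concrete computational error. The error first: you claim that the vanishing of $\partial_x(\psi^+\psi)$ at order $k^{-1}$ forces $\xi_1^+=-\xi_1$ and hence $J_2=0$, so $F_1=0$. The identity $\xi_1^+=-\xi_1$ is indeed why the expansion of $\psi^+\psi$ starts at $k^{-2}$, but $J_2$ is the $k^{-2}$ coefficient, $J_2=\xi_2+\xi_2^+-\xi_1^2$, which does not vanish: from $\cL=W\partial_xW^{-1}$ one computes $F_1=\Res_{\partial_x}\cL=-\partial_x w_1=D_1^2\ln\theta=\tfrac12 u=\sum_i U_iH_i$, which lies in $\mathbf H$ but is certainly nonzero.

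The more serious gap is in the main step. Knowing $F_n\in H^0(A,2\Theta)$ is far too weak: that space is $2^g$-dimensional, while $\mathbf H$ has dimension at most $g+1$. Your plan is to match the double-pole part of $F_n$ along $\Theta$ against those of the $H_i$, but the double-pole parts of the $H_i$ span only the subspace $D_1\theta|_\Theta\cdot H^0(\Theta,\Theta|_\Theta)$ of the much larger $H^0(\Theta,2\Theta|_\Theta)$, so you must prove that the leading polar coefficient of $F_n$ is divisible by $D_1\theta$ on $\Theta$. Your justification — that it is ``governed by the product of the simple-pole coefficients of $\xi_1$ and $\xi_1^+$'' — does not establish this: $J_{n+1}=\sum_{i+j=n+1}\xi_i\xi_j^+$ involves all cross terms $\xi_i\xi_j^+$ with $i,j\ge1$, and the individual $\xi_i$ are not abelian functions (only $\lambda$-periodic on $\CC^d$), so their polar parts are not constrained to lie in $H^0(\Theta,\Theta|_\Theta)$. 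The missing input, which is the crux of the paper's proof, is the formal-adjoint identity $(e^{-kx}\cD_1)(\cD_2e^{kx})=e^{-kx}(\cD_1\cD_2e^{kx})+\partial_x\bigl(e^{-kx}(\cD_3e^{kx})\bigr)$ applied to $\cD_1=W^{-1}$, $\cD_2=W$, which shows $\psi^+\psi=1+\partial_x\bigl(\sum_s Q_sk^{-s}\bigr)$, i.e.\ each $F_s$ is an exact $D_1$-derivative of a global meromorphic function $Q_{s+1}$ with only a \emph{simple} pole along $\Theta$. Once that is in hand, the soft part of your argument does essentially what the paper does: since $h^0(A,\Theta)=1$, $Q_{s+1}$ is only quasi-periodic with constant periods, $Q_{s+1}\theta$ is holomorphic and restricts to an element of $H^0(\Theta,\Theta|_\Theta)$, which is $g$-dimensional and spanned by the $\partial_{Z_i}\theta$, whence $Q_{s+1}=\sum_ic_i\,\partial_{Z_i}\ln\theta+\const$ and $F_s=\partial_xQ_{s+1}\in\mathbf H$. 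So your pole-order and dimension bookkeeping is the right second half of the argument, but without producing $Q_s$ (equivalently, without the exact-derivative structure of $\psi^+\psi$) the reduction from $H^0(A,2\Theta)$ to $\mathbf H$ does not go through.
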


\begin{proof}
First we note that in order to prove the statement of the lemma, it suffices to show that for any $s>0$, $F_s = \partial_x Q_s$ for some meromorphic function~$Q_s$ with a pole along $\Theta$. Indeed, since $F_s$ is a meromorphic function on $A$ with a second order pole on $\Theta$, the function $Q_s$, as a function of $Z\in\CC^{\hat n}$ must have a simple pole along~$\Theta$. Moreover, since $\dim H^0(A,\Theta)=1$, there is no meromorphic function on~$A$ with a first order pole along~$\Theta$. Thus considered as a function of $Z\in\CC^{\hat n}$, $Q_s$ must be quasiperiodic, that is we must have $Q_s(Z + \lambda) = Q_s(Z) + c_{s, \lambda}(Z)$ for any $\lambda\in\Lambda$. Since $F_s=\partial_x Q_s$ is a function on~$A$, i.e.~periodic on $\CC^{\hat n}$, it follows that $\partial_x c_{s,\lambda}(Z)=0$, and thus $c_{s,\lambda}(Z)=c_{s,\lambda}$ must be independent of~$Z$. Consider now the function $G_s(Z)\coloneqq Q_s(Z)\cdot \theta(Z)$. Since we have canceled the simple pole of $Q_s(Z)$ along~$\Theta$, it follows that $G_s$ is a holomorphic function of $Z\in\CC^{\hat n}$. Moreover, since $c_{s,\lambda}\cdot \theta(Z)$ vanishes along the theta divisor, it follows that restricting $G_s$ to the theta divisor gives a holomorphic function on $\Theta\subset A$ (that is, $G_s|_\Theta$ is periodic and not just quasi-periodic). Thus $G_s\in H^0(\Theta,\Theta|_\Theta)$, where this means the restriction of the theta as a line bundle to theta as a divisor. A standard computation gives $\dim H^0(\Theta,\Theta|_\Theta)=\hat n$ \footnote{We encourage you to do this as an exercise in algebraic geometry techniques using the restriction exact sequence for $\Theta\subset A$, but is easy to see this by observing that $\Theta$ is ample on~$A$, and thus its restriction is also ample, and thus $H^i(\Theta,\Theta|_\Theta)=0$ for any $i>0$, while the top self-intersection number is $\langle (\Theta|_\Theta)^{\hat n-1}\rangle_\Theta=\langle \Theta^{\hat n}\rangle_A=\hat n!$, so that finally
$$
 \chi(\Theta,\Theta|_\Theta)=\dim H^0(\Theta,\Theta|_\Theta)=\frac{1}{(\hat n-1)!}\langle (\Theta|_\Theta)^{\hat n-1}\rangle_\Theta=\hat n
$$
by the Hirzebruch-Riemann-Roch formula.}. Moreover, for any section $s$ of any line bundle $L$ on any variety $X$, the partial derivatives of $s$ are sections of $L$ restricted to the zero locus of $s$ (if one na\"\i vely writes down the transformation formula, the derivative of the transition function is multiplied by~$s$ and vanishes on its zero locus), and thus the $\hat n$ partial derivatives of the theta function with respect to all coordinates $Z_i$ on $\CC^{\hat n}$ generate the $\hat n$-dimensional space $H^0(\Theta,\Theta|_\Theta)$. Thus $G_s$ must be a linear combination of $\partial_{Z_i}\theta$, and thus $Q_s=G_s/\theta$ is a linear combination of logarithmic derivatives.

Let $\psi(x, Z, k)$ be the formal Baker-Akhiezer function \eqref{eq:Baker-Akhiezer1}. Then by \Cref{lem:globalization} the coefficients $w_s(Z)$ of the corresponding operator $W$ defined in \eqref{eq:psiPhi} are global meromorphic functions with poles along $\Theta$.

Note that the left and right actions of pseudo-differential operators are formally adjoint, i.e.~ for any two pseudo-differential operators $\cD_1,\cD_2$ the equality $(e^{-kx}\cD_1)(\cD_2 e^{kx}) = e^{-kx}(\cD_1\cD_2 e^{kx}) + \partial_x(e^{-kx}(\cD_3 e^{kx}))$ holds, where $\cD_3$ is some pseudo-differential operator whose coefficients are differential polynomials in the coefficients of $\cD_1$ and $\cD_2$. Therefore, if we take $\cD_1 = W^{-1}$ and $\cD_2 = W$, then from \eqref{eq:adjpsipsi} and \eqref{eq:respartialLn} it follows that
\[\psi^+ \psi = 1 + \sum \limits_{s= 2}^{\infty} F_{s - 1}k^{-s} = 1 + \partial_x \left(\sum \limits_{s = 2}^{\infty} Q_s k^{-s}\right)\,.\]
The coefficients~$Q_s$ are differential polynomials in the coefficients $w_s$ of the wave operator. Therefore, they are global meromorphic functions of $Z$ with poles along $\Theta$, and their $\partial_x$ derivatives give $F_s$.
\end{proof}

We are now ready to complete the proof of the main result, Krichever's characterization of Jacobians by the existence of a flex line of the Kummer variety. Recall that we continue to work under the essential technical assumption that $\hat C$ is smooth.
\begin{proof}[Proof of \Cref{thm:flextheta}]
It was a long way to go from the flex line condition to the global existence of an $\lambda$-periodic solution of equation \eqref{eq:L} and then to existence of a collection of commuting differential operators $L_n^Z$ indexed by $n \notin I$, and now it is the time to harvest the results. From the previous discussion (more precisely, in \Cref{lem:ztoAzcorrespondence}), we have constructed the holomorphic embedding $A \backslash \Sigma\hookrightarrow\Jac(\hat C)$ (we continue to assume that $\hat C$ is smooth, sacrificing many essential technical details for the clarity of the exposition of the idea) . The genus of the spectral curve $\hat C$ can a priori be up to $2^g$; to finally finish the proof of \Cref{thm:flextheta} we need to show that $A$ is in fact isomorphic to $\hat C$, for which we need to show that the map $j$ in \Cref{lem:ztoAzcorrespondence} is an isomorphism. The idea is to construct the inverse of $j$ as follows: from Krichever's construction of an algebro-geometric solution of the KP hierarchy using the Baker-Akhiezer functions (see \Cref{sec:BAfunction}), we see that $\Jac(\hat C)$ is precisely the orbits of the KP flows\footnote{By orbits of the KP flows or KP orbits we mean the trajectories traced by solutions of the KP hierarchy under its infinite set of commuting time evolutions, which could either be finite-dimensional or infinite-dimensional. For example, the orbits for the soliton the solutions are given by a finite-dimensional torus $(\CC^*)^{n}$ in a finite-dimensional Grassmannians \cite{Kodama2017}, the orbits of algebro-geometric (quasi-periodic) solutions are given by Jacobians of spectral curves, etc.} which control the deformation of the pseudo-differential operator $\cL$ such as the one in \Cref{lem:pseudoL}, and by \Cref{lem:hatFspannedbyHi} these flows are actually induced by abelian functions on $A$, which finally induces an embedding of $\Jac(\hat{C})$ into $A$, providing an inverse to~$j$. In the following we give a more detailed explanation of this brief discussion of the idea.

Recall that by the correspondence in the proof of \Cref{lem:ztoAzcorrespondence} between the commutative rings of differential operators and the geometric data on the spectral curve, the KP flows discussed in \Cref{rmk:KPgeometry} define deformations of the commutative rings $\cA$ of ordinary differential operators, and the spectral curve~$\hat C$ is invariant under these flows. The orbits of the KP flows applied to a given initial solution of the KP hierarchy are then isomorphic to the Jacobian $J(\hat C)$ (see \Cref{rmk:KPgeometry} and \Cref{rem:flows2}).

Recall that by \Cref{rmk:KPgeometry} for any first order pseudo-differential operator $\cL$ like the one we have obtained in \Cref{lem:pseudoL}, the KP hierarchy is nothing but an infinite set of deformations of the pseudo-differential operator $\cL$ given by the equations
\begin{equation}\label{eq:KPSatoform}
\partial_{t_n}\cL = [(\cL^n)_+, \cL].
\end{equation}
If the operator $\cL$ is defined by $\lambda$-periodic solutions of equation \eqref{eq:L} as above, then equations \eqref{eq:KPSatoform} are equivalent to the equations
\begin{equation}\label{eq:KPtn}
\partial_{t_n} u = \partial_x F_n,
\end{equation}
where the first two times of the hierarchy are identified with the variables $x\coloneqq t_1$ and $y\coloneqq t_2$.

Recall that in \Cref{rem:flows2} we have shown that $U^{(n)}$ are the directions of the KP flows induced on $\Jac(\hat C)$. As $u$ depends only on the sum $\sum U^{(n)}t_n$ and not on the individual times $t_n$ (see \eqref{eq:potentialuintheta}), from \eqref{eq:12abeliandifferential2} we see that $\partial_{t_n}$ is the direction tangent to the orbit of the KP flow. Equations \eqref{eq:KPtn} then identify the space $\hat{\mathbf{F}}_1$ generated by the functions $D_1 F_n$ with the tangent space of the KP orbits at $\cA^Z$. Then from \Cref{lem:hatFspannedbyHi} it follows that the tangent space of the KP orbit is a subspace of the tangent space of the abelian variety $A$ (generated by all directions $Z_i$). Hence for any $Z \in A\setminus \Sigma$, the orbit of the KP flows of the ring $\cA^Z$ is contained in $A$, i.e.~this defines a holomorphic embedding:
\begin{equation}\label{eq:JGammainX}
i_Z: J(\hat C) \hookrightarrow A.
\end{equation}
Then \eqref{eq:XinPicGamma} implies that $i_Z$ is an isomorphism. A posteriori it turns out that for the Jacobians of smooth algebraic curves the bad locus $\Sigma$ is empty \cite{Shiota1986}, i.e.~the embedding $j$ in \eqref{eq:XinPicGamma} is defined everywhere on $A$ and is inverse to $i_Z$.
\end{proof}

\appendix
\section{Hints for exercises}\label{sec:hints}
\begin{hint}[Hint for \Cref{ex:1}]
The differential operator $L$ can be brought into a normal form with leading coefficient $u_n$ to be $1$ and subleading coefficients $u_{n-1}$ to be $0$ by transforming the independent and dependent variables, respectively. Here we take the second order differential operator
\[L = u_2(x)\frac{d^2}{dx^2} + u_1(x) + u_0(x)\]
and the corresponding equation $L\psi = 0$ as an example to illustrate the method, and the general case is similar. To normalize the leading coefficient, we make the change of variable $t = t(x)$ and note that
\[\frac{d}{dx} = \frac{dt}{dx}\frac{d}{dt}, \qquad \frac{d^2}{dx^2} = \left(\frac{dt}{dx}\right)^2\frac{d^2}{dt^2} + \frac{d^2t}{dx^2}\frac{d}{dt}.\]
Substituting these into $L$ we obtain
\[L = u_2(x)\left({t'}^{2}\frac{d^2}{dt^2} + t''\frac{d}{dt}\right) + u_1(x)t'\frac{d}{dt} + u_0(x).\]
To make the leading coefficient to be $1$, we set
\[u_2(x){t'}^2 = 1,\]
which leads to
\[t(x) = \int \frac{dx}{\sqrt{u_2(x)}}.\]
Now the differential operator $L$ becomes
\[L = \frac{d^2}{dt^2} + p(t)\frac{d}{dt} + q(t),\]
where
\[p(t) = \frac{u_2(x)t'' + u_1(x)t'}{{t'}^2} = \frac{u_1(x)}{\sqrt{u_2(x)}} - \frac{u'_2(x)}{2\sqrt{u_2(x)}}, \quad q(t) = \frac{u_0(x)}{{t'}^2}.\]
To further remove the subleading coefficient $p(t)$, we make the change of dependent variable $\psi(t) = \varphi(t)\exp\left(-\frac{1}{2}\int p(t)dt\right)$ which corresponds to conjugating the differential operator $L$ by the function $\exp\left(\frac{1}{2}\int p(t)dt\right)$. Substituting $\psi(x)$ into the equation $L\psi = 0$ we obtain
\[\frac{d^2}{dt^2}\varphi + u(t)\varphi = 0,\]
where
\[u(t) = \frac{u_0(x(t))}{u_2(x(t))} - \frac{p(t)^2}{4} - \frac{p'(t)}{2}.\]
The above procedure to bring a second-order differential operator into its normal form is called Kummer-Liouville transformation (See \cite[p.~292, Eqs.~(20),(20a)]{Courant-Hilbert1953}, or \cite[p.~320, Eqs.~(33),(34)]{Birkhoff-Rota1989}).
\end{hint}
\begin{hint}[Hint for \Cref{ex:2}]
We first check that for a given $L_1$ with leading coefficient~$1$, any $L_2$ that commutes with it must have leading order term a constant times the $m$'th derivative. This is something we did not address properly in the main text: if we have two commutating differentials operators $L_1, L_2$, we can put $L_1$ in the canonical form by some transformations, but why can we achieve this for $L_2$ simultaneously? Then once we do that, we can check that the dimension of the space of $L_2$ that commute with $L_1$ is at most $m$ or $m + 1$ depending on what one does with the coefficients. We can see that we can eliminate many terms by using the commutation equation. Then we try to see how the polynomial ring in two variables grows and then we will convince ourselves that we have a polynomial relation between $L_1$ and $L_2$.
\end{hint}
\begin{hint}[Hint for \Cref{ex:g=1}]
A complex torus $\CC \slash \Lambda$ in dimension $1$ is defined by a lattice $\Lambda = (\omega_1, \omega_2)$ with $\omega_i \in \CC^*$, and two lattices are isomorphic if and only if they are homothetic, i.e.~there exists $a \in \CC^*$ such that $\Lambda_1 = a \Lambda_2$. Thus the lattice $\langle \omega_1, \omega_2\rangle$ is isomorphic to the lattice $\langle 1, \tau \rangle = \langle 1, \omega_1 \slash \omega_2\rangle$, and we can make the choice such that $\Im (\omega_1 \slash \omega_2) > 0$ (otherwise just switch $\omega_1$ and $\omega_2$). The complex tori defined by the lattices $\langle 1,\tau\rangle$ and $\langle 1,\tau'\rangle$ are then isomorphic if and only if it is the same lattice, up to scaling, which is equivalent to $\tau'=(a\tau+b)/(c\tau+d)$ for some $\left(\begin{smallmatrix} a&b\\ c&d\end{smallmatrix}\right)\in\SL_2(\ZZ)$. A fundamental domain for this action of $\SL(2,\ZZ)$ on the upper half-plane~$\HH$ can be given as $\{\tau \in \HH : -\half\le \Re (\tau)\le \half, |\tau| \ge 1\}$. So complex tori with nontrivial automorphism correspond to the points in the fundamental domain which have nontrivial stabilizer in $SL_2(\ZZ)$ and this corresponds to $\tau = i$, $\tau = e^{2\pi i\slash 3}\simeq e^{\pi i \slash 3}$ in the fundamental domain. We now encourage you to think about the endomorphisms.
\end{hint}
\begin{hint}[Hint for \Cref{ex:3}]
For $g = 1$, the theta divisor is zero dimensional, and by calculating the degree of $\theta$ we know that the theta divisor is just one point. Riemann theta singularity theorem says that at this point the corresponding line bundle has a one-dimensional space of sections. This is the trivial line bundle which has constant functions as sections. For $g = 2$, the theta divisor is one-dimensional, and is exactly the image of the curve under the Abel-Jacobi map discussed below, so in this case the theta divisor is smooth. For $g = 3$, at the smooth point~$D$ of the theta divisor $\dim H^0(C, [D])=1$, while if $D\in\Sing\Theta$, then $\dim H^0(C, [D]) \ge 2$, which means, since $\deg D=g-1=2$, that there exists a nonconstant function on $C$ with two poles. This implies that $C$ is a degree two cover of $\CC\PP^1$, so $C$ is a hyperelliptic curve. So we have two quite different cases here: either the theta divisor is smooth and the curve is non-hyperelliptic, or the theta divisor is singular (in fact in only one point), and the curve is hyperelliptic.
\end{hint}

\bibliographystyle{plain}
\bibliography{GrushevskyXie}

@article {Abenda-Grinevich2018,
    AUTHOR = {Abenda, S. and Grinevich, P. G.},
     TITLE = {Rational degenerations of {\tt {M}}-curves, totally positive
              {G}rassmannians and {KP}2-solitons},
   JOURNAL = {Comm. Math. Phys.},
  FJOURNAL = {Communications in Mathematical Physics},
    VOLUME = {361},
      YEAR = {2018},
    NUMBER = {3},
     PAGES = {1029--1081},
      ISSN = {0010-3616,1432-0916},
   MRCLASS = {14H81 (14H70 37K10)},
  MRNUMBER = {3830261},
MRREVIEWER = {Arthemy\ V.\ Kiselev},
       DOI = {10.1007/s00220-018-3123-y},
       URL = {https://doi.org/10.1007/s00220-018-3123-y},
}

@article {Agostini-Fevola-Mandelshtam-Sturmfels2023,
    AUTHOR = {Agostini, D. and Fevola, C. and Mandelshtam, Y. and Sturmfels, B.},
     TITLE = {K{P} solitons from tropical limits},
   JOURNAL = {J. Symbolic Comput.},
  FJOURNAL = {Journal of Symbolic Computation},
    VOLUME = {114},
      YEAR = {2023},
     PAGES = {282--301},
      ISSN = {0747-7171,1095-855X},
   MRCLASS = {37K10 (14H42 35C08)},
  MRNUMBER = {4421050},
MRREVIEWER = {Arthemy\ V.\ Kiselev},
       DOI = {10.1016/j.jsc.2022.04.009},
       URL = {https://doi.org/10.1016/j.jsc.2022.04.009},
}

@article {Akhiezer1961,
    AUTHOR = {Akhiezer, N. I.},
     TITLE = {Continuous analogues of orthogonal polynomials on a system of
              intervals},
   JOURNAL = {Dokl. Akad. Nauk SSSR},
  FJOURNAL = {Doklady Akademii Nauk SSSR},
    VOLUME = {141},
      YEAR = {1961},
     PAGES = {263--266},
      ISSN = {0002-3264},
   MRCLASS = {42.00 (46.90)},
  MRNUMBER = {140970},
MRREVIEWER = {J.\ Korevaar},
}

@article {Andreotti1958,
    AUTHOR = {Andreotti, A.},
     TITLE = {On a theorem of {T}orelli},
   JOURNAL = {Amer. J. Math.},
  FJOURNAL = {American Journal of Mathematics},
    VOLUME = {80},
      YEAR = {1958},
     PAGES = {801--828},
      ISSN = {0002-9327,1080-6377},
   MRCLASS = {14.00},
  MRNUMBER = {102518},
MRREVIEWER = {T.\ Matsusaka},
       DOI = {10.2307/2372835},
       URL = {https://doi.org/10.2307/2372835},
}

@article {Andreotti-Mayer1967,
    AUTHOR = {Andreotti, A. and Mayer, A. L.},
     TITLE = {On period relations for abelian integrals on algebraic curves},
   JOURNAL = {Ann. Scuola Norm. Sup. Pisa Cl. Sci. (3)},
  FJOURNAL = {Annali della Scuola Normale Superiore di Pisa. Classe di
              Scienze. Serie III},
    VOLUME = {21},
      YEAR = {1967},
     PAGES = {189--238},
      ISSN = {0391-173X},
   MRCLASS = {14.51},
  MRNUMBER = {220740},
MRREVIEWER = {J.\ A.\ Todd},
}

@article {Arbarello-DeConcini1984,
    AUTHOR = {Arbarello, E. and De Concini, C.},
     TITLE = {On a set of equations characterizing {R}iemann matrices},
   JOURNAL = {Ann. of Math. (2)},
  FJOURNAL = {Annals of Mathematics. Second Series},
    VOLUME = {120},
      YEAR = {1984},
    NUMBER = {1},
     PAGES = {119--140},
      ISSN = {0003-486X,1939-8980},
   MRCLASS = {14H40 (14K20 14K25 32G20)},
  MRNUMBER = {750718},
MRREVIEWER = {Gerald\ E.\ Welters},
       DOI = {10.2307/2007073},
       URL = {https://doi.org/10.2307/2007073},
}

@article {Arbarello-DeConcini1987,
    AUTHOR = {Arbarello, E. and De Concini, C.},
     TITLE = {Another proof of a conjecture of {S}. {P}. {N}ovikov on
              periods of abelian integrals on {R}iemann surfaces},
   JOURNAL = {Duke Math. J.},
  FJOURNAL = {Duke Mathematical Journal},
    VOLUME = {54},
      YEAR = {1987},
    NUMBER = {1},
     PAGES = {163--178},
      ISSN = {0012-7094,1547-7398},
   MRCLASS = {14H40 (32G20 58F07)},
  MRNUMBER = {885782},
MRREVIEWER = {Bert\ van Geemen},
       DOI = {10.1215/S0012-7094-87-05412-3},
       URL = {https://doi.org/10.1215/S0012-7094-87-05412-3},
}

@incollection {Arbarello-DeConcini1990,
    AUTHOR = {Arbarello, E. and De Concini, C.},
     TITLE = {Geometrical aspects of the {K}adomtsev-{P}etviashvili
              equation},
 BOOKTITLE = {Global geometry and mathematical physics ({M}ontecatini
              {T}erme, 1988)},
    SERIES = {Lecture Notes in Math.},
    VOLUME = {1451},
     PAGES = {95--137},
 PUBLISHER = {Springer, Berlin},
      YEAR = {1990},
      ISBN = {3-540-53286-2},
   MRCLASS = {14H42 (14H40 14K25 58F07)},
  MRNUMBER = {1084204},
MRREVIEWER = {Olivier\ Debarre},
       DOI = {10.1007/BFb0085066},
       URL = {https://doi.org/10.1007/BFb0085066},
}

@book {Baker1897,
    AUTHOR = {Baker, H. F.},
     TITLE = {Abelian functions},
    SERIES = {Cambridge Mathematical Library},
      NOTE = {Abel's theorem and the allied theory of theta functions,
              Reprint of the 1897 original,
              With a foreword by Igor Krichever},
 PUBLISHER = {Cambridge University Press, Cambridge},
      YEAR = {1995},
     PAGES = {xxxvi+684},
      ISBN = {0-521-49877-5},
   MRCLASS = {14H42 (14H40 14H55 30F10 33E05)},
  MRNUMBER = {1386644},
MRREVIEWER = {John\ B.\ Little},
}

@book{Baker1907,
  title={An introduction to the theory of multiply periodic functions},
  author={Baker, H. F.},
  year={1907},
  publisher={University Press}
}

@article {Beauville-Debarre1986,
    AUTHOR = {Beauville, A. and Debarre, O.},
     TITLE = {Une relation entre deux approches du probl\`eme de {S}chottky},
   JOURNAL = {Invent. Math.},
  FJOURNAL = {Inventiones Mathematicae},
    VOLUME = {86},
      YEAR = {1986},
    NUMBER = {1},
     PAGES = {195--207},
      ISSN = {0020-9910,1432-1297},
   MRCLASS = {14H40 (14K10 14K25 32G13)},
  MRNUMBER = {853450},
MRREVIEWER = {Emma\ Previato},
       DOI = {10.1007/BF01391500},
       URL = {https://doi.org/10.1007/BF01391500},
}

@article {Belavin-Knizhnik1986,
    AUTHOR = {Belavin, A. A. and Knizhnik, V. G.},
     TITLE = {Algebraic geometry and the geometry of quantum strings},
   JOURNAL = {Phys. Lett. B},
  FJOURNAL = {Physics Letters. B. Particle Physics, Nuclear Physics and
              Cosmology},
    VOLUME = {168},
      YEAR = {1986},
    NUMBER = {3},
     PAGES = {201--206},
      ISSN = {0370-2693,1873-2445},
   MRCLASS = {81E20 (32G15 58G25)},
  MRNUMBER = {830618},
MRREVIEWER = {Arthur\ Greenspoon},
       DOI = {10.1016/0370-2693(86)90963-9},
       URL = {https://doi.org/10.1016/0370-2693(86)90963-9},
}

@article {BKMP1986,
    AUTHOR = {Belavin, A. A. and Knizhnik, V. G. and Morozov, A. Yu. and
              Perelomov, A. M.},
     TITLE = {Two- and three-loop amplitudes in bosonic string theory},
   JOURNAL = {JETP Lett.},
  FJOURNAL = {JETP Letters},
    VOLUME = {43},
      YEAR = {1986},
    NUMBER = {7},
     PAGES = {411--415},
      ISSN = {0021-3640},
   MRCLASS = {81E99 (14H15 14K25 32G15 32L05 81E30)},
  MRNUMBER = {892643},
MRREVIEWER = {Werner\ Kleinert},
}

@article {Bernatska-Enolski-Nakayashiki2020,
    AUTHOR = {Bernatska, J. and Enolski, V. and Nakayashiki, A.},
     TITLE = {Sato {G}rassmannian and degenerate sigma function},
   JOURNAL = {Comm. Math. Phys.},
  FJOURNAL = {Communications in Mathematical Physics},
    VOLUME = {374},
      YEAR = {2020},
    NUMBER = {2},
     PAGES = {627--660},
      ISSN = {0010-3616,1432-0916},
   MRCLASS = {14M15 (14H81 33E30)},
  MRNUMBER = {4072225},
MRREVIEWER = {Ahmed\ Lesfari},
       DOI = {10.1007/s00220-020-03704-5},
       URL = {https://doi.org/10.1007/s00220-020-03704-5},
}

@book {Birkhoff-Rota1989,
    AUTHOR = {Birkhoff, G.D. and Rota, G.-C.},
     TITLE = {Ordinary differential equations},
   EDITION = {Fourth},
 PUBLISHER = {John Wiley \& Sons, Inc., New York},
      YEAR = {1989},
     PAGES = {xii+399},
      ISBN = {0-471-86003-4},
   MRCLASS = {34-01},
  MRNUMBER = {972977},
MRREVIEWER = {Michal\ \v Cver\v cko},
}

@article {Burchnall-Chaundy1923,
    AUTHOR = {Burchnall, J. L. and Chaundy, T. W.},
     TITLE = {Commutative ordinary differential operators},
   JOURNAL = {Proc. London Math. Soc. Ser.2},
  FJOURNAL = {Proc. London Math. Soc.},
    VOLUME = {21},
      YEAR = {1923},
    NUMBER = {},
     PAGES = {420-440},
      ISSN = {},
   MRCLASS = {},
  MRNUMBER = {},
MRREVIEWER = {},
}

@incollection {Casalaina-Martin2008,
    AUTHOR = {Casalaina-Martin, S.},
     TITLE = {Singularities of theta divisors in algebraic geometry},
 BOOKTITLE = {Curves and abelian varieties},
    SERIES = {Contemp. Math.},
    VOLUME = {465},
     PAGES = {25--43},
 PUBLISHER = {Amer. Math. Soc., Providence, RI},
      YEAR = {2008},
      ISBN = {978-0-8218-4334-5},
   MRCLASS = {14K25 (14H40 14H42)},
  MRNUMBER = {2457734},
MRREVIEWER = {Ahmed\ Lesfari},
       DOI = {10.1090/conm/465/09098},
       URL = {https://doi.org/10.1090/conm/465/09098},
}

@article {Casalaina-Martin2009,
    AUTHOR = {Casalaina-Martin, S.},
     TITLE = {Singularities of the {P}rym theta divisor},
   JOURNAL = {Ann. of Math. (2)},
  FJOURNAL = {Annals of Mathematics. Second Series},
    VOLUME = {170},
      YEAR = {2009},
    NUMBER = {1},
     PAGES = {162--204},
      ISSN = {0003-486X,1939-8980},
   MRCLASS = {14H40},
  MRNUMBER = {2521114},
MRREVIEWER = {Samuel\ Dalalyan},
       DOI = {10.4007/annals.2009.170.163},
       URL = {https://doi.org/10.4007/annals.2009.170.163},
}

@article {Casalaina-Martin-Friedman2005,
    AUTHOR = {Casalaina-Martin, S. and Friedman, R.},
     TITLE = {Cubic threefolds and abelian varieties of dimension five},
   JOURNAL = {J. Algebraic Geom.},
  FJOURNAL = {Journal of Algebraic Geometry},
    VOLUME = {14},
      YEAR = {2005},
    NUMBER = {2},
     PAGES = {295--326},
      ISSN = {1056-3911,1534-7486},
   MRCLASS = {14K12 (14J30 14K30)},
  MRNUMBER = {2123232},
MRREVIEWER = {M.\ Kh.\ Gizatullin},
       DOI = {10.1090/S1056-3911-04-00379-0},
       URL = {https://doi.org/10.1090/S1056-3911-04-00379-0},
}

@article{Chevalley-Weil1934,
  title={{\"U}ber das Verhalten der Integral erster Gattung bei Automorphismen des Funktionenk{\"o}rpers},
  author={Chevalley, C and Weil, A},
  journal={A. Weil, Oeuvres Scient},
  volume={1},
  pages={68--71},
  year={1934}
}

@book {Courant-Hilbert1953,
    AUTHOR = {Courant, R. and Hilbert, D.},
     TITLE = {Methods of mathematical physics. {V}ol. {I}},
 PUBLISHER = {Interscience Publishers, Inc., New York},
      YEAR = {1953},
     PAGES = {xv+561},
   MRCLASS = {79.0X},
  MRNUMBER = {65391},
MRREVIEWER = {J.\ B.\ Diaz},
}

@incollection {Debarre1995,
    AUTHOR = {Debarre, O.},
     TITLE = {The {S}chottky problem: an update},
 BOOKTITLE = {Current topics in complex algebraic geometry ({B}erkeley,
              {CA}, 1992/93)},
    SERIES = {Math. Sci. Res. Inst. Publ.},
    VOLUME = {28},
     PAGES = {57--64},
 PUBLISHER = {Cambridge Univ. Press, Cambridge},
      YEAR = {1995},
      ISBN = {0-521-56244-9},
   MRCLASS = {14H42 (14K10)},
  MRNUMBER = {1397058},
       DOI = {10.2977/prims/1195168856},
       URL = {https://doi.org/10.2977/prims/1195168856},
}

@article {DGNS2007,
    AUTHOR = {Doliwa, A. and Grinevich, P. and Nieszporski, M. and Santini,
              P. M.},
     TITLE = {Integrable lattices and their sublattices: from the discrete
              {M}outard (discrete {C}auchy-{R}iemann) 4-point equation to
              the self-adjoint 5-point scheme},
   JOURNAL = {J. Math. Phys.},
  FJOURNAL = {Journal of Mathematical Physics},
    VOLUME = {48},
      YEAR = {2007},
    NUMBER = {1},
     PAGES = {013513, 28},
      ISSN = {0022-2488,1089-7658},
   MRCLASS = {37K25 (37K10 37K20 39A70 81R12)},
  MRNUMBER = {2292628},
MRREVIEWER = {Yuri\ B.\ Suris},
       DOI = {10.1063/1.2406056},
       URL = {https://doi.org/10.1063/1.2406056},
}

@incollection {Donagi1988,
    AUTHOR = {Donagi, R.},
     TITLE = {The {S}chottky problem},
 BOOKTITLE = {Theory of moduli ({M}ontecatini {T}erme, 1985)},
    SERIES = {Lecture Notes in Math.},
    VOLUME = {1337},
     PAGES = {84--137},
 PUBLISHER = {Springer, Berlin},
      YEAR = {1988},
      ISBN = {3-540-50080-4},
   MRCLASS = {14H40 (14K10 32G15 32G20)},
  MRNUMBER = {963063},
MRREVIEWER = {Gerald\ E.\ Welters},
       DOI = {10.1007/BFb0082807},
       URL = {https://doi.org/10.1007/BFb0082807},
}

@article {Dubrovin1981,
    AUTHOR = {Dubrovin, B. A.},
     TITLE = {Theta-functions and nonlinear equations},
      NOTE = {With an appendix by I. M. Krichever},
   JOURNAL = {Uspekhi Mat. Nauk},
  FJOURNAL = {Akademiya Nauk SSSR i Moskovskoe Matematicheskoe Obshchestvo.
              Uspekhi Matematicheskikh Nauk},
    VOLUME = {36},
      YEAR = {1981},
    NUMBER = {2(218)},
     PAGES = {11--80},
      ISSN = {0042-1316},
   MRCLASS = {35Q20 (14K25 58F07)},
  MRNUMBER = {616797},
MRREVIEWER = {Alexander\ A.\ Pankov},
}

@article {Dubrovin-Matveev-Novikov1976,
    AUTHOR = {Dubrovin, B. A. and Matveev, V. B. and Novikov, S. P.},
     TITLE = {Nonlinear equations of {K}orteweg-de {V}ries type, finite-band
              linear operators and {A}belian varieties},
   JOURNAL = {Uspehi Mat. Nauk},
  FJOURNAL = {Akademija Nauk SSSR i Moskovskoe Matemati\v ceskoe Ob\v s\v
              cestvo. Uspehi Matemati\v ceskih Nauk},
    VOLUME = {31},
      YEAR = {1976},
    NUMBER = {1(187)},
     PAGES = {55--136},
      ISSN = {0042-1316},
   MRCLASS = {35Q99 (34B25 35P25 58F05)},
  MRNUMBER = {427869},
MRREVIEWER = {Shunichi\ Tanaka},
}

@incollection {Earle2006,
    AUTHOR = {Earle, C. J.},
     TITLE = {The genus two {J}acobians that are isomorphic to a product of
              elliptic curves},
 BOOKTITLE = {The geometry of {R}iemann surfaces and abelian varieties},
    SERIES = {Contemp. Math.},
    VOLUME = {397},
     PAGES = {27--36},
 PUBLISHER = {Amer. Math. Soc., Providence, RI},
      YEAR = {2006},
      ISBN = {0-8218-3855-5},
   MRCLASS = {14H40 (14H15)},
  MRNUMBER = {2217995},
       DOI = {10.1090/conm/397/07459},
       URL = {https://doi.org/10.1090/conm/397/07459},
}

@article {Ein-Lazarsfeld1997,
    AUTHOR = {Ein, L. and Lazarsfeld, R.},
     TITLE = {Singularities of theta divisors and the birational geometry of
              irregular varieties},
   JOURNAL = {J. Amer. Math. Soc.},
  FJOURNAL = {Journal of the American Mathematical Society},
    VOLUME = {10},
      YEAR = {1997},
    NUMBER = {1},
     PAGES = {243--258},
      ISSN = {0894-0347,1088-6834},
   MRCLASS = {14K05 (14E05)},
  MRNUMBER = {1396893},
MRREVIEWER = {J\'anos\ Koll\'ar},
       DOI = {10.1090/S0894-0347-97-00223-3},
       URL = {https://doi.org/10.1090/S0894-0347-97-00223-3},
}

@article {Ekedahl-Serre1993,
    AUTHOR = {Ekedahl, T. and Serre, J.-P.},
     TITLE = {Exemples de courbes alg\'ebriques \`a{} jacobienne
              compl\`etement d\'ecomposable},
   JOURNAL = {C. R. Acad. Sci. Paris S\'er. I Math.},
  FJOURNAL = {Comptes Rendus de l'Acad\'emie des Sciences. S\'erie I.
              Math\'ematique},
    VOLUME = {317},
      YEAR = {1993},
    NUMBER = {5},
     PAGES = {509--513},
      ISSN = {0764-4442},
   MRCLASS = {14H40 (14K05)},
  MRNUMBER = {1239039},
MRREVIEWER = {Montserrat\ Teixidor i Bigas},
}

@book {Farkas-Kra1992,
    AUTHOR = {Farkas, H. M. and Kra, I.},
     TITLE = {Riemann surfaces},
    SERIES = {Graduate Texts in Mathematics},
    VOLUME = {71},
   EDITION = {Second},
 PUBLISHER = {Springer-Verlag, New York},
      YEAR = {1992},
     PAGES = {xvi+363},
      ISBN = {0-387-97703-1},
   MRCLASS = {30Fxx (14-01 14H55)},
  MRNUMBER = {1139765},
       DOI = {10.1007/978-1-4612-2034-3},
       URL = {https://doi.org/10.1007/978-1-4612-2034-3},
}

@article {Farkas-Grushevsky-Salvati2021,
    AUTHOR = {Farkas, H. M. and Grushevsky, S. and Salvati Manni,
              R.},
     TITLE = {An explicit solution to the weak {S}chottky problem},
   JOURNAL = {Algebr. Geom.},
  FJOURNAL = {Algebraic Geometry},
    VOLUME = {8},
      YEAR = {2021},
    NUMBER = {3},
     PAGES = {358--373},
      ISSN = {2313-1691,2214-2584},
   MRCLASS = {14H42 (14K25 32G15)},
  MRNUMBER = {4206440},
MRREVIEWER = {Cherng-tiao\ Perng},
       DOI = {10.14231/ag-2021-009},
       URL = {https://doi.org/10.14231/ag-2021-009},
}

@book {Fay1973,
    AUTHOR = {Fay, J. D.},
     TITLE = {Theta functions on {R}iemann surfaces},
    SERIES = {Lecture Notes in Mathematics},
    VOLUME = {Vol. 352},
 PUBLISHER = {Springer-Verlag, Berlin-New York},
      YEAR = {1973},
     PAGES = {iv+137},
   MRCLASS = {30A48 (14H15)},
  MRNUMBER = {335789},
MRREVIEWER = {H.\ M.\ Farkas},
}

@book {Gauss1995,
    AUTHOR = {Gauss, C. F.},
     TITLE = {Disquisitiones arithmeticae},
    SERIES = {Colecci\'on Enrique P\'erez Arbel\'aez [Enrique P\'erez
              Arbel\'aez Collection]},
    VOLUME = {10},
      NOTE = {Translated from the Latin by Hugo Barrantes Campos, Michael
              Josephy and \'Angel Ruiz Z\'u\~niga,
              With a preface by Ruiz Z\'u\~niga},
 PUBLISHER = {Academia Colombiana de Ciencias Exactas, F\'isicas y
              Naturales, Bogot\'a},
      YEAR = {1995},
     PAGES = {xliv+495},
      ISBN = {958-9205-15-1},
   MRCLASS = {01A75 (01A55 11-03)},
  MRNUMBER = {1356001},
MRREVIEWER = {Doru\ \c Stef\u anescu},
}

@article {Grothendieck1957,
    AUTHOR = {Grothendieck, A.},
     TITLE = {Sur quelques points d'alg\`ebre homologique},
   JOURNAL = {Tohoku Math. J. (2)},
  FJOURNAL = {The Tohoku Mathematical Journal. Second Series},
    VOLUME = {9},
      YEAR = {1957},
     PAGES = {119--221},
      ISSN = {0040-8735,2186-585X},
   MRCLASS = {18.00},
  MRNUMBER = {102537},
MRREVIEWER = {D.\ Buchsbaum},
       DOI = {10.2748/tmj/1178244839},
       URL = {https://doi.org/10.2748/tmj/1178244839},
}

@article {Grunbaum1988,
    AUTHOR = {Gr\"unbaum, F. A.},
     TITLE = {Commuting pairs of linear ordinary differential operators of
              orders four and six},
   JOURNAL = {Phys. D},
  FJOURNAL = {Physica D. Nonlinear Phenomena},
    VOLUME = {31},
      YEAR = {1988},
    NUMBER = {3},
     PAGES = {424--433},
      ISSN = {0167-2789,1872-8022},
   MRCLASS = {58F07 (34A30)},
  MRNUMBER = {954780},
MRREVIEWER = {P.\ A.\ Kuchment},
       DOI = {10.1016/0167-2789(88)90007-3},
       URL = {https://doi.org/10.1016/0167-2789(88)90007-3},
}

@incollection {Grushevsky2012,
    AUTHOR = {Grushevsky, S.},
     TITLE = {The {S}chottky problem},
 BOOKTITLE = {Current developments in algebraic geometry},
    SERIES = {Math. Sci. Res. Inst. Publ.},
    VOLUME = {59},
     PAGES = {129--164},
 PUBLISHER = {Cambridge Univ. Press, Cambridge},
      YEAR = {2012},
      ISBN = {978-0-521-76825-2},
   MRCLASS = {14H42 (14K05)},
  MRNUMBER = {2931868},
MRREVIEWER = {H.\ Lange},
}

@article {Grushevsky-Krichever2010,
    AUTHOR = {Grushevsky, S. and Krichever, I.},
     TITLE = {Integrable discrete {S}chr\"odinger equations and a
              characterization of {P}rym varieties by a pair of
              quadrisecants},
   JOURNAL = {Duke Math. J.},
  FJOURNAL = {Duke Mathematical Journal},
    VOLUME = {152},
      YEAR = {2010},
    NUMBER = {2},
     PAGES = {317--371},
      ISSN = {0012-7094,1547-7398},
   MRCLASS = {14H40 (37K20 37K60)},
  MRNUMBER = {2656091},
MRREVIEWER = {H.\ Lange},
       DOI = {10.1215/00127094-2010-014},
       URL = {https://doi.org/10.1215/00127094-2010-014},
}

@article {Grushevsky-Salvati2011,
    AUTHOR = {Grushevsky, S. and Salvati Manni, R.},
     TITLE = {The superstring cosmological constant and the {S}chottky form
              in genus 5},
   JOURNAL = {Amer. J. Math.},
  FJOURNAL = {American Journal of Mathematics},
    VOLUME = {133},
      YEAR = {2011},
    NUMBER = {4},
     PAGES = {1007--1027},
      ISSN = {0002-9327,1080-6377},
   MRCLASS = {14H42 (14K25 32N10 81T30)},
  MRNUMBER = {2823869},
MRREVIEWER = {Rolf\ Berndt},
       DOI = {10.1353/ajm.2011.0028},
       URL = {https://doi.org/10.1353/ajm.2011.0028},
}

@article {Grushevsky-Salvati2012,
    AUTHOR = {Grushevsky, S. and Salvati Manni, R.},
     TITLE = {Erratum to: {T}he superstring cosmological constant and the
              {S}chottky form in genus 5},
   JOURNAL = {Amer. J. Math.},
  FJOURNAL = {American Journal of Mathematics},
    VOLUME = {134},
      YEAR = {2012},
    NUMBER = {4},
     PAGES = {1139--1142},
      ISSN = {0002-9327,1080-6377},
   MRCLASS = {81Txx (32N10)},
  MRNUMBER = {2956259},
       DOI = {10.1353/ajm.2012.0025},
       URL = {https://doi.org/10.1353/ajm.2012.0025},
}

@article {Gunning1982b,
    AUTHOR = {Gunning, R. C.},
     TITLE = {Some curves in abelian varieties},
   JOURNAL = {Invent. Math.},
  FJOURNAL = {Inventiones Mathematicae},
    VOLUME = {66},
      YEAR = {1982},
    NUMBER = {3},
     PAGES = {377--389},
      ISSN = {0020-9910,1432-1297},
   MRCLASS = {14K10 (14H15 14K25 32G20)},
  MRNUMBER = {662597},
MRREVIEWER = {H.\ H.\ Martens},
       DOI = {10.1007/BF01389218},
       URL = {https://doi.org/10.1007/BF01389218},
}

@article {Hoyt1963,
    AUTHOR = {Hoyt, W. L.},
     TITLE = {On products and algebraic families of jacobian varieties},
   JOURNAL = {Ann. of Math. (2)},
  FJOURNAL = {Annals of Mathematics. Second Series},
    VOLUME = {77},
      YEAR = {1963},
     PAGES = {415--423},
      ISSN = {0003-486X},
   MRCLASS = {14.51 (14.20)},
  MRNUMBER = {150145},
MRREVIEWER = {Y.\ Nakai},
       DOI = {10.2307/1970125},
       URL = {https://doi.org/10.2307/1970125},
}

@book {Igusa1972,
    AUTHOR = {Igusa, Jun-ichi},
     TITLE = {Theta functions},
    SERIES = {Die Grundlehren der mathematischen Wissenschaften},
    VOLUME = {Band 194},
 PUBLISHER = {Springer-Verlag, New York-Heidelberg},
      YEAR = {1972},
     PAGES = {x+232},
   MRCLASS = {14K25 (32N05)},
  MRNUMBER = {325625},
MRREVIEWER = {H.\ Klingen},
}

@incollection {Igusa1981,
    AUTHOR = {Igusa, Jun-ichi},
     TITLE = {Schottky's invariant and quadratic forms},
 BOOKTITLE = {E. {B}. {C}hristoffel ({A}achen/{M}onschau, 1979)},
     PAGES = {352--362},
 PUBLISHER = {Birkh\"auser Verlag, Basel-Boston, Mass.},
      YEAR = {1981},
      ISBN = {3-7643-1162-2},
   MRCLASS = {10C15 (10D20 14K25)},
  MRNUMBER = {661078},
MRREVIEWER = {H.\ Klingen},
}

@article {Its-Matveev1975,
    AUTHOR = {Its, A. R. and Matveev, V. B.},
     TITLE = {Schr\"odinger operators with the finite-band spectrum and the
              {$N$}-soliton solutions of the {K}orteweg-de {V}ries equation},
   JOURNAL = {Teoret. Mat. Fiz.},
  FJOURNAL = {Akademiya Nauk SSSR. Teoreticheskaya i Matematicheskaya
              Fizika},
    VOLUME = {23},
      YEAR = {1975},
    NUMBER = {1},
     PAGES = {51--68},
      ISSN = {0564-6162},
   MRCLASS = {81.35 (35Q99)},
  MRNUMBER = {479120},
}

@book{Jacobi2012,
  title={Fundamenta nova theoriae functionum ellipticarum},
  author={Jacobi, C. G. J.},
  year={2012},
  publisher={Cambridge University Press}
}

@book {Kodama2017,
    AUTHOR = {Kodama, Y.},
     TITLE = {K{P} solitons and the {G}rassmannians},
    SERIES = {SpringerBriefs in Mathematical Physics},
    VOLUME = {22},
      NOTE = {Combinatorics and geometry of two-dimensional wave patterns},
 PUBLISHER = {Springer, Singapore},
      YEAR = {2017},
     PAGES = {xii+138},
      ISBN = {978-981-10-4093-1; 978-981-10-4094-8},
   MRCLASS = {35-02 (35C08 35Q53 37K10 37K40)},
  MRNUMBER = {3642536},
MRREVIEWER = {Nikolaos\ L.\ Tsitsas},
       DOI = {10.1007/978-981-10-4094-8},
       URL = {https://doi.org/10.1007/978-981-10-4094-8},
}

@article {Kodama-Xie2021,
    AUTHOR = {Kodama, Y. and Xie, Y.},
     TITLE = {Space curves and solitons of the {KP} hierarchy. {I}. {T}he
              {$l$}-th generalized {K}d{V} hierarchy},
   JOURNAL = {SIGMA Symmetry Integrability Geom. Methods Appl.},
  FJOURNAL = {SIGMA. Symmetry, Integrability and Geometry. Methods and
              Applications},
    VOLUME = {17},
      YEAR = {2021},
     PAGES = {Paper No. 024, 43},
      ISSN = {1815-0659},
   MRCLASS = {37K40 (14H50 14H70 37K10)},
  MRNUMBER = {4230437},
MRREVIEWER = {Alexander\ Henry\ Stokes},
       DOI = {10.3842/SIGMA.2021.024},
       URL = {https://doi.org/10.3842/SIGMA.2021.024},
}

@article {Krichever1977,
    AUTHOR = {Krichever, I. M.},
     TITLE = {Integration of nonlinear equations by the methods of algebraic
              geometry},
   JOURNAL = {Funkcional. Anal. i Prilo$\check{z}$en. },
  FJOURNAL = {Akademija Nauk SSSR. Funkcional\cprime nyi Analiz i ego Prilo\v zenija},
    VOLUME = {11},
      YEAR = {1977},
    NUMBER = {1},
     PAGES = {15--31, 96},
      ISSN = {0374-1990},
   MRCLASS = {58F05 (14H99 35Q99)},
  MRNUMBER = {494262},
MRREVIEWER = {Mark\ Pankov},
}

@incollection {Krichever2006,
    AUTHOR = {Krichever, I. M.},
     TITLE = {Integrable linear equations and the {R}iemann-{S}chottky
              problem},
 BOOKTITLE = {Algebraic geometry and number theory},
    SERIES = {Progr. Math.},
    VOLUME = {253},
     PAGES = {497--514},
 PUBLISHER = {Birkh\"auser Boston, Boston, MA},
      YEAR = {2006},
      ISBN = {978-0-8176-4471-0; 0-8176-4471-7},
   MRCLASS = {14H70 (14H42 37K20)},
  MRNUMBER = {2263198},
MRREVIEWER = {Armando\ K.\ Treibich},
       DOI = {10.1007/978-0-8176-4532-8\_8},
       URL = {https://doi.org/10.1007/978-0-8176-4532-8_8},
}

@article {Krichever2010,
    AUTHOR = {Krichever, I. M.},
     TITLE = {Characterizing {J}acobians via trisecants of the {K}ummer
              variety},
   JOURNAL = {Ann. of Math. (2)},
  FJOURNAL = {Annals of Mathematics. Second Series},
    VOLUME = {172},
      YEAR = {2010},
    NUMBER = {1},
     PAGES = {485--516},
      ISSN = {0003-486X,1939-8980},
   MRCLASS = {14K05 (14H40 14H42 14H70 37K20)},
  MRNUMBER = {2680424},
MRREVIEWER = {Jennifer\ Paulhus},
       DOI = {10.4007/annals.2010.172.485},
       URL = {https://doi.org/10.4007/annals.2010.172.485},
}

@incollection {Krichever2013,
    AUTHOR = {Krichever, I. M. and Shiota, T.},
     TITLE = {Soliton equations and the {R}iemann-{S}chottky problem},
 BOOKTITLE = {Handbook of moduli. {V}ol. {II}},
    SERIES = {Adv. Lect. Math. (ALM)},
    VOLUME = {25},
     PAGES = {205--258},
 PUBLISHER = {Int. Press, Somerville, MA},
      YEAR = {2013},
      ISBN = {978-1-57146-258-9},
   MRCLASS = {14H70 (14H40 14H52 14K05)},
  MRNUMBER = {3184178},
MRREVIEWER = {Ahmed\ Lesfari},
}

@incollection {Krichever2023,
    AUTHOR = {Krichever, I. M.},
     TITLE = {Abelian pole systems and {R}iemann-{S}chottky-type problems},
 BOOKTITLE = {I{CM}---{I}nternational {C}ongress of {M}athematicians. {V}ol.
              2. {P}lenary lectures},
     PAGES = {1122--1153},
 PUBLISHER = {EMS Press, Berlin},
      YEAR = {2023},
      ISBN = {978-3-98547-060-0; 978-3-98547-560-5; 978-3-98547-058-7},
   MRCLASS = {14H42 (35Q51)},
  MRNUMBER = {4680278},
}

@incollection {Krichever-Shiota2013,
    AUTHOR = {Krichever, I. M. and Shiota, T.},
     TITLE = {Soliton equations and the {R}iemann-{S}chottky problem},
 BOOKTITLE = {Handbook of moduli. {V}ol. {II}},
    SERIES = {Adv. Lect. Math. (ALM)},
    VOLUME = {25},
     PAGES = {205--258},
 PUBLISHER = {Int. Press, Somerville, MA},
      YEAR = {2013},
      ISBN = {978-1-57146-258-9},
   MRCLASS = {14H70 (14H40 14H52 14K05)},
  MRNUMBER = {3184178},
MRREVIEWER = {Ahmed\ Lesfari},
}

@article {Krichever-Novikov1978,
    AUTHOR = {Krichever, I. M. and Novikov, S. P.},
     TITLE = {Holomorphic vector bundles over {R}iemann surfaces and the
              {K}adomtsev-{P}etviashvili equation. {I}},
   JOURNAL = {Funktsional. Anal. i Prilozhen.},
  FJOURNAL = {Akademiya Nauk SSSR. Funktsional\cprime ny\u i\ Analiz i ego
              Prilozheniya},
    VOLUME = {12},
      YEAR = {1978},
    NUMBER = {4},
     PAGES = {41--52},
      ISSN = {0374-1990},
   MRCLASS = {35Q20 (14F05 32L05 58F07)},
  MRNUMBER = {515628},
MRREVIEWER = {M.\ Otelbaev},
}

@article {Krichever-Novikov1979,
    AUTHOR = {Krichever, I. M. and Novikov, S. P.},
     TITLE = {Holomorphic bundles and nonlinear equations. {F}inite-gap
              solutions of rank {$2$}},
   JOURNAL = {Dokl. Akad. Nauk SSSR},
  FJOURNAL = {Doklady Akademii Nauk SSSR},
    VOLUME = {247},
      YEAR = {1979},
    NUMBER = {1},
     PAGES = {33--37},
      ISSN = {0002-3264},
   MRCLASS = {14F05 (35Q20 58F07)},
  MRNUMBER = {545939},
MRREVIEWER = {Eugene\ Gutkin},
}

@article {Krichever-Novikov1980,
    AUTHOR = {Krichever, I. M. and Novikov, S. P.},
     TITLE = {Holomorphic bundles over algebraic curves, and nonlinear
              equations},
   JOURNAL = {Uspekhi Mat. Nauk},
  FJOURNAL = {Akademiya Nauk SSSR i Moskovskoe Matematicheskoe Obshchestvo.
              Uspekhi Matematicheskikh Nauk},
    VOLUME = {35},
      YEAR = {1980},
    NUMBER = {6(216)},
     PAGES = {47--68, 215},
      ISSN = {0042-1316},
   MRCLASS = {35Q20 (14F99 58F07)},
  MRNUMBER = {601756},
MRREVIEWER = {P.\ A.\ Kuchment},
}

@article {Lange-Recillas2004,
    AUTHOR = {Lange, H. and Recillas, S.},
     TITLE = {Abelian varieties with group action},
   JOURNAL = {J. Reine Angew. Math.},
  FJOURNAL = {Journal f\"ur die Reine und Angewandte Mathematik. [Crelle's
              Journal]},
    VOLUME = {575},
      YEAR = {2004},
     PAGES = {135--155},
      ISSN = {0075-4102,1435-5345},
   MRCLASS = {14K05 (14L30)},
  MRNUMBER = {2097550},
MRREVIEWER = {Mihnea\ Popa},
       DOI = {10.1515/crll.2004.076},
       URL = {https://doi.org/10.1515/crll.2004.076},
}

@article {Lombardo-Lorenzo-Ritzenthaler-Sijsling2023,
    AUTHOR = {Lombardo, Davide and Lorenzo Garc\'ia, Elisa and Ritzenthaler,
              Christophe and Sijsling, Jeroen},
     TITLE = {Decomposing {J}acobians via {G}alois covers},
   JOURNAL = {Exp. Math.},
  FJOURNAL = {Experimental Mathematics},
    VOLUME = {32},
      YEAR = {2023},
    NUMBER = {1},
     PAGES = {218--240},
      ISSN = {1058-6458,1944-950X},
   MRCLASS = {14H40 (11R32 14H45 14L30)},
  MRNUMBER = {4574430},
MRREVIEWER = {Daniel\ Valli\`eres},
       DOI = {10.1080/10586458.2021.1926008},
       URL = {https://doi.org/10.1080/10586458.2021.1926008},
}

@article {Marini1998,
    AUTHOR = {Marini, G.},
     TITLE = {A geometrical proof of {S}hiota's theorem on a conjecture of
              {S}. {P}. {N}ovikov},
   JOURNAL = {Compositio Math.},
  FJOURNAL = {Compositio Mathematica},
    VOLUME = {111},
      YEAR = {1998},
    NUMBER = {3},
     PAGES = {305--322},
      ISSN = {0010-437X,1570-5846},
   MRCLASS = {14H42 (14H40 14K25)},
  MRNUMBER = {1617132},
MRREVIEWER = {Emma\ Previato},
       DOI = {10.1023/A:1000310019510},
       URL = {https://doi.org/10.1023/A:1000310019510},
}

@article {Martens1963,
    AUTHOR = {Martens, H. H.},
     TITLE = {A new proof of {T}orelli's theorem},
   JOURNAL = {Ann. of Math. (2)},
  FJOURNAL = {Annals of Mathematics. Second Series},
    VOLUME = {78},
      YEAR = {1963},
     PAGES = {107--111},
      ISSN = {0003-486X},
   MRCLASS = {14.51 (14.20)},
  MRNUMBER = {152528},
MRREVIEWER = {Andr\'e\ Weil},
       DOI = {10.2307/1970505},
       URL = {https://doi.org/10.2307/1970505},
}

@article {Matsusaka1958,
    AUTHOR = {Matsusaka, T.},
     TITLE = {On a theorem of {T}orelli},
   JOURNAL = {Amer. J. Math.},
  FJOURNAL = {American Journal of Mathematics},
    VOLUME = {80},
      YEAR = {1958},
     PAGES = {784--800},
      ISSN = {0002-9327,1080-6377},
   MRCLASS = {14.00},
  MRNUMBER = {97398},
MRREVIEWER = {P.\ Samuel},
       DOI = {10.2307/2372783},
       URL = {https://doi.org/10.2307/2372783},
}

@article {Matsusaka1959,
    AUTHOR = {Matsusaka, T.},
     TITLE = {On a characterization of a {J}acobian variety},
   JOURNAL = {Mem. Coll. Sci. Univ. Kyoto Ser. A. Math.},
  FJOURNAL = {Memoirs of the College of Science. University of Kyoto. Series
              A. Mathematics},
    VOLUME = {32},
      YEAR = {1959},
     PAGES = {1--19},
      ISSN = {0368-8887},
   MRCLASS = {14.00},
  MRNUMBER = {108497},
MRREVIEWER = {Andr\'e\ Weil},
       DOI = {10.1215/kjm/1250776695},
       URL = {https://doi.org/10.1215/kjm/1250776695},
}

@article {Mironov-Zheglov2016,
    AUTHOR = {Mironov, Andrey E. and Zheglov, Alexander B.},
     TITLE = {Commuting ordinary differential operators with polynomial
              coefficients and automorphisms of the first {W}eyl algebra},
   JOURNAL = {Int. Math. Res. Not. IMRN},
  FJOURNAL = {International Mathematics Research Notices. IMRN},
      YEAR = {2016},
    NUMBER = {10},
     PAGES = {2974--2993},
      ISSN = {1073-7928,1687-0247},
   MRCLASS = {47E05},
  MRNUMBER = {3551827},
       DOI = {10.1093/imrn/rnv218},
       URL = {https://doi.org/10.1093/imrn/rnv218},
}

@incollection {Moonen-Oort2013,
    AUTHOR = {Moonen, B. and Oort, F.},
     TITLE = {The {T}orelli locus and special subvarieties},
 BOOKTITLE = {Handbook of moduli. {V}ol. {II}},
    SERIES = {Adv. Lect. Math. (ALM)},
    VOLUME = {25},
     PAGES = {549--594},
 PUBLISHER = {Int. Press, Somerville, MA},
      YEAR = {2013},
      ISBN = {978-1-57146-258-9},
   MRCLASS = {14K10 (11G05 14G35)},
  MRNUMBER = {3184184},
MRREVIEWER = {Daniel\ Bertrand},
}

@article {Morozov1987,
    AUTHOR = {Morozov, A.},
     TITLE = {Explicit formulae for one-, two-, three- and four-loop string
              amplitudes},
   JOURNAL = {Phys. Lett. B},
  FJOURNAL = {Physics Letters. B. Particle Physics, Nuclear Physics and
              Cosmology},
    VOLUME = {184},
      YEAR = {1987},
    NUMBER = {2-3},
     PAGES = {171--176},
      ISSN = {0370-2693,1873-2445},
   MRCLASS = {81E99 (14H15 81E20 81E30)},
  MRNUMBER = {879231},
MRREVIEWER = {Tsou Sheung Tsun},
       DOI = {10.1016/0370-2693(87)90563-6},
       URL = {https://doi.org/10.1016/0370-2693(87)90563-6},
}

@article {Mulase1984,
    AUTHOR = {Mulase, M.},
     TITLE = {Cohomological structure in soliton equations and {J}acobian
              varieties},
   JOURNAL = {J. Differential Geom.},
  FJOURNAL = {Journal of Differential Geometry},
    VOLUME = {19},
      YEAR = {1984},
    NUMBER = {2},
     PAGES = {403--430},
      ISSN = {0022-040X,1945-743X},
   MRCLASS = {14H40 (14K10 58F07)},
  MRNUMBER = {755232},
MRREVIEWER = {Gerald\ E.\ Welters},
       URL = {http://projecteuclid.org/euclid.jdg/1214438685},
}

@book {Mumford1975,
    AUTHOR = {Mumford, D.},
     TITLE = {Curves and their {J}acobians},
 PUBLISHER = {University of Michigan Press, Ann Arbor, MI},
      YEAR = {1975},
     PAGES = {vi+104},
   MRCLASS = {14-01 (14HXX)},
  MRNUMBER = {419430},
MRREVIEWER = {Henry\ C.\ Pinkham},
}

@inproceedings {Mumford1978,
    AUTHOR = {Mumford, D.},
     TITLE = {An algebro-geometric construction of commuting operators and
              of solutions to the {T}oda lattice equation, {K}orteweg
              de{V}ries equation and related nonlinear equation},
 BOOKTITLE = {Proceedings of the {I}nternational {S}ymposium on {A}lgebraic
              {G}eometry ({K}yoto {U}niv., {K}yoto, 1977)},
     PAGES = {115--153},
 PUBLISHER = {Kinokuniya Book Store, Tokyo},
      YEAR = {1978},
   MRCLASS = {14K25 (14H15 14K10 34C35 34C40 58F07)},
  MRNUMBER = {578857},
MRREVIEWER = {D.\ J.\ Lewis},
}

@book {Mumford1984,
    AUTHOR = {Mumford, D.},
     TITLE = {Tata lectures on theta. {II}},
    SERIES = {Progress in Mathematics},
    VOLUME = {43},
      NOTE = {Jacobian theta functions and differential equations,
              With the collaboration of C. Musili, M. Nori, E. Previato,
              M.~Stillman and H. Umemura},
 PUBLISHER = {Birkh\"auser Boston, Inc., Boston, MA},
      YEAR = {1984},
     PAGES = {xiv+272},
      ISBN = {0-8176-3110-0},
   MRCLASS = {14K25 (14H40 32G20)},
  MRNUMBER = {742776},
MRREVIEWER = {M.\ Kh.\ Gizatullin},
       DOI = {10.1007/978-0-8176-4578-6},
       URL = {https://doi.org/10.1007/978-0-8176-4578-6},
}

@article {Nakayashiki2019,
    AUTHOR = {Nakayashiki, A.},
     TITLE = {On reducible degeneration of hyperelliptic curves and soliton
              solutions},
   JOURNAL = {SIGMA Symmetry Integrability Geom. Methods Appl.},
  FJOURNAL = {SIGMA. Symmetry, Integrability and Geometry. Methods and
              Applications},
    VOLUME = {15},
      YEAR = {2019},
     PAGES = {Paper No. 009, 18},
      ISSN = {1815-0659},
   MRCLASS = {37K40 (14H70 37K10)},
  MRNUMBER = {3910057},
       DOI = {10.3842/SIGMA.2019.009},
       URL = {https://doi.org/10.3842/SIGMA.2019.009},
}

@incollection {Nakayashiki2020,
    AUTHOR = {Nakayashiki, A.},
     TITLE = {One step degeneration of trigonal curves and mixing of
              solitons and quasi-periodic solutions of the {KP} equation},
 BOOKTITLE = {Geometric methods in physics {XXXVIII}},
    SERIES = {Trends Math.},
     PAGES = {163--186},
 PUBLISHER = {Birkh\"auser/Springer, Cham},
      YEAR = {[2020] \copyright 2020},
      ISBN = {978-3-030-53305-2; 978-3-030-53304-5},
   MRCLASS = {37K40 (14H70 35B15 35C08 35Q53)},
  MRNUMBER = {4167481},
MRREVIEWER = {Guoliang\ He},
       DOI = {10.1007/978-3-030-53305-2\_12},
       URL = {https://doi.org/10.1007/978-3-030-53305-2_12},
}

@article {Nakayashiki2024,
    AUTHOR = {Nakayashiki, A.},
     TITLE = {Vertex operators of the {KP} hierarchy and singular algebraic
              curves},
   JOURNAL = {Lett. Math. Phys.},
  FJOURNAL = {Letters in Mathematical Physics},
    VOLUME = {114},
      YEAR = {2024},
    NUMBER = {3},
     PAGES = {Paper No. 82, 36},
      ISSN = {0377-9017,1573-0530},
   MRCLASS = {37K40 (14H70 35C08 35Q53)},
  MRNUMBER = {4761017},
       DOI = {10.1007/s11005-024-01836-6},
       URL = {https://doi.org/10.1007/s11005-024-01836-6},
}

@article {Paulhus-Rojas2017,
    AUTHOR = {Paulhus, J. and Rojas, A. M.},
     TITLE = {Completely decomposable {J}acobian varieties in new genera},
   JOURNAL = {Exp. Math.},
  FJOURNAL = {Experimental Mathematics},
    VOLUME = {26},
      YEAR = {2017},
    NUMBER = {4},
     PAGES = {430--445},
      ISSN = {1058-6458,1944-950X},
   MRCLASS = {14H40 (14H37 14K12 14Q15)},
  MRNUMBER = {3684576},
MRREVIEWER = {Pawe\l\ Bor\'owka},
       DOI = {10.1080/10586458.2016.1199981},
       URL = {https://doi.org/10.1080/10586458.2016.1199981},
}

@article {Previato-Wilson1992,
    AUTHOR = {Previato, E. and Wilson, G.},
     TITLE = {Differential operators and rank {$2$} bundles over elliptic
              curves},
   JOURNAL = {Compositio Math.},
  FJOURNAL = {Compositio Mathematica},
    VOLUME = {81},
      YEAR = {1992},
    NUMBER = {1},
     PAGES = {107--119},
      ISSN = {0010-437X,1570-5846},
   MRCLASS = {14H60 (14F05 14H52 34A30)},
  MRNUMBER = {1145609},
MRREVIEWER = {Nitin\ Nitsure},
       URL = {http://www.numdam.org/item?id=CM_1992__81_1_107_0},
}

@article {Ran1981,
    AUTHOR = {Ran, Ziv},
     TITLE = {On subvarieties of abelian varieties},
   JOURNAL = {Invent. Math.},
  FJOURNAL = {Inventiones Mathematicae},
    VOLUME = {62},
      YEAR = {1981},
    NUMBER = {3},
     PAGES = {459--479},
      ISSN = {0020-9910,1432-1297},
   MRCLASS = {14K05 (14H40 14K10)},
  MRNUMBER = {604839},
MRREVIEWER = {T.\ Matsusaka},
       DOI = {10.1007/BF01394255},
       URL = {https://doi.org/10.1007/BF01394255},
}

@book{Riemann1857a,
  title={Beitr{\"a}ge zur Theorie der durch die Gauss' sche Reihe F ([alpha],[beta],[gamma],[chi]) darstellbaren Functionen},
  author={Riemann, B.},
  volume={7},
  year={1857},
  publisher={Verlag der Dieterichschen Buchhandlung}
}

@article {Riemann1857b,
    AUTHOR = {Riemann, B.},
     TITLE = {Theorie der {A}bel'schen {F}unctionen},
   JOURNAL = {J. Reine Angew. Math.},
  FJOURNAL = {Journal f\"ur die Reine und Angewandte Mathematik. [Crelle's
              Journal]},
    VOLUME = {54},
      YEAR = {1857},
     PAGES = {115--155},
      ISSN = {0075-4102,1435-5345},
   MRCLASS = {99-04},
  MRNUMBER = {1579035},
       DOI = {10.1515/crll.1857.54.115},
       URL = {https://doi.org/10.1515/crll.1857.54.115},
}

@article{Sato1981,
  title={Soliton equations as dynamical systems on a infinite dimensional Grassmann manifolds (random systems and dynamical systems)},
  author={Sato, M.},
  journal={RIMS Kokyuroku (Kyoto University)},
  volume={439},
  pages={30--46},
  year={1981},
  publisher={Kyoto University}
}

@article {Segal-Wilson1985,
    AUTHOR = {Segal, G. and Wilson, G.},
     TITLE = {Loop groups and equations of {K}d{V} type},
   JOURNAL = {Inst. Hautes \'Etudes Sci. Publ. Math.},
  FJOURNAL = {Institut des Hautes \'Etudes Scientifiques. Publications
              Math\'ematiques},
    VOLUME = {61},
      YEAR = {1985},
     PAGES = {5-65},
      ISSN = {0073-8301,1618-1913},
   MRCLASS = {58F07 (14K99 35Q20 58G35)},
  MRNUMBER = {783348},
MRREVIEWER = {A.\ M.\ Vinogradov},
       URL = {http://www.numdam.org/item?id=PMIHES_1985__61__5_0},
}

@article {Schottky1888,
    AUTHOR = {Schottky, F.},
     TITLE = {Zur {T}heorie der {A}belschen {F}unctionen vor vier {V}ariablen},
   JOURNAL = {J. Reine Angew. Math.},
  FJOURNAL = {J. Reine Angew. Math.},
    VOLUME = {102},
      YEAR = {1888},
    NUMBER = {},
     PAGES = {304--352},
      ISSN = {},
   MRCLASS = {},
  MRNUMBER = {},
       DOI = {},
       URL = {},
}

@book{Schottky-Jung1909,
  title={Neue S{\"a}tze {\"u}ber Symmetralfunctionen und die Abel'schen Functionen der Riemann'schen Theorie},
  author={Schottky, F. and Jung, H.},
  year={1909},
}

@article {Schur1905,
    AUTHOR = {Schur, I.},
     TITLE = {\"Uber vertauschbare lineare {D}ifferentialausdr\"ucke},
   JOURNAL = {Sitzungsberichte der Berliner Mathematischen Gesellschaft 4},
  FJOURNAL = {Sitzungsberichte der Berliner Mathematischen Gesellschaft 4},
    VOLUME = {},
      YEAR = {1905},
    NUMBER = {},
     PAGES = {2-8},
      ISSN = {},
   MRCLASS = {},
  MRNUMBER = {},
       DOI = {},
       URL = {},
}

@article {Shabat1980,
    AUTHOR = {Shabat, G. B.},
     TITLE = {A system of equations of {S}. {P}. {N}ovikov},
   JOURNAL = {Funktsional. Anal. i Prilozhen.},
  FJOURNAL = {Akademiya Nauk SSSR. Funktsional\cprime ny\u i\ Analiz i ego
              Prilozheniya},
    VOLUME = {14},
      YEAR = {1980},
    NUMBER = {2},
     PAGES = {89--90},
      ISSN = {0374-1990},
   MRCLASS = {58F07 (14D05 35Q20)},
  MRNUMBER = {575228},
MRREVIEWER = {P.\ A.\ Kuchment},
}

@article {Shiota1986,
    AUTHOR = {Shiota, T.},
     TITLE = {Characterization of {J}acobian varieties in terms of soliton
              equations},
   JOURNAL = {Invent. Math.},
  FJOURNAL = {Inventiones Mathematicae},
    VOLUME = {83},
      YEAR = {1986},
    NUMBER = {2},
     PAGES = {333--382},
      ISSN = {0020-9910,1432-1297},
   MRCLASS = {14H40 (58F07)},
  MRNUMBER = {818357},
MRREVIEWER = {Bert\ van Geemen},
       DOI = {10.1007/BF01388967},
       URL = {https://doi.org/10.1007/BF01388967},
}

@article {Smith-Varley2004,
    AUTHOR = {Smith, R. and Varley, R.},
     TITLE = {A necessary and sufficient condition for {R}iemann's
              singularity theorem to hold on a {P}rym theta divisor},
   JOURNAL = {Compos. Math.},
  FJOURNAL = {Compositio Mathematica},
    VOLUME = {140},
      YEAR = {2004},
    NUMBER = {2},
     PAGES = {447--458},
      ISSN = {0010-437X,1570-5846},
   MRCLASS = {14H40},
  MRNUMBER = {2027198},
MRREVIEWER = {Samuel\ Dalalyan},
       DOI = {10.1112/S0010437X03000320},
       URL = {https://doi.org/10.1112/S0010437X03000320},
}

@article {Taimanov1990,
    AUTHOR = {Taimanov, I. A.},
     TITLE = {Prym varieties of branched coverings, and nonlinear equations},
   JOURNAL = {Mat. Sb.},
  FJOURNAL = {Matematicheski\u i\ Sbornik},
    VOLUME = {181},
      YEAR = {1990},
    NUMBER = {7},
     PAGES = {934--950},
      ISSN = {0368-8666},
   MRCLASS = {14H42 (14C34 14H15 35Q53 58F07)},
  MRNUMBER = {1070488},
MRREVIEWER = {V.\ V.\ Chueshev},
       DOI = {10.1070/SM1991v070n02ABEH001257},
       URL = {https://doi.org/10.1070/SM1991v070n02ABEH001257},
}

@article{Taimanov1997,
  title={Secants of Abelian varieties, theta functions, and soliton equations},
  author={Taimanov, I. A.},
  journal={Russian Mathematical Surveys},
  volume={52},
  number={1},
  pages={147},
  year={1997},
  publisher={IOP Publishing}
}

@incollection {vanderGeer1985,
    AUTHOR = {van der Geer, G.},
     TITLE = {The {S}chottky problem},
 BOOKTITLE = {Workshop {B}onn 1984 ({B}onn, 1984)},
    SERIES = {Lecture Notes in Math.},
    VOLUME = {1111},
     PAGES = {385--406},
 PUBLISHER = {Springer, Berlin},
      YEAR = {1985},
      ISBN = {3-540-15195-8},
   MRCLASS = {14H40 (14H15 14K20 32G15)},
  MRNUMBER = {797430},
MRREVIEWER = {David\ Ortland},
       DOI = {10.1007/BFb0084599},
       URL = {https://doi.org/10.1007/BFb0084599},
}

@incollection {vanderGeer-Oort1999,
    AUTHOR = {van der Geer, G. and Oort, F.},
     TITLE = {Moduli of abelian varieties: a short introduction and survey},
 BOOKTITLE = {Moduli of curves and abelian varieties},
    SERIES = {Aspects Math.},
    VOLUME = {E33},
     PAGES = {1--21},
 PUBLISHER = {Friedr. Vieweg, Braunschweig},
      YEAR = {1999},
      ISBN = {3-528-03125-5},
   MRCLASS = {14K10 (11G15)},
  MRNUMBER = {1722536},
MRREVIEWER = {H.\ Lange},
       DOI = {10.1007/978-3-322-90172-9\_1},
       URL = {https://doi.org/10.1007/978-3-322-90172-9_1},
}

@article {vanGeeman-vanderGeer1986,
    AUTHOR = {van Geemen, B. and van der Geer, G.},
     TITLE = {Kummer varieties and the moduli spaces of abelian varieties},
   JOURNAL = {Amer. J. Math.},
  FJOURNAL = {American Journal of Mathematics},
    VOLUME = {108},
      YEAR = {1986},
    NUMBER = {3},
     PAGES = {615--641},
      ISSN = {0002-9327,1080-6377},
   MRCLASS = {14K25 (11F46 14H40 14K10 32G20)},
  MRNUMBER = {844633},
MRREVIEWER = {Emma\ Previato},
       DOI = {10.2307/2374657},
       URL = {https://doi.org/10.2307/2374657},
}

@book{Wallis1972,
  title={Arithmetica infinitorum},
  author={Wallis, J.},
   publisher = {},
  year={1972}
}

@article {Weil1957,
    AUTHOR = {Weil, A.},
     TITLE = {Zum {B}eweis des {T}orellischen {S}atzes},
   JOURNAL = {Nachr. Akad. Wiss. G\"ottingen. Math.-Phys. Kl. IIa.},
  FJOURNAL = {Nachr. Akad. Wiss. G\"ottingen. Math.-Phys. Kl. IIa.},
    VOLUME = {1957},
      YEAR = {1957},
     PAGES = {33--53},
   MRCLASS = {14.0X},
  MRNUMBER = {89483},
MRREVIEWER = {T.\ Matsusaka},
}

@article {Welters1983,
    AUTHOR = {Welters, G. E.},
     TITLE = {On flexes of the {K}ummer variety (note on a theorem of {R}.
              {C}. {G}unning)},
   JOURNAL = {Nederl. Akad. Wetensch. Indag. Math.},
  FJOURNAL = {Koninklijke Nederlandse Akademie van Wetenschappen.
              Indagationes Mathematicae},
    VOLUME = {45},
      YEAR = {1983},
    NUMBER = {4},
     PAGES = {501--520},
      ISSN = {0019-3577},
   MRCLASS = {14H40 (14K10)},
  MRNUMBER = {731833},
MRREVIEWER = {David\ Ortland},
}

@article {Welters1984,
    AUTHOR = {Welters, G. E.},
     TITLE = {A criterion for {J}acobi varieties},
   JOURNAL = {Ann. of Math. (2)},
  FJOURNAL = {Annals of Mathematics. Second Series},
    VOLUME = {120},
      YEAR = {1984},
    NUMBER = {3},
     PAGES = {497--504},
      ISSN = {0003-486X,1939-8980},
   MRCLASS = {14H40 (14K10)},
  MRNUMBER = {769160},
MRREVIEWER = {David\ Ortland},
       DOI = {10.2307/1971084},
       URL = {https://doi.org/10.2307/1971084},
}

@article{Wirtinger1895,
  title={Untersuchungen {\"u}ber Theta Funktionen, Teubner, Berlin, 1895},
  author={Wirtinger, W},
  journal={Department of Mathematics, Link{\"o}ping University},
  volume={58183},
  year={1895},
}

@incollection {Grushevsky-Hulek2013,
    AUTHOR = {Grushevsky, S. and Hulek, K.},
     TITLE = {Geometry of theta divisors---a survey},
 BOOKTITLE = {A celebration of algebraic geometry},
    SERIES = {Clay Math. Proc.},
    VOLUME = {18},
     PAGES = {361--390},
 PUBLISHER = {Amer. Math. Soc., Providence, RI},
      YEAR = {2013},
   MRCLASS = {14K10 (14K25)},
  MRNUMBER = {3114948},
MRREVIEWER = {Jae-Hyun Yang},
}

@book {Hudson1905,
    AUTHOR = {Hudson, R. W. H. T.},
     TITLE = {Kummer's quartic surface},
    SERIES = {Cambridge Mathematical Library},
      NOTE = {With a foreword by W. Barth,
              Revised reprint of the 1905 original},
 PUBLISHER = {Cambridge University Press, Cambridge},
      YEAR = {1990},
     PAGES = {xxiv+222},
      ISBN = {0-521-39790-1},
   MRCLASS = {14J28 (14J10)},
  MRNUMBER = {1097176},
MRREVIEWER = {Werner Kleinert},
}

@incollection {GrushevskyGamma,
    AUTHOR = {Grushevsky, S.},
     TITLE = {A special case of the {$\Gamma_{00}$} conjecture},
 BOOKTITLE = {Liaison, {S}chottky problem and invariant theory},
    SERIES = {Progr. Math.},
    VOLUME = {280},
     PAGES = {223--231},
 PUBLISHER = {Birkh\"{a}user Verlag, Basel},
      YEAR = {2010},
   MRCLASS = {14H42 (14H40)},
  MRNUMBER = {2664656},
       DOI = {10.1007/978-3-0346-0201-3\_12},
       URL = {https://doi.org/10.1007/978-3-0346-0201-3_12},
}

@incollection {mulase,
    AUTHOR = {Mulase, M.},
     TITLE = {Algebraic theory of the {KP} equations},
 BOOKTITLE = {Perspectives in mathematical physics},
    SERIES = {Conf. Proc. Lecture Notes Math. Phys., III},
     PAGES = {151--217},
 PUBLISHER = {Int. Press, Cambridge, MA},
      YEAR = {1994},
      ISBN = {1-57146-009-8},
   MRCLASS = {58F07 (14H40 35Q53)},
  MRNUMBER = {1314667},
MRREVIEWER = {Emma\ Previato},
}
\end{document}